\documentclass[a4, 11pt]{amsart}
\usepackage{amssymb,latexsym,hyperref}
\usepackage{tikz}
\usepackage{tikz-3dplot}
\usepackage{pgfplots} % Required for the 'axis' environment
\pgfplotsset{compat=1.18}

\usepackage{color}
\usepackage{comment}
\usepackage{xcolor}
\usepackage{mathtools}
\usepackage[dvipsnames]{xcolor}

\usepackage{graphicx}
\usepackage[left=25mm, right=25mm, top=20mm, bottom=20mm, includefoot, includehead]{geometry}
\usepackage{tikz-cd}
\usepackage{cancel}
\theoremstyle{plain}
\newtheorem{theorem}{Theorem}[section]
\newtheorem{corollary}[theorem]{Corollary} 
\newtheorem{proposition}[theorem]{Proposition}
\newtheorem{lemma}[theorem]{Lemma}
\newtheorem{conjecture}[theorem]{Conjecture}

\newtheorem*{cor}{Corollary}
\theoremstyle{definition}
\newtheorem{definition}[theorem]{Definition}

\newtheorem{example}[theorem]{Example}

\newtheorem{remark}[theorem]{Remark}

\newcommand{\enm}[1]{\ensuremath{#1}}          %
\renewcommand{\bar}[1]{\overline{#1}}

\newcommand{\CC}{\enm{\mathbb{C}}}

\newcommand{\RR}{\enm{\mathbb{R}}}

\newcommand{\ZZ}{\enm{\mathbb{Z}}}

\newcommand{\PP}{\enm{\mathbb{P}}}

\renewcommand{\theta}{\vartheta}
\renewcommand{\epsilon}{\varepsilon}

\DeclareMathOperator{\Gr}{Gr}
\DeclareMathOperator{\red}{red}
\DeclareMathOperator{\Res}{Res}

\DeclareMathOperator{\reg}{reg}
\DeclareMathOperator{\sort}{sort}
\DeclareMathOperator{\spec}{Spec}
\DeclareMathOperator{\proj}{Proj}

\renewcommand{\to}[1][]{\xrightarrow{\ #1\ }}

\newcommand{\Bl}{\mathit{Bl}}

\newcommand{\old}[1]{}

\newcommand{\rk}{\mathrm{rk}}

\DeclareMathOperator{\itr}{int}
\DeclareMathOperator{\Sing}{Sing}

\makeatletter
\@namedef{subjclassname@2020}{%
\textup{2020} Mathematics Subject Classification}
\makeatother

\subjclass[2020]{05E14, 14N15, 14D21, 81T13, 81T18}

\keywords{Positive geometry, Grassmannian, Positroid variety, Point configuration, Quantum field theory, Scattering amplitude.}

\date{}

\title{The ABCT Variety $V(3,n)$ is a Positive Geometry}

\author{Dawei Shen}
\address{University of Michigan, Department of Mathematics, 530 Church Street, Ann Arbor, USA}
\email{dwshen@umich.edu}

\author{Emanuele Ventura}
\address{Politecnico di Torino, Dipartimento di Scienze Matematiche ``G. L. Lagrange'', Corso Duca degli Abruzzi 24\\
	10129 Torino, Italy}
\email{emanuele.ventura@polito.it}

\begin{document}

\maketitle

\begin{abstract}
The {\it ABCT variety} $V(3,n)$ is the image closure of the rational Veronese map from the Grassmannian $\Gr(2,n)$ to the Grassmannian $\Gr(3,n)$. It was studied by Arkani-Hamed--Bourjaily--Cachazo--Trnka \cite{ABCT} %and by Nandan--Volovich--Wen
in the context of  tree-level scattering amplitudes arising in planar $\mathcal N=4$ supersymmetric Yang-Mills theory and Witten's twistor string theory. From this perspective, $V(3,n)$ is conjectured to be a positive 
geometry by Lam \cite{Lam24}.

In this paper, we study the combinatorial and algebraic geometry aspects of $V(3,n)$ and its subvarieties induced by iteratively taking analytic boundaries of the totally nonnegative part. 
We interpret these subvarieties as point configurations on $\mathbb{P}^2$ by the Gelfand-MacPherson correspondence. We construct a top-degree meromorphic form on $V(3,n)$ and show that it is a positive 
geometry, proving Lam's conjecture.
\end{abstract}

\tableofcontents

\section{Introduction}

\textit{Scattering amplitudes} give a measure of the probability of outcomes of quantum particles' collisions and are the central objects in quantum field theory \cite{BHPZ24,EH15,Schwartz14}. Perturbative expansions lead to expressions of the amplitudes as a gigantic sum indexed by combinatorial objects called \textit{Feynman diagrams},  representing all possible sequences of  events. While Feynman’s method yields experimental data with great accuracy, the number of Feynman diagrams increases super-exponentially with the number of particles involved, rendering computations intractable. Surprisingly, the final answers in some quantum field theories are simple despite the complicated computations involved. This hints at the existence of deeper structures.

Over the past two decades, multiple geometric structures are revealed to bypass Feynman diagrammatic computations for the $\mathcal{N} = 4$ supersymmetric Yang-Mills theory (SYM) scattering amplitude. Arguably the most notable one starts out with Hodge \cite{Hodges} pointing out that certain tree-level amplitudes can be interpreted as volumes of polytopes in projective space. This was then generalized to higher loop-level in the {\it amplituhedron} construction of Arkani-Hamed and Trnka \cite{BT14} inside Grassmannians.
Soon after, more geometric structures appeared for various other quantum field theories and cosmology, such as \textit{correlahedra} \cite{EHM17}, \textit{cosmological polytopes} \cite{AHBP17}, \textit{ABHY associahedra} \cite{AHBHY18}, and more recently \textit{cosmohedra} \cite{AHFV24}. Some notion of ``volume" can be defined for each of these geometric objects, which gives the corresponding scattering amplitude. 
%Furthermore, {\it triangulations} of these geometric objects correspond to decompositions of the scattering amplitude that coincide with different expansions, whereas they were traditionally hard to be compared with each other; see \cite{ PSTW24,EZLPTSW24,E-ZLT25} for a deep study of the relation between triangulations and the Britto-Cachazo-Feng-Witten recurrence, and the combinatorial structure of tilings. Also, taking boundaries of these geometric objects explains the factorization properties of scattering amplitudes.

\textit{Positive geometries}, introduced by Arkani-Hamed, Bai, and Lam \cite{AHBL17}, capture the underlying combinatorics and geometry, generalizing all the above examples. A \textit{positive geometry} is a triple of a projective algebraic variety, a closed semi-algebraic subset of its real points, and a top-degree rational differential form (the {\it canonical form}), satisfying a subtle recursion determined by taking boundaries of the geometry and residues of the canonical form; see Definition \ref{def: positive geom} for details. From the physics perspective, this notion is designed to reflect the properties of {\it locality} (the presence of simple poles in the amplitude at particular locations) and {\it unitarity} (residue at those simple poles factorizes) of scattering amplitudes. 

Besides the ones above, the prototypical example of a positive geometry is the {\it positive Grassmannian} \cite{Postnikov06, AHBCGPT16}. This wonderful object lies at the crossroads of algebraic combinatorics, algebraic geometry, and theoretical physics with deep connections to important topics in each of these domains such as cluster algebras \cite{FWZ25} and the KP equation \cite{KW14}, an important PDE in mathematical physics. Other important basic examples include the moduli space $\mathcal M_{0,n}$ \cite[\S 1]{Lam24}, {\it toric varieties} \cite[\S 5.6]{AHBL17}, {\it polytopes} and blowups thereof \cite{brauner2024wondertopes}, and {\it polypols} \cite{Kohn25}. For more on the profound and fast growing connections between positivity, combinatorial algebraic geometry and particle physics we refer to the recent informative surveys \cite{DPSV25, FS25, Lam25, RST25}. Recently, another interesting and fertile definition of positive geometry was given by Brown and Dupont, based on mixed Hodge theory \cite{BD24}. 
In this paper, we shall follow the former definition.  

The problem of computing the  $\mathcal{N} = 4$ SYM amplitude has yet another approach by realizing it as contour integrals over the complex Grassmannian $\Gr(k,n)$, the space of  $k$-planes in $\mathbb{C}^n$. In this framework, residues of specific meromorphic forms correspond to Yangian-invariant building blocks of scattering amplitudes \cite[\S 9]{EH15}. A breakthrough result of Arkani-Hamed {\it et al.} \cite{ABCT} showed that these residues can be organized into a single algebraic variety -- a subvariety of $\Gr(k,n)$ -- onto which the full tree-level amplitude localizes. This construction provides another unified picture of amplitude computation and gives a geometric interpretation to physical features such as the BCFW recursion.

This subvariety, known in this context as the \emph{ABCT variety} and denoted by $V(k,n)$, arises as the image closure of a rational Veronese map from $\Gr(2,n)$ to $\Gr(k,n)$, see Section \ref{intro V(k,n)}. In the case $k=3$, the algebraic geometry of the ABCT variety $V(3,n)$ is studied in \cite{ARS24}: it is a reduced, irreducible, and Cohen-Macaulay subvariety of $\Gr(3,n)$ of expected codimension $n-5$. It can be realized as the degeneracy locus of a vector bundle morphism, and its radical ideal of vanishing inside $\Gr(3,n)$ is known, see Theorems \ref{prop:interpretation of V} and \ref{Ideal of ABCT}. In this paper, we will establish that it is also rational and normal, see Theorem \ref{geometry of V and its boundaries}.

The intersection theory of the ABCT variety is also related to computing the tree-level SYM amplitude from twistor string theory introduced by Witten. Roiban, Spradlin, and Volovich \cite{RSV04} showed that the amplitude can be expressed as a sum over points in the intersection of $V(k,n)$ and a specific Schubert variety. From this perspective, the ABCT variety $V(k,n)$ is conjecturally a subvariety of $\Gr(k,n)$ of expected dimension $2(n-2)$ and its fundamental class $[V(k,n)] \in H^{2(k-2)(n-k-2)}(\Gr(k,n))$ is conjectured such that its Schubert expansion has coefficient in front of the Schubert class represented by a $k-2$ by $n-k-2$ rectangle given by the Eulerian number $E_{n-3,k-2}$. This matches the number of solutions to scattering equations considered by Cachazo, He, and Yuan \cite{CHY13}, which has an interpretation from the spinor-helicity formalism \cite{Lam24}. For the $k=3$ case, \cite{ARS24} gives a recursive formula in symmetric functions for the fundamental class $[V(3,n)]$, which recovers the Eulerian number coefficient as a corollary.

From a combinatorial algebraic geometry point of view, $V(k,n)$ has natural connections with {\it point configurations} on a rational normal curve \cite{CGMS18, ARS24}. Any point in the image of the rational Veronese map $\Gr(2,n)\dashrightarrow \Gr(k,n)$ satisfies that its matrix representative has $n$ columns giving a point configuration in $\mathbb{P}^{k-1}$ lying on a common rational normal curve of degree $k-1$. The ABCT variety $V(k,n)$ is by definition the Zariski closure of the image and it is shown in \cite{ARS24} that $V(3,n)$ is exactly given by the condition that the nonzero columns give points lying on a (not necessarily normal) conic in $\mathbb{P}^2$, see Remark \ref{rmk:point configuration for V}. In particular, the extra points needed to form the Zariski closure have an easy description in that zero columns are allowed or the conic is allowed to degenerate to a union of two lines. The perspective of point configurations will be relevant for us not only as set-theoretic descriptions and dimension counts of $V(3,n)$ and its ``boundaries," but also when we relate $V(3,n)$ in $V(n-3,n)$ in Section \ref{paozhuan}.

From a physics perspective, $V(k,n)$ captures the configurations where external particles (encoded as marked points on $\mathbb{P}^1$) are mapped into $\mathbb{P}^{k-1}$ via the Veronese embedding, reflecting the localization of the path integral to specific algebraic curves in twistor space. This variety simultaneously makes manifest the cyclic symmetry and soft limits of amplitudes. In the NMHV case (i.e., $ k = 3 $), this means that the associated $n $ points lie on a conic in $\mathbb{P}^2$.
%Another important combinatorial structure for scattering amplitudes is the {\it momentum amplituhedron} \cite{DFLP19}. Yang-Mills amplitudes may be computed as an integral over the this object. 
There are hints of a deep connection between $V(k,n)$ to the momentum amplituhedron \cite{HKZ22}, which is very far from clear.  

In this paper, we focus on the case $k=3$, building on the results by Agostini, Ramesh and the first author \cite{ARS24}, to study the combinatorics and algebraic geometry of $V(3,n)$, along with its iterated boundaries induced by the totally nonnegative part $V(3,n)_{\geq0}$. Our main result proves Lam's conjecture \cite{Lam24} that $V(3,n)$ is a positive geometry and the interior of $\Gr(2,n)_{\geq0}$ maps to the interior of $V(3,n)_{\geq0}$ so that the two canonical forms are related by pushforward. This builds a parallel between $V(3,n)$ and other positive geometries, such as toric varieties, Grassmannians, polytopes, etc. It establishes the foundation for the further conjecture that $V(3,n)_{\geq0}$ maps diffeomorphically to the interior of the momentum amplituhedron and their canonical forms are related by pushforward \cite{HKZ22}. This conjecture can be interpreted as that $V(3,n)$ plays a similar role for the momentum amplituhedron as a toric variety plays for a polytope,
and it bridges the two existing approaches of computing the  $\mathcal{N} = 4$  SYM scattering amplitude using the ABCT variety and momentum amplituhedron, respectively. \\

\noindent {\bf An Example and the Main Result.}

Iterated boundaries of $V(3,7)$ correspond to imposing positroid conditions on point configurations on a common conic in $\mathbb{P}^2$. By ``positroid conditions," we mean that columns are set to $0$, or give projective points that collide or become collinear. When points are collinear, the conic must be the union of two lines, and thus all other points lie on the other line. The figure below shows the poset of iterated boundaries of codimensions $0,1,2$ up to dihedral symmetry: to get the actual poset, we possibly reflect the diagram and then label points in counterclockwise order. A red dot enclosed by $1$ or $2$ circles denotes $2$ or $3$ colliding points. Two different configurations are in a box when they are related by a non-dihedral relabeling of points. Forgetting the conic condition gives a rank $3$ matroid on $7$ vertices, which is a positroid because the points are labeled counterclockwise in the actual poset. This identifies iterated boundaries with a subset of rank $3$ positroids on $7$ vertices. Theorem \ref{thm:mainintro} states that the poset structure on iterated boundaries is the induced sub-poset. Black covering relations come from covering relations in positroids. Pink covering relations come from non-covering relations in positroids that become covering relations in the induced sub-poset. The smooth conics in row $3$ are thin because they impose no conditions, since $5$ points determine a conic. The only iterated boundaries of $V(3,7)$ that are not positroid varieties are $V(3,7)$ itself and the codimension $1$ boundary given by the right-most configuration on the second row. We omit codimension $\geq3$ boundaries in the picture. We also omit the codimension $2$ boundary given by setting one column to zero. It is child of only the rightmost codimension $1$ boundary and it is isomorphic to $V(3,6)$ by forgetting the zero column. 

\begin{tikzpicture}

% Draw small sub-pictures, for demonstration:
\begin{scope}[shift={(0,0)}]
% Node A: Small diagram of 7 points on a hyperbola, encapsulated in a node
\node (A) {
% Embedded TikZ for hyperbola points
\begin{tikzpicture}[scale=0.25]
% Draw the hyperbola xy=3 (parametric approach)
\draw[domain=0.13:4.9,smooth,variable=\x,blue,thick]
plot ({\x},{0.5/\x});

% Mark 7 specific points on the hyperbola
\foreach \x in {0.17, 0.25 ,0.5,1,2,3,4} {
\pgfmathsetmacro\y{0.5/\x}
\filldraw[red] (\x,\y) circle (5pt); % Draw a point
}
\end{tikzpicture}
};
\end{scope}

\begin{scope}[shift={(5,-2.5)}]
\node (B) {
% Embedded TikZ for hyperbola points

\begin{tabular}{c}
\begin{tikzpicture}[scale=0.25]
\draw[domain=0.13:4.9,smooth,variable=\x,blue,thick]
plot ({\x},{0.5/\x});

% Mark 7 specific points on the hyperbola
\foreach \x in {0.17, 0.25,0.8,2,3,4} {
\pgfmathsetmacro\y{0.5/\x}
\filldraw[red] (\x,\y) circle (5pt); % Draw a point
}
% Outer circle
\draw[black, thick] (0.17,3) circle (0.4);
\end{tikzpicture}
\end{tabular}
};
\end{scope}

\begin{scope}[shift={(0,-2.5)}]
\node (C) {
% Embedded TikZ for hyperbola points
\begin{tikzpicture}[scale=0.25]
\draw[domain=-1:5,smooth,variable=\x,blue,thick]
plot ({\x},{0});

\draw[domain=-1:4,smooth,variable=\x,blue,thick]
plot ({0},{\x});

% Mark 7 specific points on the hyperbola
\foreach \x in {1,2,3} {
\filldraw[red] (0,\x) circle (5pt); 
}

\foreach \x in {1,2,3,4} {
\filldraw[red] (\x,0) circle (5pt); 
}
\end{tikzpicture}
};
\end{scope}

\begin{scope}[shift={(-5,-2.5)}]
\node (D) {
% Embedded TikZ for hyperbola points
\begin{tikzpicture}[scale=0.25]
\draw[domain=-1:6,smooth,variable=\x,blue,thick]
plot ({\x},{0});

\draw[domain=-1:3,smooth,variable=\x,blue,thick]
plot ({0},{\x});

% Mark 7 specific points on the hyperbola
\foreach \x in {1,2} {
\filldraw[red] (0,\x) circle (5pt); 
}

\foreach \x in {1,2,3,4,5} {
\filldraw[red] (\x,0) circle (5pt); 
}
\end{tikzpicture}
};
\end{scope}

\begin{scope}[shift={(-6.2,-5.6)}]
\node (E1) {
% Embedded TikZ for hyperbola points
\begin{tikzpicture}[scale=0.25]
\draw[domain=-1:6,smooth,variable=\x,blue,thick]
plot ({\x},{0});

\draw[domain=-1:2,smooth,variable=\x,blue]
plot ({0},{\x});

% Mark 7 specific points on the hyperbola
\foreach \x in {1,0} {
\filldraw[red] (0,\x) circle (5pt); 
}

\foreach \x in {1,2,3,4,5} {
\filldraw[red] (\x,0) circle (5pt); 
}
\end{tikzpicture}
};
\end{scope}

\begin{scope}[shift={(-4.5,-5.8)}]
\node (E2) {
% Embedded TikZ for hyperbola points
\begin{tikzpicture}[scale=0.25]
\draw[domain=-1:5,smooth,variable=\x,blue,thick]
plot ({\x},{0});

\draw[domain=-1:3,smooth,variable=\x,blue,thick]
plot ({0},{\x});

% Mark 7 specific points on the hyperbola
\foreach \x in {0,1,2} {
\filldraw[red] (0,\x) circle (5pt); 
}

\foreach \x in {1,2,3,4} {
\filldraw[red] (\x,0) circle (5pt); 
}
\end{tikzpicture}
};
\end{scope}

\begin{scope}[shift={(-3,-5.9)}]
\node (E3) {
% Embedded TikZ for hyperbola points
\begin{tikzpicture}[scale=0.25]
\draw[domain=-1:4,smooth,variable=\x,blue,thick]
plot ({\x},{0});

\draw[domain=-1:4,smooth,variable=\x,blue,thick]
plot ({0},{\x});

% Mark 7 specific points on the hyperbola
\foreach \x in {0,1,2,3} {
\filldraw[red] (0,\x) circle (5pt); 
}

\foreach \x in {1,2,3} {
\filldraw[red] (\x,0) circle (5pt); 
}
\end{tikzpicture}
};
\end{scope}

\begin{scope}[shift={(-1.5,-6.9)}]
\node (F1) [draw, rectangle, inner ysep=1pt, inner xsep=-4pt]{
\begin{tabular}{c}
\begin{tikzpicture}[scale=0.25]
\draw[domain=-1:3,smooth,variable=\x,blue,thick]
plot ({\x},{0});

\draw[domain=-1:5,smooth,variable=\x,blue,thick]
plot ({0},{\x});

% Mark 7 specific points on the hyperbola
\foreach \x in {1,2,3,4} {
\filldraw[red] (0,\x) circle (5pt); 
}

\foreach \x in {1,2} {
\filldraw[red] (\x,0) circle (5pt); 
}
% Outer circle
\draw[black, thick] (0,4) circle (0.4);

\end{tikzpicture}
  \\

 \begin{tikzpicture}[scale=0.25]
\draw[domain=-1:3,smooth,variable=\x,blue,thick]
plot ({\x},{0});

\draw[domain=-1:5,smooth,variable=\x,blue,thick]
plot ({0},{\x});

% Mark 7 specific points on the hyperbola
\foreach \x in {1,2,3,4} {
\filldraw[red] (0,\x) circle (5pt); 
}

\foreach \x in {1,2} {
\filldraw[red] (\x,0) circle (5pt); 
}
% Outer circle
\draw[black, thick] (0,3) circle (0.4);

\end{tikzpicture}

 \end{tabular}
% Embedded TikZ for hyperbola points
};
\end{scope}

\begin{scope}[shift={(0,-6.7)}]
\node (F2) [draw, rectangle, inner ysep=1pt, inner xsep=-5pt]{
\begin{tabular}{c}
\begin{tikzpicture}[scale=0.25]
\draw[domain=-1:4,smooth,variable=\x,blue,thick]
plot ({\x},{0});

\draw[domain=-1:4,smooth,variable=\x,blue,thick]
plot ({0},{\x});

% Mark 7 specific points on the hyperbola
\foreach \x in {1,2,3} {
\filldraw[red] (0,\x) circle (5pt); 
}

\foreach \x in {1,2,3} {
\filldraw[red] (\x,0) circle (5pt); 
}
% Outer circle
\draw[black, thick] (0,3) circle (0.4);

\end{tikzpicture}
  \\

 \begin{tikzpicture}[scale=0.25]
\draw[domain=-1:4,smooth,variable=\x,blue,thick]
plot ({\x},{0});

\draw[domain=-1:4,smooth,variable=\x,blue,thick]
plot ({0},{\x});

% Mark 7 specific points on the hyperbola
\foreach \x in {1,2,3} {
\filldraw[red] (0,\x) circle (5pt); 
}

\foreach \x in {1,2,3} {
\filldraw[red] (\x,0) circle (5pt); 
}
% Outer circle
\draw[black, thick] (0,2) circle (0.4);

\end{tikzpicture}

 \end{tabular}
% Embedded TikZ for hyperbola points
};
\end{scope}

\begin{scope}[shift={(1.6,-5.8)}]
\node (F3){
\begin{tabular}{c}
\begin{tikzpicture}[scale=0.25]
\draw[domain=-1:5,smooth,variable=\x,blue,thick]
plot ({\x},{0});

\draw[domain=-1:3,smooth,variable=\x,blue,thick]
plot ({0},{\x});

% Mark 7 specific points on the hyperbola
\foreach \x in {1,2} {
\filldraw[red] (0,\x) circle (5pt); 
}

\foreach \x in {1,2,3,4} {
\filldraw[red] (\x,0) circle (5pt); 
}
% Outer circle
\draw[black, thick] (0,2) circle (0.4);

\end{tikzpicture}
 \end{tabular}
% Embedded TikZ for hyperbola points
};
\end{scope}

\begin{scope}[shift={(3.3,-5.6)}]
\node (F4){
\begin{tabular}{c}
\begin{tikzpicture}[scale=0.25]
\draw[domain=-1:6,smooth,variable=\x,blue,thick]
plot ({\x},{0});

% Mark 7 specific points on the hyperbola
\foreach \x in {1} {
\filldraw[red] (0,\x) circle (5pt); 
}

\foreach \x in {1,2,3,4,5} {
\filldraw[red] (\x,0) circle (5pt); 
}
% Outer circle
\draw[black, thick] (0,1) circle (0.4);

\end{tikzpicture}
 \end{tabular}
% Embedded TikZ for hyperbola points
};
\end{scope}

\begin{scope}[shift={(5,-6.4)}]
\node (F5) [draw, rectangle,inner ysep=1pt, inner xsep=-5pt]{
% Embedded TikZ for hyperbola points

\begin{tabular}{c}
\begin{tikzpicture}[scale=0.25]
\draw[domain=0.13:4.9,smooth,variable=\x,blue]
plot ({\x},{0.5/\x});

% Mark 7 specific points on the hyperbola
\foreach \x in {0.17, 0.3,1,2.5,4} {
\pgfmathsetmacro\y{0.5/\x}
\filldraw[red] (\x,\y) circle (5pt); % Draw a point
}
% Outer circle
\draw[black, thick] (0.17,3) circle (0.4);
\draw[black, thick] (0.3,1.67) circle (0.4);
\end{tikzpicture} \\
\begin{tikzpicture}[scale=0.25]
\draw[domain=0.13:4.9,smooth,variable=\x,blue]
plot ({\x},{0.5/\x});

% Mark 7 specific points on the hyperbola
\foreach \x in {0.17, 0.3,1,2.5,4} {
\pgfmathsetmacro\y{0.5/\x}
\filldraw[red] (\x,\y) circle (5pt); % Draw a point
}
% Outer circle
\draw[black, thick] (0.17,3) circle (0.4);
\draw[black, thick] (1,0.5) circle (0.4);
\end{tikzpicture} 
\end{tabular}
};
\end{scope}

\begin{scope}[shift={(6.5,-5.7)}]
\node (F6) {
% Embedded TikZ for hyperbola points

\begin{tabular}{c}
\begin{tikzpicture}[scale=0.25]
\draw[domain=0.15:4.9,smooth,variable=\x,blue]
plot ({\x},{0.5/\x});

% Mark 7 specific points on the hyperbola
\foreach \x in {0.2,1,2.5,4} {
\pgfmathsetmacro\y{0.5/\x}
\filldraw[red] (\x,\y) circle (5pt); % Draw a point
}
% Outer circle
\draw[black, thick] (0.2,2.5) circle (0.4);
\draw[black, thick] (0.2,2.5) circle (0.6);
\end{tikzpicture}
\end{tabular}
};
\end{scope}

% Edges between shifted nodes
\draw[-,pink,thick] (A.south) -- (B.north);
\draw[-,pink, thick] (A.south) -- (C.north);
\draw[-,pink, thick] (A.south) -- (D.north);

\draw[-, thick] (D.south) -- (E1.north);
\draw[-, thick] (D.south) -- (E2.north);
\draw[-, thick] (D.south) -- (F1.north);
\draw[-, thick] (D.south) -- (F4.north);

\draw[-, thick] (C.south) -- (E2.north);
\draw[-, thick] (C.south) -- (E3.north);
\draw[-, thick] (C.south) -- (F2.north);
\draw[-, thick] (C.south) -- (F3.north);

\draw[-,pink,thick] (B.south) -- (F1.north);
\draw[-,pink,thick] (B.south) -- (F2.north);
\draw[-,pink,thick] (B.south) -- (F3.north);
\draw[-,pink,thick] (B.south) -- (F4.north);

\draw[-,pink,thick] (B.south) -- (F5.north);
\draw[-,pink,thick] (B.south) -- (F6.north);
\end{tikzpicture}

Our main result is: 

\begin{theorem}\label{thm:mainintro}
The following properties of the ABCT variety $V(3,n)$ hold true:

\begin{enumerate}

\item[(i)] The variety $(V(3,n), V(3,n)_{\geq0}, \Omega_{V(3,n)})$ is a positive geometry. The canonical form $\Omega_{V(3,n)}$ equals the pushforward of $\Omega_{\Gr(2,n)}$ along the rational Veronese map.\\

\item[(ii)] The (analytic) iterated boundaries of the ABCT variety $V(3,n)$ are (the totally nonnegative parts of) the reduced subschemes of intersections of $V(3,n)$ and certain positroid varieties in $\Gr(3,n)$ described in Definition \ref{dfn:iterated boundaries two types}.\\

\item[(iii)] The poset structure of iterated boundaries is the induced sub-poset on the positroids as above. The poset is naturally graded by the dimension of the iterated boundary. \\

\item[(iv)] The variety $V(3,n)$ and all its iterated algebraic boundaries are reduced, irreducible, rational, normal, Cohen-Macaulay, regular in codimension $1$, and have expected dimensions from the perspective of point configurations.
\end{enumerate}
\end{theorem}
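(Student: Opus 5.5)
The proof proceeds by induction on $n$ and on the codimension of the boundary, treating the four parts together. The natural order is (ii) and (iv) first, then (iii), then (i), because the positive-geometry recursion needs explicit knowledge of every iterated boundary and its geometry.

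\textbf{Setting up the boundaries and their geometry.} I start from the determinantal description of $V(3,n)$ as a degeneracy locus and the radical ideal from \cite{ARS24}, as recalled in Theorems \ref{prop:interpretation of V} and \ref{Ideal of ABCT}. I also use the point-configuration description: nonzero columns lie on a possibly reducible conic in $\PP^2$. The nonnegative part $V(3,n)_{\geq0}$ is the image of $\Gr(2,n)_{\geq0}$ under the Veronese map, and its interior is the image of $\Gr(2,n)_{>0}$. This image is open in $V(3,n)(\RR)$ because the map is birational onto its image there.

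The codimension-one analytic boundaries are then the limits of this image. By the pictures, there are three kinds:
\begin{enumerate}
\item[(a)] two cyclically consecutive points collide, i.e.\ the positroid condition $\Delta$ of rank $\le 1$ on $\{i,i+1\}$;
\item[(b)] a cyclic interval of points becomes collinear, so the conic splits as two lines;
\item[(c)] a column vanishes.
\end{enumerate}
Iterating these moves gives the two families of Definition \ref{dfn:iterated boundaries two types}. The first family consists of smooth-conic types with collisions and zero columns. The second consists of line-pair types, which are honest positroid varieties of $\Gr(3,n)$, namely rank conditions on two cyclic intervals.

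For (iv), I show each boundary is a reduced component of $V(3,n)\cap\Pi_f$. The line-pair boundaries are positroid varieties, and these are already known to be normal, Cohen--Macaulay and rational. The smooth-conic boundaries with $m$ distinct points and some zero columns are isomorphic to $V(3,m)$, or to a product-like fibration over it obtained by duplicating columns. Their geometry therefore reduces to that of $V(3,m)$ for smaller $m$.

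For $V(3,n)$ itself:
\begin{itemize}
\item Cohen--Macaulayness comes from the degeneracy-locus (Eagon--Northcott) resolution.
\item Rationality comes from the birational Veronese parametrization.
\item Regularity in codimension one is checked by computing the rank of the Jacobian of the determinantal ideal at generic points of each divisor (a), (b), (c), all of which are explicit.
\end{itemize}
Serre's criterion $R_1+S_2$ then gives normality. Dimension counts come from the configuration space: a conic has $5$ parameters, each point on it has $1$, and each collinearity or collision drops dimension by one.

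\textbf{The poset.} For (iii), I compare closures. Boundary $B'$ lies in boundary $B$ exactly when the generic configuration of $B'$ is a degeneration of that of $B$. That in turn happens exactly when the positroid of $B'$ is below the positroid of $B$, which can be checked using positroid containment together with the conic condition. The grading follows from the dimension formula in (iv).

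\textbf{The positive-geometry recursion.} For (i), I define $\Omega_{V(3,n)}$ as the pushforward of $\Omega_{\Gr(2,n)}$ under the Veronese map. Since the map is birational and positive-to-positive, this is a sum over a single sheet, hence a well-defined rational top form on $V(3,n)$. I then verify the recursive axiom, that $\Res_{B}\Omega_{V(3,n)}=\Omega_B$ for each boundary divisor $B$, and that there are no other poles. Normality and $R_1$ make residues along the Weil divisors $B$ well-defined.

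The residues are computed as follows:
\begin{itemize}
\item For type (a) and (c) divisors, the Veronese map restricts from the corresponding boundaries of $\Gr(2,n)$ (collision of $i,i+1$ in $\PP^1$, or a vanishing column). Pushforward therefore commutes with residue, giving $\Omega_{V(3,n-1)}$-type forms, and induction applies.
\item Type (b) divisors are not images of divisors of $\Gr(2,n)$; they are contracted loci, lying over deeper strata where the two-plane degenerates. Here I compute the residue directly in positroid-style coordinates adapted to the line pair. I expect the result to be the canonical form of the positroid variety $\Pi_f$, which is a product of two $\Gr(2,\cdot)$-type positive parts, and its own recursion is already known.
\end{itemize}

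I expect this type (b) step to be the main obstacle. It requires showing that the pushforward form has exactly a simple pole there with the correct residue, and no poles along non-positive divisors. My first attempt would be an explicit chart, e.g.\ a conic $xy=t\,z^2$ degenerating as $t\to0$, tracking $d\log t$ in the pushforward, and checking that the remaining factor is the product of positive-Grassmannian forms on the two lines.
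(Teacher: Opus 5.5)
Your plan rests on a false premise: $\theta_2\colon\Gr(2,n)\dashrightarrow V(3,n)$ is not birational. Flipping the sign of one column of the $2\times n$ matrix flips every $p_{ij}$ involving that column. It therefore leaves every $p_{ijk}=p_{ij}p_{ik}p_{jk}$ unchanged, so $\theta_2$ is generically $2^{n-1}$-to-one. You use birationality in three places, and each needs repair.
\begin{itemize}
\item \textbf{Rationality.} Rationality of $V(3,n)$ does not follow from the Veronese parametrization; you only get unirationality. The paper proves rationality separately, by realizing a chart as a product of rational cones over conics defined over the function field of a rational base.
\item \textbf{The pushforward.} The pushforward is not ``a sum over a single sheet''. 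A local inverse, built from square roots, gives only $2^{-(n-1)}\theta_{2*}\Omega_{\Gr(2,n)}$. You must sum all $2^{n-1}$ sheets and check that they contribute with the same sign, or the $\pm1$ normalization fails. The same bookkeeping enters your ``pushforward commutes with residue'' step: on $\{p_{i,i+1}=0\}$ the ratio of the two colliding columns of $\theta_2(M)$ is the square of the ratio in $\Gr(2,n)$.
\item \textbf{The interior.} Openness of $V(3,n)\cap\Gr_{>0}(3,n)$ in $V(3,n)(\RR)$ is the trivial direction. The real content is that every point of $(\Gr_{\geq0}(3,n)\setminus\Gr_{>0}(3,n))\cap V(3,n)$ is a limit of points of $V(3,n)(\RR)$ outside $\Gr_{\geq0}(3,n)$, and the paper proves this separately. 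Also, $\theta_2$ is undefined at points of $\Gr(2,n)_{\geq0}$ such as $\langle e_i,e_j\rangle$. So $V(3,n)_{\geq0}$ is not literally the image of $\Gr(2,n)_{\geq0}$; it is the closure of $\theta_2(\Gr(2,n)_{>0})$.
\end{itemize}
Your divisor list is also wrong. A vanishing column has codimension $2$ in $V(3,n)$, since it leaves $V(3,n-1)$, of dimension $2n-6$. It is a boundary of a collision divisor, not of $V(3,n)$. The actual divisors are the $Z(A|B)$ with $A,B$ cyclic intervals of size at least $2$, and the $ZV(i,i+1,x)_{\red}$. ``By the pictures'' must be replaced by a proof; the paper uses supermodularity of Pl\"ucker coordinates together with the quartic equations of $V(3,n)$.

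More seriously, two essential steps are missing.

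First, ``induction applies'' for collision divisors presupposes that $ZV(i,i+1,x)_{\red}$ and all deeper colliding boundaries $V(A_0;I_1,\dots,I_r)_{\red}$ are positive geometries, but your hypothesis only concerns $V(3,m)$. These boundaries are not fibrations over any $V(3,m)$. Where both colliding columns vanish their ratio is undefined. The boundary is therefore only the image of the incidence $\PP^1$-fibration $\mathcal{I}_{st}$, with fibres $V(A_0;I_1',\dots,I_r)'\subset V(3,n-1)$, under a birational morphism $\pi_1$ contracting a $\PP^1$-family. Moreover, $V(3,n)\cap\Pi(A_0;I_1,\dots,I_r)$ is generally non-reduced, and its reduced structure requires the extra maximal minors of $\tilde\theta(M)$. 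The paper supplies three things here:
\begin{itemize}
\item it proves $\pi_{1*}\mathcal{O}_{\mathcal{I}_{st}}=\mathcal{O}$ to transport reducedness, normality and the other properties;
\item it proves that positive geometries pass to $\PP^1$-fibrations with compatible transition maps;
\item it proves that they then pass to such contractions (retracts).
\end{itemize}
Nothing in your proposal plays this role.

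Second, along $Z(A|B)$ you stop at an expectation, and the expected product structure is off: $\dim Z(A|B)=2n-5$, while $\dim\Gr(2,|A|)\times\Gr(2,|B|)=2n-8$. Worse, the non-cyclic $Z(A|B)$ are, like the cyclic ones, not images of divisors of $\Gr(2,n)$. So nothing in the pushforward formalism rules out poles along them. The paper excludes these poles via the recursion $\Omega_{V(3,n)}=-\frac{\beta_1\gamma_2}{\beta_1\gamma_2-\beta_2\gamma_1}\varphi_4^*\Omega_{V(3,n-1)}\wedge d\log\beta_1\wedge d\log\gamma_1$ (and its $\alpha$-analogue), dihedral symmetry and a case analysis. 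It computes the residue along cyclic $Z(A|B)$ directly in BCFW-bridge coordinates.

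Finally, $R_1$ is a condition at every codimension-one point. Your Jacobian checks along boundary divisors must therefore be supplemented by smoothness of the locus where all Pl\"ucker coordinates are nonzero and an $S_n$-symmetry reduction, or by the paper's GIT argument.
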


\begin{remark}
    It is well known that the totally nonnegative Grassmannian $(\Gr(k,n),\Gr_{\geq0}(k,n))$  is a positive geometry with iterated boundaries given by positroid varieties. For more details, see Subsections \ref{Positive Grassmannians and Positroid Varieties} to \ref{The Positive Grassmannian is a Positive Geometry}. In particular, the canonical form $\Omega_{\Gr(2,n)}$ in Theorem \ref{thm:mainintro}.(i) makes sense.

    Theorem \ref{thm:mainintro}.(ii), (iii), (iv) are analogous to, but not a consequence of, the fact that positroid varieties are iterated boundaries of the Grassmannian satisfying all of the same properties as listed above. Our poset of iterated boundaries can be embedded in the poset of positroid varieties, but is far from a lower order ideal. The intersection between $V(3,n)$ and positroid varieties is in general ill-behaved. 
    
    As shown in the above example, some positroid varieties are contained in $V(3,n)$ and some are not. For those that are not contained in $V(3,n)$, the scheme theoretic intersection is often not reduced and we need to take the reduced subscheme to form the iterated boundary. We characterize exactly when they are not reduced and identify their reduced subscheme in  Theorem \ref{thm:matrix description of colliding boundary} and \ref{Ideal of colliding boundary}. The intersection is far from proper and the codimensions in different positroid varieties differ, even for those positroid varieties that we take intersection with $V(k,n)$ to form iterated boundaries. For the rest of the positroid varieties, the intersection can have multiple components, which is the case for positroid divisors as in the proof of Theorem \ref{decomposition of boundary}. Our result that all iterated boudnaries are reduced, irreducible, normal, and Cohen-Macaulay is very surprising, given the geometrically ill-behaved intersection. For example, Cohen-Macaulayness fails if we do not take the reduced subscheme of the intersection, and normality fails trivially with non-reduced schemes. Also, our result that the dimension of the intersection can be predicted from the perspective of point configurations is particularly desired, given the wild behavior of dimensions of the intersections.

\end{remark}

\noindent {\bf Organization of the article}. \\
In \S \ref{sec: pos geom}, we give the definition of positive geometries and see an important family of examples given by Grassmannians and positroid varieties. In \S \ref{intro V(k,n)}, we give the definition of ABCT variety $V(k,n)$ and review known results on the geometry of $V(3,n)$ from \cite{ARS24}. In \S\ref{section: candidate form}, we study our candidate canonical form $\Omega_{V(3,n)}$ in local coordinates. In \S\ref{sec:boundary components}, we study the structure of the analytic boundary components of $V(3,n)_{\geq0}$. In \S \ref{rational normal}, we identify a family of subvarieties inside $V(3,n)$, which turn out to be all the iterated boundaries. We then study their algebro-geometric properties. In \S \ref{n=5,6 case}, we prove the base case that $V(3,5)$ and $V(3,6)$ equipped with the candidate canonical form from \S\ref{section: candidate form} are positive geometries. Our central findings regarding the general theory of positive geometries is in \S\ref{sec: pos geoms as fibrations over p^1}, where we study positive geometries arising as fibrations over a $\mathbb P^1$ and retract. In \S\ref{sec: canonical form of Z(A|B)}, we explicitly compute the canonical form for certain positroid varieties. We prove our main result that $V(3,n)$ is a positive geometry in \S\ref{sec:final proof}. We prove that $V(n-3,n)$ is a positive geometry is study its boundary structures in \S\ref{paozhuan}. We state open problems in the last section.\\

\noindent {\bf Acknowledgements}. \\
We are grateful to Thomas Lam and Bernd Sturmfels for suggesting this project. We are grateful to Thomas Lam for his meaningful guidance throughout the project. We would like to 
warmly thank Vincenzo Antonelli, Claudia Fevola and Mateusz Micha\l{}ek for extensive important discussions during several stages of this project. D.S. thanks Yucong Lei and Shend Zhjeqi for the helpful discussions. We thank the organizers of the {\it Combinatorics of Fundamental Physics Workshop} held at the IAS in Princeton in November 2024, where the work on this project started. D.S was partially supported by NSF grant DMS-2348799. E.V. is a member of the research group GNSAGA of INdAM and acknowledges that the preparation of this article was partially funded by the Italian Ministry of University and Research, through the project {\it Applied Algebraic Geometry of Tensors}, PRIN 2022 Protocol no. 2022NBN7TL.

\section{Positive Geometries, Positive Grassmannians, and Positroid Varieties}\label{sec: pos geom}

\subsection{Positive geometries}
In this subsection, we give the definition of positive geometries, following \cite{lam2022invitationpositivegeometries}. The data of a positive geometry consists of a triple $(X,X_{\geq0},\Omega_X)$, where $X$ is a projective variety, $X_{\geq0}$ is a closed semi-algebraic subset, and $\Omega_X$ is a top-degree rational differential form on $X$. We first introduce the ingredients required to define a positive geometry.

\begin{definition}[{\bf Iterated boundary components}]\label{def:iterated boundary}
Let $X$ be an irreducible complex projective variety of dimension $d$. Let $X_{\geq0}$ be a closed semi-algebraic subset of the real points $X(\RR)\subset X$ such that $X_{\geq0}$ equals the analytic closure of its analytic interior $X_{>0}:=\itr X_{\geq0}$. Moreover, $X_{>0}$ is an orientable real manifold of dimension $d$. 

Now, consider the analytic boundary $\partial X_{\geq0}:=X_{\geq0}\setminus X_{>0}$ and denote by $C$ its Zariski closure in $X$, which is a divisor on $X$. Let $C=C_1\cup C_2\cup \cdots \cup C_r$ be the irreducible decomposition of $C$. Let $C_{i,\geq 0 }$ denote the analytic closure of the intersection $C_i\cap X_{>0}$. We call the $C_i$ (resp., $C_{i,\geq 0}$) the {\it algebraic} (resp., {\it analytic}) {\it boundary components} of the pair $(X,X_{\geq0})$. We drop the word algebraic or analytic when it is clear from the context whether we are referring to the algebraic variety or to the semi-algebraic subset. Starting from $(X,X_{\geq0})$, we can continue the above construction and take boundary components of the pair $(C_i,C_{i,\geq0})$ and iterate. We call all pairs $(C,C_{\geq0})$ obtained this way the {\it iterated boundary components} of $(X,X_{\geq 0})$. 
\end{definition}

Let $X_{\reg}:=X\setminus \Sing(X)$ be the smooth locus of $X$. By a rational (or meromorphic) {\it differential $d$-form} on $X$, we mean a differential $d$-form on an open subset of the smooth manifold $X_{\reg}$. This is a local section of the canonical bundle $\bigwedge^{\dim X} T^{\ast} X_{\reg}$, the line bundle given by the top exterior power of the cotangent bundle of $X_{\reg}$.  We now define the {\it Poincar\'e residue} of a differential form.

\begin{definition}[{\bf Residue of a form}]
Let $\omega$ be a differential $d$-form on $X$ with a pole of order one along
a prime divisor $Y\subset X$ not contained in the singular locus $\Sing(X)$. A pole of order one is
called a {\it simple} pole. Locally at a smooth point of $Y\setminus \Sing(X)$, we have
\begin{equation*}
    \omega=\eta\wedge \frac{df}{f}+\eta',
\end{equation*}where $\eta$ is a $(d-1)$-form, $\eta'$ is a $d$-form, both having no poles along $Y$, and $f$ is a local coordinate which vanishings to order one along $Y$. The {\it (Poincar\'e) residue} of $\omega$ along $Y$ is defined as the $(d-1)$-form on $Y$ given by the restriction
\begin{equation*}
\Res_{Y} \omega = \eta_{|_Y},
\end{equation*}
not depending on the choices of $\eta, \eta', f$.
\end{definition}

\begin{example}
Let $\omega(z)=\frac{g(z)}{h(z)}dz_1\wedge \cdots \wedge dz_n$ have a simple pole along $\{z_j=0\}$, so that $h=z_j\cdot \frac{\partial h}{\partial z_j}$.
Then 
\[
\Res_{\{z_j=0\}} \omega =(-1)^{n-j} \frac{g}{\partial h/\partial z_j} dz_1\wedge \cdots \wedge\widehat{dz_j}\wedge\cdots \wedge dz_n,
\]
where $\widehat{dz_j}$ means that this differential is omitted. Any choice of $j$ would give cohomologous residue forms on $Y$.
\end{example}

Roughly speaking, the notion of a positive geometry is describing the phenomenon of the existence of a unique top-degree rational differential form $\Omega_X$ on $X$, called the {\it canonical form}, such that it is ``compatible" with the semi-algebraic set $X_{\geq0}$ and its iterated boundaries in the sense that  
\begin{enumerate}
    \item We recover exactly all the iterated boundary components $(C,C_{\geq0})$ of $(X,X_{\geq0})$ by iteratedly taking the poles and residues of $\Omega_X$.
    \item In each iteration, the dimension of the iterated boundary component decreases by one. When we reach the points, the iterated residue of $\Omega_X$ is a $0$-form on the point that equals $\pm 1$, depending on the orientation. 
    \item There is a unique such differential form $\Omega_X$ on $X$. Furthermore, on each iterated component $(C,C_{\geq0})$, the iterated residue $\Omega_C$ of $\Omega_X$ is also the unique form on $C$ satisfying the previous two properties on $C$. 
\end{enumerate}

We now give the formal definition in the case when $X$ and all iterated boundaries of $X$ are normal. In this case, $(X, X_{\geq0}, \Omega_X)$ is called a {\it normal positive geometry}. We omit the more complicated general definition without normality, which involves possibly taking normalizations when taking iterated boundaries, because our result will show the stronger property of normal positive geometry. 

\begin{definition}[\bf Normal Positive Geometry]\label{def: positive geom}
Let $(X,X_{\geq0})$ be as in Definition \ref{def:iterated boundary}. Assume that $X$ and all iterated boundaries of $X$ are normal. A {\it normal positive geometry} is a triple $(X,X_{\geq 0}, \Omega_X)$, where $\Omega_X$ is the unique rational $d$-form on $X$, called the {\it canonical form}, satisfying the following recursive property. 

\begin{enumerate}
\item[(i)] When $d=0$, $X=X_{\geq 0}$ is a point and $\Omega_X=\pm 1$,
depending on orientation. 

\item[(ii)] When $d>0$, let $(C_i,C_{i,\geq0})$ for $i=1,2,\cdots,r$ be the boundary components of $(X,X_{\geq0})$ as in Definition \ref{def:iterated boundary}. The form $\Omega_X$ has simple poles exactly along the boundary components $\bigcup C_i$ and nowhere else. Furthermore, each $(C_i,C_{i,\geq 0},\Omega_{C_i})$ is a positive geometry with the unique canonical form $\Omega_{C_i}$ and the residue $\Res_{C_i}\Omega_X$ coincides with the canonical form $\Omega_{C_i}$.
\end{enumerate}

\end{definition}

\begin{remark}\label{uniqueness of canonical form}
    The uniqueness of the canonical form (on $X$ and on all its iterated boundary components, respectively) is guaranteed by regularity in codimension $1$ and rationality of $X$ and of all its iterated algebraic boundaries. If two rational top forms both satisfy the recursive definition, their difference must be zero because it is a holomorphic top form on $X_{\reg}$, which has geometric genus $0$, because it is rational \cite[II.8.19]{Hartshorne}.

\end{remark}

It is  remarkable that such a canonical form exists on a variety. It is generally very difficult to prove that a given differential form is the canonical form, as the definition requires to understand the complex and real geometry of all iterated boundary components and how to take poles and residues of the canonical form on each iterated boundary component. All known examples of positive geometries are proved under a case-by-case basis. Prototypical examples include  projective polytopes and their blowups \cite{brauner2024wondertopes}, the Deligne-Mumford compactification $\bar{\mathcal{M}_{0,n}}$ \cite{Lam24}, and the Grassmannians\cite{AHBCGPT16}.

One can define a semialgebraic subset $(M_{0,n})_{\geq 0}$ in such a way that $(\overline{\mathcal M}_{0,n}, (\mathcal M_{0,n})_{\geq 0})$ is a positive geometry with a degree $n-3$ differential form \emph {Parke-Taylor form} $\Omega_{0,n}$; see \cite{Lam24} for detailed descriptions of these spaces. It is worth mentioning that the name of this form is inspired by the {\it Parke-Taylor} formula for the $\mathrm{MHV}$ gluon scattering amplitudes in the tree-level Yang-Mills theory. Gluons are massless particles with helicities $\pm 1$ (in units of $\hbar$) appearing in quantum chromodynamics, the fundamental gauge theory of the Standard Model. Employing the spinor helicity formalism, one can express the scatterning amplitude of $n$ gluons 
with total helicity $h=\sum_{i=1}^n h_i = n-4$, called $\mathrm{MHV}$ amplitude $A(1^{+},\ldots, i^{-}, \ldots, j^{-},\ldots, n^{+})$, where only the particles $i$ and $j$ have negative helicities. The point is that this quantity {\it can be extracted} from the canonical form on $\Gr(2,n)$.

In later subsections, we present more details in the case of Grassmannians as a positive geometry, as it will be useful for our purpose.

\subsection{Positive Grassmannians and Positroid Varieties}\label{Positive Grassmannians and Positroid Varieties}
The Grassmannians are known to be positive geometries and its iterated boundary components are given by closed positroid varieties and positroid cells \cite{AHBCGPT16}.

\begin{definition}[\bf Total Nonnegativity on Grassmannians] \label{Total Nonnegativity on Grassmannians}
    The {\it positive} (resp., {\it totally nonnegative}) {\it Grassmannian} $\Gr_{>0}(k,n)$ (resp., $\Gr_{\geq 0}(k,n)$) is the semi-algebraic subset of the real points of $\Gr(k,n)$ such that all Pl\"{u}cker coordinates are positive (resp., nonnegative).

    For a subvariety $V$ of $\Gr(k,n)$, we define its positive part to be $V_{>0}:=V(\mathbb{R})\cap \Gr_{>0}(k,n)$ and its totally nonnegative part to be $V_{\geq0}:=V(\mathbb{R})\cap \Gr_{\geq0}(k,n) $.
\end{definition}

The Grassmannian variety $\Gr(k,n)$ is a classical object of study in algebraic geometry. A comprehensive expository of positroid varieties can be found in \cite{lam2015,speyer2024richardsonvarietiesprojectedrichardson}. The Grassmannians have a stratification by {\it positroid varieties} \cite{KLS13} that restricts to a well behaved stratification of the totally nonnegative Grassmannian by {\it positroid cells} \cite{postnikov2006totalpositivitygrassmanniansnetworks}. It is worth noting that the positroid stratification refines the Schubert (and Richardson) stratification and it is the finest known stratification of Grassmannians having good geometric and topological properties. The well-known matroid stratification \cite{Gelfand_Goresky_MacPherson_Serganova_1987} for instance has topologically badly behaved cells. 

\begin{theorem}\cite{KLS13}
    There exists a family of reduced, irreducible, normal, Cohen-Macaulay closed subvarieties $\Pi_f$ called {\it (closed) positroid varieties} with the following property: inside each closed positroid variety, we have an open subvariety $\Pi_f^\circ$ called open positroid subvarieties, which  form a stratification of the Grassmannian $$\Gr(k,n)=\bigsqcup \Pi_f^\circ,$$
    whose totally nonnegative parts form a stratification of the totally nonnegative Grassmannian 
    $$\Gr_{\geq0}(k,n)=\bigsqcup \Pi_{f,\geq0}^\circ.$$
\end{theorem}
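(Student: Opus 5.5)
We would prove this following the strategy of Knutson--Lam--Speyer, working through the complete flag variety $Fl(n)$ and the projection $\pi\colon Fl(n)\to\Gr(k,n)$. First, define the positroid varieties combinatorially. For a point $V\in\Gr(k,n)$ with representing matrix having columns $v_1,\dots,v_n$, record the ranks $r_{[i,j]}(V)=\dim\mathrm{span}(v_i,\dots,v_j)$ of all \emph{cyclic} intervals $[i,j]\subset\ZZ/n$. The possible rank arrays are encoded by bounded affine permutations $f$. We set $\Pi_f^\circ$ to be the locus where the cyclic ranks equal those prescribed by $f$, and $\Pi_f$ to be the locus where they are at most those. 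By construction the $\Pi_f^\circ$ are locally closed and pairwise disjoint, and they cover $\Gr(k,n)$, since every point has some cyclic rank array. The real work is to show that $\Pi_f$ is the closure of $\Pi_f^\circ$ and to establish its geometric properties.

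Second, identify $\Pi_f$ with a projected Richardson variety $\pi(X_u^w)$, where $X_u^w=X_u\cap X^w\subset Fl(n)$, with $u\le w$ chosen so that $w$ is a minimal-length coset representative for the relevant parabolic. The plan is to show that $\pi$ restricted to the open Richardson $X_u^{w,\circ}$ is an isomorphism onto its image, and that this image is exactly $\Pi_f^\circ$. This is a direct linear algebra comparison: the flags in $X_u^{w,\circ}$ have prescribed relative positions to the standard and opposite flags, and one translates this into cyclic rank conditions on the projected $k$-plane. Irreducibility of $\Pi_f$ then follows from irreducibility of Richardson varieties.

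Third, deduce the geometric properties.
\begin{itemize}
\item Richardson varieties are normal and Cohen--Macaulay with rational singularities (Brion).
\item $X_u^w$ is compatibly Frobenius split in positive characteristic. The map $\pi$ has connected fibres over its image with $R\pi_*\mathcal O_{X_u^w}=\mathcal O_{\pi(X_u^w)}$. This uses the Kumar--Mehta type result that pushforwards of compatibly split subvarieties along such maps stay split.
\item The image is therefore reduced, and it is normal by Zariski's main theorem-type arguments from $\pi_*\mathcal O=\mathcal O$.
\item It is Cohen--Macaulay with rational singularities, by Kollár's theorem on higher direct images. Characteristic zero follows by the usual reduction mod $p$.
\end{itemize}
This in particular shows that the scheme cut out by the rank conditions, together with Plücker relations, is reduced and equals $\overline{\Pi_f^\circ}$.

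Fourth, the totally nonnegative statement. By Rietsch and Lusztig, $Fl(n)_{\ge0}$ is stratified by the cells $X^{w,\circ}_{u,>0}$. Their images under $\pi$ are Postnikov's positroid cells, which are exactly the sets of totally nonnegative points with a fixed pattern of vanishing Plücker coordinates. This matches $\Pi^\circ_{f,\ge0}$ once we check that nonvanishing Plücker sets of TNN points are determined by cyclic ranks, which holds because TNN matroids are positroids. Intersecting the stratification from the first step with $\Gr_{\ge0}(k,n)$ then gives the claimed decomposition.

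The main obstacle is the third step, namely normality and Cohen--Macaulayness of the projected image. The projection of a normal CM variety need not be normal, so the key input is the Frobenius splitting and cohomology-vanishing argument giving $R\pi_*\mathcal O=\mathcal O$. We would attack this by first establishing the fibres' rational connectedness and then invoking Kollár.
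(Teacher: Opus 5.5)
The paper gives no proof of this statement; it imports it from \cite{KLS13}. Your outline follows the Knutson--Lam--Speyer architecture: positroid varieties as projected Richardson varieties $\pi(X_u^w)$ with $\pi$ an isomorphism on $X_u^{w,\circ}$, geometry inherited from Richardson varieties, and the nonnegative part from Rietsch and Postnikov. The gap is in the step you yourself call the main obstacle, and the argument you offer there does not close it.

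\begin{itemize}
\item Because $\pi\colon X_u^w\to\Pi_f$ is proper and birational and $X_u^w$ is normal, $\pi_*\mathcal O_{X_u^w}$ is exactly $\nu_*\mathcal O_{\widetilde\Pi_f}$, where $\nu\colon\widetilde\Pi_f\to\Pi_f$ is the normalization. So ``$\pi_*\mathcal O=\mathcal O$, hence $\Pi_f$ is normal by Zariski's main theorem'' is circular: the hypothesis is equivalent to the conclusion.
\item For the same reason, connectedness of all fibres of $X_u^w\to\Pi_f$ is equivalent to bijectivity of $\nu$. It cannot simply be asserted.
\item The pushforward lemma for splittings (if $f_*\mathcal O_X=\mathcal O_Y$, a splitting of $X$ compatible with $Z$ induces one of $Y$ compatible with $f(Z)$), applied to $Fl(n)\to\Gr(k,n)$, only shows that $\Pi_f$ is compatibly split. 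That gives reduced and weakly normal, and says nothing about $R\pi_*\mathcal O_{X_u^w}$.
\item The proposed remedy via Kollár also fails. The general fibre of $X_u^w\to\Pi_f$ is a point, so rational connectedness of fibres is vacuous input. For a birational map from a variety with rational singularities onto a normal target, $R^i\pi_*\mathcal O=0$ for $i>0$ is equivalent to the target having rational singularities; this is essentially the definition. So Kollár-type results presuppose normality and cannot produce the conclusion.
\end{itemize}

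What is actually needed is two genuine inputs. First, a direct proof that every fibre of $X_u^w\to\Pi_f$ is connected. Then $\nu$ is a finite birational universal homeomorphism, hence an isomorphism, because Frobenius-split schemes are weakly normal. Second, an independent vanishing $R^i\pi_*\mathcal O_{X_u^w}=0$ for $i>0$, for instance a relative vanishing argument from Frobenius splitting of the kind used for resolutions of Schubert varieties. After that, composing with a resolution of $X_u^w$ gives rational singularities, hence Cohen--Macaulayness, of $\Pi_f$. This is the input that \cite{KLS13} draws from the literature on projected Richardson varieties (e.g.\ Billey--Coskun).

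There are also two smaller points. Since you define $\Pi_f$ by the inequalities $r\le r(f)$, irreducibility requires $\{r\le r(f)\}=\overline{\Pi_f^\circ}$. This rests on matching the rank-matrix order with the Bruhat order on bounded affine permutations and with the order on pairs $(u,w)$, a substantial combinatorial result of \cite{KLS13} that your sketch skips; defining $\Pi_f$ as the closure avoids it. Also, reducedness of $\pi(X_u^w)$ does not imply that the scheme cut out by the rank conditions is reduced. In \cite{KLS13} that comes from writing $\Pi_f$ as an intersection of cyclically rotated Schubert varieties, each compatibly split. Neither of these smaller points is needed for the statement as quoted.
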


\begin{example}\label{ideal of positroid}
We give an explicit description of positroid varieties in the case $k=3$ using cyclic rank matrices. All positroid varieties in $\Gr(3,n)$ correspond to imposing a collection of cyclic rank conditions on the matrix $M$ representing a point $[M]\in \Gr(3,n)$. By a cyclic rank condition, we mean that we specify a cyclic interval $I$ in $[n]$ and insist that the submatrix $M_I$ of $M$ has rank at most $0,1,$ or $2$. The rank conditions on $M_I$ can be translated into the vanishing of Pl\"ucker coordinates as follows.
\begin{enumerate}
    \item $\rk M_I\leq 2$ if and only if $p_{J}=0$ for all $J\subset I$.
    \item $\rk M_I\leq 1$ if and only if $p_{J}=0$ for all $\#(J\cap I)\geq 2$.
    \item $\rk M_I=0$ if and only if $p_{J}=0$ for all $J\cap I\not= \emptyset$.
\end{enumerate}

It was shown in \cite{KLS13} that the vanishing of these Pl\"ucker coordinates cut out the positroid varieties as a subscheme of $\Gr(3,n)$.
\end{example}

\begin{remark}\label{rmk:positroid as point config}
The cyclic rank conditions translate to the perspective of point configurations in $\mathbb{P}^2$ under the Gelfand-MacPherson correspondence from classical invariant theory.

For any $3$ by $n$ matrix $M$, each nonzero column gives a point in $\mathbb{P}^2$ after projectivization. The nonzero columns of $M$ give a configuration of points in $\mathbb{P}^2$ up to $GL(3)$. Under this description, we have that
\begin{enumerate}
    \item $\rk M_I\leq 2$ if and only if nonzero columns in $I$ give collinear points in $\mathbb{P}^2$.
    \item $\rk M_I\leq 1$ if and only nonzero columns in $I$ give colliding points in $\mathbb{P}^2$.
    \item $\rk M_I=0$ if and only columns in $I$ are all zero and do not give points after projectivization.
\end{enumerate}
    Throughout the paper, we will repeatedly appeal to the Gelfand-MacPherson correspondence to interpret set-theoretically subschemes of $\Gr(3,n)$.   
\end{remark}

\subsection{Positroids, Affine permutations, and Grassmann necklaces}

Besides Cyclic rank conditions, which we introduced through an example in the last subsection, we now review other combinatorial objects that index positroid varieties.
\begin{definition}\label{def: necklace}
A $(k,n)$-{\it Grassmann necklace} is a collection of $k$-element subsets $\mathcal I = (I_1,I_2,\ldots, I_n)$ such that, for each $a\in [n]$, the following conditions hold:
\begin{enumerate}
\item $I_{a+1}=I_a$ if $a\notin I_a$;
\item $I_{a+1}=I_a\setminus \lbrace a\rbrace \cup \lbrace a'\rbrace$ if $a\in I_a$ for some $a'\in [n]$.
\end{enumerate}
\begin{comment}
We say that $\mathcal I\leq \mathcal J$ if $I_a\leq J_a$ for all $a\in [n]$. 
\end{comment}
\end{definition}

\begin{definition}\label{def: bounded affine perm}
Let $n\geq 2$. An {\it affine permutation}
is a bijection $f:\ZZ\rightarrow \ZZ$ satisfying the periodicity condition $f(i+n) = f(i)+n$ for all $i\in \ZZ$. A {\it bounded affine permutation} is an affine permutation $f$ with the additional bounded condition $i\leq f(i)\leq i+n$. In its {\it window notation}, an affine permutation is denoted $[f(1),f(2),\ldots,f(n)]$. We often write $k+n$ as $\bar{k}$ for simplicity of notations.

For a bounded affine permutation $f$ with period $n$, the sum 
$\sum_{i=1}^n (f(i)-i)=kn$ for some $0\leq k\leq n$. Let $\mathcal B(k,n)$ denote the set of bounded affine permutations $f$ such that $\sum_{i=1}^n (f(i)-i)=kn$. 
\end{definition}

\begin{comment}
Let $I=\lbrace i_1<i_2<\cdots < i_k\rbrace$ and $J = \lbrace j_1<j_2<\cdots < j_k \rbrace$ be two $k$-element subsets of $[n]$. We say that $I\leq J$ if $i_r\leq j_r$ for every $1\leq r\leq k$. This defines a partial order on the set of $k$-element subsets of $[n]$. Let $\leq_a$ be the cyclically rotated order $a<a+1<\cdots < n < 1<\cdots <a-1$ on $[n]$. This new order induces a new partial order $I\leq_a J$ between $k$-element subsets of $[n]$. 
\end{comment}

There exist bijections between any two of the following
\begin{itemize}
    \item Bounded affine permutations in $\mathcal B(k,n)$.
    \item $(k,n)$-Grassmann necklaces.
    \item positroids of rank $k$ on $[n]$.
\end{itemize}
Each of these combinatorial objects can be used to index positroid varieties in $\Gr(k,n)$. We provide an example of such a bijection. Details about these families and bijections among them can be found in \cite{lam2015}.

Given $f\in \mathcal B(k,n)$, we define a sequence $\mathcal I(f)=(I_1,\ldots,I_n)$ of $k$-element subsets by
\[
I_a = \lbrace f(b) \ | \ b<a \mbox{ and } f(b)\geq a\rbrace \mbox{ mod } n,
\]
where $\mbox{mod } n$ means that one takes representatives in $[n]$.

\subsection{The Positive Grassmannian is a Positive Geometry}\label{The Positive Grassmannian is a Positive Geometry}

It was discovered later that the positroid stratification exactly gives the iterated boundary components of the totally nonnegative Grassmannian $\Gr(k,n)$ in the sense of Definition \ref{def:iterated boundary} and, with a suitable choice of the canonical form, makes $(\Gr(k,n),\Gr_{\geq 0}(k,n), \Omega(k,n))$ a positive geometry \cite[\S 7]{AHBCGPT16}. 

\begin{theorem}\cite[\S 7]{AHBCGPT16}\label{Grassmannian is a positive geometry}
The triple $(\Gr(k,n),\Gr_{\geq 0}(k,n), \Omega(k,n))$ a positive geometry, where $\Omega(k,n)$ is given by
\begin{equation*}
    \frac{vol(\Gr(k,n))}{\prod_{i=1}^n p_{i,i+1,\cdots ,i+k-1}}.
\end{equation*}

The iterated boundary components are exactly the positroid varieties and their totally nonnegative parts 
$(\Pi_f,\Pi^\circ_{f,\geq 0})$. 
\end{theorem}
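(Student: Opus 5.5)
The plan is to prove a stronger statement by induction on dimension: for every positroid variety $\Pi_f \subseteq \Gr(k,n)$, the triple $(\Pi_f, \Pi_{f,\geq 0}, \Omega_f)$ is a normal positive geometry. Here $\Omega_f$ is a specific rational top form on $\Pi_f$. The theorem is the case where $\Pi_f=\Gr(k,n)$ is the top cell. The base case is dimension $0$: then $\Pi_f$ is a single torus-fixed point, its totally nonnegative part is that point, and $\Omega_f=\pm1$. Normality and rationality of all $\Pi_f$ come from \cite{KLS13}; rationality follows from the network parametrization described next. By Remark \ref{uniqueness of canonical form}, these two facts make every canonical form unique once it exists.

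\textbf{Step 1 (definition of $\Omega_f$).} For each $f$, I fix a reduced plabic graph (or Le-diagram) with $d=\dim \Pi_f$ face or edge weights $t_1,\dots,t_d$. The boundary measurement map $\phi_f:(\CC^*)^d\to \Pi^\circ_f$ is birational. By Postnikov's results it restricts to a homeomorphism from $\RR_{>0}^d$ onto $\Pi^\circ_{f,\geq0}$. I define
\[
\Omega_f := (\phi_f)_*\Big(\frac{dt_1}{t_1}\wedge\cdots\wedge\frac{dt_d}{t_d}\Big),
\]
and I orient $\Pi^\circ_{f,\geq0}$ through the chart. Changing the plabic graph by square moves gives subtraction-free cluster mutations, which preserve the dlog form. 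Hence $\Omega_f$ is well defined up to sign.

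For the top cell, I then check that $\Omega_f$ equals $\mathrm{vol}(\Gr(k,n))/\prod_i p_{i,\ldots,i+k-1}$. I would do this by writing both forms in the chart where a $k\times k$ block of the matrix is the identity, with the $t_i$ as the Le-diagram coordinates. The consecutive minors are, up to monomials, products of the $t_i$, so the Jacobian computation reduces to a triangular determinant. Both forms have the same poles, no zeros, and the same normalization; their ratio is therefore a nowhere-vanishing regular function on a projective variety, hence constant. The constant can be fixed by evaluating at a single point.

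\textbf{Step 2 (identify the boundary).} By Postnikov and Galashin–Karp–Lam, $\Pi_{f,\geq0}$ is the closure of the cell $\Pi^\circ_{f,\geq0}$, and the boundary $\Pi_{f,\geq 0}\setminus\Pi^\circ_{f,\geq0}$ is a union of smaller cells. The maximal ones are the cells $\Pi^\circ_{g,\geq0}$ with $g\lessdot f$ a cover in the bounded affine Bruhat order. The Zariski closure of this analytic boundary is therefore $\bigcup_{g\lessdot f}\Pi_g$. Each $\Pi_g$ is an irreducible divisor of $\Pi_f$, and its positive part is the analytic closure of $\Pi_g\cap\overline{\Pi^\circ_{f,>0}}$, as Definition \ref{def:iterated boundary} requires. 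These pairs are exactly the boundary components, and iterating over further covers yields every positroid variety, which proves the last assertion of the theorem.

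\textbf{Step 3 (poles and residues).} The form $\Omega_f$ is regular and nowhere vanishing on $\Pi^\circ_f$. To see this, I use that $\Pi^\circ_f$ is the complement in $\Pi_f$ of the vanishing locus of the Grassmann-necklace Plücker coordinates $\Delta_{I_a(f)}$, and that $\Omega_f$ is the log-volume form of this cluster variety. It remains to show that $\Omega_f$ has exactly simple poles along each $\Pi_g$, and to compute the residue. For a cover $g\lessdot f$ I choose a plabic graph for $f$ in which $\Pi_g$ is reached by sending a single removable edge weight $t_j\to0$. Then $\phi_f$ extends over the locus $\{t_j=0\}$ with generic image dense in $\Pi_g$, and it is a local isomorphism there. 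In these coordinates $\Omega_f=\pm\big(\bigwedge_{i\neq j} dt_i/t_i\big)\wedge dt_j/t_j$. Hence the pole along $\Pi_g$ is simple, and the residue is the dlog form of the contracted graph, which is $\pm\Omega_g$. The induction hypothesis then finishes the argument.

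I expect the main obstacle to be showing that there are no other poles or zeros, i.e. that the divisor of $\Omega_f$ on the normal variety $\Pi_f$ is exactly $-\sum_{g\lessdot f}\Pi_g$. A chart-level computation only sees generic points of each $\Pi_g$. So I would invoke the KLS fact that $\partial\Pi_f=\sum_{g\lessdot f}\Pi_g$ is an anticanonical divisor. It then suffices to compare $\Omega_f$ at the generic point of each component, which Step 3 already does, since codimension $\geq2$ loci do not matter on a normal variety. A second delicate point is checking that the orientations induced on different boundary cells are consistent, so that the iterated residues are $\pm1$ with coherent signs.
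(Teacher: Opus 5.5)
Your route is the one the paper relies on. The paper gives no argument of its own for this theorem and simply cites \cite[\S 7]{AHBCGPT16}. When it later computes the canonical form of $Z(A|B)$, it uses exactly your description: the wedge of the $d\log$'s of plabic edge weights.

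The genuine gap is the step you yourself flag, and the remedy you propose does not close it. The chart $\phi_f((\CC^*)^d)$ is only a dense open subset of $\Pi^\circ_f$, and its complement in $\Pi^\circ_f$ is in general a hypersurface. Nothing in Steps 1--3 controls the order of $\Omega_f$ along its components, since the $t_j\to 0$ computation only sees the generic points of the $\Pi_g$. Anticanonicity of $\partial\Pi_f$ does not help. It provides some $\Omega'$ with $\mathrm{div}(\Omega')=-\sum_{g\lessdot f}\Pi_g$, and Step 3 shows that $h=\Omega_f/\Omega'$ has order $0$ along every $\Pi_g$. But $\mathrm{div}(h)$ can still be a nonzero divisor supported on prime divisors meeting $\Pi^\circ_f$, and then $h$ is not constant. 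The remark that codimension $\geq 2$ loci do not matter is beside the point, because the uncontrolled loci have codimension one.

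So everything rests on the one-line claim in Step 3 that $\Omega_f$ is regular and nowhere vanishing on all of $\Pi^\circ_f$, because it is ``the log-volume form of this cluster variety''. That is true, but it does not follow from $\Pi^\circ_f=\Pi_f\cap\{\prod_a\Delta_{I_a(f)}\neq 0\}$. It requires all of the following:
\begin{enumerate}
\item $\CC[\Pi^\circ_f]$ is a cluster algebra whose seeds include the plabic seeds (Galashin--Lam).
\item Your face-weight torus corresponds to a cluster torus under an automorphism of $\Pi^\circ_f$ (the Muller--Speyer twist).
\item The log-volume form is invariant up to sign under \emph{all} mutations. Square moves only realize mutations at square faces, so plabic seeds alone do not supply the adjacent seeds a starfish-type argument needs.
\item The resulting cluster tori cover the smooth variety $\Pi^\circ_f$ outside codimension two, so that $\Omega_f$ and $\Omega_f^{-1}$ extend by Hartogs.
\end{enumerate}
With this input, $\mathrm{div}(\Omega_f)=-\sum_{g\lessdot f}\Pi_g$ follows directly from your Step 3, and anticanonicity is unnecessary. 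Without it, the induction over proper positroid varieties is not established.

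For the top cell alone there is no difficulty. The form $\mathrm{vol}(\Gr(k,n))/\prod_i p_{i,\ldots,i+k-1}$ visibly has divisor minus the $n$ positroid divisors, because $K_{\Gr(k,n)}=\mathcal{O}(-n)$ and $\mathrm{vol}$ is an $\mathrm{SL}_n$-invariant, hence nowhere vanishing, section of $K\otimes\mathcal{O}(n)$. There, what identifies it with the $d\log$ form is your direct Jacobian comparison, not the ratio argument, which presupposes the divisor of $\Omega_f$.
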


\begin{example} 
 The canonical form on $\Gr(2,n)$ may be written as follows. 
\begin{equation}\label{equ:canonical form on G(2,n)}
\Omega(\Gr(2,n)) = \frac{vol(\Gr(2,n))}{\prod_{i=1}^n p_{i,i+1}},
\end{equation}
where $p_{i,i+1}$ are the Pl\"{u}cker coordinates on $\Gr(2,n)$ with $p_{n,n+1}=p_{n,1}$, and $vol(\Gr(2,n))$ is the standard $\mathrm{GL}(2,n)$-invariant measure on $\Gr(2,n)$. We now describe the invariant measure in this case. Let $M\in \Gr(2,n)$, denote with $M_i$ the rows of $M$ and let $d M_i$ be the row vector whose $j$th entry is the differential $d m_{ij}$, where $m_{ij}$ is the $(i,j)$th entry of $M_i$. 
Let $\langle M_1M_2 d^{n-2} M_i\rangle$ be the formal determinant of the $n\times n$ matrix with rows $M_1, M_2$ and $(n-2)$-copies of the row $d M_i$. This determinant is a differential of degree $(n-2)$ in $d m_{ij}$. Then the invariant measure on $\Gr(2,n)$ is 
\[
vol(\Gr(2,n)) = \frac{\prod_{i=1}^2 \langle M_1M_2 d^{n-2} M_i\rangle}{((n-2)!)^2}
\]

Similar formulas give the invariant measure and the canonical form on any $\Gr(k,n)$ \cite[\S 5.5.2]{AHBL17}. On the coordinate chart $U=\lbrace p_{1,2}=1\rbrace$, where we fix the identity matrix in the first two columns and take the rest of the matrix coordinates to be $x_{1j},x_{2j}$ (for $1\leq j\leq n-2$), the $2(n-2)$-form
\eqref{equ:canonical form on G(2,n)} becomes
\[
\Omega(\Gr(2,n)_{\geq 0})_{|U} = \frac{\bigwedge_{j=1}^{n-2} dx_{1j}\wedge \bigwedge_{j=1}^{n-2} dx_{2j}}{\prod_{i=1}^n p_{i,i+1}},
\]
where $p_{1,2}=1, p_{1,j} = x_{2j}, p_{2,j} = -x_{1j}$, and so on. 

\end{example}

\subsection{Actions by $S_n$, $C_n$, $D_n$, and $(\mathbb{C}^*)^{n-1}$}\label{actions}
The symmetric group $S_n$ acts on the Grassmannian $\Gr(k,n)$ by permuting columns. However, $S_n$ do not act on the totally nonnegative Grassmannian $\Gr_{\geq0}(k,n)$, as permuting columns might change the sign of Pl\"ucker coodinates.

Instead, the cyclic group $C_n$ acts on the totally nonnegative Grassmannian $\Gr_{\geq0}(k,n)$. A generator of $C_n$ acts by sending a matrix with columns $v_1,\cdots, v_n$ to the matrix with columns $(-1)^{k-1}v_n, v_1,\cdots,v_{n-1}$. The sign $(-1)^{k-1}$ ensures that the resulting matrix has all positive maximal minors.

The positive geometry $(\Gr(k,n),\Gr_{\geq0}(k,n))$ has a natural $C_n$-symmetry. The canonical form $\Omega_{\Gr(k,n)}$ is invariant under the $C_n$ action, but not under the $S_n$ action. Also, $C_n$, but not $S_n$, acts on the collection of positroid varieties. These two facts are mirrored by the fact that the canonical form determines iterated boundary divisors by iteratedly taking poles and residues. 

In this paper, we focus on the case $k=3$. The above sign disappears. Also, it is worth noting that there is actually the dihedral $D_n$-symmetry, because flipping all columns (the flip maps column $i$ to column $n+1-i$, for $1\leq i\leq n$) also preserves positivity.

Finally, notice that the algebraic torus $(\mathbb{C}^*)^n$ acts on $\Gr(k,n)$ by rescaling columns (matrix multiplication from the right), and the diagonal subgroup $\mathbb{C}^*$ acts trivially. The totally nonnegative Grassmannian $\Gr_{\geq0}(k,n)$ and its decomposition into positroid cells $\Pi_{f,\geq 0}^{\circ}$ are invariant under the positive torus $(\mathbb{R}_{>0})^{n-1}$. And it turns out that each positroid variety is invariant under the $(\mathbb{C}^*)^{n-1}$-action by rescaling columns. 

For any subvariety of $\Gr(k,n)$ invariant under the $(\mathbb{C}^*)^{n-1}$-action, we can interpret it as a closed condition on the corresponding point configuration in $\mathbb{P}^k$ under the Gelfand-MacPherson correspondence as in Remark \ref{rmk:positroid as point config}.

\section{The ABCT Variety} \label{intro V(k,n)}

Consider the Veronese map $\theta: \PP^1\longrightarrow \PP^{k-1}$ induced
by the map $\theta: \CC^2\longrightarrow S^{k-1}\CC^2\cong \CC^{k}$, 
sending $v\mapsto v\otimes v\otimes \cdots \otimes v$. The representation 
$S^{k-1}\CC^2$ of $\mathrm{GL}(2)$ gives a homomorphism $\theta: \mathrm{GL}(2)\longrightarrow \mathrm{GL}(k)$. The Veronese map is equivariant 
with respect to the actions of $\mathrm{GL}(2)$, i.e. $\theta(g\cdot v) = \theta(g)\cdot \theta(v)$. The map $\theta\times \cdots \times \theta: (\CC^2)^{\times n}\longrightarrow (\CC^k)^n$ descends to a {\it rational Veronese map}
$\theta_{k-1}: \Gr(2,n)\dashrightarrow \Gr(k,n)$. In matrix coordinates, this map is given 
by the following formula
\[
\begin{pmatrix}
a_{11} & a_{12} & \cdots & a_{1n} \\
a_{21} & a_{22} & \cdots & a_{2n} \\
\end{pmatrix}
\mapsto 
\begin{pmatrix}
a_{11}^{k-1} & a_{12}^{k-1} & \cdots & a_{1n}^{k-1} \\
a_{11}^{k-2}a_{21} & a_{12}^{k-2}a_{22} & \cdots & a_{1n}^{k-2}a_{2n} \\
\vdots & \vdots & \vdots & \vdots \\
a_{21}^{k-1} & a_{22}^{k-1} & \cdots & a_{2n}^{k-1} \\
\end{pmatrix}.
\]
The map $\theta_{k-1}$ between the two Grassmannians is not defined for instance on $\langle e_i,e_j\rangle\in \Gr(2,n)$,
because the rank of the matrix in the image is lower than $k$. 

\begin{definition}\label{def:ABCT variety}
The {\it ABCT variety} $V(k,n)$ is the Zariski closure of the image of $\theta_{k-1}$, i.e. $V(k,n)= \overline{\theta_{k-1}(\Gr(2,n))}\subset \Gr(k,n)$.
\end{definition}

\begin{conjecture}[{\cite[Conjecture~4.10]{Lam24}}]\label{conj lam}
The ABCT variety $(V(k,n),V(k,n)_{\geq 0})$, where $V(k,n)_{\geq 0}=\overline{\theta_{k-1}(\Gr(2,n)_{>0})}$ denotes the analytic closure, is a positive
geometry. 
\end{conjecture}

\begin{remark}
The notation of positive geometry is a condition about boundaries. The above conjecture is nontrivial in the sense that the face structure of $V(k,n)_{\geq 0}$ and $\Gr(2,n)_{\geq 0}$
differ, even though we have a holomorphic map on the interiors. 
For instance, $\Gr(2,n)_{\geq 0}$ has $0$-dimensional faces that are the $\binom{n}{2}$ torus fixed points $W_{ij} = \langle e_i,e_j\rangle\in\Gr(2,n)$. But these torus fixed points are in the indeterminacy locus of the rational map $\theta_{k-1}: \Gr(2,n)\dashrightarrow \Gr(k,n)$, for $k>2$.  Indeed, we will see in our main theorem that $V(3,n)_{\geq0}$ has different boundary structures than $\Gr_{\geq0}(2,n)$.
\end{remark}

In the case $k=3$, the geometric properties of $V(3,n)$ as a subvariety of $\Gr(3,n)$ have been understood \cite{ARS24}. In this paper, we leverage their results to show that $V(3,n)$ is a positive geometry. We briefly review results from \cite{ARS24} that will be relevant for our purpose.

\begin{theorem}[\cite{ARS24}]\label{prop:interpretation of V}
The ABCT variety $V(3,n)$ is reduced, irreducible, and Cohen-Macaulay of expected codimension $n-5$ in $\Gr(3,n)$. Moreover, let $\theta:\operatorname{Mat}_{3\times n}\to  \operatorname{Mat}_{6\times n}$ be defined by 
$$\begin{pmatrix} 
x_{11} & x_{12} & \dots & x_{1n} \\
x_{21} & x_{22} & \dots & x_{2n} \\
x_{31} & x_{32} & \dots & x_{3n}
\end{pmatrix}\mapsto
\begin{pmatrix} 
x_{11}^2 & x_{12}^2 & \dots & x_{1n}^2 \\
x_{21}^2 & x_{22}^2 & \dots & x_{2n}^2 \\
x_{31}^2 & x_{32}^2 & \dots & x_{3n}^2 \\
x_{11}x_{21} & x_{12}x_{22}&\dots & x_{1n}x_{2n}\\
x_{11}x_{31} & x_{12}x_{32}&\dots & x_{1n}x_{3n}\\
x_{21}x_{31} & x_{22}x_{32}&\dots & x_{2n}x_{3n}\\
\end{pmatrix}.
$$
Then, one has the scheme-theoretic equality
$
V(3,n)=\left\{ [M] \in \Gr(3,n) \,|\, \operatorname{rk}\theta(M) < 6 \right\}.$

\end{theorem}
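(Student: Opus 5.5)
The plan is to realize the locus $\{\operatorname{rk}\theta(M)<6\}$ as the degeneracy locus of a map of vector bundles on $\Gr(3,n)$. We then show it has the expected codimension $n-5$, so that the Eagon--Northcott theory of determinantal loci gives Cohen--Macaulayness. Irreducibility, reducedness and the equality with $V(3,n)$ should follow from a dimension count of its strata together with a generic smoothness check.

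\emph{Well-definedness and set-theoretic description.} If $M$ is replaced by $gM$ with $g\in \mathrm{GL}(3)$, then $\theta(M)$ is replaced by $\operatorname{Sym}^2(g)\,\theta(M)$. Rescaling columns of $M$ rescales the columns of $\theta(M)$ by squares. Hence the rank of $\theta(M)$ depends only on $[M]$. Globally, $\theta$ is the composition $\mathcal O^n\to \mathcal Q$ followed by the squaring of the tautological quotient of rank $3$ into $\operatorname{Sym}^2$, which gives a bundle map $\mathcal O^n \to \operatorname{Sym}^2\mathcal Q$ with target of rank $6$. A nonzero vector in the left kernel of $\theta(M)$ is exactly a quadratic form vanishing at all columns of $M$. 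Therefore $\operatorname{rk}\theta(M)<6$ holds if and only if the nonzero columns lie on a conic, possibly degenerate. The image of $\theta_2$ consists of configurations on smooth conics, so $V(3,n)$ is contained in the degeneracy locus $D$.

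\emph{Dimension count.} Via the Gelfand--MacPherson correspondence, $\Gr(3,n)$ is, up to the torus $(\CC^*)^{n-1}$, the space of $n$-point configurations in $\PP^2$ modulo $PGL(3)$; indeed $2n-8+n-1=3n-9$. I would stratify $D$ by the type of configuration. Points on a smooth conic contribute $5+n-8+(n-1)=2n-4$, which is exactly codimension $n-5$. I would then check that every other stratum has strictly smaller dimension. These strata are: points distributed $a+b$ on a pair of lines, with both lines needed because $M$ has rank $3$; points with some zero columns; and points with some coincidences. Each of them is seen to impose at least one extra condition. The expected codimension of the degeneracy locus of a map $\mathcal O^n\to$ (rank $6$) dropping rank is $n-6+1=n-5$. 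So $D$ has the expected codimension, and by Eagon--Northcott the scheme defined by the $6\times 6$ minors is Cohen--Macaulay, hence equidimensional of dimension $2n-4$.

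\emph{Irreducibility and reducedness.} Because $D$ is equidimensional and all degenerate strata have dimension $<2n-4$, every component of $D$ must be the closure of the smooth-conic stratum. That stratum is the irreducible image of $\Gr(2,n)$, so $D_{\mathrm{red}}=V(3,n)$. For reducedness, a Cohen--Macaulay scheme is reduced once it is generically reduced (Serre's criterion, since CM gives $S_1$). It therefore suffices to find one point of the open stratum where $D$ is smooth of dimension $2n-4$. I would check this by showing that the Zariski tangent space cut out by the differentials of the minors has dimension $2n-4$ at a generic point, for instance $M$ on the conic $xz=y^2$. There a rank-$5$ computation reduces to showing that first-order deformations preserving $\operatorname{rk}<6$ are exactly deformations of the conic plus motions of the points along it. The main obstacle I expect is the careful enumeration of degenerate strata, especially line pairs with points at the intersection, coincident points and zero columns, to confirm that none reaches dimension $2n-4$.
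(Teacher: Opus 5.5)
This matches the route the paper relies on: the statement is not proved in the paper but imported from \cite{ARS24}, where $V(3,n)$ is realized as the degeneracy locus of a vector bundle morphism. Your argument along those lines is correct: the bundle map $\mathcal O^n\to\operatorname{Sym}^2\mathcal Q$, expected codimension $n-5$, Eagon--Northcott for Cohen--Macaulayness, a dimension count for irreducibility, and $S_1$ plus one smooth point for reducedness.

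The enumeration of degenerate strata, which you flag as the main obstacle, is in fact uniform. An incidence count over the $\PP^5$ of conics, each with a $2$-dimensional affine cone, gives $\dim D\le 5+2n-9=2n-4$. The same count over the $4$-dimensional family of line pairs bounds everything outside $\theta_2(\Gr(2,n))$ by $2n-5$, regardless of zero columns, coincidences or points at the node; double lines are excluded by the rank-$3$ condition.

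The generic smoothness check is also clean. At a point with $\operatorname{rk}\theta(M)=5$, let $B$ be the bilinear form of the unique conic through the columns. The tangent conditions are then the $n-5$ functionals $\dot M\mapsto\sum_j w_jB(c_j,\dot c_j)$, one for each $w\in\ker\theta(M)$. These functionals are independent as soon as that conic is smooth and no column $c_j$ is zero, which gives a tangent space of dimension $2n-4$.
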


\begin{theorem}[{\cite[Corollary 4.2]{ARS24}}] \label{Ideal of ABCT}
 If $n\geq 6$. Then $V(3,n)$ is cut out as a scheme by the following quartic equations in the Pl\"ucker coordinates on $\Gr(3,n)$:
    \[p_{i_1,i_2,i_3}p_{i_1,i_5,i_6}p_{i_2,i_4,i_6}p_{i_3,i_4,i_5}-p_{i_2,i_3,i_4}p_{i_1,i_2,i_6}p_{i_1,i_3,i_5}p_{i_4,i_5,i_6} = 0,\] where $i_1<i_2<i_3<i_4<i_5<i_6$ vary through all the $6$-element subsets of $\{1,\dots,n\}$.   
\end{theorem}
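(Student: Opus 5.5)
The plan is to deduce the quartic equations directly from the determinantal description in Theorem \ref{prop:interpretation of V}. By that theorem, $V(3,n)$ is, as a scheme, the locus in $\Gr(3,n)$ where $\operatorname{rk}\theta(M)<6$. So it is cut out by the $6\times 6$ minors $\Delta_{I}(M)=\det\theta(M)_I$, where $I=\{i_1<\cdots<i_6\}$ runs over the $6$-subsets of columns; these exist exactly when $n\geq 6$. The whole argument then reduces to one identity: each $\Delta_I$ equals, up to a nonzero universal constant, the bracket quartic
\[
Q_I=p_{i_1,i_2,i_3}p_{i_1,i_5,i_6}p_{i_2,i_4,i_6}p_{i_3,i_4,i_5}-p_{i_2,i_3,i_4}p_{i_1,i_2,i_6}p_{i_1,i_3,i_5}p_{i_4,i_5,i_6}.
\]
Granting this, the ideal of $6\times6$ minors and the ideal generated by the $Q_I$ coincide in the coordinate ring of the affine cone, hence define the same subscheme of $\Gr(3,n)$.

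To prove the identity, I fix six column vectors $v_1,\dots,v_6\in\CC^3$ and regard $D=\det(\theta(v_1),\dots,\theta(v_6))$ as a polynomial in them.

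\textbf{Step 1: invariance.} Since $\theta(gv)=S^2(g)\theta(v)$ and $\det S^2(g)=\det(g)^4$, the polynomial $D$ is an $\mathrm{SL}(3)$-invariant of degree $2$ in each $v_j$. By the first fundamental theorem of invariant theory, $D$ is a polynomial in the brackets $[abc]=\det(v_a,v_b,v_c)$, homogeneous of total degree $4$, with each index appearing exactly twice. The bracket monomial $Q_I$ has the same multidegree.

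\textbf{Step 2: comparison.} Both $D$ and $Q_I$ vanish exactly when the six points lie on a common conic. For $D$ this is the definition: a nonzero conic through the six points is a kernel vector of $\theta^T$. For $Q_I$ it is the classical bracket form of Pascal's theorem (equivalently, equality of cross-ratios of the four lines joining $v_1$ to $v_2,\dots$ versus those joining $v_4$). The locus of six points on a conic is an irreducible hypersurface in $(\CC^3)^6$, and $D$ is irreducible: it is the conic condition, and $D$ is not a product, since any factor would have to be a lower-degree invariant with the same zero set. Therefore $Q_I=cD^m$, and comparing degrees gives $m=1$. I then fix $c\neq0$ by evaluating both sides at one explicit configuration, for example five points on $xy=z^2$ and a sixth point off it. Alternatively, the identity can be verified by a direct symbolic expansion.

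\textbf{Step 3: descent to the Grassmannian.} Both $\Delta_I$ and $Q_I$ are $\mathrm{GL}(3)$-semi-invariants of the same weight $\det^4$. Their equality as polynomials on $\operatorname{Mat}_{3\times n}$ therefore says that they are the same element of the homogeneous coordinate ring of $\Gr(3,n)$ in the Plücker embedding. Hence the two ideals define the same closed subscheme.

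The main obstacle is making Step 3 airtight scheme-theoretically. Theorem \ref{prop:interpretation of V} describes $V(3,n)$ as a degeneracy locus of a bundle map, which is computed locally on the Stiefel variety of full-rank matrices. I would check the equality of ideal sheaves on each standard affine chart $p_J\neq0$: in the chart, normalize the columns $J$ to the identity, observe that the minors and the quartics agree there by Step 2, and conclude that the two ideal sheaves coincide on every chart, and hence globally.
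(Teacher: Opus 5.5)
Your proposal is correct and follows essentially the route behind the cited result: the paper gives no proof of its own but quotes it from \cite[Corollary 4.2]{ARS24} as a corollary of the determinantal description in Theorem \ref{prop:interpretation of V}. That corollary rests on the classical identity that each $6\times 6$ minor of $\theta(M)$ is a nonzero constant multiple of the bracket quartic $Q_I$, followed by the chart-by-chart comparison of ideals you describe. The one step to tighten is Step 2: the cross-ratio argument only gives the generic case, so you should not claim that $Q_I$ vanishes only on the conic locus in degenerate configurations. Instead, use that $Q_I$ vanishes on all of $Z(D)$, which follows from the generic case by density in the irreducible hypersurface $Z(D)$. Then use that $D$ is not a square: for general $v_1,\dots,v_5$, $D$ as a quadratic form in $v_6$ is the equation of the smooth conic through them. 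These two facts, together with equal degrees, give $Q_I=cD$ with $c\neq 0$.
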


\begin{remark}\label{S_n symmetry}
The above theorems provide the description of $V(3,n)$ as a subscheme of $\Gr(3,n)$ by specifying the radical vanishing ideal in terms of matrix and Pl\"ucker coordinates. 

The description in Pl\"ucker coordinates makes it manifest that $V(3,n)$ is invariant under the $S_n$-action by permuting columns and the $(\mathbb{C}^*)^{n-1}$-action by column scaling.

The description in matrix coordinates works well after fixing any standard chart of $\Gr(3,n)$ by fixing some minor of $M$ to be the identity matrix. Then, the rank condition translates to an explicit ideal in the matrix coordinates cutting $V(3,n)$ out in that chart.
\end{remark}

\begin{remark}\label{rmk:point configuration for V}
The ABCT variety is invariant under the $(\mathbb{C}^*)^{n-1}$-action by column scaling. Following the Gelfand-MacPherson correspondence as in Remark \ref{rmk:positroid as point config}, the description of $V(3,n)$ in matrix coordinates translates to a set-theoretic description of $V(3,n)$ in terms of the point configurations in $\mathbb{P}^2$.
We see that $[M]\in \Gr(3,n)$ lies in $V(3,n)$ if and only if the configuration of points in $\mathbb{P}^2$ given by the nonzero columns of $M$ lie on a common conic in $\mathbb{P}^2$: if $\theta(M)$ is not full-rank, any nonzero vector in the kernel of $\theta(M)$ is a six-tuple of coefficients for a conic in $\mathbb{P}^2$ containing all points corresponding to nonzero columns of $M$.
\end{remark}

We can naturally write the map $\theta_{k-1}$ in terms of Pl\"{u}cker coordinates. 

\begin{proposition}\label{prop:map in terms of Pluckers}
The map $\theta_{k-1}$ can be lifted between Pl\"{u}cker projective spaces. Namely, $\theta_{k-1}$ can be defined in terms of Pl\"{u}cker coordinates as follows. 
\[
p_{i_1,\cdots,i_k} = \prod_{ j<\ell} p_{i_j,i_{\ell}},
\]
where $p_{j,{\ell}}$ (and resp., $p_{i_1,\cdots,i_k}$) is a Pl\"{u}cker coordinate on $\Gr(2,n)$ (and resp., $\Gr(k,n)$).
\begin{proof}
A $k\times k$ minor of $\theta_{k-1}(M)$ corresponding to the coordinate $p_{i_1,\cdots,i_k}$
is a Vandermonde determinant, where each factor is the quadratic polynomial $(a_{1,i_j}a_{2,i_\ell}-a_{1,i_\ell}a_{2,i_j})=p_{i_j,i_\ell}$. 
%More invariantly, the minors must be invariant under the $\mathrm{GL}(2)$-actions induced from the ones on the $n$ copies of $\CC^2$. So they must be divisible by these $2\times 2$ determinants. 
\end{proof}
\end{proposition}

\begin{remark}\label{D_n symmetry}
From the above proposition, it is clear that the rational map $\theta$ is $S_n$-invariant. Also, it sends points in $\Gr_{\geq0}(2,n)$, if the image is well-defined, to points in $\Gr_{\geq0}(3,n)$. In particular, it sends every point in $\Gr_{>0}(2,n)$, the subset where all Pl\"ucker coordinates are strictly positive, to $\Gr_{>0}(3,n)$. Thus, $V(3,n)\cap \Gr_{\geq0}(3,n)$ is invariant under the $D_n$-action by cyclically rotating and flipping the columns. Furthermore, the $D_n$-symmetry of the canonical form on $\Gr(2,n)$ passes to its pushforward via $\theta$.

\end{remark}

\section{Candidate Canonical Form by Pushing-forward}\label{section: candidate form}

\subsection{Pushforwards of differentials}
In the category of complex algebraic varieties, the pushforward of a form is defined as follows \cite[\S 3.9]{Lam24}. 

\begin{definition}[\bf Pushforward]\label{def: pushforward Lam}
Suppose we have a dominant rational map $f: X\rightarrow Y$ of complex varieties, 
generically of degree $d$. 
Let $\omega$ be a rational $r$-form on $X$. 
Let $W\subset Y$ be an open set such that $f^{-1} = U_1\sqcup U_2\sqcup \cdots \sqcup U_d$ is a disjoint union of open sets $U_1,U_2,\ldots, U_d\subset X$ where $f_{|U_i}: U_i\rightarrow W$ is an isomorphism. Then the {\it pushforward} of $\omega$ through $f$ is
\[
f_{*}\omega = \sum_{i=1}^d ((f_{|U_i})^{-1})^{*}\omega_{|U_i},
\]
where the form on the right-hand side is a rational form on $W$, extended to a rational form on $Y$. 
\end{definition}

\begin{remark}
We shall use Definition \ref{def: pushforward Lam} on local affine charts of complex algebraic varieties. It is easy to generalize this definition for schemes over arbitrary rings. However since we do not need such a general setup, we will not introduce it.  
\end{remark}

\subsection{Pushforward of the canonical form on $\Gr(2,n)$ to $V(3,n)$ via the rational map $\theta$} 
Consider the affine open $\{p_{01}p_{12}p_{02}\not=0\}\subset \Gr(2,n)$ which gets mapped inside $\{p_{012}\not=0\}\subset \Gr(3,n)$ under $\theta$. We fix coordinates \begin{equation}\label{eq:matrices_chart}
    \begin{pmatrix}
    1&0&a&x_1&x_2 &\cdots &x_{n-3}\\
    0&1&b&y_1& y_2&\cdots &y_{n-3}
\end{pmatrix} \qquad \text{and} \qquad \begin{pmatrix}
    1&0&0&\alpha_1&\cdots&\alpha_{n-3}\\
    0&0&1&\beta_1&\cdots&\beta_{n-3}\\
    0&1&0&\gamma_1&\cdots &\gamma_{n-3}
\end{pmatrix}
\end{equation}
on $\Gr(2,n)$ and $\Gr(3,n)$, respectively. Then, the restriction of $\theta$ to these affine opens is described by a ring homomorphism as follows. From now on, set $m\coloneqq n-3$. Define the rings $$S\coloneqq k[a^{\pm 1},b^{\pm 1},x_i,y_i \,|\, i\in [m]] \qquad \text{and}\qquad R' \coloneqq k[\alpha_i , \beta_i,\gamma_i \,|\, i\in [m]].$$ Note that 
$S$ is the coordinate ring for $\Gr(2,n)\cap \{p_{01}p_{12}p_{02}\not=0\}$. Let $I_{\det}$ denote the ideal in $R'$ generated by the maximal minors of the matrix \begin{equation}\label{variant matrix}
   \theta'(M) = \begin{pmatrix}
        \alpha_1\beta_1&\dots& \alpha_{m}\beta_{m}\\
    \alpha_1\gamma_1&\dots& \alpha_{m}\gamma_{m}\\
    \beta_1\gamma_1&\dots& \beta_{m}\gamma_{m}
\end{pmatrix}
\end{equation}
From \cite[Corollary~3.8]{ARS24}, we know that the ideal $I_{\det}$ is prime in $R'$. Hence, the quotient $R\coloneqq R'/I_{\det}$ is a domain, and serves as the coordinate ring for $V(3,n)\cap \{p_{012}\not=0\}$. Let $\Tilde{\varphi}: R' \to S $ be the ring homomorphism defined by \[\alpha_i\,\mapsto\, x_i^2-\frac{a}{b}x_iy_i, \qquad \beta_i\,\mapsto\, \frac{1}{ab}x_i y_i,\qquad\gamma_i\,\mapsto\, y_i^2-\frac{b}{a}x_iy_i ,\qquad i\in [m].\]

\begin{proposition}\label{obvious kernel}
    We have $\Tilde{\varphi} (I_{\det})=0$. Therefore, $\Tilde{\varphi}$ induces a ring homomorphism $R \to S$, giving the restriction of $\theta$ to $\mathrm{Spec}(S)=\{p_{01}p_{12}p_{02}\not=0\}\subset \Gr(2,n)\to \mathrm{Spec}(R)= \{p_{012}\not=0\}\cap V(3,n)$.
\end{proposition}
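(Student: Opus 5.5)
The plan is to show that, after applying $\Tilde{\varphi}$, all columns of the $3\times m$ matrix $\theta'(M)$ in \eqref{variant matrix} are annihilated by one fixed nonzero row vector with entries in $S$. Then the image matrix has rank at most $2$ over $\mathrm{Frac}(S)$, so all its $3\times 3$ minors vanish. Since $I_{\det}$ is generated by these minors, this gives $\Tilde{\varphi}(I_{\det})=0$. The induced map $R=R'/I_{\det}\to S$ then follows formally, and it restricts $\theta$ to the stated charts because the substitution formulas come from the chart description of $\theta$ in \eqref{eq:matrices_chart}.

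Geometrically, the relation we want is the conic through the three coordinate points of the $\Gr(3,n)$ chart. Such a conic has the form $c_1\alpha\beta+c_2\alpha\gamma+c_3\beta\gamma=0$, and it should contain all the image points. So we look for constants $c_1,c_2,c_3\in k[a^{\pm1},b^{\pm1}]$, independent of $i$, with
$$c_1\,\Tilde{\varphi}(\alpha_i\beta_i)+c_2\,\Tilde{\varphi}(\alpha_i\gamma_i)+c_3\,\Tilde{\varphi}(\beta_i\gamma_i)=0\qquad\text{for all } i\in[m].$$

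To find them, drop the index $i$ and set $L=x-\frac{a}{b}y$. Then $\Tilde{\varphi}(\alpha)=xL$ and $\Tilde{\varphi}(\beta)=\frac{xy}{ab}$. Moreover $y-\frac{b}{a}x=-\frac{b}{a}L$, so $\Tilde{\varphi}(\gamma)=-\frac{b}{a}yL$. From these,
$$ab\,\alpha\beta\mapsto x^2yL,\qquad -a^2\,\beta\gamma\mapsto xy^2L,\qquad \alpha\gamma\mapsto -\tfrac{b}{a}\,xyL^2 .$$
Combining the first two gives $xyL\bigl(x-\frac{a}{b}y\bigr)=xyL^2$. The expected identity is therefore
$$ab\,\alpha\beta+\frac{a}{b}\,\alpha\gamma+\frac{a^3}{b}\,\beta\gamma\;\longmapsto\;0 .$$
The coefficients are units in $S$, so this row vector is nonzero, and the rank bound holds.

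The only real obstacle is guessing the right linear combination. The conic picture reduces this to three unknown coefficients, and factoring out $L$ makes the identity immediate. The remaining steps are routine bookkeeping, namely checking the signs and the factors of $a$ and $b$ in the three products above.
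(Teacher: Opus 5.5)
Your argument is correct, but it takes a different route from the paper. The paper's proof only says that checking $\tilde{\varphi}(\det\theta'(M)_{ijk})=0$ for every triple $i,j,k$ is a straightforward computation, i.e.\ a direct expansion of each $3\times 3$ determinant. You instead produce one row vector with unit entries, $(ab,\frac{a}{b},\frac{a^3}{b})$, or after rescaling $(b^2,1,a^2)$, that annihilates every column of $\tilde{\varphi}(\theta'(M))$. Your three identities $ab\,\alpha\beta\mapsto x^2yL$, $-a^2\,\beta\gamma\mapsto xy^2L$ and $\alpha\gamma\mapsto -\frac{b}{a}xyL^2$ are right. A kernel vector $c$ with a unit entry kills every maximal minor $N$ directly: $c^{T}N=0$ gives $\det(N)\,c^{T}=c^{T}N\,\mathrm{adj}(N)=0$, so you do not even need to pass to $\mathrm{Frac}(S)$. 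What your route buys is a column-by-column check that is uniform in $m$ and makes the geometry explicit: all image points lie on the fixed conic $b^2\alpha\beta+\alpha\gamma+a^2\beta\gamma=0$ through the three coordinate points of the chart. This is the same conic as the relation $\alpha_i\gamma_i+f\beta_i\gamma_i+g\alpha_i\beta_i=0$ used in Proposition~\ref{ring map injective}, with $f=a^2$ and $g=b^2$ matching the inverse $\psi(a)=\sqrt f$, $\psi(b)=\sqrt g$. So your proof also explains where that inverse comes from, whereas the paper's brute-force expansion needs no guessing but hides this structure. One small point: you assert rather than verify that the substitution formulas come from the chart description of $\theta$, as the paper also does. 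This amounts to the row operation that normalizes the Veronese images $(1,0,0)^T,(0,0,1)^T,(a^2,ab,b^2)^T$ of the first three columns to the fixed block of the $\Gr(3,n)$ chart.
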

\begin{proof}
    It is enought to verify that for any choice of $i,j,k\in [m]$, $\varphi$ maps the determinant of the $3\times 3$ matrix 
    \begin{equation}\label{eq:Mijk}
    \theta'(M)_{ijk} = \begin{pmatrix}
        \alpha_i\beta_i&\alpha_j\beta_j&\alpha_k\beta_k\\
        \alpha_i\gamma_i&\alpha_j\gamma_j&\alpha_k\gamma_k\\
        \beta_i\gamma_i&\beta_j\gamma_j&\beta_k\gamma_k\\
  
    \end{pmatrix}
    \end{equation}
    to zero. This is a straightforward computation.
\end{proof}

\begin{proposition}\label{ring map injective}
The map $\varphi:R\to S$ is injective with an explicit partial inverse.
\end{proposition}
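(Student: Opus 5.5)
The plan is to construct an explicit rational partial inverse $\psi$, defined on a localization of $R$ and with values in its fraction field, such that $\psi\circ\varphi$ is the identity on $R$. Since $R$ is a domain (because $I_{\det}$ is prime by \cite[Corollary~3.8]{ARS24}), injectivity then follows. Equivalently, $\varphi$ is injective once the induced morphism $\mathrm{Spec}(S)\to\mathrm{Spec}(R)$ is dominant, and dominance follows from a rational left inverse in the opposite direction, $\mathrm{Spec}(R)\dashrightarrow\mathrm{Spec}(S)$. I will therefore recover $a,b,x_i,y_i$ as rational functions of the $\alpha_i,\beta_i,\gamma_i$, modulo $I_{\det}$.

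First I solve the equations coordinatewise. From the definition of $\tilde\varphi$ we get $x_iy_i=ab\,\beta_i$. This gives
\[
x_i^2=\alpha_i+a^2\beta_i, \qquad y_i^2=\gamma_i+b^2\beta_i .
\]
Multiplying these two identities and using $(x_iy_i)^2=a^2b^2\beta_i^2$ yields
\[
\alpha_i\gamma_i+b^2\alpha_i\beta_i+a^2\beta_i\gamma_i=0 \quad\text{for every } i.
\]
This is a linear system in the unknowns $(1,b^2,a^2)$ with coefficient columns $(\alpha_i\gamma_i,\alpha_i\beta_i,\beta_i\gamma_i)$. Its solvability is exactly the rank condition defining $I_{\det}$. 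So for $m\ge 2$, any two indices $i\neq j$ determine $a^2$ and $b^2$ by Cramer's rule as ratios of $2\times 2$ minors of $\theta'(M)$. Then $x_i=ab\beta_i/y_i$, and the remaining ambiguity is the sign of $a$ or $b$. That sign is fixed once I use $a/b$ instead: the quantity $a/b$ can be read off from
\[
\alpha_i=x_i^2-\tfrac{a}{b}x_iy_i .
\]

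Rather than chase square roots, I will work with the subring of $S$ generated by the image and show directly that the generators of $R$ can be recovered. Concretely, I will set $\psi(\alpha_i)=\alpha_i$ and so on, and argue via dimension. Both $\mathrm{Spec}(R)$ and $\mathrm{Spec}(S)$ are irreducible of dimension $2m+2=2(n-2)$, which is the dimension of $V(3,n)$ (from the codimension $n-5$ in Theorem \ref{prop:interpretation of V}). Since $\theta$ is dominant onto $V(3,n)$ by Definition \ref{def:ABCT variety}, the image of $\mathrm{Spec}(S)$ is dense in $\mathrm{Spec}(R)$. Hence $\ker\varphi$ is contained in the nilradical of $R$, which is zero because $R$ is a domain.

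The explicit partial inverse will then be written down on the open set where a chosen $2\times2$ minor of $\theta'(M)$ and the $\beta_i$ are nonzero: $a^2$ and $b^2$ are given by Cramer's rule, and $x_i/y_i$ is given by the rational expression
\[
\frac{x_i}{y_i}=\frac{\alpha_i+a^2\beta_i}{ab\,\beta_i}.
\]

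The main obstacle is checking that the dense-image claim on this particular chart is correct, that is, that $\{p_{01}p_{12}p_{02}\neq0\}$ really maps dominantly onto $V(3,n)\cap\{p_{012}\neq0\}$. This holds because the chart is open dense in $\Gr(2,n)$ and $V(3,n)$ is irreducible.
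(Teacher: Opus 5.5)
Your injectivity argument is correct, and it takes a shorter route than the paper's. Since $\mathrm{Spec}(S)$ is a dense open subset of $\Gr(2,n)$ on which $\theta$ is defined, its image is dense in $V(3,n)$, and hence in the open subset $V(3,n)\cap\{p_{012}\neq 0\}$. That open subset is exactly $\mathrm{Spec}(R)$, and $R$ is a domain, so $\ker\varphi\subseteq\operatorname{nil}(R)=0$. The ingredient you need is the rank-condition description of $V(3,n)$ from \cite{ARS24}, which identifies $\mathrm{Spec}(R)$ with this open set; irreducibility of $V(3,n)$ is not what matters, and the dimension count plays no role. The paper instead builds a ring map $\psi\colon S\rightarrow\tilde R$ into an extension $R\subset\tilde R$ with $\psi\circ\varphi$ equal to the inclusion. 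That is more work, but it is what produces the explicit partial inverse the statement asks for.

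That second half of the statement is where your proposal fails. There is no rational inverse $\mathrm{Spec}(R)\dashrightarrow\mathrm{Spec}(S)$, so $a,b,x_i,y_i$ cannot be recovered as rational functions of the $\alpha_i,\beta_i,\gamma_i$ modulo $I_{\det}$. For instance, the automorphism $(x_1,y_1)\mapsto(-x_1,-y_1)$ of $S$ fixes $\varphi(R)$ pointwise but moves $x_1$, so $x_1\notin\operatorname{Frac}(\varphi(R))$; in fact $\theta$ is generically $2^{n-1}$-to-one on this chart. Two of your claims are therefore false:
\begin{itemize}
\item You say only the signs of $a$ and $b$ are ambiguous and that $a/b$ can be read off from $\alpha_i=x_i^2-\frac{a}{b}x_iy_i$. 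But $(b,y_1,\dots,y_m)\mapsto(-b,-y_1,\dots,-y_m)$ preserves every $\varphi(\alpha_i),\varphi(\beta_i),\varphi(\gamma_i)$ while negating $a/b$ and $ab$. Only $\frac{a}{b}x_iy_i=a^2\beta_i$ is determined.
\item Your final formula $x_i/y_i=(\alpha_i+a^2\beta_i)/(ab\,\beta_i)$ still involves the undetermined $ab=\pm\sqrt{fg}$. Setting $\psi(\alpha_i)=\alpha_i$ does not define an inverse.
\end{itemize}
Your Cramer's-rule step is right and recovers the paper's $f=a^2$, $g=b^2$ and $x_i^2=\alpha_i+\beta_i f=-\frac{\alpha_i\beta_i}{\gamma_i}g$. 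To finish, you must adjoin square roots, as the paper does with $\sqrt f,\sqrt g,\sqrt{\beta_i},\sqrt{-\gamma_i/\alpha_i}$. You then define $\psi$ by \eqref{eq:psi_inverse} and track signs as in \eqref{eq:caveat} so that $\psi\circ\varphi(\alpha_i)=\alpha_i$. This explicit $\psi$, not mere injectivity, is what Proposition~\ref{pushforward formula} uses to compute the pushforward form.
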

\begin{proof}
    We construct a ring extension $i:R\hookrightarrow\Tilde{R}$ and a ring homomorphism $\psi:S\to \Tilde{R}$ such that $\psi\circ\varphi=i$ as ring homomorphisms $R\hookrightarrow\Tilde{R}$. Recall that $R$ is a domain and let us first define $\Tilde{R}$. 

 For $i\not=j\in [m]$, define the elements $f_{ij},g_{ij}\in\operatorname{Frac}(R)$ given by 
 $$f_{ij}\coloneqq\frac{\alpha_i\alpha_j}{\beta_i\beta_j}\frac{\beta_i\gamma_j-\beta_j\gamma_i}{\gamma_i\alpha_j-\gamma_j\alpha_i} \qquad \text{and} \qquad  g_{ij}\coloneqq\frac{\gamma_i\gamma_j}{\beta_i\beta_j}\frac{\alpha_i\beta_j-\alpha_j\beta_i}{\gamma_i\alpha_j-\gamma_j\alpha_i}.$$
For $i,j,k\in [m]$ distinct, by direct computation, we verify that $f_{ij}=f_{ik}$ and $g_{ij}=g_{ik}$ modulo the determinant of the matrix $\theta'(M)_{ijk}$ in~\eqref{eq:Mijk} in the ideal $I_{\det}$ of $R'$. As $f_{ij}=f_{ji}$ and $g_{ij}=g_{ji}$, we have well defined elements $f=f_{ij}$ and $g=g_{ij}$ for any $i\not=j\in [m]$ in $\operatorname{Frac}(R)$. Importantly, we have 
$$\alpha_i+\beta_if=-\frac{\alpha_i\beta_i}{\gamma_i}g \qquad\text{and}\qquad \gamma_i+\beta_i g=-\frac{\beta_i\gamma_i}{\alpha_i}f \quad \text{for any }i\in [m].$$
Let $R\to \Tilde{R}$ be the ring extension obtained by adjoining the square roots of $f,g,\beta_i,-\frac{\gamma_i}{\alpha_i}$ (for $i\in [m]$) and localizing so that all these elements become invertible. The ring
thus $\Tilde{R}$ contains special elements $$\sqrt{f},\,\sqrt{g},\,\sqrt{\beta_i},\,\sqrt{-\frac{\gamma_i}{\alpha_i}}.$$
When multiplying these ``square root" elements, we shall abuse notation and write it as the square root of the product. One should, however, be aware of the following caveat: \begin{equation}\label{eq:caveat}
\sqrt{\beta_i^2}=\sqrt{\beta_i}\sqrt{\beta_i}=\beta_i, \qquad \text{but}\qquad \sqrt{\left(\frac{\alpha_i}{\gamma_i}\right)^2}=\sqrt{-\frac{\alpha_i}{\gamma_i}}\sqrt{-\frac{\alpha_i}{\gamma_i}}=-\frac{\alpha_i}{\gamma_i}.
\end{equation}
We define the ring homomorphism $\psi:S\to \Tilde{R}$ on a set of generators as follows.
\begin{equation}\label{eq:psi_inverse}
    a\mapsto \sqrt{f},\qquad b\mapsto \sqrt{g}, \qquad
    x_i \mapsto \sqrt{-\frac{\alpha_i\beta_i}{\gamma_i}g},
    \qquad
    y_i \mapsto \sqrt{-\frac{\beta_i\gamma_i}{\alpha_i}f}, \quad i\in [m].        
\end{equation}

Finally, let us verify that $\psi\circ\varphi=i$. It suffices to check thin on the generators $\{\alpha_i,\beta_i,\gamma_i\,|\,i\in[m]\}.$

\[\begin{aligned}
    \psi\circ\varphi(\alpha_i)
    =\psi\left(x_i^2-\frac{a}{b}x_i y_i\right)
    &=\psi(x_i)^2-\frac{\psi(a)}{\psi(b)}\psi(x_i)\psi( y_i)\\
    &=\left(-\frac{\alpha_i\beta_i}{\gamma_i}g\right)- \sqrt{\frac{f}{\cancel{g}}\left(-\frac{\cancel{\alpha_i}\beta_i}{\cancel{\gamma_i}}\cancel{g}\right)\left(-\frac{\beta_i\cancel{\gamma_i}}{\cancel{\alpha_i}}f\right)}\\
    &=\alpha_i+\beta_i f - \sqrt{\beta_i^2f^2}=\alpha_i,
\end{aligned}
\]

\[\begin{aligned}
    \psi\circ\varphi(\beta_i)
    =\psi\left(\frac{1}{ab}x_iy_i\right)
    &=\sqrt{\left(-\frac{{\alpha_i}\beta_i}{{\gamma_i}}{g}\right)\left(-\frac{\beta_i{\gamma_i}}{{\alpha_i}}f\right)\frac{1}{fg}}
    =\sqrt{\beta_i^2}=\beta_i,
\end{aligned}
\]

\[\begin{aligned}
    \psi\circ\varphi(\gamma_i)
    =\psi\left(y_i^2-\frac{b}{a}x_i y_i\right)
    &=\psi(y_i)^2-\frac{\psi(b)}{\psi(a)}\psi(x_i)\psi( y_i)\\
    &=\left(-\frac{\beta_i\gamma_i}{\alpha_i}f\right)- \sqrt{\frac{g}{\cancel{f}}\left(-\frac{\cancel{\alpha_i}\beta_i}{\cancel{\gamma_i}}{g}\right)\left(-\frac{\beta_i\cancel{\gamma_i}}{\cancel{\alpha_i}}\cancel{f}\right)}\\
    &=\gamma_i+\beta_i g - \sqrt{\beta_i^2g^2}=\gamma_{i}.
\end{aligned}
\]
This concludes the proof.
\end{proof}
For simplicity of notations, set 
\[
A_{ij}:=\beta_i\beta_j(\gamma_i\alpha_j-\alpha_i\gamma_j), \qquad
B_{ij}:=\alpha_i\alpha_j(\beta_i\gamma_j-\gamma_i\beta_j),\qquad 
C_{ij}:=\gamma_i\gamma_j(\alpha_i\beta_j-\beta_i\alpha_j).
        \]
Recall that, by definition of $f$ and $g$, we have $f=f_{ij}=B_{ij}/A_{ij}$ and $g=g_{ij}=C_{ij}/A_{ij}$ and these expressions are independent of the choice of indices $i,j$ in the quotient ring $R=R'/I_{\det}$. 

In the quotient ring $R=R'/I_{\det}$, the determinants of the matrix $\theta'(M)_{ijk}$ are zero. This determinant can be expanded as 
\[P_{ijk}\coloneqq \det(\theta'(M)_{ijk}) = \alpha_i\gamma_iA_{jk}+\beta_i\gamma_iB_{jk}+\alpha_i\beta_i C_{jk}=0.\]
Since the triple $(A_{ij},B_{ij},C_{ij})$ is independent from the index up to a common scalar, we may write $A,B,C$ without indices when taking ratios.

Let $\Omega^{ijk}_{\alpha_i}$ denote the $8$-form obtained by omitting $d\alpha_i$ from the $9$-form $d\alpha_i\wedge d\beta_i\wedge d\gamma_i\wedge \cdots \wedge d\gamma_k$. Here, $1$-forms are pulled-back to $V(3,n)$ via the inclusion map $V(3,n)\to \Gr(3,n)$. The cotangent bundle on $V(3,n)$ is a quotient of that on $\Gr(3,n)$ by relations of the form $dP_{ijk}=0$.
That is, we have the relation $\frac{\partial P_{ijk}}{\partial \alpha_i}d\alpha_i+\cdots+ \frac{\partial P_{ijk}}{\partial \gamma_k}d\gamma_k=0$. Taking the wedge with the wedge product of any $7$ variables, we obtain the relation

\begin{equation*}
   \Omega^{ijk}:=\frac{\Omega^{ijk}_{\alpha_i}}{sgn(\alpha_i)\frac{\partial P_{ijk}}{\partial \alpha_i}}=\frac{\Omega^{ijk}_{\alpha_j}}{sgn(\alpha_j)\frac{\partial P_{ijk}}{\partial \alpha_j}}=\cdots =\frac{\Omega^{ijk}_{\gamma_k}}{sgn(\gamma_k)\frac{\partial P_{ijk}}{\partial \gamma_k}},
\end{equation*}
where the $sgn=sgn^{ijk}$ function is determined by $sgn(\alpha_i)=1$ and alternates in the sequence $\alpha_i, \beta_i, \gamma_i,\alpha_j, \beta_j, \gamma_j, \alpha_k, \beta_k, \gamma_k$. The partial derivatives of $P_{ijk}$ can be explicitly computed, namely, we have
$$\frac{\partial P_{123}}{\partial \alpha_1}=\beta_1C_{23} +\gamma_1 A_{23}=\frac{-\beta_1\gamma_1 B_{23}}{\alpha_1},$$
and the partial derivative with respect to any other variable can be obtained by symmetry.

\begin{lemma} \label{pick 8 out of 9}
 Fix $1<i<n-3$, we have the following identity. 
\begin{align*}
    d\alpha_1 \wedge d\beta_1 \wedge d\gamma_1\wedge  d(\frac{\alpha_i}{\gamma_i})\wedge d\beta_i\wedge d\alpha_{n-3}\wedge d\beta_{n-3}\wedge  d\gamma_{n-3}=\frac{\alpha_i A_{1,n-3}}{\gamma_i}\Omega^{1,i,n-3} =-\frac{1}{\beta_ig}\Omega_{\gamma_i}^{1,i,n-3}=-\frac{\alpha_i^2}{\gamma_i^2\beta_if}\Omega_{\alpha_i}^{1,i,n-3}
\end{align*}
\end{lemma}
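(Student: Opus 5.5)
The identity is a computation inside the $8$-dimensional space cut out by the single relation $P_{1,i,n-3}=0$. Throughout, I work with the three columns $1,i,n-3$ and the relation $dP_{1,i,n-3}=0$ on the cotangent space recalled just before the statement. Write $A,B,C$ for $A_{1,n-3},B_{1,n-3},C_{1,n-3}$; ratios among them are index-independent in $R$.

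\textbf{Step 1: expand the left-hand side.} Use
\[
d\Bigl(\frac{\alpha_i}{\gamma_i}\Bigr)=\frac{d\alpha_i}{\gamma_i}-\frac{\alpha_i\,d\gamma_i}{\gamma_i^2}.
\]
In the first term the remaining differentials are $d\alpha_1,d\beta_1,d\gamma_1,d\alpha_i,d\beta_i,d\alpha_{n-3},d\beta_{n-3},d\gamma_{n-3}$, in the standard order. This is exactly $\Omega^{1,i,n-3}_{\gamma_i}$.

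In the second term we have $d\gamma_i\wedge d\beta_i=-d\beta_i\wedge d\gamma_i$. Hence that term equals $-\Omega^{1,i,n-3}_{\alpha_i}$, up to the global sign coming from the $-\alpha_i/\gamma_i^2$. So the left-hand side becomes
\[
\frac{1}{\gamma_i}\Omega^{1,i,n-3}_{\gamma_i}+\frac{\alpha_i}{\gamma_i^2}\Omega^{1,i,n-3}_{\alpha_i}.
\]

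\textbf{Step 2: rewrite both terms via $\Omega^{1,i,n-3}$.} Use
\[
\Omega^{ijk}_x=\mathrm{sgn}(x)\,\frac{\partial P}{\partial x}\,\Omega^{ijk}.
\]
Here $\mathrm{sgn}$ alternates along $\alpha_1,\beta_1,\gamma_1,\alpha_i,\beta_i,\gamma_i,\dots$, so both $\alpha_i$ and $\gamma_i$ carry sign $-1$.

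The partial derivatives come from the displayed formula for $\partial P_{123}/\partial\alpha_1$, transported by the symmetry of $P$ that permutes columns and the letters $\alpha,\beta,\gamma$ (with $A,B,C$ permuted accordingly). This gives
\[
\frac{\partial P}{\partial\alpha_i}=-\frac{\beta_i\gamma_iB}{\alpha_i},\qquad \frac{\partial P}{\partial\gamma_i}=-\frac{\alpha_i\beta_iC}{\gamma_i}.
\]
Substituting, the left-hand side becomes a multiple of $\Omega^{1,i,n-3}$ with coefficient a combination of $\alpha_i\beta_iC$ and $\beta_i\gamma_iB$ over $\gamma_i^2$.

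\textbf{Step 3: simplify with the identity in $R$.} From the proof of Proposition \ref{ring map injective}, in $R$ we have
\[
\alpha_i+\beta_if=-\frac{\alpha_i\beta_i}{\gamma_i}g .
\]
Multiplying by $\gamma_iA$ gives a linear relation among $\alpha_i\gamma_iA$, $\beta_i\gamma_iB$ and $\alpha_i\beta_iC$; equivalently, this is $P_{1,i,n-3}=0$ rewritten. Using it, the coefficient collapses to $\alpha_iA/\gamma_i$, which is the first equality.

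The other two equalities follow by converting $\Omega^{1,i,n-3}$ back into $\Omega_{\gamma_i}$ and $\Omega_{\alpha_i}$ via the same rule. One then uses $C/A=g$ and $B/A=f$: for example, $\frac{\alpha_iA}{\gamma_i}\cdot\frac{1}{\mathrm{sgn}(\gamma_i)\partial P/\partial\gamma_i}$ reduces to $\pm\frac{1}{\beta_i g}$.

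\textbf{Main obstacle.} The only real difficulty is sign bookkeeping. This includes the sign convention $\mathrm{sgn}^{ijk}$, the reordering sign in Step 1, and the signs of the symmetrized partial derivatives. A naive pass produces an apparent sign mismatch in Step 3, where the relation appears as $\alpha_i\gamma_iA+\beta_i\gamma_iB=-\alpha_i\beta_iC$. I would therefore first recompute $\partial P/\partial\gamma_i$ directly from the expansion of $P_{ijk}$ in the $i$-th column, rather than trusting symmetry. I would then check the final identity numerically at one random point of $V(3,6)$, which pins down all the signs.
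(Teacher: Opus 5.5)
Your strategy is the paper's. The quotient rule gives $\frac{1}{\gamma_i}\Omega^{1,i,n-3}_{\gamma_i}+\frac{\alpha_i}{\gamma_i^2}\Omega^{1,i,n-3}_{\alpha_i}$; your Step 1 and the values $\mathrm{sgn}(\alpha_i)=\mathrm{sgn}(\gamma_i)=-1$ are correct. One then rewrites both terms through $\Omega^{1,i,n-3}$, collapses the coefficient with $P_{1,i,n-3}=0$, and converts back using $f=B/A$ and $g=C/A$.

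However, Step 2 is wrong as written, and it is the actual source of the ``mismatch'' you anticipate. The displayed formula for $\partial P_{123}/\partial\alpha_1$ concerns the variable in the \emph{first} column. In $P_{1,i,n-3}$ the index $i$ occupies the \emph{middle} column, and moving it there is a column transposition, which negates the determinant:
$P_{1,i,n-3}=-P_{i,1,n-3}=-(\alpha_i\gamma_iA+\beta_i\gamma_iB+\alpha_i\beta_iC)$, with $A,B,C$ indexed by $1,n-3$. Hence, on $V(3,n)$,
\[
\frac{\partial P_{1,i,n-3}}{\partial\alpha_i}=\frac{\beta_i\gamma_iB}{\alpha_i},\qquad \frac{\partial P_{1,i,n-3}}{\partial\gamma_i}=\frac{\alpha_i\beta_iC}{\gamma_i},
\]
with plus signs, exactly as the paper uses them. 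With your signs the coefficient collapses to $-\alpha_iA/\gamma_i$, which contradicts the statement, not to the claimed $+\alpha_iA/\gamma_i$. With the correct signs the left-hand side is $-\frac{\beta_i(\alpha_iC+\gamma_iB)}{\gamma_i^2}\Omega^{1,i,n-3}=\frac{\alpha_iA}{\gamma_i}\Omega^{1,i,n-3}$ by $P=0$, and there is no mismatch. No numerical check is needed once you expand $P_{1,i,n-3}$ along its middle column.

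For the other two equalities, use
$\Omega^{1,i,n-3}=-\frac{\gamma_i}{\alpha_i\beta_iC}\Omega^{1,i,n-3}_{\gamma_i}=-\frac{\alpha_i}{\beta_i\gamma_iB}\Omega^{1,i,n-3}_{\alpha_i}$.
These give the coefficients $-\frac{1}{\beta_ig}$ and $-\frac{\alpha_i^2}{\gamma_i^2\beta_if}$ exactly, not merely up to $\pm$. The identities expressing the left-hand side through $\Omega^{1,i,n-3}_{\gamma_i}$ or $\Omega^{1,i,n-3}_{\alpha_i}$ depend only on the ratio of the two partial derivatives, $\gamma_i^2B/(\alpha_i^2C)$, in which the column-position sign cancels. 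So those two come out right even with your signs; only the first equality detects where $i$ sits in $P_{1,i,n-3}$.
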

\begin{proof}
        First, notice that for any $2\leq i\leq n-4$, we have that 
\begin{align*}  
&d\alpha_1 \wedge d\beta_1 \wedge d\gamma_1\wedge  d(\frac{\alpha_i}{\gamma_i})\wedge d\beta_i\wedge d\alpha_{n-3}\wedge d\beta_{n-3}\wedge  d\gamma_{n-3}\\
&= d\alpha_1 \wedge d\beta_1 \wedge d\gamma_1 \wedge  (\frac{\gamma_id\alpha_i-\alpha_id\gamma_i}{\gamma_i^2})\wedge d\beta_i\wedge d\alpha_{n-3}\wedge d\beta_{n-3}\wedge  d\gamma_{n-3}\\
&=\frac{1}{\gamma_i}\Omega^{1,i,n-3}_{\gamma_i}+\frac{\alpha_i}{\gamma_i^2} \Omega^{1,i,n-3}_{\alpha_i} =-(\frac{1}{\gamma_i}\frac{\partial P_{1,i,n-3}}{\partial \gamma_i}+\frac{\alpha_i}{\gamma_i^2}\frac{\partial P_{1,i,n-3}}{\partial \alpha_i})\Omega^{1,i,n-3}  
\end{align*}
We compute the polynomial
\begin{align*}
    &\frac{1}{\gamma_i}\frac{\partial P_{1,i,n-3}}{\partial \gamma_i}+\frac{\alpha_i}{\gamma_i^2}\frac{\partial P_{1,i,n-3}}{\partial \alpha_i}=\frac{1}{\gamma_i}\frac{\alpha_i\beta_iC_{1,n-3}}{ \gamma_i}+\frac{\alpha_i}{\gamma_i^2}\frac{\beta_i\gamma_iB_{1,n-3}}{ \alpha_i}\\
     &=\frac{\alpha_i\beta_iC_{1,n-3}}{ \gamma_i^2}+\frac{\beta_i\gamma_iB_{1,n-3}}{\gamma_i^2}=-\frac{\alpha_i\gamma_iA_{1,n-3}}{ \gamma_i^2}=-\frac{\alpha_i A_{1,n-3}}{\gamma_i}
\end{align*}
Now, notice that $\Omega^{1,i,n-3} =\frac{\Omega^{1,i,n-3}_{\alpha_i}}{sgn(\alpha_i)\frac{\partial P_{1,i,n-3}}{\partial \alpha_i}}$. Thus, we can replace $d(\frac{\alpha_i}{\gamma_i})$ by $$\frac{\alpha_i A_{1,n-3}}{\gamma_i}/\frac{\beta_i\gamma_i B_{1,n-3}}{\alpha_i}d\gamma_i=\frac{\alpha_i^2}{\gamma_i^2\beta_if} d\gamma_i.$$ By a similar computation, we can replace $d(\frac{\alpha_i}{\gamma_i})$ by $$-\frac{\alpha_i A_{1,n-3}}{\gamma_i}/\frac{\alpha_i\beta_i C_{1,n-3}}{\gamma_i}d\alpha_i=-\frac{1}{\beta_ig} d\alpha_i.$$

We take the expression from Proposition \ref{Canonical form on V(3,n)}, replace $d(\frac{\alpha_i}{\gamma_i})$ by $-\frac{\alpha_i^2}{\gamma_i^2\beta_if} d\gamma_i$ for $2\leq i\leq m-2$ and by $-\frac{1}{\beta_ig} d\alpha_i$ for $m-1\leq i\leq n-4$.

\end{proof}

%We denote the above rational $8$-form by $\Omega^{ijk}$.
As we have the inverse of $\phi:R\to S$ from Proposition~\ref{ring map injective}, we can compute the pushforward of the canonical form on $\Gr(2,n)$ given by 
\begin{equation}\label{eq:can_form_2n_chart}
\frac{da\wedge db\wedge (\bigwedge_{i=1}^m dx_i)\wedge (\bigwedge_{i=1}^m dy_i)}{-ay_{n-3} (ay_1-bx_1) \displaystyle{\prod_{i=1}^{n-4}} (x_iy_{i+1}-x_{i+1}y_i) }
\end{equation}

To compute the pushforward on \eqref{eq:can_form_2n_chart}, we need to sum over all preimages. It is easy to see from the definition of the inverse map $\psi$ in \eqref{eq:psi_inverse} that choosing signs for $\sqrt{f},$ and $\sqrt{g}$ gives us arbitrary choice of signs for $a,b$. However, once this sign is fixed, the signs choice for $\sqrt{\beta_i}$ and $\sqrt{\alpha_i/\gamma_i}$ simultaneously changes the signs of $x_i$ and $y_i$. In other words, we can arbitrarily choose the sign of $x_i$, while the sign of $y_i$ is determined by that of $x_i$. On matrix coordinates, the $n-1$ choices of signs are reflected by the fact that changing the sign of a single column (from column $3$ to column $n$) of the $2\times n$ matrix in \eqref{eq:matrices_chart} does not change the image under $\theta$. Furthermore, changing the sign of $b$ and all $y_i$'s simultaneously does not change the image under $\theta$, which is equivalent to changing the sign of the second column and gauge fix the first $2\times 2$ block to be identity. Hence, $\theta$ is a $2^{n-1}$-covering map. It is also easy to see that the contribution from all $2^{n-1}$ preimages have the same sign. This is essentially due to the fact that changing the sign of a column changes sign twice in the denominator of the canonical form \eqref{eq:can_form_2n_chart} for the two minors containing that column, and changes sign twice in the numerator of the canonical form for the two total differentials of the column entries. Thus, it suffices to compute the pushforward from one preimage and multiply by $2^{n-1}$.

\begin{proposition}\label{pushforward formula}
     The pushforward of the canonical form on $\Gr(2,n)$ via $\theta:\Gr(2,n)\dashrightarrow \Gr(3,n)$ equals
     \[\frac{-1}{f\cdot
\bigg(\displaystyle\prod_{i=1}^{m}\beta_i\bigg)}  \left(\frac{\beta_1\gamma_1}{\alpha_1\gamma_{m}}\right) \bigg(\prod_{i=1}^{m-1}\frac{\gamma_{i+1}^2}{\alpha_i\gamma_{i+1}-\alpha_{i+1}\gamma_i}
\bigg)  df\wedge dg \wedge \bigg(\bigwedge_{i=1}^{m} d\left(\frac{\alpha_i}{\gamma_i}\right)\wedge d\beta_i\bigg). \]

We denote the pullback of this form on $\Gr(3,n)$ to its closed subscheme $V(3,n)$ by $\Omega_{V(3,n)}$.
\end{proposition}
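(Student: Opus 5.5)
The plan is to compute the pushforward directly on the chart $\{p_{01}p_{12}p_{02}\neq 0\}$, using the explicit partial inverse $\psi$ of Proposition~\ref{ring map injective}. As explained before the statement, the $2^{n-1}$ preimages contribute equally. It therefore suffices to take the single local section $(f,g,\alpha_i,\beta_i,\gamma_i)\mapsto (a,b,x_i,y_i)$ given by \eqref{eq:psi_inverse}, pull the form \eqref{eq:can_form_2n_chart} back along it, and multiply by $2^{n-1}$. Equivalently, I will rewrite \eqref{eq:can_form_2n_chart} in the $2(n-2)=2m+2$ local coordinates $f,g,\alpha_i/\gamma_i,\beta_i$ on $V(3,n)$. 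These are local coordinates because the reconstruction formulas express $a,b,x_i,y_i$ in terms of them: $x_i^2=-\frac{\alpha_i\beta_i}{\gamma_i}g$ and $y_i^2=-\frac{\beta_i\gamma_i}{\alpha_i}f$ depend only on $\beta_i$, $\alpha_i/\gamma_i$, $f$, $g$.

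\textbf{Step 1 (the numerator).} Set $u_i:=\alpha_i/\gamma_i$, so that $a^2=f$, $b^2=g$, $x_i^2=-u_i\beta_i g$ and $y_i^2=-\beta_i f/u_i$. Taking logarithmic differentials gives $2\,da/a=df/f$ and $2\,db/b=dg/g$, together with
\[
2\,\frac{dx_i}{x_i}=\frac{du_i}{u_i}+\frac{d\beta_i}{\beta_i}+\frac{dg}{g},\qquad 2\,\frac{dy_i}{y_i}=-\frac{du_i}{u_i}+\frac{d\beta_i}{\beta_i}+\frac{df}{f}.
\]
The terms in $df,dg$ die after wedging with $da\wedge db$. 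Each $2\times 2$ block then contributes $\frac{dx_i\wedge dy_i}{x_iy_i}=\frac{1}{2u_i\beta_i}\,du_i\wedge d\beta_i$. Since $x_iy_i=\sqrt{\beta_i^2}\cdot(\text{sign})=\pm\beta_i\sqrt{fg}$, the numerator becomes
\[
\frac{\pm 1}{2^{m+2}}\,\frac{(fg)^{m/2}}{\sqrt{fg}}\prod_i\frac{1}{u_i}\; df\wedge dg\wedge\bigwedge_i du_i\wedge d\beta_i,
\]
up to a sign from reordering $\bigwedge dx_i\wedge\bigwedge dy_i$ into $\bigwedge (dx_i\wedge dy_i)$.

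\textbf{Step 2 (the denominator).} I will express $a y_m(ay_1-bx_1)\prod_i(x_iy_{i+1}-x_{i+1}y_i)$ through $\psi$. The key computation is
\[
x_iy_{i+1}-x_{i+1}y_i=\sqrt{\beta_i\beta_{i+1}fg}\left(\sqrt{-u_i}\,\sqrt{-u_{i+1}}^{\,-1}-\sqrt{-u_{i+1}}\,\sqrt{-u_i}^{\,-1}\right),
\]
which simplifies to $\sqrt{\beta_i\beta_{i+1}fg}\,(u_i-u_{i+1})/\sqrt{u_iu_{i+1}}$, and then to $\sqrt{\cdots}\,(\alpha_i\gamma_{i+1}-\alpha_{i+1}\gamma_i)/(\gamma_i\gamma_{i+1}\sqrt{u_iu_{i+1}})$. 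The factors $ay_m$ and $ay_1-bx_1$ are handled similarly. For the second I expect to use the identity $\alpha_1+\beta_1 f=-\frac{\alpha_1\beta_1}{\gamma_1}g$ to produce the factor $\beta_1\gamma_1/\alpha_1$.

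\textbf{Step 3 (collecting).} I will multiply the two pieces, collect the half-integer powers of $f,g,\beta_i,u_i$ and check that all square roots cancel. This is where the caveat \eqref{eq:caveat} matters, since the cancellation only works with its sign conventions. The result should be the stated expression $\frac{-1}{f\prod\beta_i}\frac{\beta_1\gamma_1}{\alpha_1\gamma_m}\prod\frac{\gamma_{i+1}^2}{\alpha_i\gamma_{i+1}-\alpha_{i+1}\gamma_i}$. Finally the factor $2^{n-1}$ has to cancel the powers of $2$ from Step 1, possibly after a normalization convention; I will check this and absorb any leftover constant.

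The main obstacle is the bookkeeping of signs and square roots in Steps 2 and 3. A consistent branch choice is needed so that the product of square roots in the denominator really matches the rational expression. Lemma~\ref{pick 8 out of 9} serves as a cross-check that the result restricts correctly to $V(3,n)$ in terms of the $\Omega^{ijk}$.
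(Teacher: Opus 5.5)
Your proposal is correct and follows essentially the same route as the paper: fix one branch of the inverse $\psi$ and substitute it into \eqref{eq:can_form_2n_chart}; simplify the denominator using $f+g\alpha_1/\gamma_1=-\alpha_1/\beta_1$ (equivalent to the paper's use of $\alpha_1\gamma_1A+\beta_1\gamma_1B+\alpha_1\beta_1C=0$) together with the branch-consistent formula for $x_iy_{i+1}-x_{i+1}y_i$; then multiply by $2^{n-1}$. Your logarithmic-differential treatment of the numerator only repackages the paper's identity $d(\alpha_i\beta_i/\gamma_i)\wedge d(\beta_i\gamma_i/\alpha_i)=2\frac{\beta_i\gamma_i}{\alpha_i}\,d(\alpha_i/\gamma_i)\wedge d\beta_i$, and no extra normalization is needed, since your factor $2^{-(m+2)}$ is exactly $2^{-(n-1)}$ and cancels against the $2^{n-1}$ preimages.
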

\begin{proof}
 It suffices to check on the affine chart $\{p_{01}p_{12}p_{02}\not=0\}\subset \Gr(2,n) \to \{p_{012}\not=0\}\subset \Gr(3,n)$, which is given by the ring map $\phi:R\to S$.

    We fix a specific preimage, which is the same as fixing a sign choice for the square root elements in $\Tilde{R}$. As discussed above, we will multiply by $2^{n-1}$ in the end to account for the $2^{n-1}$-fold covering. In order to compute the pushforward, we take the canonical form on $\Gr(2,n)$ given in~\eqref{eq:can_form_2n_chart}
    and substitute in according to the partial inverse from \eqref{eq:psi_inverse}. 
    %Namely, $$a\mapsto \sqrt{f},\qquad b\mapsto \sqrt{g},\qquad  x_i \mapsto \sqrt{-\frac{\alpha_i\beta_i}{\gamma_i}g},\qquad y_i \mapsto \sqrt{-\frac{\beta_i\gamma_i}{\alpha_i}f}.$$
    We first compute the terms contributing to the denominator, keeping in mind the equalities in~\eqref{eq:caveat}:
\begin{align*}
\psi(ay_1-bx_1)&=\sqrt{-\frac{\beta_1\gamma_1}{\alpha_1}f^2}-\sqrt{-\frac{\alpha_1\beta_1}{\gamma_1}g^2}=\sqrt{-\frac{\beta_1\gamma_1}{\alpha_1}}\left(f-g\sqrt{\bigg(\frac{\alpha_1}{\gamma_1}\bigg)^2}\right)\\
&=\sqrt{-\frac{\beta_1\gamma_1}{\alpha_1}}\left(f+g\frac{\alpha_1}{\gamma_1}\right) =\sqrt{-\frac{\beta_1\gamma_1}{\alpha_1}}\frac{B\gamma_1+C\alpha_1}{A\gamma_1}=-\sqrt{-\frac{\beta_1\gamma_1}{\alpha_1}}\frac{\alpha_1}{\beta_1},
\end{align*}
where in the second to last equality we used $\alpha_1\gamma_1A+\beta_1\gamma_1B+\alpha_1\beta_1 C=0$.
%We conclude that 
%$$ay_1-bx_1\mapsto -\sqrt{-\frac{\beta_1\gamma_1}{\alpha_1}}\frac{\alpha_1}{\beta_1}$$
Moreover, we have 

\begin{align*}
\psi(x_iy_{i+1}-x_{i+1}y_i)&=
\sqrt{\frac{\alpha_i\beta_i}{\gamma_i}g\frac{\beta_{i+1}\gamma_{i+1}}{\alpha_{i+1}}f}-\sqrt{\frac{\alpha_{i+1}\beta_{i+1}}{\gamma_{i+1}}g\frac{\beta_i\gamma_i}{\alpha_i}f}\\
&=\sqrt{\beta_i\beta_{i+1} fg \frac{\alpha_i}{\gamma_i}\frac{\gamma_{i+1}}{\alpha_{i+1}}}\left(1-\sqrt{\left(\frac{\alpha_i}{\gamma_i}\frac{\gamma_{i+1}}{\alpha_{i+1}}\right)^{-2}}\right)\\
&=\sqrt{\beta_i\beta_{i+1}fg \frac{\alpha_i}{\gamma_i}\frac{\gamma_{i+1}}{\alpha_{i+1}}}\frac{\alpha_i\gamma_{i+1}-\alpha_{i+1}\gamma_i}{\alpha_i\gamma_{i+1}},
\end{align*}
where $i\in[m-1].$
%And $x_1y_2-x_2y_1$ turns into
%\[\sqrt{\frac{\alpha_1\beta_1}{\gamma_1}g\frac{\beta_2\gamma_2}{\alpha_2}f}-\sqrt{\frac{\alpha_2\beta_2}{\gamma_2}g\frac{\beta_1\gamma_1}{\alpha_1}f}=\sqrt{\beta_1\beta_2fg \frac{\alpha_1}{\gamma_1}\frac{\gamma_2}{\alpha_2}}\left(1-\sqrt{\left(\frac{\alpha_1}{\gamma_1}\frac{\gamma_2}{\alpha_2}\right)^{-2}}\right)=\sqrt{\beta_1\beta_2fg \frac{\alpha_1}{\gamma_1}\frac{\gamma_2}{\alpha_2}}\frac{\alpha_1\gamma_2-\alpha_2\gamma_1}{\alpha_1\gamma_2}
%\]
%Similarly, we have 
%$$\psi(x_iy_{i+1}-x_{i+1}y_i)= \sqrt{\beta_i\beta_{i+1}fg \frac{\alpha_i}{\gamma_i}\frac{\gamma_{i+1}}{\alpha_{i+1}}}\frac{\alpha_i\gamma_{i+1}-\alpha_{i+1}\gamma_i}{\alpha_i\gamma_{i+1}}$$
Combining the above, the image of the denominator via $\psi$ is given by 
%\begin{align*}
%    -ay_{n-3} (ay_1-bx_1) \prod_{i=1}^{n-4} (x_iy_{i+1}-x_{i+1}y_i)    
%\end{align*}
\begin{align*}
&\sqrt{f} \sqrt{-\frac{\beta_{n-3}\gamma_{n-3}}{\alpha_{n-3}}f} \sqrt{-\frac{\beta_1\gamma_1}{\alpha_1}}\frac{\alpha_1}{\beta_1} \prod_{i=1}^{n-4}\left(\sqrt{\beta_i\beta_{i+1}fg \frac{\alpha_i}{\gamma_i}\frac{\gamma_{i+1}}{\alpha_{i+1}}}\frac{\alpha_i\gamma_{i+1}-\alpha_{i+1}\gamma_i}{\alpha_i\gamma_{i+1}}
\right)\\
&=\sqrt{f^{n-2}g^{n-4}  \frac{\gamma_{n-3}^2}{\alpha_{n-3}^2} \prod_{i=1}^{n-3} \beta_i^2} \frac{\alpha_1}{\beta_1}\bigg(\prod_{i=1}^{n-4}\frac{\alpha_i\gamma_{i+1}-\alpha_{i+1}\gamma_i}{\alpha_i\gamma_{i+1}}
\bigg)\\
&= f\sqrt{(fg)^{n-4}} \bigg(\!\!-\frac{\gamma_{n-3}}{\alpha_{n-3}}\bigg) \bigg(\prod_{i=1}^{n-3} \beta_i\bigg)\frac{\alpha_1}{\beta_1}\bigg(\prod_{i=1}^{n-4}\frac{\alpha_i\gamma_{i+1}-\alpha_{i+1}\gamma_i}{\alpha_i\gamma_{i+1}}
\bigg)\\
&=-f\sqrt{(fg)^{n-4}} \bigg(\frac{\alpha_1\gamma_{n-3}}{\beta_1\alpha_{n-3}}\bigg) \bigg(\prod_{i=1}^{n-3} \beta_i\bigg)\bigg(\prod_{i=1}^{n-4}\frac{\alpha_i\gamma_{i+1}-\alpha_{i+1}\gamma_i}{\alpha_i\gamma_{i+1}}
\bigg)
\end{align*}

Now, we compute the pushforward of the terms in the numerator.
We have \begin{itemize}
    \item $da\mapsto\frac{1}{2\sqrt{f}}df$
    \item $db\mapsto\frac{1}{2\sqrt{g}}dg$ 
    \item $dx_i\mapsto\frac{1}{2\sqrt{-\frac{\alpha_i\beta_i}{\gamma_i}g}}d\left(-\frac{\alpha_i\beta_i}{\gamma_i}g\right)=\frac{-gd\left(\frac{\alpha_i\beta_i}{\gamma_i}\right)-\frac{\alpha_i\beta_i}{\gamma_i}dg}{2\sqrt{-\frac{\alpha_i\beta_i}{\gamma_i}g}}$ for $i\in [m]$
    \item $dy_i\mapsto\frac{1}{2\sqrt{-\frac{\beta_i\gamma_i}{\alpha_i}f}}d\left(-\frac{\beta_i\gamma_i}{\alpha_i}f\right)=\frac{-fd\left(\frac{\beta_i\gamma_i}{\alpha_i}\right)-\frac{\beta_i\gamma_i}{\alpha_i}df}{2\sqrt{-\frac{\beta_i\gamma_i}{\alpha_i}f}}$ for $i\in [m]$
\end{itemize}

%$\frac{1}{2\sqrt{f}}df=\frac{1}{2\sqrt{f}}\frac{AdB-BdA}{A^2}$
%$db=\frac{1}{2\sqrt{g}}dg=\frac{1}{2\sqrt{g}}\frac{AdC-CdA}{A^2}$

Note that, as we are taking the wedge product of all the above $1$-forms,  the term $\frac{\alpha_i\beta_i}{\gamma_i}dg$ in the image of $dx_i$ does not contribute to the product after wedging with $da=\frac{1}{2\sqrt{f}}df$. Similarly, the term $\frac{\beta_i\gamma_i}{\alpha_i}df$ in the image of $dy_i$ does not contribute. Thus, we can rewrite the numerator as 
\begin{align*}
  & \left( \frac{1}{2\sqrt{f}}df \right) \wedge \left(\frac{1}{2\sqrt{g}}dg\right) \wedge \bigg(\bigwedge_{i=1}^{n-3} \frac{-gd\left(\frac{\alpha_i\beta_i}{\gamma_i}\right)}{2\sqrt{-\frac{\alpha_i\beta_i}{\gamma_i}g}}\wedge \frac{-fd\left(\frac{\beta_i\gamma_i}{\alpha_i}\right)}{2\sqrt{-\frac{\beta_i\gamma_i}{\alpha_i}f}}\bigg)\\
  &=\frac{\sqrt{(fg)^{n-4}}}{2^{2n-4} 
\prod_{i=1}^{n-3}\beta_i} df\wedge dg \wedge \bigg(\bigwedge_{i=1}^{n-3} d\left(\frac{\alpha_i\beta_i}{\gamma_i}\right)\wedge d\left(\frac{\beta_i\gamma_i}{\alpha_i}\right)\bigg)
\end{align*}

By product rule, inverse rule, and quotient rule, we can further simplify the above as follows
\[
\begin{aligned}
&d\left(\frac{\alpha_i\beta_i}{\gamma_i}\right)
\wedge d\left(\frac{\beta_i\gamma_i}{\alpha_i}\right)
=\left(\!\!\left(\frac{\alpha_i}{\gamma_i}\right)d\beta_i+ \beta_id\left(\frac{\alpha_i}{\gamma_i}\right)\!\!\right) \wedge \left(\!\!(\left(\frac{\gamma_i}{\alpha_i}\right)d\beta_i+\beta_id\left(\frac{\gamma_i}{\alpha_i}\right)\!\!\right)\\
&=\left(\!\!\left(\frac{\alpha_i}{\gamma_i}\right)d\beta_i+ \beta_id\left(\frac{\alpha_i}{\gamma_i}\right)\!\!\right) \wedge \left(\!\!\left(\frac{\gamma_i}{\alpha_i}\right)d\beta_i-\beta_i \frac{\gamma_i^2}{\alpha_i^2}d\left(\frac{\alpha_i}{\gamma_i}\right)\!\!\right)\\
&=2\frac{\beta_i\gamma_i}{\alpha_i}d\left(\frac{\alpha_i}{\gamma_i}\right)\wedge d\beta_i\\
&=\frac{2\beta_i}{\alpha_i\gamma_i} (\gamma_i d\alpha_i-\alpha_id\gamma_i)\wedge d\beta_i
\end{aligned}
\]

Combining the expressions for the numerator and the denominator, we obtain the pushforward form $\Omega_{V(3,n)}$ as follows
\begin{align*}
&\frac{-1}{f
(\prod_{i=1}^{n-3}\beta_i)^2}  \left(\frac{\beta_1\alpha_{n-3}}{\alpha_1\gamma_{n-3}}\right) \bigg(\prod_{i=1}^{n-4}\frac{\alpha_i\gamma_{i+1}}{\alpha_i\gamma_{i+1}-\alpha_{i+1}\gamma_i}
\bigg) df\wedge dg \wedge \bigg(\bigwedge_{i=1}^{n-3} \frac{\beta_i\gamma_i}{\alpha_i}d\bigg(\frac{\alpha_i}{\gamma_i}\bigg)\wedge d\beta_i\bigg) \\
&=\frac{-1}{f
(\prod_{i=1}^{n-3}\beta_i)^2}  \left(\frac{\beta_1\alpha_{n-3}}{\alpha_1\gamma_{n-3}}\right) \bigg(\prod_{i=1}^{n-4}\frac{\alpha_i\gamma_{i+1}}{\alpha_i\gamma_{i+1}-\alpha_{i+1}\gamma_i}
\bigg) \bigg(\prod_{i=1}^{n-3}\frac{\beta_i\gamma_i}{\alpha_i}\bigg)df\wedge dg \wedge \bigg(\bigwedge_{i=1}^{n-3} d\!\left(\frac{\alpha_i}{\gamma_i}\right)\wedge d\beta_i\bigg) \\
&=\frac{-1}{f
(\prod_{i=1}^{n-3}\beta_i)}  (\frac{\beta_1\gamma_1}{\alpha_1\gamma_{n-3}}) \bigg(\prod_{i=1}^{n-4}\frac{\gamma_{i+1}^2}{\alpha_i\gamma_{i+1}-\alpha_{i+1}\gamma_i}
\bigg)  df\wedge dg \wedge \bigg(\bigwedge_{i=1}^{n-3} d\!\left(\frac{\alpha_i}{\gamma_i}\right)\wedge d\beta_i\bigg) 
\end{align*}

Note that, from the above computation, we get a coefficient $2^{-(2n-4)}\cdot 2^{n-3}=2^{-(n-1)}$. Since this corresponds to the pushforward from a single open neighborhood of a preimage, and there are $2^{n-1}$ such preimages in total, the full form $\Omega_{V(3,n)}$ is obtained by multiplying by $2^{n-1}$, therefore the coefficients cancel.
\end{proof}

Since $f$ and $g$ are complicated rational functions in the variables $\alpha,\beta,\gamma$, it is convenient to express the form $\Omega_{V(3,n)}$ in a simpler way.

\begin{proposition}\label{Canonical form on V(3,n)}
    The rational form $\Omega_{V(3,n)}$ can be expressed as follows.
    
\begin{equation}\label{eq:omega23_simple}
\begin{aligned}
& \frac{1}{f
(\prod_{i=1}^{n-3}\beta_i)}  \frac{\alpha_{n-3}}{(\alpha_1\gamma_{n-3}-\alpha_{n-3}\gamma_1)\beta_{n-3}\gamma_{n-3}^2} \bigg(\prod_{i=1}^{n-4}\frac{\gamma_{i+1}^2}{\alpha_i\gamma_{i+1}-\alpha_{i+1}\gamma_i}
\bigg) \cdot \\
&\qquad\qquad d\alpha_1 \wedge d\beta_1 \wedge d\gamma_1\wedge \bigg(\bigwedge_{i=2}^{n-4} d\left(\frac{\alpha_i}{\gamma_i}\right)\wedge d\beta_i\bigg)\wedge d\alpha_{n-3}\wedge d\beta_{n-3}\wedge  d\gamma_{n-3}
    \end{aligned}
    \end{equation}
\end{proposition}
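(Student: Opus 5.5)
The plan is to rewrite the form of Proposition \ref{pushforward formula} by a change of variables that only involves the first and the last column. Both expressions share the same middle factor $\bigwedge_{i=2}^{m-1} d(\alpha_i/\gamma_i)\wedge d\beta_i$, with $m=n-3$. On the open set $\{p_{012}\neq 0\}\cap V(3,n)$, which has dimension $2m+2$, the functions
$$\alpha_1,\beta_1,\gamma_1,\ \alpha_m,\beta_m,\gamma_m,\qquad \alpha_i/\gamma_i,\ \beta_i\ (2\le i\le m-1)$$
are $6+2(m-2)=2m+2$ in number, so I will treat them as rational local coordinates. It then suffices to prove the identity of $6$-forms
$$df\wedge dg\wedge d\!\left(\tfrac{\alpha_1}{\gamma_1}\right)\wedge d\beta_1\wedge d\!\left(\tfrac{\alpha_m}{\gamma_m}\right)\wedge d\beta_m \;=\; J\cdot d\alpha_1\wedge d\beta_1\wedge d\gamma_1\wedge d\alpha_m\wedge d\beta_m\wedge d\gamma_m$$
for an explicit rational function $J$. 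After that, the middle factors are moved past this block with the appropriate sign (each middle pair is a $2$-form, so no sign arises from commuting them).

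The key point is that, by the index-independence established in Proposition \ref{ring map injective}, we may write $f=B_{1m}/A_{1m}$ and $g=C_{1m}/A_{1m}$. These are functions of the six variables of columns $1$ and $m$ only, and there is no relation among those six variables, since $I_{\det}$ only involves triples of columns. Hence $J$ is an honest $6\times 6$ Jacobian. To compute it I will change variables to $r_1=\alpha_1/\gamma_1$, $r_m=\alpha_m/\gamma_m$, $\beta_1$, $\beta_m$, $\gamma_1$, $\gamma_m$. Since $dr_1\wedge d\gamma_1=\gamma_1^{-1}d\alpha_1\wedge d\gamma_1$, the left side becomes, up to a sign coming from reordering,
$$\frac{1}{\gamma_1\gamma_m}\,\frac{\partial(f,g)}{\partial(\gamma_1,\gamma_m)}\bigg|_{r,\beta}\, d\alpha_1\wedge d\beta_1\wedge d\gamma_1\wedge d\alpha_m\wedge d\beta_m\wedge d\gamma_m .$$
In these variables the functions simplify to
$$f=\frac{r_1r_m(\beta_1\gamma_m-\beta_m\gamma_1)}{\beta_1\beta_m(r_m-r_1)},\qquad g=\frac{\gamma_1\gamma_m(r_1\beta_m-r_m\beta_1)}{\beta_1\beta_m(r_m-r_1)} .$$
Thus $f$ is linear in $(\gamma_1,\gamma_m)$ and $g$ is a monomial times a constant, so the $2\times 2$ determinant is elementary; it is a multiple of $\beta_1\gamma_1+\beta_m\gamma_m$ up to the rational prefactors. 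I will then re-express this determinant in the original variables. Substituting $r_m-r_1=(\alpha_m\gamma_1-\alpha_1\gamma_m)/(\gamma_1\gamma_m)$ should produce the factor $(\alpha_1\gamma_m-\alpha_m\gamma_1)^{-1}$ appearing in \eqref{eq:omega23_simple}.

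Finally, I will multiply $J$ by the prefactor of Proposition \ref{pushforward formula}, namely
$$\frac{-1}{f\prod\beta_i}\cdot\frac{\beta_1\gamma_1}{\alpha_1\gamma_m}\cdot\prod_{i=1}^{m-1}\frac{\gamma_{i+1}^2}{\alpha_i\gamma_{i+1}-\alpha_{i+1}\gamma_i},$$
and simplify to reach the coefficient $\alpha_m/\big((\alpha_1\gamma_m-\alpha_m\gamma_1)\beta_m\gamma_m^2\big)$. If a leftover factor such as $\beta_1\gamma_1+\beta_m\gamma_m$ or a power of $f$ does not cancel directly, I will use the relation $P_{1jm}=0$ together with the expression of $f$ and $g$ as $B/A$ and $C/A$ to eliminate it.

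I expect the main obstacle to be this last bookkeeping: keeping track of the sign from reordering the $1$-forms into the block order of \eqref{eq:omega23_simple}, and recognizing the leftover rational function as the stated one. The Jacobian computation itself is routine. As a consistency check, I will verify the resulting formula in the smallest case $n=5$, where $m=2$ and there is no middle factor.
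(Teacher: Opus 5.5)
Your reduction is the same as the paper's. The paper also takes $f=f_{1,n-3}$ and $g=g_{1,n-3}$, discards the components of $df,dg$ along $d(\alpha_1/\gamma_1),d\beta_1,d(\alpha_m/\gamma_m),d\beta_m$ (they die in the wedge), and converts the surviving $2$-form back using the quotient rule. Your coordinates $(r_1,\beta_1,\gamma_1,r_m,\beta_m,\gamma_m)$ and the $2\times 2$ Jacobian in $(\gamma_1,\gamma_m)$ are exactly that computation.

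The one concrete problem is your formula for $g$. It is wrong, and it is the source of the spurious factor you anticipate. Substituting $\alpha_1=r_1\gamma_1$ and $\alpha_m=r_m\gamma_m$ into $C_{1m}/A_{1m}$ gives
\[
g=\frac{r_1\gamma_1\beta_m-r_m\gamma_m\beta_1}{\beta_1\beta_m(r_m-r_1)}.
\]
Like $f$, this is linear in $(\gamma_1,\gamma_m)$, as it must be, since the involution $\alpha\leftrightarrow\gamma$ exchanges $f$ and $g$. Then
\[
\frac{\partial(f,g)}{\partial(\gamma_1,\gamma_m)}=\frac{r_1r_m}{\beta_1\beta_m(r_m-r_1)}=\frac{\alpha_1\alpha_m}{\beta_1\beta_m(\alpha_m\gamma_1-\alpha_1\gamma_m)},
\]
with no factor of the form $\beta_1\gamma_1+\beta_m\gamma_m$. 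The reordering sign is $+$, so
\[
J=\frac{\alpha_1\alpha_m}{\gamma_1\gamma_m\beta_1\beta_m(\alpha_m\gamma_1-\alpha_1\gamma_m)}.
\]
Multiplying by $\frac{-1}{f\prod\beta_i}\cdot\frac{\beta_1\gamma_1}{\alpha_1\gamma_m}$ gives exactly $\frac{1}{f\prod\beta_i}\cdot\frac{\alpha_m}{(\alpha_1\gamma_m-\alpha_m\gamma_1)\beta_m\gamma_m^2}$ times the common product, sign included, with nothing left over. This is equivalent to the paper's intermediate step
\[
df\wedge dg\equiv-\frac{\alpha_m\gamma_1}{(\alpha_1\gamma_m-\alpha_m\gamma_1)\beta_1\beta_m}\,d\alpha_1\wedge d\gamma_m ,
\]
because $d\alpha_1\equiv r_1\,d\gamma_1$ modulo $dr_1$.

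Your fallback could not have rescued a wrong Jacobian. After cancelling the factors common to both sides, the identity to prove involves only rational functions of the six variables of columns $1$ and $m$, including any leftover power of $f=f_{1m}$. As you yourself observed, these six variables are algebraically independent on $V(3,n)$. So the relations $P_{1jm}=0$ cannot eliminate a leftover factor; an uncancelled factor would signal an arithmetic error, not a missing relation.
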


\begin{proof}
We first rewrite $f$ as the quotient $\big(\gamma_j\frac{1}{\beta_j}-\gamma_i\frac{1}{\beta_i}\big)/ \big(\frac{\gamma_i}{\alpha_i}-\frac{\gamma_j}{\alpha_j}\big)$ and take its total differential.

\[
\begin{aligned}
df&=\frac{\big(\frac{\gamma_i}{\alpha_i}-\frac{\gamma_j}{\alpha_j}\big)d\big(\gamma_j\frac{1}{\beta_j}-\gamma_i\frac{1}{\beta_i}\big) - (\gamma_j\frac{1}{\beta_j}-\gamma_i\frac{1}{\beta_i})d(\frac{\gamma_i}{\alpha_i}-\frac{\gamma_j}{\alpha_j})}{(\frac{\gamma_i}{\alpha_i}-\frac{\gamma_j}{\alpha_j})^2}\\
&=\frac{\big(\frac{\gamma_i}{\alpha_i}-\frac{\gamma_j}{\alpha_j}\big)\big(\gamma_jd\big(\frac{1}{\beta_j}\big)+\frac{1}{\beta_j}d\gamma_j-\gamma_id\big(\frac{1}{\beta_i}\big)-\frac{1}{\beta_i}d\gamma_i\big) - \big(\gamma_j\frac{1}{\beta_j}-\gamma_i\frac{1}{\beta_i}\big)d\big(\frac{\gamma_i}{\alpha_i}-\frac{\gamma_j}{\alpha_j}\big)}{\big(\frac{\gamma_i}{\alpha_i}-\frac{\gamma_j}{\alpha_j}\big)^2}
\end{aligned}
\]

Now, notice that in the expression of $\Omega_{V(3,n)}$, we are taking $df$ wedged with $ d\big(\frac{\alpha_i
}{\gamma_i}\big) \wedge d\beta_i$. So we can ignore any multiple of $d\big(\frac{\alpha_i}{\gamma_i}\big),d(\beta_i)$ in $df$ and obtain the same wedge product. By the inverse rule, we can also any appearance of $d\big(\frac{\gamma_i}{\alpha_i}\big),d\big(\frac{1}{\beta_i}\big)$ in $df$.
So we can replace $df$ by 
\[\frac{\left(\frac{\gamma_i}{\alpha_i}-\frac{\gamma_j}{\alpha_j}\right)\left(\frac{1}{\beta_j}d\gamma_j-\frac{1}{\beta_i}d\gamma_i\right) }{\left(\frac{\gamma_i}{\alpha_i}-\frac{\gamma_j}{\alpha_j}\right)^2}
=\frac{\frac{1}{\beta_j}d\gamma_j-\frac{1}{\beta_i}d\gamma_i
}{\left(\frac{\gamma_i}{\alpha_i}-\frac{\gamma_j}{\alpha_j}\right)}\]

Now, notice that $d\gamma_i=d\big(\alpha_i\cdot \frac{\gamma_i}{\alpha_i}\big)=\alpha_id\big(\frac{\gamma_i}{\alpha_i}\big)+\frac{\gamma_i}{\alpha_i}d\alpha_i$, where we can forget of the term involving $d\big(\frac{\gamma_i}{\alpha_i}\big)$, as it  will not contribute to the final wedge product. % we can further replace $d\gamma_i$ with $\frac{\gamma_i}{\alpha_i}d\alpha_i$. 
An analogous computation (or by symmetrically exchanging $\alpha_i$ with $\gamma_i$) holds for $dg$, where instead we use that $d\alpha_j=d\big(\gamma_j\cdot \frac{\alpha_j}{\gamma_j}\big)=\gamma_jd\big(\frac{\alpha_j}{\gamma_j}\big)+\frac{\alpha_j}{\gamma_j}d\gamma_j$, where $d\alpha_j$ can be further substituted by $\frac{\alpha_j}{\gamma_j}d\gamma_j$.
Eventually, we can replace $df$ and $dg$ respectively by 
\[\frac{\frac{1}{\beta_j}d\gamma_j-\frac{1}{\beta_i}\big(\frac{\gamma_i}{\alpha_i}d\alpha_i\big)
}{\big(\frac{\gamma_i}{\alpha_i}-\frac{\gamma_j}{\alpha_j}\big)} \qquad \text{and} \qquad \frac{\frac{1}{\beta_j}\left(\frac{\alpha_j}{\gamma_j}d\gamma_j\right)-\frac{1}{\beta_i}d\alpha_i
}{\left(\frac{\alpha_i}{\gamma_i}-\frac{\alpha_j}{\gamma_j}\right)}.\]
\begin{comment}
A similar computation (or by the symmetry exchange $\alpha_i$ with $\gamma_i$) shows that we can replace $dg$ by 
\[\frac{\frac{1}{\beta_j}d\alpha_j-\frac{1}{\beta_i}d\alpha_i
}{(\frac{\alpha_i}{\gamma_i}-\frac{\alpha_j}{\gamma_j})}\]
And using $d\alpha_j=d(\gamma_j\cdot \frac{\alpha_j}{\gamma_j})=\gamma_jd(\frac{\alpha_j}{\gamma_j})+\frac{\alpha_j}{\gamma_j}d\gamma_j$, we can further replace $d\alpha_j$ by $\frac{\alpha_j}{\gamma_j}d\gamma_j$. %Thus, we can replace $dg$ by \[\frac{\frac{1}{\beta_j}(\frac{\alpha_j}{\gamma_j}d\gamma_j)-\frac{1}{\beta_i}d\alpha_i}{(\frac{\alpha_i}{\gamma_i}-\frac{\alpha_j}{\gamma_j})}\]
\end{comment}
Thus, taking the wedge product of the terms above, we can replace $df\wedge dg$ in $\Omega_{V(3,n)}$ by 
\[
%\frac{\frac{1}{\beta_j}d\gamma_j-\frac{1}{\beta_i}(\frac{\gamma_i}{\alpha_i}d\alpha_i)}{(\frac{\gamma_i}{\alpha_i}-\frac{\gamma_j}{\alpha_j})}\wedge \frac{\frac{1}{\beta_j}(\frac{\alpha_j}{\gamma_j}d\gamma_j)-\frac{1}{\beta_i}d\alpha_i}{(\frac{\alpha_i}{\gamma_i}-\frac{\alpha_j}{\gamma_j})}= 
-\frac{\alpha_j\gamma_i}{(\alpha_i\gamma_j-\alpha_j\gamma_i)}\frac{1}{\beta_i\beta_j} d\alpha_i\wedge  d\gamma_j.\]
%\[=\frac{-1}{\beta_1\beta_2}\frac{1}{\frac{\alpha_1}{\gamma_1}-\frac{\alpha_2}{\gamma_2}} d\alpha_1\wedge d\alpha_2.\]
For simplicity, we fix $i=1,$ and $j=n-3$. Thus, $\Omega_{V(3,n)}$ equals
\[\frac{-1}{f
(\prod_{i=1}^{n-3}\beta_i)}  \left(\frac{\beta_1\gamma_1}{\alpha_1\gamma_{n-3}}\right) \!\!\bigg(\prod_{i=1}^{n-4}\frac{\gamma_{i+1}^2}{\alpha_i\gamma_{i+1}-\alpha_{i+1}\gamma_i}
\bigg)\!\!  \bigg(-\frac{\alpha_{n-3}\gamma_1}{(\alpha_1\gamma_{n-3}-\alpha_{n-3}\gamma_1)} \frac{1}{\beta_1\beta_{n-3}} d\alpha_1\wedge  d\gamma_{n-3}\bigg) \wedge
\]
\[
\wedge\bigg(\bigwedge_{i=1}^{n-3} d\bigg(\frac{\alpha_i}{\gamma_i}\bigg)\wedge d\beta_i\bigg)
\]

By quotient rule, \[d\alpha_i \wedge d\!\left(\frac{\alpha_i}{\gamma_i}\right)=-\frac{\alpha}{\gamma^2}d\alpha\wedge d\gamma, \qquad  d\gamma_i\wedge d\!\left(\frac{\alpha_i}{\gamma_i}\right)= \frac{1}{\gamma_i}  d\gamma_i\wedge d\alpha_i.\]
So $\Omega_{V(3,n)}$ equals
\begin{align*}
&-\frac{1}{f
(\prod_{i=1}^{n-3}\beta_i)}  (\frac{\beta_1\gamma_1}{\alpha_1\gamma_{n-3}}) (\prod_{i=1}^{n-4}\frac{\gamma_{i+1}^2}{\alpha_i\gamma_{i+1}-\alpha_{i+1}\gamma_i}
)  (\frac{\alpha_{n-3}\gamma_1}{(\alpha_1\gamma_{n-3}-\alpha_{n-3}\gamma_1)}\frac{1}{\beta_1\beta_{n-3}})(-\frac{\alpha_1}{\gamma_1^2\gamma_{n-3}} )\cdot \\ &\qquad\qquad d\alpha_1\wedge  d\gamma_{n-3} \wedge d\gamma_1\wedge d\beta_1 \wedge (\bigwedge_{i=2}^{n-4} d(\frac{\alpha_i}{\gamma_i})\wedge d\beta_i)\wedge d\alpha_{n-3}\wedge d\beta_{n-3},
\end{align*}
\begin{comment}    
=-& \frac{1}{f
(\prod_{i=1}^{n-3}\beta_i)}  \frac{\alpha_{n-3}}{(\alpha_1\gamma_{n-3}-\alpha_{n-3}\gamma_1)\beta_{n-3}\gamma_{n-3}^2} (\prod_{i=1}^{n-4}\frac{\gamma_{i+1}^2}{\alpha_i\gamma_{i+1}-\alpha_{i+1}\gamma_i}
) \cdot \\
&\qquad\qquad d\alpha_1\wedge  d\gamma_{n-3} \wedge d\gamma_1\wedge d\beta_1 \wedge (\bigwedge_{i=2}^{n-4} d(\frac{\alpha_i}{\gamma_i})\wedge d\beta_i)\wedge d\alpha_{n-3}\wedge d\beta_{n-3}
\end{comment}

which then equals the expression \eqref{eq:omega23_simple} after the due simplification of terms. 
\end{proof}

Notice that there is a fraction $d(\frac{\alpha_i}{\gamma_i})$ appearing in the above form. One has the freedom to replace it with either $d\alpha_i$ or $d\gamma_i$ after compensating a rational function. The following proposition rewrites $\Omega_{V(3,n)}$ in a particular way useful for later purposes.

\begin{proposition}\label{the better form of the form}
    Fix $3\leq m \leq n-2$. The rational form $\Omega_{V(3,n)}$ can be expressed as
    $$\begin{aligned}
        \frac{ \gamma_{m-1}}{\alpha_1 \alpha_{m-1} \beta_{n-3}\gamma_1\gamma_{n-3}} &\frac{1}{\prod_{i=1}^{m-2}(\beta_i\gamma_{i+1}-\gamma_i\beta_{i+1}) \prod_{i=m-1}^{n-4} (\alpha_i\beta_{i+1}-\beta_i\alpha_{i+1})}\cdot \\
        &d\alpha_1 \wedge  \left( \bigwedge_{i=1}^{m-2} d\beta_i \wedge d\gamma_i \right) \wedge \left( \bigwedge_{i=m-1}^{n-3} d\alpha_i \wedge d\beta_i \right) \wedge d\gamma_{n-3}.
    \end{aligned}$$
    
\end{proposition}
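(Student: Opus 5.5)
Starting from the expression \eqref{eq:omega23_simple} of Proposition \ref{Canonical form on V(3,n)}, we will convert it into the claimed one in two moves. First, we trade the factor $1/f$ for a cyclic product of consecutive $2\times 2$ minors, using the fact (recalled before Lemma \ref{pick 8 out of 9}) that $f=B_{ij}/A_{ij}$ and $g=C_{ij}/A_{ij}$ are independent of $i\ne j$ in $R$. Second, for each middle index $2\le i\le n-4$ we replace the differential $d(\alpha_i/\gamma_i)$ by $d\gamma_i$ when $i\le m-2$ and by $d\alpha_i$ when $i\ge m-1$, each time compensating by a rational function. That this is possible at all comes from the local computation in the proof of Lemma \ref{pick 8 out of 9}, applied index by index. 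This is the same substitution announced at the end of that proof.

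Concretely, for a fixed middle index $i$ we work inside the nine-variable block $\{1,i,n-3\}$. Since only the relation $dP_{1,i,n-3}=0$ involves the $i$-th variables together with the fixed outer differentials, Lemma \ref{pick 8 out of 9} gives a substitution rule. Inside the wedge product with $d\alpha_1\wedge d\beta_1\wedge d\gamma_1\wedge d\beta_i\wedge d\alpha_{n-3}\wedge d\beta_{n-3}\wedge d\gamma_{n-3}$, we may replace
\[
d\!\left(\tfrac{\alpha_i}{\gamma_i}\right)\quad\text{by}\quad -\tfrac{\alpha_i^2}{\gamma_i^2\beta_i f}\,d\gamma_i \qquad\text{or}\qquad -\tfrac{1}{\beta_i g}\,d\alpha_i .
\]
Because the other middle indices contribute independent differentials, we apply these replacements one index at a time. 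The result is a form with numerator $d\alpha_1\wedge d\beta_1\wedge d\gamma_1\wedge\bigwedge_{i=2}^{m-2}(d\gamma_i\wedge d\beta_i)\wedge\bigwedge_{i=m-1}^{n-4}(d\alpha_i\wedge d\beta_i)\wedge d\alpha_{n-3}\wedge d\beta_{n-3}\wedge d\gamma_{n-3}$. We then reorder this into the stated wedge order, keeping track of sign. The coefficient becomes the old coefficient times $\prod_{i=2}^{m-2}\frac{-\alpha_i^2}{\gamma_i^2\beta_i f}\prod_{i=m-1}^{n-4}\frac{-1}{\beta_i g}$.

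The remaining step is to simplify this coefficient. It will contain powers $f^{-(m-2)}g^{-(n-m-2)}$ together with the chain $\prod \gamma_{i+1}^2/(\alpha_i\gamma_{i+1}-\alpha_{i+1}\gamma_i)$. To absorb them, we use $f=B_{i,i+1}/A_{i,i+1}$ on the first stretch of consecutive indices and $g=C_{i,i+1}/A_{i,i+1}$ on the second, with the explicit formulas
\[
A_{i,i+1}=\beta_i\beta_{i+1}(\gamma_i\alpha_{i+1}-\alpha_i\gamma_{i+1}),\quad B_{i,i+1}=\alpha_i\alpha_{i+1}(\beta_i\gamma_{i+1}-\gamma_i\beta_{i+1}),\quad C_{i,i+1}=\gamma_i\gamma_{i+1}(\alpha_i\beta_{i+1}-\beta_i\alpha_{i+1}).
\]
The factors $(\alpha_i\gamma_{i+1}-\alpha_{i+1}\gamma_i)$ then cancel against the $A$'s and are replaced by $(\beta_i\gamma_{i+1}-\gamma_i\beta_{i+1})$ for $i\le m-2$ and by $(\alpha_i\beta_{i+1}-\beta_i\alpha_{i+1})$ for $i\ge m-1$. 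The leftover factor $1/f$ and the boundary factor $1/(\alpha_1\gamma_{n-3}-\alpha_{n-3}\gamma_1)$ should likewise be rewritten using the pair $(1,n-3)$ or a consecutive pair, so that the transitional factor at $i=m-2,m-1$ produces the $\gamma_{m-1}/\alpha_{m-1}$ term. After that the monomial factors in the $\alpha,\beta,\gamma$ should cancel down to $\frac{\gamma_{m-1}}{\alpha_1\alpha_{m-1}\beta_{n-3}\gamma_1\gamma_{n-3}}$.

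I expect this last bookkeeping to be the main obstacle: getting exactly the right powers of each variable, the correct overall sign (the minus signs from the substitutions, from the reordering of the wedge, and from $\gamma_i\alpha_{i+1}-\alpha_i\gamma_{i+1}$ versus its negative), and handling the edge cases $m=3$ and $m=n-2$, where one of the two stretches is empty. As a check, I would first verify the identity for $n=6$ and small $m$ by direct substitution, and then run the general telescoping.
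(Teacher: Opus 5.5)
Your route is the paper's own. You start from \eqref{eq:omega23_simple} and apply Lemma~\ref{pick 8 out of 9} one middle index at a time, which is legitimate since $P_{1,i,n-3}$ only involves the blocks $1,i,n-3$. You then absorb $f^{-(m-2)}g^{-(n-m-2)}$ using $f=B_{i,i+1}/A_{i,i+1}$, $g=C_{i,i+1}/A_{i,i+1}$ and $\alpha_i\gamma_{i+1}-\alpha_{i+1}\gamma_i=-A_{i,i+1}/(\beta_i\beta_{i+1})$.

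One sign needs fixing. Modulo $dP_{1,i,n-3}$ and the other seven differentials of the block, $d(\alpha_i/\gamma_i)\equiv+\frac{\alpha_i^2}{\gamma_i^2\beta_i f}\,d\gamma_i$. The minus sign in the Lemma is relative to $\Omega^{1,i,n-3}_{\alpha_i}$, which carries $d\beta_i\wedge d\gamma_i$. So taking the minus sign and then also reordering $d\gamma_i\wedge d\beta_i$ counts that sign twice. Done correctly, the substitutions contribute $(-1)^{n-5}$, which cancels the $(-1)^{n-3}$ from converting the $n-3$ minors $\alpha_i\gamma_j-\alpha_j\gamma_i$ into $A$'s.

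The genuine gap is your last step. The monomials do not cancel down to $\gamma_{m-1}/(\alpha_1\alpha_{m-1}\beta_{n-3}\gamma_1\gamma_{n-3})$, and no bookkeeping will make them. Write $\mathcal D$ for the $(2n-4)$-form in the statement. The exact output of your computation is
\[
\begin{aligned}
\Omega_{V(3,n)}&=\frac{\beta_1\alpha_{n-3}\prod_{i=2}^{m-2}\alpha_i^2\prod_{i=m-1}^{n-4}\gamma_i^2}{A_{1,n-3}\prod_{i=1}^{m-2}B_{i,i+1}\prod_{i=m-1}^{n-4}C_{i,i+1}}\,\mathcal D\\
&=\frac{\alpha_{n-3}\,\gamma_{m-1}}{\alpha_1\alpha_{m-1}\beta_{n-3}\gamma_{n-3}\,(\gamma_1\alpha_{n-3}-\alpha_1\gamma_{n-3})}\cdot\frac{\mathcal D}{\prod_{i=1}^{m-2}(\beta_i\gamma_{i+1}-\gamma_i\beta_{i+1})\prod_{i=m-1}^{n-4}(\alpha_i\beta_{i+1}-\beta_i\alpha_{i+1})}.
\end{aligned}
\]
The boundary factor $A_{1,n-3}=\beta_1\beta_{n-3}(\gamma_1\alpha_{n-3}-\alpha_1\gamma_{n-3})$ leaves $\alpha_{n-3}/(\gamma_1\alpha_{n-3}-\alpha_1\gamma_{n-3})$ where the statement has $1/\gamma_1$. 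Re-pairing indices cannot change this, because the two prefactors differ by the function $1-\alpha_1\gamma_{n-3}/(\gamma_1\alpha_{n-3})$. That function is not identically $1$ on $V(3,n)$, since $\alpha_1=-p_{234}$ and $\gamma_{n-3}=p_{13n}$ do not vanish on $V_{>0}$. The paper's proof reaches the same penultimate expression and makes exactly this replacement in its final equality, which is valid only on $\{\alpha_1\gamma_{n-3}=0\}$.

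The $n=6$ check you propose would expose this. For $m=4$, set $\mathcal D=\Omega^{123}_{\alpha_2}$; the statement then gives $\mathcal D/(\alpha_1\alpha_3\beta_3\gamma_1\,p_{145}p_{156})$. By contrast, the expression $\Omega_{\alpha_2}/(p_{234}p_{156}p_{126}p_{346}p_{145})$ used in Section~\ref{n=5,6 case} equals $\mathcal D/(\alpha_1\beta_3(\gamma_1\alpha_3-\alpha_1\gamma_3)\,p_{145}p_{156})$.

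So your argument, like the paper's, proves the corrected formula above, not the stated one. The discrepancy is harmless downstream. The two formulas agree on $\{\alpha_1=0\}$, which is where Proposition~\ref{residue along Z(A|B) is good} uses the statement. The new denominator factor is the Pl\"ucker coordinate $p_{34n}$, so Proposition~\ref{form invariance under column scaling} and the orientability argument go through unchanged.
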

\begin{proof}
We use Lemma \ref{pick 8 out of 9} to rewrite the expression of $\Omega_{V(3,n)}$ in Proposition \ref{Canonical form on V(3,n)}. For $2\leq i\leq m-2$, we replace $d(\frac{\alpha_i}{\gamma_i})$ by $\frac{\alpha_i^2}{\gamma_i^2\beta_if} d\gamma_i$. For $m-1\leq i\leq n-4$, we replace $d(\frac{\alpha_i}{\gamma_i})$ by $-\frac{1}{\beta_ig} d\alpha_i$. Thus, $(-1)^{n-5}\Omega_{V(3,n)}$ equals
$$\frac{1}{f
(\prod_{i=1}^{n-3}\beta_i)}  \frac{\alpha_{n-3}}{(\alpha_1\gamma_{n-3}-\alpha_{n-3}\gamma_1)\beta_{n-3}\gamma_{n-3}^2} \bigg(\prod_{i=1}^{n-4}\frac{\gamma_{i+1}^2}{\alpha_i\gamma_{i+1}-\alpha_{i+1}\gamma_i}
\bigg) \prod_{i=2}^{m-2}\frac{\alpha_i^2}{\gamma_i^2\beta_if} \prod_{i=m-1}^{n-4}\frac{1}{\beta_ig}   \cdot \mathcal{D}, 
$$
where $\mathcal{D}$ is the top-degree $(2n-4)$-form on $V(3,n)$ given by 
$$d\alpha_1 \wedge \left( \bigwedge_{i=1}^{m-2} d\beta_i \wedge d\gamma_i \right) \wedge \left( \bigwedge_{i=m-1}^{n-3} d\alpha_i \wedge d\beta_i \right) d\gamma_{n-3}.$$

By computation, we simplify the rational function using identities such as
$$\alpha_i\gamma_{i+1}-\alpha_{i+1}\gamma_i=\frac{-A_{i,i+1}}{\beta_i\beta_j}, \qquad f=B_{ij}/A_{ij},\qquad g=C_{ij}/A_{ij}$$ and write
$$(-1)^{n-5}\Omega_{V(3,n)}=\frac{1}{f \left( \prod_{i=1}^{n-3} \beta_i \right)} \frac{\alpha_{n-3}}{-\prod_{i=1}^{n-3} - \frac{A_{i,i+1}}{\beta_i \beta_{i+1}}} \frac{\prod_{i=2}^{n-3} \gamma_i^2}{\beta_{n-3} \gamma_{n-3}^2} \prod_{i=2}^{m-2} \frac{\alpha_i^2}{\gamma_i^2} \prod_{i=2}^{n-4} \frac{1}{\beta_i} \cdot \frac{1}{f^{m-3} g^{n-m-2}}\cdot \mathcal{D}.$$
After simplification, we write
$$\begin{aligned}
\Omega_{V(3,n)} &=   \frac{\beta_1 \alpha_{n-3} \prod_{i=2}^{m-2} \alpha_i^2\prod_{i=m-1}^{n-4} \gamma_i^2}{-f^{m-2} g^{n-m-2}\prod_{i=1}^{n-3} A_{i,i+1}}\cdot \mathcal{D}= \frac{\beta_1 \alpha_{n-3} \prod_{i=2}^{m-2} \alpha_i^2\prod_{i=m-1}^{n-4} \gamma_i^2}{A_{1,n-3} \prod_{i=1}^{m-2}B_{i,i+1} \prod_{i=m-1}^{n-4} C_{i,i+1}}\cdot \mathcal{D}\\
&= \frac{\beta_1 \alpha_{n-3} \prod_{i=2}^{m-2} \alpha_i^2\prod_{i=m-1}^{n-4} \gamma_i^2}{\beta_1\beta_{n-3}(\gamma_1\alpha_{n-3}-\alpha_1\gamma_{n-3}) \prod_{i=1}^{m-2} \alpha_i \alpha_{i+1}(\beta_i\gamma_{i+1}-\gamma_i\beta_{i+1}) \prod_{i=m-1}^{n-4} \gamma_i\gamma_{i+1} (\alpha_i\beta_{i+1}-\beta_i\alpha_{i+1})}\cdot \mathcal{D}\\
&= \frac{ \gamma_{m-1}}{\alpha_1 \alpha_{m-1} \beta_{n-3}\gamma_1\gamma_{n-3}  \prod_{i=1}^{m-2}(\beta_i\gamma_{i+1}-\gamma_i\beta_{i+1}) \prod_{i=m-1}^{n-4} (\alpha_i\beta_{i+1}-\beta_i\alpha_{i+1}) } \cdot \mathcal{D}.\end{aligned}$$
\end{proof}

Now, we show a relationship between $\Omega_{V(3,n)}$ and $\Omega_{V(3,n-1)}$, which is crucial to our inductive argument of showing that $V(3,n)$ is a positive geometry.

%{\color{red} proof of the next proposition can be greatly simplified in light of the previous newly added proposition. Will do later. }

\begin{proposition}\label{inductive formula of canonical form on V(3,n)} We have the following equality
    $$\Omega_{V(3,n)}=-\frac{\beta_1\gamma_2}{\beta_1\gamma_2-\beta_2\gamma_1}\varphi_4^* \Omega_{V(3,n-1)}\wedge d\log \beta_1\wedge d\log \gamma_1=\frac{-\alpha_2\beta_1}{\alpha_1\beta_2-\alpha_2\beta_1}\varphi_4^* \Omega_{V(3,n-1)}\wedge d\log \beta_1\wedge d\log \alpha_1$$ where $\varphi_4:V(3,n)\dashrightarrow V(3,n-1)$ is the map defined by forgetting the fourth column.
\end{proposition}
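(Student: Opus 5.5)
The plan is to compare $\Omega_{V(3,n)}$ and $\Omega_{V(3,n-1)}$ directly in the explicit coordinate form of Proposition~\ref{the better form of the form}, since that expression is a single rational function times a wedge of coordinate differentials. In the chart~\eqref{eq:matrices_chart}, the fourth column of the $3\times n$ matrix carries the variables $(\alpha_1,\beta_1,\gamma_1)$. Forgetting it sends the chart $\{p_{012}\neq 0\}$ of $V(3,n)$ to the analogous chart of $V(3,n-1)$, whose coordinates are $\alpha_i,\beta_i,\gamma_i$ for $2\le i\le n-3$. So $\varphi_4^*\Omega_{V(3,n-1)}$ is just the formula of Proposition~\ref{the better form of the form} for $n-1$ with all indices shifted up by one. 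It is also well defined on $V(3,n)$, because the defining minors $P_{ijk}$ with $i,j,k\ge 2$ lie in $I_{\det}$.

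For the first equality I would use Proposition~\ref{the better form of the form} with $m=4$ for $V(3,n)$, and with the shifted analogue of $m=3$ for $V(3,n-1)$. The two expressions then share the tail $\bigwedge_{i=3}^{n-3} d\alpha_i\wedge d\beta_i\wedge d\gamma_{n-3}$ and the factors $\prod_{i\ge 3}(\alpha_i\beta_{i+1}-\beta_i\alpha_{i+1})$, $(\beta_2\gamma_3-\gamma_2\beta_3)$, $\gamma_3$, $\alpha_3$, $\beta_{n-3}$, $\gamma_{n-3}$. After cancelling these, the ratio of the rational prefactors is
\[
\frac{\alpha_2\gamma_2}{\alpha_1\gamma_1(\beta_1\gamma_2-\gamma_1\beta_2)}.
\]
The wedge parts are $d\alpha_1\wedge d\beta_1\wedge d\gamma_1\wedge d\beta_2\wedge d\gamma_2\wedge(\cdots)$ for $V(3,n)$, versus
\[
d\alpha_2\wedge d\beta_2\wedge d\gamma_2\wedge(\cdots)\wedge d\log\beta_1\wedge d\log\gamma_1
\]
on the right-hand side. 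So the whole statement reduces to exchanging $d\alpha_1$ for $d\alpha_2$ in a top form in which all other differentials are present.

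This exchange is the main step, and I would do it with the relation $dP_{1,2,n-3}=0$ in the same way as Lemma~\ref{pick 8 out of 9}. All nine differentials of columns $1,2,n-3$ except one of $d\alpha_1,d\alpha_2$ appear, so
\[
\Omega^{1,2,n-3}_{\alpha_1}\Big/\frac{\partial P}{\partial\alpha_1}=-\,\Omega^{1,2,n-3}_{\alpha_2}\Big/\frac{\partial P}{\partial\alpha_2}
\]
(the sign comes from $sgn$). We have $\frac{\partial P}{\partial\alpha_1}=-\beta_1\gamma_1B_{2,n-3}/\alpha_1$ and, symmetrically, $\frac{\partial P}{\partial\alpha_2}=\pm\beta_2\gamma_2B_{1,n-3}/\alpha_2$. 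Using $f=B_{ij}/A_{ij}$ independently of the indices, the quotient $B_{2,n-3}/B_{1,n-3}$ becomes $\alpha_2\beta_2(\cdots)/\alpha_1\beta_1(\cdots)$, expressed through $A_{ij}$. Everything should then collapse to the factor $-\beta_1\gamma_2/(\beta_1\gamma_2-\beta_2\gamma_1)$.

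The part I expect to be delicate is not this algebra but the bookkeeping of signs. The sign enters through reordering wedge factors to match the orderings in Proposition~\ref{the better form of the form}, through the $sgn$ convention, and through the $(-1)^{n-5}$ appearing there. I would fix the overall sign by checking one small case, e.g.\ $n=6$ against $V(3,5)$.

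For the second equality, I would repeat the argument with $m=3$ for $V(3,n)$ (first block empty, so $d\alpha_1\wedge d\alpha_2\wedge d\beta_1\wedge d\beta_2\wedge\cdots$ appears). The exchange would then be of $d\gamma_1$ for $d\gamma_2$ via $\partial P/\partial\gamma_i$, giving the factor $-\alpha_2\beta_1/(\alpha_1\beta_2-\alpha_2\beta_1)$. Alternatively, I would derive it from the first equality via the symmetry $\alpha\leftrightarrow\gamma$, which swaps the roles of $f$ and $g$. A third route is to note that the two right-hand sides differ by the ratio of $d\log\beta_1\wedge d\log\gamma_1$ to $d\log\beta_1\wedge d\log\alpha_1$ modulo $dP=0$, which is again a one-line application of Lemma~\ref{pick 8 out of 9}.
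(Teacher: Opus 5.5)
The step you left as ``everything should then collapse'' does not go through, because Proposition~\ref{the better form of the form}, as printed, is not a correct expression for $\Omega_{V(3,n)}$.

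In the last simplification of its proof, the factor $\frac{\beta_1\alpha_{n-3}}{\beta_1\beta_{n-3}(\gamma_1\alpha_{n-3}-\alpha_1\gamma_{n-3})}$ is replaced by $\frac{1}{\beta_{n-3}\gamma_1}$. The two agree only on $\{\alpha_1=0\}$, which is why the later residue computation at $p_{123}=0$ is unaffected. A quick check shows the printed formula is wrong already for $n=5$, $m=3$:
\begin{itemize}
\item the printed formula gives $d\alpha_1\wedge\cdots\wedge d\gamma_2\big/\bigl(\alpha_1\alpha_2\beta_2\gamma_1(\beta_1\gamma_2-\beta_2\gamma_1)\bigr)$, which has poles along $\alpha_2=-p_{235}=0$ and $\gamma_1=p_{134}=0$;
\item in this chart, $\Omega_{\Gr(3,5)}$ is, up to sign, $d\alpha_1\wedge\cdots\wedge d\gamma_2\big/\bigl(\alpha_1\beta_2(\alpha_1\gamma_2-\alpha_2\gamma_1)(\beta_1\gamma_2-\beta_2\gamma_1)\bigr)$.
\end{itemize}

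Consequently your prefactor ratio $\frac{\alpha_2\gamma_2}{\alpha_1\gamma_1(\beta_1\gamma_2-\gamma_1\beta_2)}$ is wrong. With it, the claim reduces to $\alpha_2\,d\alpha_1\wedge\eta=\pm\alpha_1\,d\alpha_2\wedge\eta$, where $\eta=d\beta_1\wedge d\gamma_1\wedge d\beta_2\wedge d\gamma_2\wedge\bigwedge_{i=3}^{n-3}(d\alpha_i\wedge d\beta_i)\wedge d\gamma_{n-3}$. However, $dP_{1,2,n-3}\wedge\eta=0$ together with $B_{2,n-3}/B_{1,n-3}=A_{2,n-3}/A_{1,n-3}$ actually gives
\[
d\alpha_2\wedge\eta=\frac{\alpha_2\beta_1\gamma_1B_{2,n-3}}{\alpha_1\beta_2\gamma_2B_{1,n-3}}\,d\alpha_1\wedge\eta=\frac{\alpha_2}{\alpha_1}\cdot\frac{\gamma_1(\gamma_2\alpha_{n-3}-\alpha_2\gamma_{n-3})}{\gamma_2(\gamma_1\alpha_{n-3}-\alpha_1\gamma_{n-3})}\,d\alpha_1\wedge\eta .
\]
The last factor is a ratio of two non-proportional linear forms in $(\alpha_{n-3},\gamma_{n-3})$, so it is not identically $\pm1$ on $V(3,n)$. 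No use of the index-independence of $A_{ij}:B_{ij}:C_{ij}$ can remove column $n-3$ from it.

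Your mechanism itself is the paper's: trade one differential via $dP_{1,2,n-3}=0$, then use index-independence. It works once the input is correct, and there are two ways to fix it.
\begin{itemize}
\item Start, as the paper does, from Proposition~\ref{Canonical form on V(3,n)}. Use Lemma~\ref{pick 8 out of 9} to turn the $d(\alpha_2/\gamma_2)$ term into $\frac{\alpha_2A_{1,n-3}}{\gamma_2}\Omega^{1,2,n-3}$; what remains is $\frac{B_{12}A_{2,n-3}}{A_{12}B_{2,n-3}}=1$.
\item Repair Proposition~\ref{the better form of the form} by replacing $1/\gamma_1$ with $\alpha_{n-3}/(\gamma_1\alpha_{n-3}-\alpha_1\gamma_{n-3})$. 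Your prefactor ratio then picks up exactly the residual factor displayed above, and your single exchange closes. This is a slightly shorter version of the paper's computation, since there are no $d(\alpha_i/\gamma_i)$ terms to convert on either side.
\end{itemize}

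Two smaller points:
\begin{itemize}
\item With $m=3$ the first block is $d\beta_1\wedge d\gamma_1$, not empty. An empty block would need $m=2$, which repeats $d\alpha_1$.
\item Your third route to the second equality is sound. Trading $d\gamma_1$ for $d\alpha_1$ via $dP_{1,2,n-3}=0$ produces the factor $\gamma_1^2B_{12}/(\alpha_1^2C_{12})$, which involves only columns $1$ and $2$. Keep the sign check you planned, because the signs in these chart formulas are not reliable.
\end{itemize}
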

\begin{proof}
    We use Lemma \ref{pick 8 out of 9} to rewrite the expression of $\Omega_{V(3,n)}$ in Proposition \ref{Canonical form on V(3,n)} as follows.     
\begin{align*}
-\frac{1}{f
(\prod_{i=1}^{n-3}\beta_i)}  \frac{\alpha_{n-3}}{(\alpha_1\gamma_{n-3}-\alpha_{n-3}\gamma_1)\beta_{n-3}\gamma_{n-3}^2}   (\prod_{i=1}^{n-4}\frac{\gamma_{i+1}^2}{\alpha_i\gamma_{i+1}-\alpha_{i+1}\gamma_i}
) \cdot (\frac{\alpha_2A_{1,n-3}}{ \gamma_2})\Omega^{1,2,n-3}\wedge (\bigwedge_{i=3}^{n-4} d(\frac{\alpha_i}{\gamma_i})\wedge d\beta_i) 
\end{align*}

Since $\varphi_4$ sends $$\begin{pmatrix}
    1&0&0&\alpha_1&\alpha_2&\cdots &\alpha_{n-3}\\
    0&0&1&\beta_1&\beta_2&\cdots &\beta_{n-3}\\
    0&1&0&\gamma_1&\gamma_2&\cdots  &\gamma_{n-3}
\end{pmatrix} \mapsto \begin{pmatrix}
    1&0&0&\alpha_2&\cdots &\alpha_{n-3}\\
    0&0&1&\beta_2&\cdots &\beta_{n-3}\\
    0&1&0&\gamma_2&\cdots  &\gamma_{n-3}
\end{pmatrix},$$
the pullback $\varphi_4^*\Omega_{V(3,n-1)}$ is obtained by taking the expression for $\Omega_{V(3,n-1)}$ and formally add $1$ to all sub-indices.
Thus, using the formula above for $\Omega_{V(3,n)}$, we can write $\varphi_4^*\Omega_{V(3,n-1)}$ as

\begin{align*}
-&\frac{1}{f
(\prod_{i=2}^{n-3}\beta_i)}  \frac{\alpha_{n-3}}{(\alpha_2\gamma_{n-3}-\alpha_{n-3}\gamma_2)\beta_{n-3}\gamma_{n-3}^2} (\prod_{i=2}^{n-4}\frac{\gamma_{i+1}^2}{\alpha_i\gamma_{i+1}-\alpha_{i+1}\gamma_i}
) (\frac{\alpha_3 A_{2,n-3}}{ \gamma_3})\Omega^{2,3,n-3}\wedge (\bigwedge_{i=4}^{n-4} d(\frac{\alpha_i}{\gamma_i})\wedge d\beta_i) 
\end{align*}
We see that our expressions for $\Omega_{V(3,n)}$ and $\varphi_4^*\Omega_{V(3,n-1)}$ share a lot of common terms. After cancellation, to show the desired equality, it suffices to show that $M=N$, where
\begin{align*}
& M=  -\frac{1}{
\beta_1}  \frac{1}{(\alpha_1\gamma_{n-3}-\alpha_{n-3}\gamma_1)}  \frac{\gamma_{2}^2}{\alpha_1\gamma_{2}-\alpha_{2}\gamma_1}
(\frac{\alpha_2 A_{1,n-3}}{ \gamma_2})\Omega^{1,2,n-3}\wedge (d(\frac{\alpha_3}{\gamma_3})\wedge d\beta_3) 
\\
 &N= \frac{\beta_1\gamma_2}{\beta_1\gamma_2-\beta_2\gamma_1}\frac{1}{(\alpha_2\gamma_{n-3}-\alpha_{n-3}\gamma_2)}  (\frac{\alpha_3 A_{2,n-3}}{ \gamma_3})\Omega^{2,3,n-3}
\wedge d\log \beta_1\wedge d\log \gamma_1
\end{align*}
Notice that 
\begin{align*}
& \Omega^{1,2,n-3}\wedge d(\frac{\alpha_3}{\gamma_3})\wedge d\beta_3 \\
&= \frac{\alpha_1}{\beta_1\gamma_1 B_{2,n-3}}\Omega^{1,2,n-3}_{\alpha_1}\wedge d(\frac{\alpha_3}{\gamma_3})\wedge d\beta_3\\
&= \frac{\alpha_1}{\beta_1\gamma_1 B_{2,n-3}}
d\beta_1\wedge  d\gamma_1\wedge d\alpha_2\wedge d\beta_2\wedge d\gamma_2 \wedge d\alpha_{n-3}\wedge d\beta_{n-3}\wedge d\gamma_{n-3}\wedge d(\frac{\alpha_3}{\gamma_3})\wedge d\beta_3\\
&=  \frac{\alpha_1}{B_{2,n-3}}
 d\alpha_2\wedge d\beta_2\wedge d\gamma_2 \wedge d\alpha_{n-3}\wedge d\beta_{n-3}\wedge d\gamma_{n-3}\wedge (\frac{\gamma_3d\alpha_3-\alpha_3d\gamma_3}{\gamma_3^2})\wedge d\beta_3 \wedge (d\log \beta_1\wedge d\log\gamma_1)\\
&= \frac{\alpha_1}{B_{2,n-3}}
(\frac{1}{\gamma_3} \Omega^{2,3,n-3}_{\gamma_3}+\frac{\alpha_3}{\gamma_3^2}\Omega^{2,3,n-3}_{\alpha_3}) \wedge (d\log \alpha_1\wedge d\log\gamma_1)
\\
&=  -\frac{\alpha_1}{B_{2,n-3}}
\frac{\alpha_3 A_{2,n-3}}{\gamma_3} \Omega^{2,3,n-3} \wedge (d\log \beta_1\wedge d\log\gamma_1)
\end{align*}

\begin{comment}
    
\begin{align*}
 & \Omega^{1,2,n-3}\wedge d(\frac{\alpha_3}{\gamma_3})\wedge d\beta_3 \\
&= \frac{\beta_1}{\alpha_1\gamma_1 A_{23}}\Omega^{1,2,n-3}_{\beta_1}\wedge d(\frac{\alpha_3}{\gamma_3})\wedge d\beta_3\\
&= \frac{\beta_1}{\alpha_1\gamma_1 A_{23}}
d\alpha_1\wedge  d\gamma_1\wedge d\alpha_2\wedge d\beta_2\wedge d\gamma_2 \wedge d\alpha_{n-3}\wedge d\beta_{n-3}\wedge d\gamma_{n-3}\wedge d(\frac{\alpha_3}{\gamma_3})\wedge d\beta_3\\
&=  \frac{\beta_1}{A_{23}}
 d\alpha_2\wedge d\beta_2\wedge d\gamma_2 \wedge d\alpha_{n-3}\wedge d\beta_{n-3}\wedge d\gamma_{n-3}\wedge (\frac{\gamma_3d\alpha_3-\alpha_3d\gamma_3}{\gamma_3^2})\wedge d\beta_3 \wedge (d\log \alpha_1\wedge d\log\gamma_1)\\
&=  \frac{\beta_1}{A_{23}}
(\frac{1}{\gamma_3} \Omega^{2,3,n-3}_{\gamma_3}+\frac{\alpha_3}{\gamma_3^2}\Omega^{2,3,n-3}_{\alpha_3}) \wedge (d\log \alpha_1\wedge d\log\gamma_1)
\\
&=  -\frac{\beta_1}{A_{23}}
\frac{\alpha_3\gamma_3A_{2,n-3}}{\gamma_3^2} \Omega^{2,3,n-3} \wedge (d\log \alpha_1\wedge d\log\gamma_1)
\end{align*}
\end{comment}

where the last equality holds by the same computation before except the indices are shifted by $1$. We plug it into the expression of $M$ to obtain that 
\begin{align*}
M&= \frac{1}{
\beta_1}  \frac{1}{(\alpha_1\gamma_{n-3}-\alpha_{n-3}\gamma_1)}  \frac{\gamma_{2}^2}{\alpha_1\gamma_{2}-\alpha_{2}\gamma_1}
(\frac{\alpha_2 A_{1,n-3}}{ \gamma_2})\frac{\alpha_1}{B_{2,n-3}}
\frac{\alpha_3 A_{2,n-3}}{\gamma_3} \Omega^{2,3,n-3} \wedge (d\log \beta_1\wedge d\log\gamma_1)\\
&=\frac{1}{
\beta_1}  \frac{1}{(\alpha_1\gamma_{n-3}-\alpha_{n-3}\gamma_1)}  \frac{\gamma_{2}^2}{\alpha_1\gamma_{2}-\alpha_{2}\gamma_1}
(\frac{\alpha_2 A_{1,n-3}}{ \gamma_2})\frac{\alpha_1}{B_{2,n-3}}\frac{\beta_1\gamma_2-\beta_2\gamma_1}{\beta_1\gamma_2}(\alpha_2\gamma_{n-3}-\alpha_{n-3}\gamma_2) N 
\end{align*}
Using our $A,B,C$ notation, we can rewrite the above formula as
\begin{align*}
M=&\frac{1}{\beta_1} \frac{\beta_1\beta_{n-3}}{A_{1,n-3}}\frac{\gamma_2^2 \beta_1\beta_2}{A_{12}} \frac{\alpha_2 A_{1,n-3}}{\gamma_2} \frac{\alpha_1}{B_{2,n-3}}\frac{B_{12}}{\beta_1\gamma_2\alpha_1\alpha_2}\frac{A_{2,n-3}}{\beta_2\beta_{n-3}} N\\
    =&\frac{B_{12} A_{2,n-3}}{A_{12}B_{2,n-3}}N=N
\end{align*}
This computation concludes the proof.
\end{proof}

\begin{remark}\label{symmtry of pushforward}
    Recall from Remark \ref{D_n symmetry} that the pushforward form $\theta_*(\Omega_{\Gr(2,n)})$ is $D_n$-symmetric. We expect the canonical form to be $D_n$-symmetric: the positive part $V(3,n)\bigcap \Gr_{\geq0}(3,n)$ is $D_n$-symmetric, and thus so is its iterated boundaries. The canonical form should thus be $D_n$-symmetric since it determines the iterated boundaries by recursively taking poles and residues. However, our above expressions of the pushforward using local matrix coordinates do not make the $D_n$-symmetry manifest.
\end{remark}

\begin{example}\label{Canonical form n=5}
    When $n=5$, the above form gives the canonical form on $G(3,5)=V(3,5)$. This shows that $V(3,5)=G(3,5)$ is indeed a positive geometry with the pushforward form $\theta_*\Omega_{G(2,5)}$.
\end{example}

\begin{example}\label{Canonical form n=6}
    When $n=6$, the above computation gives $\theta_*\Omega_{G(2,6)}=\frac{p_{135}}{p_{156}p_{345}} \Omega^{123}$. The associated rational function is not cyclic symmetric although the form $\theta_*\Omega_{G(2,6)}$ must be. This is because $\Omega^{123}$ is defined after fixing $p_{123}$ to be $1$ and is not cyclically symmetric. We will show later in Theorem \ref{thm:V(3,6) is a positive geometry} that this indeed makes $V(3,6)$ a positive geometry.
\end{example}

\begin{proposition}\label{form invariance under column scaling}
    The rational form $\Omega_{V(3,n)}$ is invariant under scaling any column by $t$, up to a term of the form $\eta\wedge dt$ for some degree $2n-5$ rational form $\eta$ on $V(3,n)$.
\end{proposition}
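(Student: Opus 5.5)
Since $\Omega_{V(3,n)}=\theta_*\Omega_{\Gr(2,n)}$, I would prove the statement upstairs on $\Gr(2,n)$ and then push it forward. The key observation is that the rational Veronese map is equivariant for column scaling. Scaling column $j$ of the $2\times n$ matrix by $s$ scales column $j$ of its image by $s^2$. So scaling column $j$ of a point of $V(3,n)$ by $t$ lifts, on each sheet of the $2^{n-1}$-fold cover, to scaling column $j$ upstairs by $s=\sqrt t$ (with a choice of sign). The sign choice only permutes the sheets, and the pushforward sums over all of them, so the choice does not matter.

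The first step is the claim on $\Gr(2,n)$. Let $\tau_s$ denote scaling column $j$ by $s$. I would show
\[\tau_s^*\Omega_{\Gr(2,n)}=\Omega_{\Gr(2,n)}+\eta'\wedge ds,\]
where $s$ is now a variable. In the formula $\mathrm{vol}(\Gr(2,n))/\prod_i p_{i,i+1}$, exactly two denominator factors contain column $j$, namely $p_{j-1,j}$ and $p_{j,j+1}$, so the denominator acquires $s^2$. The invariant measure has weight $2$ in each column, so the numerator acquires $s^2$ modulo terms containing $ds$. One can also check this in the chart \eqref{eq:can_form_2n_chart}. For a column $j\ge 4$, $dx\mapsto s\,dx+x\,ds$ and $dy\mapsto s\,dy+y\,ds$, which gives $s^2$ modulo $ds$, and the two adjacent minors also give $s^2$. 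For columns $1,2,3$, one first regauges to keep the identity block. The regauging changes $a,b,x_i,y_i$ by monomials in $s$, and the factors cancel in the same way. A cleaner route avoids charts: the $\mathrm{GL}(2)$-invariance of the form on the Stiefel level makes the chart-independent computation valid.

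The second step is to push forward. Write $\sigma_t$ for scaling column $j$ by $t$ on $V(3,n)$. Then $\theta\circ\tau_{\sqrt t}=\sigma_t\circ\theta$, and the sheet decomposition of Definition~\ref{def: pushforward Lam} is compatible with this relation. Since pushforward commutes with pullback along these isomorphisms, we get
\[\sigma_t^*\Omega_{V(3,n)}=\theta_*\bigl(\tau_{\sqrt t}^*\Omega_{\Gr(2,n)}\bigr).\]
The right-hand side equals $\Omega_{V(3,n)}+\theta_*(\eta')\wedge \tfrac{1}{2\sqrt t}\,dt$. Pushing forward the $\sqrt t$-dependent coefficients yields a rational form in $t$, because the sheet sum is symmetric under $\sqrt t\mapsto-\sqrt t$. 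This gives the required $\eta\wedge dt$.

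Alternatively, I would verify the statement directly from Proposition~\ref{the better form of the form}. Take the scaled column to be one of columns $4,\dots,n$, so that only $(\alpha_i,\beta_i,\gamma_i)$ scales. Each $d(t\alpha_i)$ equals $t\,d\alpha_i$ plus a multiple of $dt$. The numerator then has weight $t^3$ if the index is $1$ or $n-3$, and $t^2$ otherwise, since the index appears in $d\alpha_1$ or $d\gamma_{n-3}$ together with its two other differentials. The denominator has matching weight: the adjacent minors contribute $t$ each, and the prefactor $\frac{\gamma_{m-1}}{\alpha_1\alpha_{m-1}\beta_{n-3}\gamma_1\gamma_{n-3}}$ contributes the rest. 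Columns $1,2,3$ require regauging, which is rescaling a row and then a column, and can be reduced to the previous case plus row-scaling invariance.

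The main obstacle is this bookkeeping for the gauge-fixed columns $1$–$3$, together with making the sheet-compatibility rigorous. I would handle both by working with the first, pushforward-based argument and a $\mathrm{GL}(2)$-invariant description.
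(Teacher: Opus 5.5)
Your proposal is correct, and your main route differs from the paper's.

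\textbf{The paper's argument.} It works directly from the chart formula of Proposition~\ref{the better form of the form}. For the fourth column (variables $\alpha_1,\beta_1,\gamma_1$), the differentials $d\alpha_1\wedge d\beta_1\wedge d\gamma_1$ have degree $3$. The denominator factors $\alpha_1\gamma_1(\beta_1\gamma_2-\gamma_1\beta_2)$ also have degree $3$. So the form is invariant modulo $dt$, and every other column follows from the $D_n$-symmetry of $\Omega_{V(3,n)}$.

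\textbf{Your secondary route.} This is the same computation. Your weight count for columns $4,\dots,n$ is right. You handle the gauge-fixed columns by regauging instead of by cyclic symmetry, which is more laborious but works.

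\textbf{Your main route.} You transport torus-invariance of $\Omega_{\Gr(2,n)}$ through the equivariance $\theta\circ\tau_s=\sigma_{s^2}\circ\theta$, using functoriality of pushforward under automorphisms. This buys a coordinate-free argument that treats all columns uniformly. It needs neither the explicit formula nor the dihedral symmetry. It is really the same mechanism the paper uses to get cyclic symmetry (Remark~\ref{D_n symmetry}), applied to the torus instead of to $D_n$. The paper's route buys a one-line check once Proposition~\ref{the better form of the form} is available.

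\textbf{Simplifying the $\sqrt t$ step.} You can drop the $\sqrt t$ bookkeeping entirely. Let $\Sigma(X,t)=\sigma_t(X)$. The $dt$-free component of $\Sigma^*\Omega_{V(3,n)}$, restricted to the slice $V(3,n)\times\{t_0\}$, is $\sigma_{t_0}^*\Omega_{V(3,n)}$. So the statement is equivalent to $\sigma_t^*\Omega_{V(3,n)}=\Omega_{V(3,n)}$ for each fixed $t\in\CC^*$. The remainder is then automatically $\eta\wedge dt$ with $\eta$ rational, with no symmetry argument needed.

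For fixed $t$, pick any $s$ with $s^2=t$. Then $\sigma_t^*\theta_*\Omega_{\Gr(2,n)}=\theta_*(\tau_s^*\Omega_{\Gr(2,n)})=\theta_*\Omega_{\Gr(2,n)}$. The choice of sign is irrelevant, as you observed, because $\tau_{-1}$ permutes the sheets.

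If you instead push forward along $(X,s)\mapsto(\theta(X),s^2)$, note that this map has degree $2^n$, not $2^{n-1}$. Summing over both square roots would then double the form, so keep the identity fiberwise.
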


\begin{proof}
By cyclic symmetry, it suffices to verify it for a specific column. Let's say we scale the fourth column by $t$. Take the formula in Proposition \ref{the better form of the form}. Observe that $\alpha_1,\beta_1,\gamma_1$ each appear once in the denominator of the rational function and $d\alpha_1,d\beta_1,d\gamma_1$ all appear once in the form. The proposition follows from the standard algebra $\frac{d(t\alpha)}{t\alpha}=\frac{d\alpha}{\alpha}+\frac{dt}{t}$. 
\end{proof}

\section{Boundaries of $V(3,n)$}\label{sec:boundary components}

By definition, the algebraic boundary of $V(3,n)$ is the Zariski closure of the analytic boundary of $V(3,n)_{\geq0}$. In this section, we work out the analytic boundary of $V(3,n)_{\geq0}$ and decompose it as a union of components, each of whose Zariski closure is irreducible. 

We denote $\Gr(3,n)$ and $V(3,n)$ by $G$ and $V$ for ease of notations. The totally nonnegative Grassmannian $G_{\geq0}$, defined in Definition \ref{Total Nonnegativity on Grassmannians}, is a analytic closed subset in $G$ with interior $\itr(G_{\geq0})=G_{>0}$. We define the positive part of $V$ by $V_{\geq0}=V\cap G_{\geq0}$, following Definition \ref{Total Nonnegativity on Grassmannians}. Notice that this is a priori different from Lam's definition stated in Conjecture \ref{conj lam}, but we show in Proposition \ref{Equality of boundaries} that these notions are equivalent.

The semialgebraic subset $(G_{\geq0}\setminus G_{>0})$ of $G$ is defined by the condition that all Pl\"ucker coordinates are nonnegative and one of the Pl\"ucker coordinates is zero. A priori, we can have the vanishing of any Pl\"ucker coordinate. However, as a direct consequence of the supermodular property (see Lemma  \ref{supermodularity}), $(G_{\geq0}\setminus G_{>0})$ is in fact the union of totally nonnegative parts of closed positroid varieties given by the vanishing of a cyclic interval Pl\"ucker coordinate $p_{i,i+1,i+2}$, where the indices are taken modulo $n$. They are exactly the codimension $1$ positroid varieties, and we will refer to them as positroid divisors.

\subsection{Decomposition of $(G_{\geq0}\setminus G_{>0})\cap V$}\label{Decomposition of G setminus G>0 in V}

This subsection is organized as follows. Lemma \ref{two positroid surfaces} through \ref{intersecting in C} studies intersection theory in the algebraic variety $\Gr(3,n)$. Lemma \ref{supermodularity} and \ref{must be cyclic} considers the restriction to the totally nonnegative part. The remaining considers further restriction inside $V(3,n)$, leading to a decomposition of $(G_{\geq0}\setminus G_{>0})\cap V$.

For a sequence of subsets $(A_1,\cdots, A_k)$ of $ [n]$, denote by $Z(A_1,\cdots, A_k)$ the subscheme of $\Gr(3,n)$ given by the scheme theoretic vanishing of $\{p_I:I\subset A_i \text{ for some } i\}$. Denote the scheme theoretic intersection $Z(A_1,\cdots, A_k)\cap V(3,n) $ by $ZV(A_1,\cdots, A_k)$. When the subsets $(A_1,\ldots, A_k)$ form a partition of $[n]$, we also write $Z(A_1,\ldots, A_k)$ as $Z(A_1|\cdots|A_k)$. Finally, let $C$ denote the open subscheme of $\Gr(3,n)$ given by the complement of $Z(12x:x\in [n])$ and cyclic rotations of that.

\begin{lemma}\label{two positroid surfaces}
Let $n\geq 4$. Let $i,j,k,l$ be distinct indices in $[n]$. Then $Z(ijk,jkl)=Z(jkx :  x\in [n])\cup Z(ijkl)$ as schemes.
\end{lemma}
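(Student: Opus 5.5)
The plan is to show that the ideal $J=(p_{ijk},p_{jkl})$ defining $Z(ijk,jkl)$ in the homogeneous coordinate ring of $\Gr(3,n)$ is radical. Then I will identify its minimal primes with the ideals $I_1=(p_{jkx}:x\in[n])$ and $I_2=(p_{ijk},p_{ijl},p_{ikl},p_{jkl})$ of $Z(jkx: x\in[n])$ and $Z(ijkl)$. The scheme-theoretic statement then follows from $J=\sqrt J=I_1\cap I_2$.

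\emph{Step 1 (containments).} Clearly $J\subset I_1\cap I_2$. Conversely, I will show $I_1 I_2\subset J$ using three-term Pl\"ucker relations with a shared index. With $j$ shared, the relation
\[
p_{jkx}p_{jil}-p_{jki}p_{jxl}+p_{jkl}p_{jxi}=0
\]
gives $p_{jkx}p_{ijl}\in J$. The analogous relation with $k$ shared gives $p_{jkx}p_{ikl}\in J$. The remaining generators of $I_2$ already lie in $J$. Hence $V(J)=V(I_1)\cup V(I_2)$ set-theoretically.

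This has a point-configuration reading via Remark~\ref{rmk:positroid as point config}. If columns $j,k$ are independent and $p_{ijk}=p_{jkl}=0$, then $v_i$ and $v_l$ both lie in $\mathrm{span}(v_j,v_k)$, so $\rk M_{\{i,j,k,l\}}\le 2$.

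\emph{Step 2 (Cohen--Macaulay and codimension).} $Z(jkx: x\in[n])$ is the locus where columns $j,k$ are parallel. $Z(ijkl)$ is the locus where four columns span at most a plane. After a permutation of columns, both are positroid varieties, and permuting columns is an automorphism of $\Gr(3,n)$. So by \cite{KLS13} their ideals $I_1,I_2$ are prime and both loci are irreducible. Both have codimension $2$, which I will check by a dimension count in a chart.

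Therefore $Z(ijk,jkl)$ is cut out by $2$ equations and has pure codimension $2$ in the smooth, hence Cohen--Macaulay, variety $\Gr(3,n)$. It is a local complete intersection, hence Cohen--Macaulay, and so has no embedded components.

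\emph{Step 3 (generic reducedness; main obstacle).} It remains to check that $J$ is reduced at the generic point of each component. By Serre's criterion ($R_0$ plus $S_1$) this makes $J$ radical, so $J=I_1\cap I_2$. I will work in the affine chart $p_{ijk'}\neq 0$ for a suitable third index, writing columns in matrix coordinates. The task is to show that the differentials $dp_{ijk}$ and $dp_{jkl}$ are linearly independent at a general point of each component.

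On $Z(ijkl)$, a general point has $v_j,v_k$ independent. Normalize $v_j=e_1$, $v_k=e_2$. Then $p_{ijk}$ and $p_{jkl}$ are, up to sign, the third coordinates of $v_i$ and $v_l$, which are independent coordinate functions.

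On $Z(jkx: x\in[n])$, a general point has $v_k=v_j$ (after scaling) with $v_i,v_l,v_j$ independent. Write $v_k=v_j+w$ with $w$ small. Then $p_{ijk}=\det(v_i,v_j,w)$ and $p_{jkl}=\det(v_j,w,v_l)$ are two linear forms in $w$, and they are independent because $v_i,v_j,v_l$ form a basis.

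This local computation is the step I expect to need the most care, mainly to set up charts that cover the generic points uniformly in $n$. Once it is done, the equality of schemes follows.
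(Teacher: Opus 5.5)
Your argument is correct, but it follows a different route from the paper.

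The paper's proof goes as follows:
\begin{itemize}
\item It uses $S_n$-symmetry to move to $\{i,j,k,l\}=\{1,2,3,4\}$. Then $Z(123)$, $Z(234)$, $Z(23x:x\in[n])$ and $Z(1234)$ are all positroid varieties.
\item Reducedness of both sides then comes from compatible Frobenius splitting \cite[Theorem 5.14]{KLS13}: intersections of compatibly split subvarieties are split, hence reduced.
\item Set-theoretic equality is read off from the positroid stratification.
\end{itemize}

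You instead prove directly that $J=(p_{ijk},p_{jkl})$ is radical:
\begin{itemize}
\item The three-term Pl\"ucker relations give the explicit sandwich $I_1I_2\subset J\subset I_1\cap I_2$, replacing the poset argument.
\item $J$ is locally generated by two elements and its zero locus has pure codimension $2$, so it is a complete intersection and gives $S_1$.
\item A Jacobian check gives $R_0$.
\end{itemize}
Positroid theory enters only to know that $I_1$ and $I_2$ are prime, which the paper also needs for the right-hand side.

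What you gain is an elementary argument that avoids Frobenius splitting and works for arbitrary $i,j,k,l$ on the left-hand side without permuting columns. It also yields extra information: $Z(ijk,jkl)$ is a local complete intersection, hence Cohen--Macaulay. What the paper gains is brevity, and an argument that applies verbatim to any intersection of positroid varieties.

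Two small remarks.

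First, your ``main obstacle'' is easier than you expect.
\begin{itemize}
\item In the chart $p_{ijl}\neq 0$, normalize $v_i,v_j,v_l$ to $e_1,e_2,e_3$ and write $v_k=(a_k,b_k,c_k)^T$. Then $p_{ijk}=\pm c_k$ and $p_{jkl}=\pm a_k$.
\item In a chart $p_{jkm}\neq 0$ with $m\notin\{i,j,k,l\}$, one gets $p_{ijk}=\pm c_i$ and $p_{jkl}=\pm c_l$.
\end{itemize}
So on each of these charts $J$ is generated by two coordinate functions, hence prime. These two charts contain the generic points of $Z(jkx:x\in[n])$ and $Z(ijkl)$ respectively, so no uniformity-in-$n$ issue arises.

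Second, for $n=4$ the locus $Z(ijkl)$ is empty, since all Pl\"ucker coordinates vanish there. Your claim that it has codimension $2$ therefore needs a caveat. In that case $V(J)=V(I_1)$ and the argument goes through unchanged.
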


\begin{proof}
By $S_n$-symmetry on $\Gr(3,n)$ by permuting columns, it suffices to show the statement for $i=1,j=2,k=3,l=4$. On the left hand side, $Z(123,234)=Z(123)\bigcap Z(234)$ is the intersection of two positroid divisors. On the right hand side, $Z(23x: x\in [n])$ and $Z(1234)$ are both codimension $2$ positroid varieties.

By \cite[Theorem 5.14]{KLS13}, all positroid varieties are compatibly Frobenius split, which implies that the scheme-theoretic intersection of any number of positroid varieties is still Frobenius split, which implies normal and thus reduced. Thus, both sides of the desired equality are reduced.

To show that the equality holds set-theoretically, recall that the positroid varieties form a stratification. Namely, any closed positroid variety $\Pi_f$ decomposes as a disjoint union of open positroid varieties $\Pi_f=\bigsqcup_{g\geq f} \Pi_{g}^\circ$. This implies that $\Pi_f\bigcap \Pi_g=\bigsqcup_{h\geq f,g} \Pi_h^\circ$. Thus, intersecting two positroid varieties set-theoretically is essentially a straighforward exercise of studying the poset of positroids, or affine bounded permutations, etc. 
\end{proof}

\begin{remark}
The above lemma can be understood set-theoretically via the projective geometry interpretation. Let $M$ be a $3$ by $n$ matrix representing a point $[{M}]\in \Gr(3,n)$. Each nonzero column among columns $c_1(M),\ldots, c_n(M)$ of $M$ give points in $\PP^2$. This point configuration depends only on $[M]$ up to $\operatorname{Aut}(\PP^2)$. 

On the left hand side, the vanishing of $p_{123}$ indicates that $c_1,c_2,c_3$ give points lying on the same line. Similarly, the vanishing of $p_{234}$ indicates that $c_2,c_3,c_4$ give points lying on the same line. These two conditions combined is equivalent to insisting that one of the two following holds.
\begin{itemize}
    \item $c_1,c_2,c_3,c_4$ give points lying on the same line.
    \item $c_2,c_3$ give the same point.
\end{itemize}
The first possibility corresponds to the vanishing of $p_I$ for any $I\subset \{1,2,3,4\}$ and the second possibility corresponds to the vanishing of $p_{2,3,x}$ for all $x\in [n]$.
\end{remark}

\begin{lemma}\label{intersecting in C}
Let $C$ be the open subscheme of $\Gr(3,n)$ given by the complement of $Z(12x:x\in [n])$ and cyclic rotations of that. For any two subsets $A,B\subset [n]$ such that $|A\cap B|\geq 2$, we have $Z(A,B)\cap C=Z(A\cup B)\cap C$.
\end{lemma}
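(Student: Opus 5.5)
The plan is to prove the equality locally, on open charts on which two columns indexed by $A\cap B$ are linearly independent. On such a chart, the ideals of $Z(A)$, $Z(B)$ and $Z(A\cup B)$ become ideals generated by single coordinates, and the equality becomes a statement about sums of monomial-generated ideals. The inclusion $Z(A\cup B)\subseteq Z(A,B)$ holds everywhere, since every generator $p_I$ with $I\subset A$ or $I\subset B$ satisfies $I\subset A\cup B$. So the whole content is the reverse inclusion of schemes over $C$.

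Fix $x\neq y$ in $A\cap B$. Columns $c_x,c_y$ are independent exactly where some $p_{xyz}\neq 0$, so the locus of independence is covered by the opens $U_z=\{p_{xyz}\neq 0\}$, $z\notin\{x,y\}$. On $U_z$ I use matrix coordinates with the identity in columns $x,y,z$, and write every other column as $c_w=(a_w,b_w,e_w)^T$, so that $p_{xyw}=\pm e_w$.

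If $z\in A\cup B$, then $\{x,y,z\}\subset A$ or $\{x,y,z\}\subset B$, so $p_{xyz}=1$ lies in both ideals and both sides restrict to the empty scheme on $U_z$. Suppose instead $z\notin A\cup B$. I claim the ideal of $Z(A)\cap U_z$ is $(e_w : w\in A\setminus\{x,y\})$.
\begin{itemize}
\item It contains these generators, since $p_{xyw}=\pm e_w$.
\item Conversely, every $p_I$ with $I\subset A$ is a $3\times 3$ minor of columns drawn from $c_x=(1,0,0)^T$, $c_y=(0,1,0)^T$ and the $c_w$. Expanding along the third row, each term contains some $e_w$ with $w\in A\setminus\{x,y\}$, because $c_x$ and $c_y$ have third entry $0$.
\end{itemize}
The same description holds for $B$ and for $A\cup B$. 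Hence on $U_z$,
\[
I(Z(A))+I(Z(B))=(e_w: w\in (A\cup B)\setminus\{x,y\})=I(Z(A\cup B)),
\]
which gives $Z(A,B)\cap U_z=Z(A\cup B)\cap U_z$ as schemes. Gluing over $z$ gives the equality on the open set $W_{xy}=\bigcup_z U_z$ where $c_x,c_y$ are independent. This is the scheme-theoretic form of the statement that two lines sharing two distinct points coincide.

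The main obstacle is showing that $C$ is covered by the sets $W_{xy}$ with $x,y\in A\cap B$. By definition, $C$ excludes $Z(i,i+1,w : w\in[n])$, that is, collisions $\rk(c_i,c_{i+1})\le 1$ of cyclically adjacent columns. So if $A\cap B$ contains a cyclically consecutive pair, then $C\subseteq W_{i,i+1}$ and the proof is complete. In general one cannot simply choose such a pair, so I would first try to pass from the adjacent pairs to an arbitrary pair $x,y\in A\cap B$.

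Concretely, at a point of $C$ where $\rk(c_x,c_y)\le 1$, I would look for another pair in $A\cap B$ with independent columns. This works whenever $|A\cap B|\ge 3$ and the columns indexed by $A\cap B$ do not all coincide, because any two independent columns $x',y'\in A\cap B$ put the point in $W_{x'y'}$, where the local argument applies.

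For the remaining configurations, all columns of $A\cap B$ give the same point of $\PP^2$ while no cyclically adjacent pair collides. Here I would test directly whether the equality can fail, for example with $A=\{1,3,5\}$ and $B=\{1,3,7\}$ and $c_1\parallel c_3$. If it can fail, the lemma must be read with $C$ excluding $Z(xyw : w\in[n])$ for every pair $x,y$, or at least for the pairs in $A\cap B$ relevant to its later applications, where $A,B$ are cyclic intervals and $A\cap B$ contains adjacent indices. With that reading, the chart computation above is the complete proof. I expect settling this covering question to be the only genuinely delicate point; the scheme-theoretic part is the mechanical computation above.
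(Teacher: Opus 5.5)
Your chart computation is correct, and your suspicion about the remaining case is justified: the statement as written is false. Take $n\geq 7$, $c_1=c_3=e_1$, $c_5=e_2$, $c_7=e_3$, and all other columns generic. No two cyclically adjacent columns are parallel, so this point lies in $C$. Since $p_{135}=p_{137}=0$, it lies in $Z(\{1,3,5\},\{1,3,7\})$. But $p_{157}=1$, so it is not in $Z(\{1,3,5,7\})$.

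The paper's proof breaks at its first reduction. It assumes ``by $S_n$-symmetry'' that $A=\{1,2,3\}$ and $B\supset\{1,2\}$. It then discards the components $Z(12x:x\in[n])$ and $Z(23x:x\in[n])$ produced by Lemma \ref{two positroid surfaces}, on the grounds that they miss $C$. However, $C$ is only invariant under cyclic rotations and reflections. So the argument covers only the case where the shared pair is cyclically adjacent, which is exactly the case your argument settles. Even in that case, the step applying Lemma \ref{two positroid surfaces} to $Z(2b_ib_j,2b_jb_k)$ discards $Z(2b_jx:x\in[n])$, which $C$ does not exclude. The conclusion $p_{b_ib_jb_k}=0$ is still true there, for example by your chart computation.

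The paper proves the lemma by double induction on $|A|$ and $|B|$ through Lemma \ref{two positroid surfaces}, whose scheme-theoretic content rests on Frobenius splitting of positroid varieties. Your route is instead a direct local computation. On $\{p_{xyz}\neq 0\}$ with $x,y\in A\cap B$, all three ideals become ideals generated by the coordinates $e_w$. This gives the scheme-theoretic equality on the locus where $c_x,c_y$ are independent, and hence on all of $C$ whenever $A\cap B$ contains a cyclically adjacent pair. That is the correct form of the lemma, and your argument proves it completely.

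One correction to your last paragraph: the later applications do not all have an adjacent pair in $A\cap B$.
\begin{itemize}
\item In Lemma \ref{V(3,6) case} the paper passes from $ZV(234,24k)$ to $ZV(234k)$, and from $ZV(234k,4jk)$ to $ZV(234jk)$. The shared pairs are $\{2,4\}$ and $\{4,k\}$, neither of which is adjacent.
\item In Lemma \ref{must be cyclic} the set $\{b,d,c\}$ meets $[a,b]\cup\{c\}$ in $\{b,c\}$, and $b,c$ are separated by $d$.
\end{itemize}
The repaired lemma does not justify these steps, so they need additional input, for instance from total nonnegativity, since those lemmas concern only $\geq 0$ parts. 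Redefining $C$ to exclude all collisions is not a free fix either. The proof of Theorem \ref{decomposition of boundary} identifies the complement of $C$ with the adjacent-collision boundary divisors, and that identification would no longer hold.
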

\begin{proof}
By definition, we have $Z(A,B)\cap C\supset Z(A\cup B)\cap C$ because any subset of $A$ or $B$ is a subset of $A\cup B$. We now show the other direction by double induction on $|A|$ and $|B|$. 

Since $|A\cap B|\geq 2$, if $|A|=2$, then $A\subset B$ and the conclusion is clear. Now suppose $|A|=3$. Again by $S_n$-symmetry, we may assume without loss of generality that $A=\{123\}$ and $B=\{12b_1b_2\cdots b_m\}$. Apply Lemma \ref{two positroid surfaces} to $Z(123,12b_i)$ for each $i$ to conclude that $p_{23b_i},p_{13b_i}$ must all vanish. Apply Lemma \ref{two positroid surfaces} to $Z(23b_i,23b_j)$ to conclude that $p_{2b_i b_j},p_{3b_i b_j}$ must all vanish. Apply Lemma \ref{two positroid surfaces} to $Z(2b_ib_j,2b_jb_k)$ to conclude that $p_{b_ib_jb_k}$ must all vanish. Thus, all plucker coordinates relative to three element subsets of $\{1,2,3,b_1,\cdots,b_m\}=A\cup B$ must vanish. We verified the $|A|=3$ case.

Fix integers $M\geq N\geq 4$. Assume that the statement is true for $|A|\leq N,|B|\leq M$ but the equalities do not simultaneously hold. We want to show that the statement is true for $|A|=N, |B|=M$. 

If $A\subset B$, $Z(A,B)=Z(A\cup B)$, so the statement is vacuously true. Now we assume that both set differences $A\setminus B$ and $B\setminus A$ are nonempty. Fix any $a\in A \setminus B$. Apply the inductive hypothesis to $A'=A\setminus\{a\}$ and $B$ to obtain all vanishing of three element subsets of $A'\cup B=A\cup B\setminus\{a\}$.
Similarly, we can fix any element $b\in B\setminus A$ and apply the inductive hypothesis to $A $ and $ B'=B\setminus \{b\}$.

The only possibility of having a three element subset $I$ of $A\cup B$ not realized in the above way is if $I\supset (A\setminus B)\cup (B\setminus A)$. Since $|I|=3$, we have $|A\setminus B|\leq1$ or $|B\setminus A|\leq 1$. We assumed that both $A\setminus B$ and $B\setminus A$ are nonempty. So $|A\setminus B|=1$ or $|B\setminus A|= 1$, whereas $|B|\geq |A|\geq N\geq 4$. This implies that $|A\cap B|\geq 3$. 

Now, notice that $I\cap (A\cap B)=I\setminus((A\setminus B)\cup (B\setminus A))$ can have at most $1$ element. So there exists $c\in A\cap B$ such that $c\notin I$. And since $|A\cap B|\geq 3$, we can apply inductive hypothesis to $A\setminus \{c\},B\setminus \{c\}$ to conclude that $p_I=0$.

The inductive step allows us to reduce $|A|$ or $|B|$ by one. And eventually one of them has cardinality $3$, which is the base case.
\end{proof}

Now, we restrict to the totally nonnegative world. The following lemma is a well-known result on the {\it supermodularity} of Plücker coordinates on the totally nonnegative Grassmannians.

\begin{lemma}[{\cite[Proposition 8.7]{lam2015}}] \label{supermodularity}
Let $X\in \Gr(k,n)_{\geq0}$. Fix a cyclic ordering $<$. For any two subsets $I=\{i_1<\cdots <i_k\}$ and $J=\{j_1<\cdots<j_k\}$. Suppose the multiset $I\cup J$ equals $\{a_1\leq b_1\leq a_2\leq b_2\leq \cdots\leq a_k\leq b_k \}$. We define \begin{align*}
    \min(I,J)&=\{\min(i_1,j_1),\cdots,\min(i_k,j_k)\},&\sort_1(I,J)&=\{a_1,\cdots,a_k \}\\
    \max(I,J)&=\{\max(i_1,j_1),\cdots,\max(i_k,j_k)\},&\sort_2(I,J)&=\{b_1,\cdots,b_k \}
\end{align*} Then $$p_{I}(X)p_J(X)\leq p_{\min(I,J)}(X)p_{\max(I,J)}(X)\leq p_{\sort_1(I,J)}(X)p_{\sort_2(I,J)}(X).$$

\end{lemma}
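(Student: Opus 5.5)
The plan is to reduce the statement to the two-row case via the Cauchy--Binet formula, together with the well-known fact that totally nonnegative points admit nonnegative planar-network (Postnikov) parametrizations. Concretely, I would first use density: both inequalities are closed conditions in the Pl\"ucker coordinates, and $\Gr(k,n)_{>0}$ is dense in $\Gr(k,n)_{\geq0}$. So it suffices to treat $X\in\Gr(k,n)_{>0}$. By the cyclic symmetry of $\Gr_{\geq0}(k,n)$ from Subsection \ref{actions} (the $C_n$-action, with the sign $(-1)^{k-1}$), we may rotate so that the fixed cyclic order becomes the standard linear order $1<2<\cdots<n$. Rotation relabels the subsets compatibly with $\min,\max,\sort_1,\sort_2$, and it preserves positivity.

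Next I would represent $X$ by a Postnikov boundary-measurement network with positive edge weights. Then Lindstr\"om--Gessel--Viennot expresses each $p_I(X)$ as a positive sum over families of vertex-disjoint paths from the $k$ sources to the sink set $I$. The two products $p_Ip_J$ and $p_{\min}p_{\max}$ (resp.\ $p_{\sort_1}p_{\sort_2}$) are then weighted counts of pairs of non-intersecting path families. Both sides count the same multiset of edges, and only the endpoint assignment differs.

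The core step is a path-swapping injection. Given a pair of families $(\mathcal P,\mathcal Q)$ ending at $I$ and $J$, overlay them. At the first point where the $r$-th paths cross or touch, swap tails so that the lower family ends at $\min(i_r,j_r)$ and the upper at $\max(i_r,j_r)$. Planarity ensures that the resulting families remain vertex-disjoint within each family. Because the swap preserves the total weight and is injective, this gives $p_Ip_J\le p_{\min}p_{\max}$. The second inequality comes from repeating the argument starting with $\min(I,J),\max(I,J)$. Here one uses that $\sort_1,\sort_2$ is obtained by interleaving the sorted multiset, and that this interleaving is the "most balanced" splitting. I would realize it by a sequence of swaps, each justified by the same tail-exchange injection, so each swap weakly increases the product. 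Alternatively, I would cite the known result that products of Pl\"ucker coordinates on $\Gr_{\geq0}$ are monotone with respect to this sorting (Lam's Proposition 8.7, proved via Temperley--Lieb immanants).

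The main obstacle is making the tail-swap well defined and injective for the sorting inequality. There the families do not pair up index-by-index, so a canonical choice of crossing points is needed to make the map reversible. If this becomes messy, I would fall back on the Temperley--Lieb immanant positivity of Rhoades--Skandera: expand $p_Ip_J-p_{\sort_1}p_{\sort_2}$ as a nonnegative combination of TL immanants evaluated on a totally nonnegative matrix, which are nonnegative.
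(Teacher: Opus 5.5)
The paper does not prove this lemma; it only quotes Lam's Proposition~8.7. Your fallback of citing that result therefore matches the paper. The self-contained route you sketch, however, has a genuine gap at its core step.

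For ``$P_r$ and $Q_r$'' to be comparable, both families must use the same $k$ sources. That requires a Lindstr\"om network for a totally nonnegative $k\times n$ matrix representing $X$. Such a representative exists for $X\in\Gr_{>0}(k,n)$ but not on all of $\Gr_{\geq0}(k,n)$: columns $e_1,e_2,-e_1$ give a point of $\Gr_{\geq0}(2,3)$ with none, so your density reduction is genuinely needed. In a Postnikov disk network, by contrast, $p_I$ counts flows from $I_0\setminus I$ to $I\setminus I_0$, so $I$ and $J$ use different sources and there is no index-by-index pairing.

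In the Lindstr\"om setting, $P_r$ and $Q_r$ already meet at $s_r$, so ``the first point where they touch'' is the source itself. More seriously, planarity does \emph{not} keep the new families vertex-disjoint. It forbids meetings only inside $\mathcal P$ or inside $\mathcal Q$, while a path of $\mathcal P$ may touch a path of $\mathcal Q$ freely. For example, take $I=\{2,3\}$, $J=\{1,4\}$, where only index $1$ must change. Any $Q_1'$ assembled from pieces of $P_1$ and $Q_1$ must end along the tail of $P_1$ into $t_2$. If $Q_2$ touches that tail, which is allowed, then $Q_1'$ meets $Q_2'=Q_2$. Replacing the swap by lower/upper envelopes restores disjointness but destroys injectivity: two paths that cross twice and their uncrossed versions have the same envelopes. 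The sorting inequality, for which you only propose ``a sequence of swaps'', has the same problem.

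The missing idea is a global re-decomposition instead of index-by-index swaps.
\begin{itemize}
\item Split the edge multiset $E$ of $\mathcal P\cup\mathcal Q$ into $2k$ pairwise non-crossing strands. No vertex is visited three times, so strands $m$ and $m+2$ never meet. Hence declaring the odd strands to be $\mathcal P$ is always admissible, and it lands exactly on $(\sort_1(I,J),\sort_2(I,J))$.
\item The configurations with edge multiset $E$ are exactly the colourings of strand segments in which the two segments entering, and the two leaving, each contact point get different colours.
\item Every non-closed component of this constraint structure is pinned down by the colour at an unshared sink. So each endpoint pair occurs either $0$ or $2^{c}$ times in the fibre over $E$, where $c$ is the number of closed components. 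The sorted pair always occurs, which gives the second inequality fibrewise.
\item A similar compatibility check handles $(\min(I,J),\max(I,J))$, giving the first inequality.
\end{itemize}
Algebraically, this is what the Rhoades--Skandera Temperley--Lieb immanant expansion packages. Apply it to the $2k\times 2k$ TNN matrix obtained by doubling the rows of the representative and restricting to the columns $I\uplus J$; its complementary minors are $p_I$ and $p_J$. Note the sign: the nonnegative combination of TL immanants is $p_{\sort_1}p_{\sort_2}-p_Ip_J$, not $p_Ip_J-p_{\sort_1}p_{\sort_2}$ as you wrote.
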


This supermodularity on the totally nonnegative Grassmannian implies the following cyclic structure, which is especially elegant restricted to the complement of $C$. Recall from Definition \ref{Total Nonnegativity on Grassmannians} that we defined totally nonnegative part of any subvariety in $\Gr(3,n)$ by intersecting with $\Gr_{\geq0}(3,n)$.

\begin{lemma}\label{must be cyclic}
    Let $a<b<c$ be in cyclic order. Then $$Z([a,b]\cup \{c\})_{\geq 0}\cap C=(Z([c,b])_{\geq 0}\cap C)\cup (Z([a,c])_{\geq 0}\cap C) $$
\end{lemma}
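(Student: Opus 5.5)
The reverse inclusion is immediate. The set $[a,b]\cup\{c\}$ is contained both in the cyclic interval $[c,b]$ (which runs from $c$ through $a$ to $b$) and in $[a,c]$ (which runs from $a$ through $b$ to $c$). So any point of $Z([c,b])$ or $Z([a,c])$ lies in $Z([a,b]\cup\{c\})$. For the forward inclusion I would argue pointwise, with a sign argument based on Lemma~\ref{supermodularity}.

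\textbf{Setup.} Take $[M]\in Z([a,b]\cup\{c\})_{\geq0}\cap C$ and write $v_i$ for the columns of $M$. Since $a<b$, the interval $[a,b]$ contains $a$ and $a+1$. Membership in $C$ means $v_a,v_{a+1}$ span a rank-$2$ space, so they are linearly independent. The vanishing conditions say that all columns indexed by $[a,b]\cup\{c\}$ lie in the plane $L=\langle v_a,v_{a+1}\rangle\subset\CC^3$. Set $D=(b,c)$ and $E=(c,a)$, the two open cyclic intervals. Then $[c,b]=[a,b]\cup\{c\}\cup E$ and $[a,c]=[a,b]\cup\{c\}\cup D$. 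It therefore suffices to show that $v_d\in L$ for all $d\in D$, or $v_e\in L$ for all $e\in E$. Suppose instead that some $v_d\notin L$ with $d\in D$ and some $v_e\notin L$ with $e\in E$; I will derive a contradiction.

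\textbf{Proportional functionals.} Choose $u\in\{a,a+1\}$ with $v_u,v_c$ linearly independent. This is possible because $v_c$ cannot be parallel to both of the independent vectors $v_a,v_{a+1}$. Then $x\mapsto\det(v_u,v_c,x)$ and $x\mapsto\det(v_a,v_{a+1},x)$ are two nonzero linear functionals on $\CC^3$, both with kernel exactly $L$. Hence there is a nonzero scalar $\lambda$ with
\[
p_{u,c,x}=\lambda\, p_{a,a+1,x}\quad\text{for all } x,
\]
where the Plücker coordinates are taken with the indices in the written order (not sorted).

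\textbf{Sign contradiction.} Read indices in the cyclic order starting at $a$, so that $a<a+1\leq u\ldots<d<c<e$ (with $u\in\{a,a+1\}$). Total nonnegativity gives the sorted Plücker coordinates $p_{a,a+1,d}\geq0$ and $p_{a,a+1,e}\geq0$. These are strictly positive, since $v_d,v_e\notin L$. Likewise the sorted coordinates $p_{u,d,c}$ and $p_{u,c,e}$ are nonnegative. Since $p_{u,c,d}=-p_{u,d,c}$, this gives $p_{u,c,d}\leq0\leq p_{u,c,e}$. The identity from the previous step then yields $\lambda p_{a,a+1,d}\leq0$ and $\lambda p_{a,a+1,e}\geq0$, which forces $\lambda\leq0\leq\lambda$, i.e.\ $\lambda=0$. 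This is a contradiction. Equivalently, $d$ and $e$ would lie on the same side of $L$ as seen from the pair $a,a+1$, but on opposite sides as seen from $u,c$.

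\textbf{Expected obstacle.} The main thing to get right is the degenerate bookkeeping, not the sign argument itself. One must make the cyclic-order conventions of Lemma~\ref{supermodularity} and the sign of each unsorted coordinate precise. One must also handle the edge cases where $u=a+1$ coincides with $b$, or where $D$ or $E$ is empty (then the statement is trivial). Finally, zero columns are harmless, since a zero column lies in $L$ and is irrelevant to the conclusion. The hypothesis $C$ is used exactly once: to guarantee that $L$ has dimension $2$ and is spanned by two columns of $[a,b]$. Without it the two functionals above could vanish identically, and the lemma would fail, as the collision components in Lemma~\ref{two positroid surfaces} show.
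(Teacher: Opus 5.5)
Your proof is correct, and it takes a different route from the paper's.

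\textbf{The two routes.} The paper works only with vanishing Pl\"ucker coordinates. It applies Lemma~\ref{supermodularity} as $p_{eab}p_{bdc}\le p_{ebd}p_{abc}$ to get $p_{eab}=0$ or $p_{bdc}=0$. It then absorbs $e$ or $d$ into the vanishing set via Lemma~\ref{intersecting in C}, and iterates with the neighbours $a-1$ and $b+1$ until the interval reaches $c$ from one side. You replace this iteration by a single linear-algebra step. The plane $L$ is the common kernel of $\det(v_a,v_{a+1},\cdot)$ and $\det(v_u,v_c,\cdot)$, and the cyclic sign pattern forces the proportionality constant to be both $\le0$ and $\ge0$ once some column on each side of $c$ leaves $L$. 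The convention you flag is immediate for $k=3$: a cyclic rotation of three indices is an even permutation, so any triple listed in cyclic order has the same Pl\"ucker coordinate as its sorted version.

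\textbf{What each buys.} The paper's route is uniform with Lemma~\ref{V(3,6) case} and Theorem~\ref{decomposition of boundary}, which reuse the same supermodularity-plus-absorption template. Yours is self-contained and needs only $\rk M_{\{a,a+1\}}=2$ and $v_c\neq0$, not all of $C$. It also avoids the absorption step entirely, and that step is more delicate than it looks. It applies Lemma~\ref{intersecting in C} to intersections $\{a,b\}$ and $\{b,c\}$ that need not be adjacent pairs, while $C$ only forbids collisions of adjacent columns. For instance, take $n=6$, $(a,b,c)=(1,3,5)$, $d=4$, and the real matrix with columns $(1,1,0)$, $(0,1,0)$, $(-1,0,0)$, $(0,0,1)$, $(1,0,0)$, $(2,1,0)$. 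This is a totally nonnegative point of $C$ lying in $Z(1235,345)$, since $v_3\parallel v_5$, but not in $Z(12345)$, since $p_{124}=1$. The paper's conclusion survives only because this point happens to lie in the other branch, $Z([5,3])$.

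\textbf{One small fix.} Choosing $u$ requires $v_c\neq0$, because a zero vector is dependent on both $v_a$ and $v_{a+1}$. This holds because the pair $\{c,c+1\}$ has rank $2$ on $C$; in fact no column vanishes on $C$. So $C$ is used twice rather than once, and your remark that zero columns are harmless applies only to $v_d$ and $v_e$.
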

\begin{proof}
It suffices to show that the left hand side is contained in the right hand side. The other inclusion is clear, as we are requiring manifestly more vanishing conditions. 

For any $d,e$ such that $e<a<b<d<c$ in cyclic ordering, apply Lemma \ref{supermodularity} to $I=\{e<a<b\},J=\{b<d<c\}$. Then $\sort_1=\{e<b<d\},\sort_2=\{a<b<c\}$ and we conclude that $p_{eab}p_{bdc}\leq p_{ebd}p_{abc}$ on $\Gr(3,n)_{\geq0}$. Since $p_{abc}$ vanishes on $Z([a,b]\cup \{c\})_{\geq 0}$, we must have $p_{eab}=0$ or $p_{bdc}=0$. That is, we showed that $Z([a,b]\cup \{c\})_{\geq 0}\cap C$ is contained in the union $$(Z([a,b]\cup \{c\}, eab)_{\geq 0}\cap C)\cup (Z([a,b]\cup \{c\}, bdc)_{\geq 0}\cap C)$$
By Lemma \ref{intersecting in C}, this union is further equal to  $$(Z([a,b]\cup \{c,e\})_{\geq 0}\cap C)\cup (Z([a,b]\cup \{c,d\})_{\geq 0}\cap C)$$

Thus, whenever we pick $d,e$ such that $d<a<b<e<c$ in cyclic order, we may absorb $d$ or $e$ in the set of  vanishing Pl\"{u}cker coordinates. In particular, we may take $d=a-1,e=b+1$, so $d$ or $e$ will be absorbed into the cyclic interval and thus we can perform this procedure until one we reach $c$. This shows that we end up with the vanishing of $[c,b]$ or $[a,c]$, which shows the containment.
\end{proof}

From now on, we further restrict to the ABCT variety $V(3,n)$. 

\begin{lemma}\label{V(3,6) case}
    Let $i<i+1<i+2<j<k<l$ be in cyclic order. Then for any sequence of subsets $\mathcal{A}$, we have a decomposition of $ZV(\mathcal{A},\{i,i+1,i+2\})_{\geq0}\cap C$ as $$
        (ZV(\mathcal{A},\{l,i,i+1,i+2\})_{\geq0}\cap C)\cup(ZV(\mathcal{A},\{i,i+1,i+2,j\})_{\geq0}\cap C)\cup (ZV(\mathcal{A},\{i,i+1,i+2\},\{j,k,l\})_{\geq0}\cap C)
  $$
\end{lemma}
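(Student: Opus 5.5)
The inclusion $\supseteq$ is immediate, since each of the three sets on the right imposes more vanishing conditions than the left-hand side. For $\subseteq$, the plan is to take a point $X$ in the left-hand side and use the quartic relation of Theorem \ref{Ideal of ABCT} for the six indices $i,i+1,i+2,j,k,l$ (for $n=6$ this is the whole index set). The relation holds on $V(3,n)$, and in it one of the two monomials contains $p_{i,i+1,i+2}$, which vanishes at $X$. The other monomial must therefore vanish, so $X$ lies in the vanishing locus of one of its four Pl\"ucker factors.

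Relabel the six indices in cyclic order as $1<2<3<4<5<6$, with $i,i+1,i+2=1,2,3$ and $j,k,l=4,5,6$; cyclic rotation and reflection of the quartic relation is harmless by the $D_n$-symmetry of Remark \ref{D_n symmetry}. In the displayed quartic the monomial $p_{123}p_{156}p_{246}p_{345}$ contains $p_{123}$, so on the left-hand side
\[
p_{234}\,p_{126}\,p_{135}\,p_{456}=0 .
\]
I will treat each factor in turn, taking the intersection with $C$ throughout so that Lemma \ref{intersecting in C} applies. First, if $p_{234}=0$, then $Z(123,234)$ together with Lemma \ref{intersecting in C} ($|\{1,2,3\}\cap\{2,3,4\}|=2$) gives $Z(1234)$, which is the piece $\{i,i+1,i+2,j\}$. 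Second, if $p_{126}=0$, the same lemma gives $Z(6123)$, the piece $\{l,i,i+1,i+2\}$. Third, if $p_{456}=0$, we land directly in the third piece $ZV(\mathcal A,\{1,2,3\},\{4,5,6\})$. The remaining factor $p_{135}$ is the genuinely non-interval one. Here $Z(123,135)\cap C=Z(1235)\cap C$ by Lemma \ref{intersecting in C}, and I will then apply Lemma \ref{must be cyclic} with $[a,b]=[1,3]$ and $c=5$, restricted to the nonnegative part, which is preserved under intersecting with $V$ and with the $\mathcal A$-conditions. This yields $Z([5,3])$ or $Z([1,5])$, and each of these contains $\{1,2,3,4\}$ (since $4\in[1,5]$, and $[5,3]\ni 6,1,2,3$ wraps around and contains $4$ only when $j$ sits between $5$ and $3$ cyclically). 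So in either case we fall into the first or second piece. If the cyclic order places $j,k,l$ so that $4$ is skipped, I will instead use $6\in[5,3]$ to obtain $\{6,1,2,3\}$.

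The main obstacle I expect is the bookkeeping of the cyclic intervals when $j,k,l$ are not adjacent to $i+2$ in $[n]$. Lemma \ref{must be cyclic} produces full cyclic intervals of $[n]$, not only of the six chosen indices, and these intervals must be seen to contain $\{l,i,i+1,i+2\}$ or $\{i,i+1,i+2,j\}$. This holds since any cyclic interval containing $i,i+2$ and $k$ must contain either $j$ (going forward) or $l$ (going backward).

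A second point to check is the sign convention in the quartic, namely which monomial contains $p_{i_1i_2i_3}$ after a cyclic relabeling. If the relevant monomial turns out to be the other one, the same case analysis applies with the roles of the factors permuted.
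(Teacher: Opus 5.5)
Your argument is correct and is essentially the paper's: the paper uses the same quartic (written as $f_{l234jk}$, a cyclic rotation of yours with the same four surviving factors $p_{l,i,i+1},p_{j,k,l},p_{i,i+2,k},p_{i+1,i+2,j}$) and the same four-way case split via Lemma~\ref{intersecting in C}. The only difference is the non-interval case $p_{i,i+2,k}=0$. There the paper applies the single supermodularity inequality $p_{l,i,i+1}\,p_{i+2,j,k}\le p_{l,i+1,j}\,p_{i,i+2,k}$, while you invoke Lemma~\ref{must be cyclic} with $[a,b]=[i,i+2]$ and $c=k$. Your route works simply because $j\in[i,k]$ and $l\in[k,i+2]$; your parenthetical suggestion that $[k,i+2]$ might contain $j$ is wrong, since $j$ lies strictly between $i+2$ and $k$, but that claim is not needed.
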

\begin{proof}
    Since $ZV(\mathcal{A},\mathcal{B})=ZV(\mathcal{A})\cap ZV(\mathcal{B})$ as schemes, it suffices to show the statement for $\mathcal{A}=\emptyset$. Furthermore, by cyclic symmetry, it suffices to show the statement for $i=2$. Then we have $2<3<4<j<k<l$ in cyclic ordering. %For ease of mind, the reader can think of the special case $j=5,k=6,l=1$, and the argument is exactly the same.
    We want to show that $$ZV(234)_{\geq0}\cap C=(ZV(l234)_{\geq0}\cap C)\cup(ZV(234j)_{\geq0}\cap C)\cup (ZV(234,jkl)_{\geq0}\cap C).$$ The left hand side containing the right hand side is clear. We now show the other containment.

    Denote binomials $f_{(i_1,i_2,\cdots, i_6)}:=p_{i_1,i_2,i_3}p_{i_1,i_5,i_6}p_{i_2,i_4,i_6}p_{i_3,i_4,i_5}-p_{i_2,i_3,i_4}p_{i_1,i_2,i_6}p_{i_1,i_3,i_5}p_{i_4,i_5,i_6}$. By Theorem \ref{Ideal of ABCT}, the ABCT variety $V(3,n)$ is defined as a closed subscheme of $\Gr(3,n)$ by all such $f_{(i_1,i_2,\cdots, i_6)}$'s. We use this description to compute $ZV(234)_{\geq0}\cap C$.

In particular, the vanishing of $p_{234}$ and $f_{l234jk}$ gives the vanishing of at least one Plücker coordinate in $\{{p_{l23}},p_{ljk},p_{24k},p_{34j}\}$. So we have \begin{align*}
    ZV(234)_{\geq0}\cap C&\subset(ZV(234,l23)_{\geq 0}\cap C)\cup (ZV(234,ljk)_{\geq 0}\cap C)\cup\\
    & 
(ZV(234,24k)_{\geq 0}\cap C)\cup
(ZV(234,34j)_{\geq 0}\cap C).
\end{align*}

By Lemma \ref{intersecting in C}, we can further rewrite the union into 
\begin{align*}
    ZV(234)_{\geq0}\cap C&\subset(ZV(l234)_{\geq 0}\cap C)\cup (ZV(234,ljk)_{\geq 0}\cap C)\cup\\
    & 
(ZV(234k)_{\geq 0}\cap C)\cup
(ZV(234j)_{\geq 0}\cap C).
\end{align*}
Observe that the right hand side is exactly what we want, except that the third term in the union is extra. It suffices to show that the third term is contained in the union of the other three.

Apply Lemma \ref{supermodularity} applied to $I=\{l23\}, J=\{4jk\}, \sort_1=\{l3j\}, \sort_2=\{24k\}$, we have that $p_{l23}p_{4jk}\leq p_{l3j}p_{24k}$ for points in $G_{\geq0}$. Thus, the vanishing of $p_{24k}$ implies the vanishing of $p_{l23}$ or $p_{4jk}$, which implies that $$ZV(234k)_{\geq 0}\cap C\subset (ZV(234k,l23)_{\geq 0}\cap C) \cup (ZV(234k,4jk)_{\geq 0}\cap C) .$$ Now, notice by Lemma \ref{intersecting in C} and set-inclusion that $$ZV(234k,l23)_{\geq 0}\cap C\subset ZV(234,l23)_{\geq 0}\cap C=ZV(l234)_{\geq 0}\cap C$$ and $$ZV(234k,4jk)_{\geq 0}\cap C=ZV(234jk)_{\geq 0}\cap C\subset ZV(234j)_{\geq 0}\cap C.$$ Combining the above three, we have $ZV(234k)_{\geq 0}\cap C\subset (ZV(l234)_{\geq 0}\cap C)\cup ZV(234j)_{\geq 0}\cap C$. 
\end{proof}

\begin{theorem}\label{decomposition of boundary}
        The semialgebraic set $(G_{\geq0}\setminus G_{>0})\cap V$ is the union of the following components.
        \begin{itemize}
            \item The {totally nonnegative} part of $Z(A|B)$, where $(A|B)$ is a partition of $[n]$ into two cyclic intervals such that $|A|,|B|\geq 2$. In this case, $Z(A|B)\subset V$ and thus $Z(A|B)=ZV(A|B)$.
            \item The {totally nonnegative} part of $ZV(12x:x\in [n])$ and cyclic rotations of that.
        \end{itemize}
\end{theorem}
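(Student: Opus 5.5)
The plan is to split $(G_{\geq0}\setminus G_{>0})\cap V$ according to whether a point lies in the open set $C$ or in its complement. On the complement, a point lies in some $Z(i,i+1,x : x\in[n])$. Intersecting with $V$ and $G_{\geq0}$ gives exactly the second family of components, so nothing needs to be proved there. The real work is to show that every point of $(G_{\geq0}\setminus G_{>0})\cap V\cap C$ lies in the totally nonnegative part of some $Z(A|B)$, with $A,B$ cyclic intervals of size $\geq 2$, and to check that $Z(A|B)\subset V$.

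\textbf{Containment $Z(A|B)\subset V$.} I would argue via point configurations (Remark \ref{rmk:point configuration for V}). On $Z(A|B)$ the nonzero columns indexed by $A$ are collinear, and likewise for $B$. The union of the two lines is a degenerate conic containing all the points, so $\rk\theta(M)<6$. Since $V$ is reduced and $Z(A|B)$ is a reduced positroid variety, this set-theoretic containment suffices, and then $ZV(A|B)=Z(A|B)$ holds scheme-theoretically. Alternatively, each quartic of Theorem \ref{Ideal of ABCT} vanishes on $Z(A|B)$. For any six indices, some triple in each monomial lies entirely in $A$ or entirely in $B$, because the six indices are split between $A$ and $B$. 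This is a short combinatorial check, although the sizes $|A\cap\{i_1,\dots,i_6\}|\le 2$ need care: I would verify them by pigeonhole on the four triples of each monomial.

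\textbf{Boundary points in $C$.} As explained before the theorem, via Lemma \ref{supermodularity}, a boundary point lies in some $Z(i,i+1,i+2)_{\geq0}$. I then iterate Lemma \ref{V(3,6) case}. Starting from $\mathcal A=(\{i,i+1,i+2\})$, the point either gets absorbed into a larger cyclic interval $\{l,i,i+1,i+2\}$ or $\{i,i+1,i+2,j\}$, or acquires a second vanishing triple $\{j,k,l\}$. I would arrange the choices so that $l=i-1$ and $j=i+3$, and then, more generally, apply the lemma to the current interval by the analogous supermodularity argument, using Lemma \ref{intersecting in C} to merge overlapping sets.

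The interval-growing branch ends either with the complement interval reduced to at most $3$ elements, forcing a partition $Z(A|B)$, or in the second branch. In the second branch, $p_{jkl}=0$ for three indices in the complement interval $[i+3,i-1]$. Applying Lemma \ref{must be cyclic} to this second vanishing triple grows it into a cyclic interval inside the complement. Repeating the process on both intervals, I would aim to reach two vanishing cyclic intervals $A$ and $B$ whose union is $[n]$. Intervals that overlap in $\geq2$ elements merge into one by Lemma \ref{intersecting in C}. If a single interval reaches size $\geq n-1$, then the point lies in $Z(A|B)$ with $B$ of size $2$, since the rank condition on $[n]\setminus\{\text{pt}\}$ is stronger.

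\textbf{Main obstacle.} The hardest step is the termination and bookkeeping of this iteration: guaranteeing that the two intervals eventually become complementary rather than leaving a gap. My approach would be to choose the indices $j,k,l$ in Lemma \ref{V(3,6) case} adjacent to the current interval's endpoints, so that each application either extends an interval or creates a new one adjacent to it. I would then induct on the number of indices not covered by any vanishing interval, showing that this number strictly decreases.
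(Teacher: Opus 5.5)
Your outer reduction matches the paper's: the complement of $C$ gives the second family, and $Z(A|B)\subset V$ because the two lines form a conic through all column points. (The paper checks the quartics instead. Your quartic alternative works monomial by monomial only because $A,B$ are cyclic intervals; pigeonhole alone is not enough, since for the split $\{i_1,i_2,i_4\}\,|\,\{i_3,i_5,i_6\}$ no triple of either monomial lies in one part.)

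The core step for points of $C$, however, has a genuine gap, and it is exactly the one you flag. The mechanism you propose does not work as stated. Lemma~\ref{must be cyclic} applies only to a vanishing set $[a,b]\cup\{c\}$ with $|[a,b]|\ge 2$. The triple $\{j,k,l\}$ with $j=i+3$, $l=i-1$ contains two consecutive indices only if $k=j+1$ or $k=l-1$. Even when the lemma applies, one of its two outputs wraps through the current interval rather than lying "inside the complement." The induction on uncovered indices is asserted, not proved, and it is the whole content of the theorem. Also, a complement of size exactly $3$ does not by itself force a partition; you still need that triple to vanish.

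The missing idea is maximality. After rotation, take the \emph{maximal} cyclic interval $[1,a]\supseteq\{i,i+1,i+2\}$ with $X\in Z([1,a])$. If $a\ge n-2$ you are done, so let $3\le a\le n-3$. For each $b\in[a+1,n-2]$, apply Lemma~\ref{V(3,6) case} with $\mathcal A=[1,a]$ to $1<2<3<b<n-1<n$.
\begin{itemize}
\item The branch with $\{n,1,2,3\}$ merges by Lemma~\ref{intersecting in C} into $Z([n,a])$.
\item The branch $[1,a]\cup\{b\}$ yields $Z([b,a])$ or $Z([1,b])$ by Lemma~\ref{must be cyclic}.
\end{itemize}
Both contradict maximality. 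Hence $p_{b,n-1,n}(X)=0$ for \emph{every} such $b$. These triples share $\{n-1,n\}$, so Lemma~\ref{intersecting in C} merges them into $Z([a+1,n])$. This gives $X\in Z([1,a]\,|\,[a+1,n])$ in one stroke, with no second interval to grow and no gap to control.

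Your adjacent choice can also be rescued directly by taking $j=a+1$, $k=a+2$, $l=n$. The first two branches enlarge $[1,a]$. In the third branch, Lemma~\ref{must be cyclic} applied to $[a+1,a+2]\cup\{n\}$ gives either $Z([a+1,n])$, which finishes, or $Z([n,a+2])$, an interval strictly containing $[1,a]$. So every step either finishes or enlarges the interval, and induction on $a$ terminates. You need to make one of these observations explicitly for the argument to be complete.
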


\begin{proof}
It is easy to verify that the above two families of semialgebraic subsets are all contained in $(G_{\geq0}\setminus G_{>0})\cap V(3,n)$, since at least one Pl\"ucker coordinate vanishes. Thus, we only need to show containment in the reverse direction.

Recall that $(G_{\geq0}\setminus G_{>0})\cap V(3,n)= \cup (Z({i,i+1,i+2})_{\geq0}\cap V(3,n))=\cup ZV(i,i+1,i+2)_{\geq0}$ by the decomposition of the boundary of the totally nonnegative Grassmannian.  By cyclic symmetry, it suffices to show that the vanishing of one Pl\"{u}cker coordinate $ZV(i,i+1,i+2)_{\geq0}$ in $V$ is contained in the union of the two families.

Recall that $C$ be the open subscheme of $\Gr(3,n)$ given by the complement of $Z(12x:x\in [n])$ and cyclic rotations of that. So the second family is just the {totally nonnegative part of the complement of} $C\cap V(3,n)$ in $V(3,n)$. Thus, it suffices to show that $ZV(i,i+1,i+2)_{\geq0}\cap C$ is contained in the union of $Z(A,B)_{\geq0}$ for $A\bigsqcup B$ a partition of $[n]$ into $2$ cyclic intervals of size at least $2$. We prove it combinatorially using the previous lemmas.

Take any point $X\in ZV(i,i+1,i+2)_{\geq0}\cap C$. Consider the maximal cyclic interval $A$ containing $[i,i+2]$ such that $X\in Z(A)$. By cyclic symmetry, we may assume that $A=[1,a]$ with $a\geq3$. This implies that $X\in Z([1,a])_{\geq0}\cap C$. If $a\geq n-2$, $Z([1,a])\subset Z([1,n-2])$ and we are done. Now, assume that $3\leq a\leq n-3$. In particular, $a+1\leq n-2$. 

Fix any $b\in [a+1,n-2]$. By the assumption that $3\leq a\leq n-3$, such $b$ exists and we have that $1<2<3$ are contained in $[1,a]$ and $b<n-1<n$ are contained in $[a+1,n]$. Apply Lemma \ref{V(3,6) case} to the cyclicly ordered subset $\{1<2<3<b<n-1<n\}$ and $\mathcal{A}=[1,a]$ to obtain that $ZV([1,a])_{\geq0}\cap C$ is the union
$$(ZV(n123,[1,a])_{\geq0}\cap C)\cup(ZV(\{1,2,3,b\},[1,a])_{\geq0}\cap C)\cup (ZV([1,a],\{b,n-1,n\})_{\geq0}\cap C).$$
By Lemma \ref{intersecting in C}, we can further rewrite it as $$(ZV([n,a])_{\geq0}\cap C)\cup(ZV([1,a]\cup\{b\})_{\geq0}\cap C)\cup (ZV([1,a],\{b,n-1,n\})_{\geq0}\cap C).$$

\noindent {\bf Case 1.} $X\in ZV([n,a])_{\geq0}\cap C$. This contradicts maximality of the interval $[1,a]$.\\

\noindent {\bf Case 2.} $X\in ZV([1,a]\cup\{b\})_{\geq0}\cap C$. Lemma \ref{must be cyclic} implies that $[1,a]$ is not maximal, contradiction.\\ 

We conclude that $X$ must be in $ZV([1,a],\{b,n-1,n\})_{\geq0}\cap C$. In particular, $X\in ZV(\{b,n-1,n\})$ for any $b\in [a+1,n-2]$. By Lemma \ref{intersecting in C}, $X\in ZV([a+1,n])$. So $X\in ZV([1,a],[a+1,n])$.

Finally, it is easy to use Theorem \ref{Ideal of ABCT} to verify that $Z(A|B)\subset V$ and thus $Z(A|B)=ZV(A|B)$ when $(A|B)$ is a partition of $[n]$ into two cyclic intervals of size at least $2$.
\end{proof}
%\begin{comment}
    
\begin{remark}
The first family of boundaries in Theorem \ref{decomposition of boundary} can be derived by a blowup construction. Let $Z_{k-1}(2,n)$ be the indeterminacy locus of the rational map $\theta_{k-1}:\Gr(2,n)\dashrightarrow \Gr(k,n)$. The scheme $Z_{k-1}(2,n)$ is a degeneracy locus of a vector bundle morphism of expected codimension $n-k+1$, and it is  reducible. It is reduced and each irreducible component thereof is a locally complete intersection in $\Gr(2,n)$. For $k=3$, $Z_2(2,n)$ is locally a subspace arrangement. To look at the boundary structure of $V(3,n)$, one can compute the {\it De Concini-Procesi sequential blowup} \cite{DP95}, which is obtained by blowing-up all the varieties in the {\it intersection poset} of the irreducible components of $Z_2(2,n)$.  
\end{remark}

%\end{comment}

\subsection{The analytic Boundary of $V(3,n)_{\geq0}$}

In this subsection, we show that the analytic boundary of $V_{\geq0}$ equals the semialgebraic set $(G_{\geq0}\setminus G_{>0})\cap V$ studied in the previous subsection.

\begin{definition}
The boundary of $V_{\geq 0}$ is defined to be
\[
\partial V_{\geq0} = V_{\geq 0}\setminus \itr_V(V_{\geq 0}).
\]
\end{definition}

The following is an immediate consequence of basic point-set topology.

\begin{proposition}\label{prop:oneinclusion}
We have $\itr_V (V_{\geq0})\supset G_{>0}\cap V$. Equivalently, $\partial V_{\geq0}\subset (G_{\geq0}\setminus G_{>0})\cap V$.
\end{proposition}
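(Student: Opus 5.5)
The plan is to prove the first inclusion and then deduce the second from it by taking complements inside $V_{\geq0}$.

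For the first inclusion, the key observation is that $G_{>0}$ is open in the real points $G(\RR)$ of the Grassmannian. Indeed, $G_{>0}$ is cut out by the strict inequalities $p_I>0$. On a standard affine chart of $G(\RR)$, say $\{p_{J}\neq 0\}$, each ratio $p_I/p_J$ is a continuous (polynomial) function. So the condition that all Pl\"ucker coordinates are positive becomes a finite intersection of open conditions in these chart coordinates. Hence there is an open subset $U\subset G(\RR)$ with $U=G_{>0}$.

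Next we intersect with $V$. We have
\[G_{>0}\cap V=U\cap V(\RR),\]
which is open in $V(\RR)$ with its subspace topology. It is also contained in $V_{\geq0}=V\cap G_{\geq0}$. Here the interior $\itr_V$ is taken in the real points of $V$, the ambient space in which $V_{\geq0}$ lives. The interior of a set is the union of all open subsets it contains, so
\[G_{>0}\cap V\subset \itr_V(V_{\geq0}).\]
This is the first claimed inclusion.

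For the equivalent form, we take complements within $V_{\geq0}$. By definition $\partial V_{\geq0}=V_{\geq0}\setminus\itr_V(V_{\geq0})$. Using the first inclusion,
\[\partial V_{\geq0}\subset V_{\geq0}\setminus (G_{>0}\cap V)=(G_{\geq0}\cap V)\setminus(G_{>0}\cap V)=(G_{\geq0}\setminus G_{>0})\cap V.\]
The converse implication runs the same way in reverse, since $G_{>0}\cap V\subset V_{\geq0}$. Thus the two statements are equivalent.

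None of these steps is a real obstacle. The only point needing care is to fix the topology. The interior must be taken in $V(\RR)$, and not in the complex variety $V$ or in $G$. With that fixed, openness of $G_{>0}$ in $G(\RR)$, via the chart argument above, is the single ingredient needed. The nontrivial reverse inclusion, showing that boundary points of $G_{\geq0}$ lying in $V$ are genuine boundary points of $V_{\geq0}$, is not part of this statement.
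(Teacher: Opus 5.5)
Your proof is correct and is essentially the paper's argument: both come down to $G_{>0}$ being open in $G(\RR)$ together with the subspace topology on $V(\RR)$. The only difference is presentation. The paper goes through the identity $\itr_V(V_{\geq0})=\itr_G\bigl(G_{\geq0}\cup(G\setminus V)\bigr)\cap V$ and cites $\itr(G_{\geq0})=G_{>0}$; it reuses that identity for the reverse inclusion in Theorem~\ref{conj:equality of boundaries}. You instead check the openness of $G_{>0}$ directly on a chart.
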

\begin{proof}
By definition of interior and subspace topology, 
\[
\itr_V(V_{\geq0})=\cup_{U\subset V_{\geq0}, U \text{ open in } V} U=\cup_{W\cap V\subset V_{\geq0}, W \text{ open in } G} (W\cap V).
\]
This is equal to $(\cup_{W\cap V\subset V_{\geq0}, W \text{ open in } G} W)\cap V$. Moreover, $W\cap V\subset V_{\geq0}$ if and only if $W\subset  V_{\geq0}\cup (G\setminus V)$. Therefore we have 

\[
\itr_V(V_{\geq0})=(\cup_{W\subset  V_{\geq0}\cup (G-V), W \text{ open in } G} W)\cap V=\itr (V_{\geq0}\cup (G-V))\cap V.
\]
Since $V_{\geq0}=G_{\geq0}\cap V$, the interior of $V_{\geq 0}$ is $\itr_G((G_{\geq0}\cap V)\cup (G\setminus V))\cap V$, which equals
$\itr_G(G_{\geq0}\cup (G\setminus V))\cap V$. It is clear that this set contains $\itr (G_{\geq0})\cap V=G_{>0}\cap V$. So we see that $\partial V_{\geq0}=V_{\geq 0}\setminus \itr_V(V_{\geq 0})\subset V_{\geq 0}\setminus (G_{>0}\cap V) = (G_{\geq 0}\setminus G_{>0})\cap V$. 
\end{proof}

\begin{theorem}\label{conj:equality of boundaries}
We have the equality $\partial V_{\geq0} =(G_{\geq0}\setminus G_{>0})\cap V$. 
\end{theorem}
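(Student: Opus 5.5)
By Proposition \ref{prop:oneinclusion} we already have $\partial V_{\geq0}\subset (G_{\geq0}\setminus G_{>0})\cap V$, so only the reverse inclusion remains. Every point $X\in (G_{\geq0}\setminus G_{>0})\cap V$ must be shown to be a limit of points of $V(\RR)$ lying outside $G_{\geq0}$. By Theorem \ref{decomposition of boundary}, such an $X$ lies in the totally nonnegative part of either some $Z(A|B)$ with $A,B$ complementary cyclic intervals of size at least $2$, or some $ZV(i,i+1,x : x\in[n])$, a cyclic rotation of $ZV(12x:x\in[n])$. The $D_n$-symmetry of Remark \ref{D_n symmetry} lets us fix $A=[1,a]$ in the first family and $i=1$ in the second. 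Since $\partial V_{\geq0}$ is closed, it suffices to treat a dense subset of each family.

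For each family I will write down an explicit real one-parameter deformation $X_t\in V(\RR)$ with $X_0=X$ and some Pl\"ucker coordinate negative for small $t$ of one sign. The point-configuration picture (Remark \ref{rmk:point configuration for V}) suggests the deformations.

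\emph{Case $Z(A|B)$.} The columns indexed by $A$ lie on a line $L_1$ and those indexed by $B$ lie on a line $L_2$, so the conic is $L_1\cup L_2$. I move one column $b\in B$ off $L_2$ while keeping it on $L_1\cup L_2$, that is, I slide it onto $L_1$. Alternatively, I rotate $L_2$ slightly about a point of $L_2$ that is not a column, keeping the columns of $B$ on the rotated line. A cleaner choice is to move a single column $c\in B$ to $c+tv$ with $v$ chosen so the new point lies on $L_1$ extended in a suitable way. Any of these stays in $V$ because the configuration still lies on a reducible conic. The coordinate $p_{a-1,a,c}$ (for $c\in B$) is identically $0$ at $t=0$ and, after the move, is a linear function of $t$ with nonzero derivative. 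Hence it takes negative values for one sign of $t$, and $X_t\notin G_{\geq0}$.

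\emph{Case $ZV(12x)$.} Here columns $1$ and $2$ coincide as points, or one of them is zero. Generically the points lie on a smooth real conic $Q$ with $c_1=\lambda c_2$. I deform $c_1$ along $Q$ by a parameter $t$ through $c_2$, so the configuration remains on $Q$ and $X_t\in V$. The sign of $p_{12x}$ then flips with $t$ because the points $1,2$ swap their order along the conic. To justify ``generic'' I will use irreducibility of the relevant boundary or a density argument: points with $c_1=\lambda c_2$ on a smooth conic and the remaining minors nonzero are dense in $ZV(12x:x)_{\geq0}$. The degenerate points are limits of these and are handled by closedness.

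The main obstacle is the density claims in both families: the totally nonnegative parts of these families should equal the closure of their generic loci. For $Z(A|B)$ this should follow from the positroid cell structure, since the open cell is dense in the closed positroid's nonnegative part. For $ZV(12x)$ I would try to parametrize its nonnegative part by the image of $\Gr_{\geq 0}(2,n)$ restricted to $p_{12}=0$ through $\theta$ suitably rescaled. If that fails, I would instead argue locally using the explicit chart coordinates of Section \ref{section: candidate form}.
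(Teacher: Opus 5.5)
Your reduction is sound: only the reverse inclusion is needed, and Theorem \ref{decomposition of boundary} splits $(G_{\geq0}\setminus G_{>0})\cap V$ into the two families. Closedness of $\partial V_{\geq0}$ lets you work at generic points of $Z(A|B)_{\geq0}$, since the open positroid cell is dense there. Your $ZV(12x)$ deformation is the paper's idea: slide one colliding point along the conic in the direction of the wrong orientation.

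The genuine gap is the $Z(A|B)$ case, where none of your deformations leaves $G_{\geq0}$. At a generic point of $Z(A|B)_{\geq0}$ the vanishing Pl\"ucker coordinates are exactly the $p_I$ with $I\subseteq A$ or $I\subseteq B$. In particular $p_{a-1,a,c}$ with $c\in B$ is strictly positive at $t=0$, not zero. Neither it nor any other coordinate that is nonzero at $p$ can change sign under a small deformation. To exit $G_{\geq0}$ you must therefore break the collinearity of the $A$-points or of the $B$-points.
\begin{itemize}
\item Rotating $L_2$ breaks neither, so the deformed point stays in $Z(A|B)$, hence in $G_{\geq0}$ for small angles.
\item Moving a column $c\in B$ onto $L_1$, by ``sliding'' or by choosing $v$ with $c+tv\in L_1$, is not a small deformation, because a generic $c$ is at positive distance from $L_1$.
\item More fundamentally, no deformation through reducible conics can work. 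Mixed triples have $p_I>0$ at $p$, so they stay non-collinear nearby. Since $n\geq 6$, any nearby configuration lying on a line pair still has all its $A$-points on one line and all its $B$-points on the other, and every $p_I$ with $I\subseteq A$ or $I\subseteq B$ stays zero.
\end{itemize}

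The missing idea is to smooth the line pair, moving all but at most two points of each line at once; this is what the paper does. Choose coordinates with $L_1=\{x=0\}$ and $L_2=\{y=0\}$. Write the columns of $A$ as $(0,y_i,z_i)^T$ and those of $B$ as $(x_k,0,z_k)^T$, and replace them by $(-\epsilon z_i^2/y_i,\,y_i,\,z_i)^T$ and $(x_k,\,-\epsilon z_k^2/x_k,\,z_k)^T$. Every column then lies on the smooth conic $xy=-\epsilon z^2$, so $q_\epsilon\in V$ and $q_\epsilon\to p$. Expanding along the first row gives exactly $\Delta_{i_1i_2i_3}(q_\epsilon)=-\epsilon\, m_{i_1i_2}m_{i_2i_3}m_{i_1i_3}/(y_{i_1}y_{i_2}y_{i_3})$ for $i_1,i_2,i_3\in A$, where $m_{ij}=y_iz_j-y_jz_i\neq 0$; the analogous formula holds for three columns of $B$. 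Since $n\geq 6$, one of $A,B$ has at least three elements. So for one sign of $\epsilon$ (the paper normalizes so that $\epsilon>0$ works) this minor is negative and $q_\epsilon\notin G_{\geq0}$. The opposite sign is what the paper uses in Proposition \ref{analytic closure} to approach $p$ from $V_{>0}$.

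There is also a smaller open point in your $ZV(12x)$ case. Your closedness reduction needs density of the smooth-conic, nondegenerate locus in $ZV(12x:x\in[n])_{\geq0}$, and you have not proved it. The paper avoids this by separating colliding points directly, including the case of a collision at the node of a line pair, and falling back on the $Z(A|B)$ case.
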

\begin{proof}
From the proof of Proposition \ref{prop:oneinclusion}, $\itr_V (V_{\geq0})=\itr_G (G_{\geq0}\cup (G\setminus V))\cap V$. We know that it contains $G_{>0}\cap V$. Showing the other containment would imply the statement.

 We prove it by contradiction. Suppose there exists a point $p\in V$ such that $p\in \itr_G (G_{\geq0}\cup (G\setminus V))$ and $ p\not\in  G_{>0}$. Now, $p\in \itr (G_{\geq0}\cup (G\setminus V))$ implies that $p\in G_{\geq0}\cup (G\setminus V)= G\setminus ((G\setminus G_{\geq 0}) \cap V)$. So $p\not\in (G\setminus G_{\geq 0}) \cap V$. But $p$ is already in $V$, so $p\not\in G\setminus G_{\geq 0}$. Combining this with the assumption that $p\not\in G_{>0}$, we conclude that such a point $p$ must be contained in $(G_{\geq 0}\setminus G_{>0})\cap V$. Notice that we have the decomposition of $(G_{\geq 0}\setminus G_{>0})\cap V$ as the union of two families of semialgebraic sets in $G$ from Theorem \ref{decomposition of boundary}.

    To reach a contradiction with the assumption that $p\in \itr_G (G_{\geq0}\cup (G\setminus V))$, we show that any open set $U$ in $G$ containing $p$ has nonempty intersection with $G\setminus (G_{\geq0}\cup (G\setminus V)))=(G\setminus G_{\geq 0}) \cap V$. This is proven by constructing a sequence of points $q_\epsilon \in (G\setminus G_{\geq 0}) \cap V$ limiting to $p$ as $\epsilon \mapsto 0$. We divide the analysis according to which component, among the ones classified in Theorem \ref{decomposition of boundary}, $p$ belongs to.

    \textbf{Case 1.} The point $p$ lies in $Z(A,B)_{\geq0}\bigcap C$, where $A,B$ are cyclic intervals, $A\bigsqcup B=[n]$, and $|A|,|B|\geq2$. By cyclic symmetric, we may assume that $A=\{1,2,\cdots,m\},\ B=\{m+1,m+2,\cdots ,n\}$ for $2\leq m\leq n-1$. Then, we have $n$ distinct points $c_1,\cdots,c_n$ in $\mathbb{P}^2$ given by the columns such that $c_1,\cdots,c_m$ lie on a common line $l_1$ and $c_{m+1}\cdots c_n$ lies on a common line $l_2$ in $ \mathbb{P}^2$. After applying an automorphism of $\mathbb{P}^2$, we may assume that $l_1=\{[0:y:z]\},\ l_2=\{[x:0:z]\}$. This corresponds to left multiplication by $GL_2$ on the matrix and does not change the point it represents in $\Gr(3,n)$. By total positivity of $p$, we may assume that $p$ is given by 
    $$\begin{pmatrix}
        \cdots0\cdots &\cdots 0\cdots & \cdots x_k \cdots\\
        \cdots y_i\cdots &\cdots 0\cdots &\cdots 0\cdots\\
        \cdots z_i\cdots &\cdots z_j\cdots &\cdots z_k\cdots 
    \end{pmatrix}$$
such that $x_j,y_i,z_i,z_j$ satisfy the total positivity condition. Index the columns by $I\bigsqcup J\bigsqcup K$ according to which line the corresponding projective point lies on. Notice that $|I|,|K|\geq2$ by the full rank condition, since the matrix represents a point in $\Gr(3,n)$.

Fix any $\epsilon>0$.
Let $q_{\epsilon}$ be the point given by 
$$\begin{pmatrix}
        \cdots\frac{-\epsilon z_i^2}{y_i}\cdots &\cdots -j\sqrt{\epsilon}z_j\cdots & \cdots x_k \cdots\\
        \cdots y_i\cdots &\cdots \frac{1}{j}\sqrt{\epsilon}z_j\cdots &\cdots \frac{-\epsilon z_k^2}{x_k}\cdots\\
        \cdots z_i\cdots&\cdots z_j\cdots & \cdots z_k\cdots 
    \end{pmatrix}$$
We verify that each column of the above matrix gives a projective point lying on the same degree $2$ curve $\{[x:y:z]:xy=-\epsilon z^2\}$. This implies that $q_\epsilon\in V$. Also, we have $\lim_{\epsilon\to 0} q_\epsilon =p$.
    
    We can furthermore check that $q_\epsilon\in G-G_{\geq0}$.
    \begin{enumerate}
        \item If $|I|\geq2$, $\Delta_{i_1,i_2,k}>0,\Delta_{i_1,i_2,i_3},\Delta_{i_1,i_2,j}<0$ for small $\epsilon$. 
        \item If $|K|\geq2$, $\Delta_{i_1,k_1,k_2}>0,\Delta_{k_1,k_2,k_3},\Delta_{j,k_1,k_2}<0$ for small $\epsilon$.
        \item If $|I|=|K|=1$, $|J|=n-2\geq 3$. Then $\Delta_{j_1,j_2,j_3}<0,\Delta_{ijk}>0$ for small $\epsilon$. 
    \end{enumerate}
    
    \textbf{Case 2.} The point $p$ lies in some cyclic rotation of $ZV(12x:x\in [n])_{\geq0}$. By cyclic symmetry, we can assume that $p$ gives six points $c_1\cdots c_n$ in $\mathbb{P}^2$ such that $2$ points $c_1$ and $c_2$ coincide. Take the degree $2$ curve $\mu$ passing through the given five points. We may move $c_2$ along $\mu$ slightly to produce another point in $V$. If our degree $2$ curve is smooth at the projective point represented by $c_1$ and $c_2$, there are two directions of moving this point, where one will result in a correct orientation whereas the other one result in an incorrect orientation. Otherwise, our conic is a union of two lines and the projective point represented by $c_1$ and $c_2$ lie on the intersection. In this case, we have four directions to go, two of which will give the correct orientation. We can take $q_\epsilon$ be a sequence of points on $\mu$ converging to $p$ from the direction giving the incorrect orientation. 
    
    The above process separates two colliding points. We can iterate until we reach a sequence of points not in $C$ that approach $p$. For each point in this sequence, if it is not in $G_{\geq0}\setminus G_{>0}$, we win. Otherwise, since it is not in $C$, we are back in Case 1. So we can find another sequence in $(G\setminus G_{\geq 0})\bigcap V$ approaching this point. Then, any Hausdorff neighborhood of $p$ must contain a Hausdorff neighborhood of a point in the original sequence, which must contain a point in $(G\setminus G_{\geq 0})\bigcap V$. 
\end{proof}
The following example illustrates what happens in the case $|I|\geq2$ or $|K|\geq 2$.
\begin{example} 
Define the following  matrices
\[
p=\begin{pmatrix}
        0&0&0&1&2&3\\
        3&2&1&0&0&0\\
        1&1&1&1&1&1
    \end{pmatrix}, 
q_\epsilon=\begin{pmatrix}
        -\epsilon/3&-\epsilon/2&-\epsilon/1&1&2&3\\
        3&2&1&-\epsilon/1&-\epsilon/2&-\epsilon/3\\
        1&1&1&1&1&1
    \end{pmatrix}.
 \]
Then $\Delta_{123}=\Delta_{456}=-\frac{\epsilon}{3}<0,\ \Delta_{234}=1+2\epsilon+\frac{3}{4}\epsilon^2>0$. The following pictures gives $6$ points in $\mathbb{P}^2$ for $p$ and $q_{1/2}$. One can see that $c_1,\cdots,c_6$ are oriented clockwise. If we perturb the six points to $xy=-0.5 z^2$, we obtain $d_1,\cdots, d_6$. The fact that minors of $q_{\epsilon}$ have different signs can be visualized by the fact that the orientation breaks going between two strands (Figure \ref{fig1}). In particular, $d_1,d_2,d_3$ is oriented clockwise and $d_2,d_3,d_4$ is oriented counter-clockwise, resulting the signed area of the two triangles to have opposite signs.
Notice that if we perturb the original configuration to $xy=0.5 z^2$ instead, we would get the consistent clockwise orientation of $6$ points (Figure \ref{fig2}), resulting in $q_{-\epsilon}\in G_{>0}$.

\begin{figure}[h]
\centering
\includegraphics[scale=0.1]{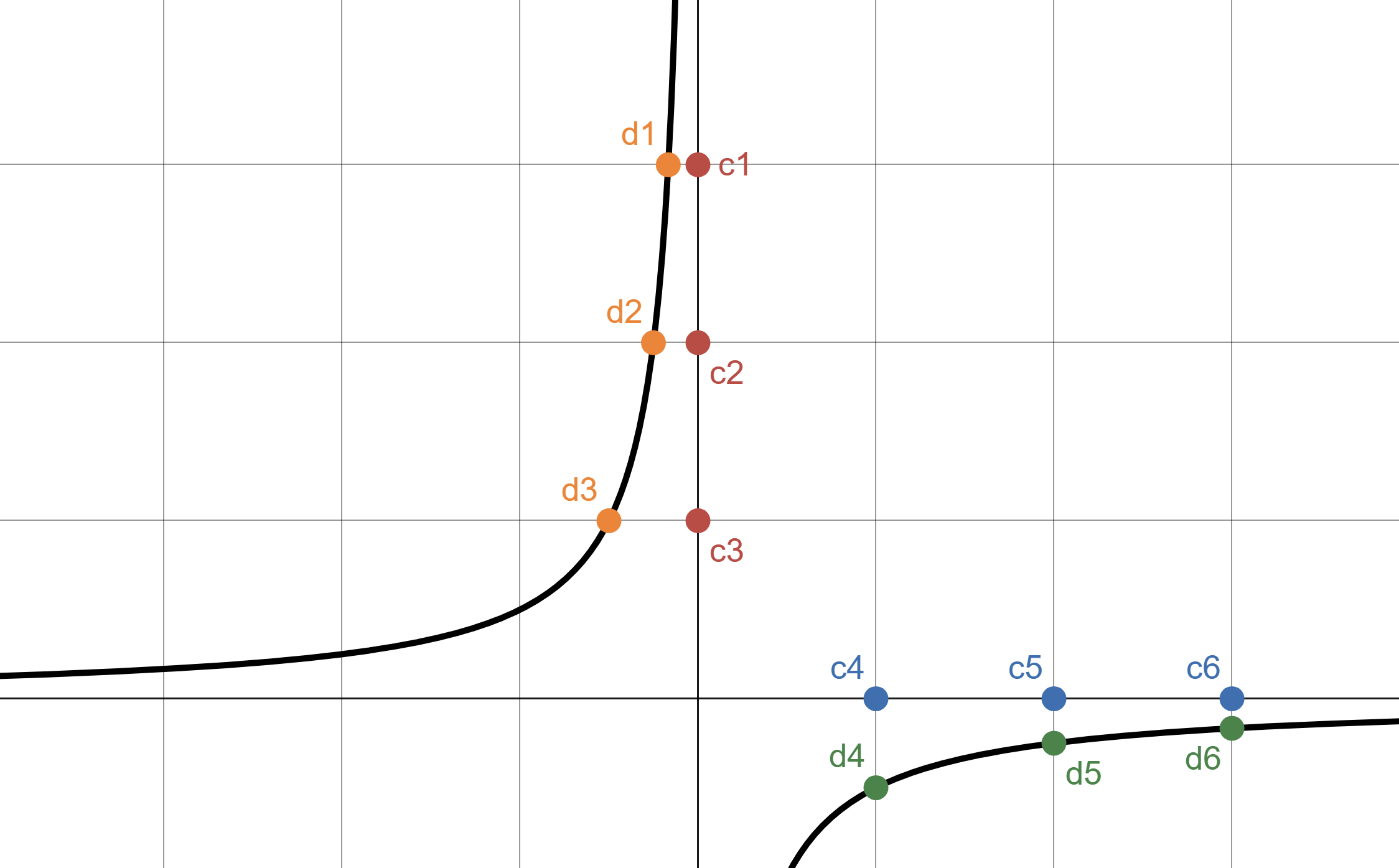}
\caption{}
\label{fig1}
\end{figure}
\begin{figure}[h]
\includegraphics[scale=0.1]{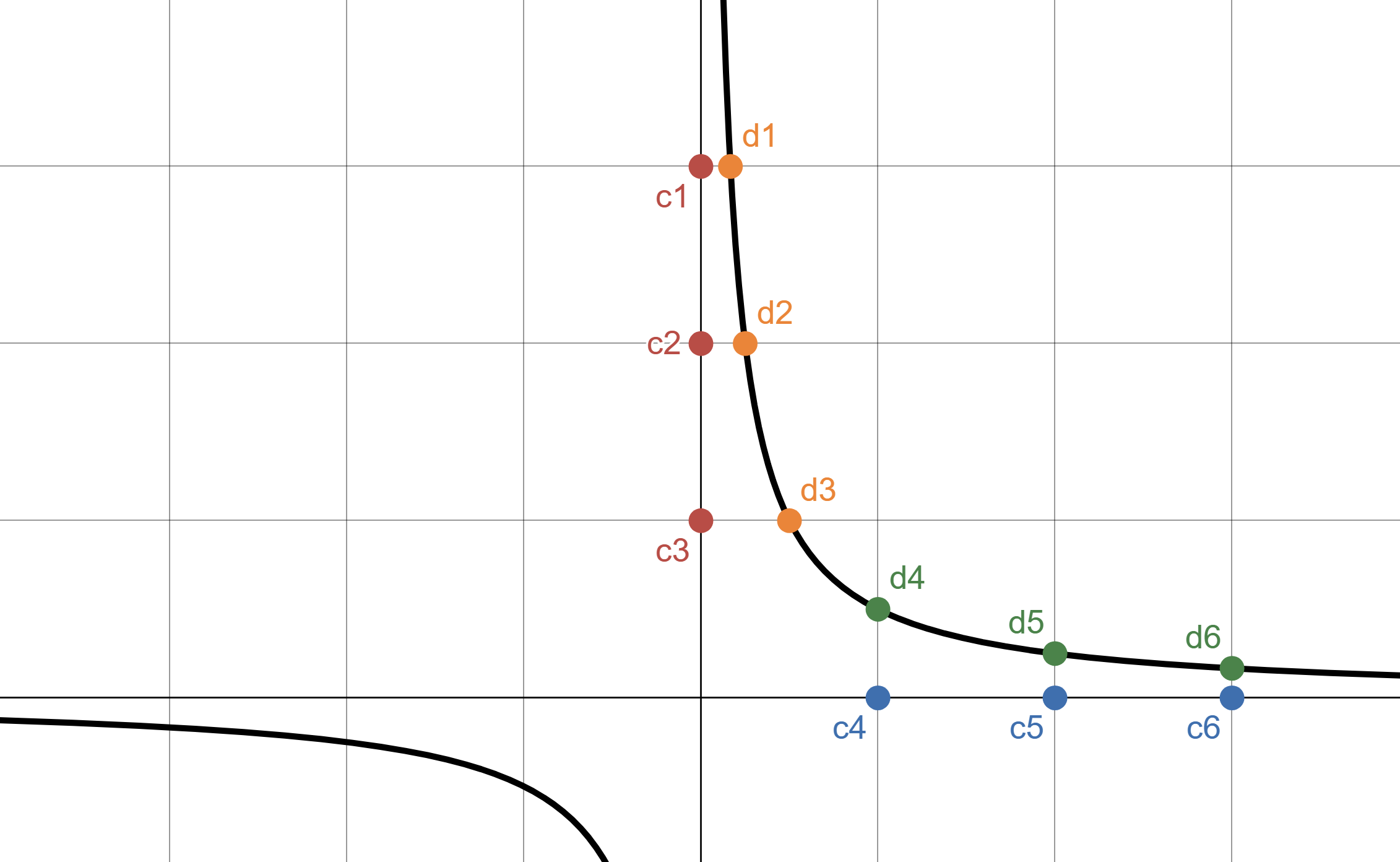}
\caption{}
\label{fig2}
\end{figure}
\end{example}

\begin{proposition}\label{analytic closure}
The semi-alegbraic set $V_{\geq0}$ is the analytic closure of its interior $V_{>0}=V\bigcap G_{>0}$.
\end{proposition}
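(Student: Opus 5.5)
Since $V_{\geq0}$ is closed, $\overline{V_{>0}}\subset V_{\geq0}$ holds automatically. The whole task is therefore the reverse inclusion: every point $p\in (G_{\geq0}\setminus G_{>0})\cap V$ must be a limit of points of $V\cap G_{>0}$. (Together with Theorem \ref{conj:equality of boundaries}, this also shows that the interior of $V_{\geq0}$ in $V$ is exactly $V\cap G_{>0}$, so the notation $V_{>0}$ is consistent.) By Theorem \ref{decomposition of boundary}, it suffices to handle two families of points. The first family consists of points in $Z(A|B)_{\geq0}$ for a partition of $[n]$ into cyclic intervals $A,B$ with $|A|,|B|\geq 2$. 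The second consists of points in a cyclic rotation of $ZV(12x:x\in[n])_{\geq0}$. I will mirror the two cases in the proof of Theorem \ref{conj:equality of boundaries}, but now perturb in the direction that keeps the orientation correct instead of breaking it.

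\textbf{Case 1: $p\in Z(A|B)_{\geq0}\cap C$.} By cyclic symmetry take $A=[1,m]$, and use the normal form of that proof: columns indexed by $I\sqcup J\sqcup K$ lying on $l_1=\{x=0\}$ and $l_2=\{y=0\}$. I will use the same family as there, but with $\epsilon$ replaced by $-\epsilon$. Concretely, columns in $I$ become $(\epsilon z_i^2/y_i, y_i, z_i)$, columns in $K$ become $(x_k,\epsilon z_k^2/x_k,z_k)$, and the intersection columns in $J$ are spread out with parameters $\mp j\sqrt{\epsilon}z_j$ along the smooth conic $xy=\epsilon z^2$, with signs chosen so that the labels remain in the right cyclic order. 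All columns then lie on the conic, so $q_\epsilon\in V$, and $q_\epsilon\to p$. The Example following that theorem already exhibits this behaviour: perturbing to $xy=+0.5z^2$ yields a consistently oriented configuration. To prove positivity in general, I parametrize the smooth conic rationally by $t\mapsto(t^2,1,t)$, or equivalently use the Veronese picture. Then $3\times3$ minors of points on the conic are Vandermonde products $\prod(t_b-t_a)$, which are positive as soon as the parameters are increasing in the cyclic order compatible with the labeling. The check reduces to showing that the chosen parameters are monotone, and this follows from the total positivity of $p$: the $y_i$ order along $l_1$ and the $x_k$ order along $l_2$ are forced by the signs of the minors of $p$.

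\textbf{Case 2: colliding points.} Suppose $p$ is not in $C$, say columns $1$ and $2$ are proportional or zero. I proceed as in Case 2 of the proof of Theorem \ref{conj:equality of boundaries}: choose a conic $\mu$ through the nonzero columns and slide one of the colliding points along $\mu$, now in the direction that gives the correct orientation. A zero column is first replaced by a small multiple of a point of $\mu$ placed between its neighbours. Each such move produces a sequence in $V_{\geq0}$ converging to $p$ with strictly fewer collisions or zero columns. Iterating, and then applying Case 1 when the resulting points lie in $Z(A|B)_{\geq0}$, a diagonal argument gives a sequence in $V\cap G_{>0}$ converging to $p$, since the closure of a closure is the closure.

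\textbf{Main obstacle.} The hard part is justifying in Case 2 that a ``correct'' direction exists and that it keeps all Pl\"ucker coordinates nonnegative, not just those involving the moved point. When $\mu$ is a line pair and the collision sits at the node, or when $\mu$ is forced to be degenerate, the local analysis is delicate. My first approach would be to avoid it: show that $p$ lies in the closure of $\theta(\Gr_{>0}(2,n))$ by lifting. Write the points on $\mu$ by parameters $t_1\le\cdots\le t_n$, which the nonnegativity of $p$ forces into cyclic order. Then perturb these to strictly increasing $t$'s, which directly gives points of $\theta(\Gr_{>0}(2,n))\subset V_{>0}$ by Proposition \ref{prop:map in terms of Pluckers}. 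For the degenerate conics, I fall back on Case 1.
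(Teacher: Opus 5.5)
Your proposal is essentially the paper's proof. You reduce via Theorem \ref{decomposition of boundary} to the two families, reuse the Case 1 perturbation from the proof of Theorem \ref{conj:equality of boundaries} with the sign of $\epsilon$ reversed, and otherwise pull colliding points apart along a conic in the orientation-preserving direction and iterate. Your added details make explicit what the paper only asserts: the Vandermonde check, placing the $J$-columns at real parameters $\pm\sqrt{\epsilon}$ (literally taking $\epsilon<0$ in the paper's formula makes them non-real), and the lift to strictly increasing parameters on a smooth conic. The remaining loose end, shared with the paper, is that for line-pair conics you must first separate collisions and zero columns along the two lines before Case 1 applies, since Case 1 assumes $p\in C$.
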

\begin{proof}
    From Theorem \ref{conj:equality of boundaries}, we know that the analytic interior of $V_{\geq0}$ equals $V_{\geq0}\setminus ((G_{\geq0}\setminus G_{>0})\cap V) = V\bigcap G_{>0}$. The semi-algebraic set $V_{\geq0}$ is already analytically closed, so it suffices to show that  $V_{\geq0}$ is contained in the analytic closure of $V_{>0}$. We show that for any $p\in \partial V_{\geq0}=(G_{\geq0}\setminus G_{>0})\cap V$, there exists a sequence of points $q_{\epsilon}\in V_{>0}$ limiting to $p$ as $\epsilon\mapsto 0$. Notice that this is analogous to what we proved in Theorem \ref{conj:equality of boundaries}, except that the sequence were required to be ``strictly negative" points in $V$, whereas now they are required to be ``strictly positive" points in $V$.

    Exactly the same construction as in the proof of Theorem \ref{conj:equality of boundaries} follows. When $p$ lies in $Z(A,B)_{\geq0}\bigcap C$, take the $q_{\epsilon}$ in the proof of Theorem \ref{conj:equality of boundaries} for $\epsilon <0$ works. And when $p$ lies in a cyclic rotation of $ZV(12x:x\in [n])_{\geq0}$, the same procedure follows, except that now we move the new point in the direction of the correct orientation.
\end{proof}

\subsection{Two notions of positivity agree}
In the article \cite{Lam24}, Lam defined the totally nonnegative part of $V$ as $ \overline{\theta_2(\Gr(2,n)_{>0})}$,
where the closure is taken in the analytic topology of $\Gr(3,n)(\RR)$. This definition first takes totally nonnegative part in $\Gr(2,n)$ and then takes analytic closure in the image. Meanwhile, our definition of $V_{\geq0}$ first takes Zariski closure in the image and then takes totally nonnegative part in $\Gr(3,n)$. These two notions both contain $\theta_2(\Gr(2,n)_{>0})$, because the map $\theta_2$ clearly preserves positivity as a consequence of Proposition \ref{prop:map in terms of Pluckers}. In this subsection, we show that these notions agree.  

\begin{proposition} \label{Equality of boundaries}
We have the following equality of boundaries:
\[
\overline{\theta_2(\Gr(2,n)_{>0})} = V(3,n)\cap \Gr(3,n)_{\geq 0}.
\]
\end{proposition}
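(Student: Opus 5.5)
We prove the two inclusions separately, using Proposition \ref{analytic closure} to reduce everything to the positive interior.

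For the inclusion $\subseteq$, note that by Proposition \ref{prop:map in terms of Pluckers} every Pl\"ucker coordinate of $\theta_2(M)$ is a product of Pl\"ucker coordinates of $M$. Hence for $M\in\Gr(2,n)_{>0}$ the map is defined and $\theta_2(M)\in \Gr(3,n)_{>0}$. Since $\theta_2(M)$ also lies in the image of $\theta_2$, it lies in $V(3,n)$. Thus $\theta_2(\Gr(2,n)_{>0})\subseteq V\cap G_{\geq 0}$. The right-hand side is closed in the analytic topology, being the intersection of a Zariski closed set with the closed semialgebraic set $G_{\geq0}$. Taking closures gives the inclusion.

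For the inclusion $\supseteq$, Proposition \ref{analytic closure} shows that $V_{\geq 0}$ is the analytic closure of $V_{>0}=V\cap G_{>0}$. It therefore suffices to show $V_{>0}\subseteq \theta_2(\Gr(2,n)_{>0})$, i.e.\ that every totally positive point of $V(3,n)$ is the image of a totally positive point of $\Gr(2,n)$.

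Take $[M]\in V_{>0}$. All its minors are nonzero, so its $n$ columns are distinct points of $\PP^2$ with no three collinear. By Remark \ref{rmk:point configuration for V}, they lie on a conic $Q$. This conic is smooth: if $Q$ were a pair of lines or a double line, then since $n\geq 6$ at least three of the points would lie on one line, contradicting $p_{ijk}\neq0$. (For $n=5$ we have $V=G$, and we can choose a smooth conic through the $5$ points in general position.) A smooth real conic containing real points is projectively equivalent over $\RR$ to the image of the Veronese map $[s:t]\mapsto[s^2:st:t^2]$. Acting on the left by $GL_3(\RR)$ does not change $[M]$. So we may write the columns as $\lambda_j(s_j^2,s_jt_j,t_j^2)^T$ with real $s_j,t_j$ and nonzero real scalars $\lambda_j$.

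The main obstacle is to arrange the signs so that the preimage is totally positive. By Proposition \ref{prop:map in terms of Pluckers}, $p_{ijk}(M)=\det(g)\,\lambda_i\lambda_j\lambda_k\,q_{ij}q_{ik}q_{jk}$, where $q_{ij}=s_it_j-s_jt_i$ and $g$ is the change of coordinates. The plan is as follows.

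First, use the $2^{n-1}$ sign freedom (flip $(s_j,t_j)\mapsto -(s_j,t_j)$) and an overall $GL_2$ action so that the points $[s_j:t_j]\in\RR\PP^1$ are arranged with the $(s_j,t_j)$ lying in an open half-plane. Their cyclic order on $\RR\PP^1$ then equals their angular order.

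Second, argue that positivity of all $p_{ijk}$ forces that angular order to agree with the labelling $1,\dots,n$, up to reversal. The signs of $q_{ij}q_{ik}q_{jk}$ record the orientation of triples, so the sign of $p_{ijk}\lambda_i\lambda_j\lambda_k$ is $\det(g)$ times the sign of the permutation sorting $(i,j,k)$ by angle. Consistency over all triples and quadruples should force the angular permutation to be cyclic or dihedral, and the $\lambda$'s to have a sign pattern absorbable by the column rescaling, which acts trivially on $[M]$.

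Third, after a cyclic reindexing, composing with the reflection if necessary and choosing the sign of $\det g$, obtain a real $2\times n$ matrix $N$ with all $q_{ij}>0$ for $i<j$ and $[\theta_2(N)]=[M]$, using that $\theta_2$ is $D_n$-compatible (Remark \ref{D_n symmetry}). Then $N\in\Gr(2,n)_{>0}$.

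The delicate step is the second one: the sign bookkeeping showing that the $\lambda_j$ can be absorbed into the $\pm$ choices of the square roots. Here the identity $p_{ijk}=\prod q$ combined with $\theta_2(\mathrm{diag}(\pm1)\cdot)$-invariance should reduce this to a finite combinatorial check on the $4$-element subsets.
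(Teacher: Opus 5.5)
Your easy inclusion, and your reduction of $\supseteq$ to surjectivity of $\theta_2\colon \Gr(2,n)_{>0}\to V\cap G_{>0}$ via Proposition \ref{analytic closure}, are exactly the paper's. The gap is in the surjectivity step, at the point you flag yourself.

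\textbf{The sign mechanism you propose is false.} The sign pattern of the $\lambda_j$ is \emph{not} absorbable by column rescaling. Rescaling column $j$ by $\mu_j$ multiplies $p_{ijk}$ by $\mu_i\mu_j\mu_k$, so it moves $[M]$ in $\Gr(3,n)$. Only a common scalar (equivalently $g\mapsto -g$) acts trivially. The $\pm$ choices of $(s_j,t_j)$ do not help either, since $(s_j^2,s_jt_j,t_j^2)$ is invariant under $(s_j,t_j)\mapsto-(s_j,t_j)$. A column with $\lambda_j<0$ is therefore not the Veronese image of any real vector.

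What you actually need is that all $\lambda_j$ have a common sign, and this genuinely uses $n\geq 5$. For $n=4$, take two points on each branch of a real hyperbola, labelled cyclically around their convex hull. This gives a totally positive configuration on that conic with $\lambda$'s of both signs. As written, your Steps 2 and 3 rest on a false mechanism and leave the real issue unproved.

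\textbf{How to close the gap.} The check on $4$-element subsets you mention does work, but its conclusion must be same-signedness. Fix $F\subset[n]$ with $|F|=4$. Every pair in $F$ lies in exactly two of the four triples of $F$, and every element lies in three. Hence
\[
\prod_{T\subset F,\ |T|=3}p_T(M)=\det(g)^4\prod_{a\in F}\lambda_a^3\prod_{\{a,b\}\subset F}q_{ab}^2 ,
\]
and positivity gives $\prod_{a\in F}\lambda_a>0$. Since $n\geq5$, you can compare two $4$-subsets sharing three elements, which gives $\lambda_d\lambda_e>0$ for all $d,e$. Replace $g$ by $-g$ if necessary and set $N_j=\sqrt{\lambda_j}\,(s_j,t_j)$. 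Then $N$ is real and $[\theta_2(N)]=[M]$.

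Your angular-order analysis is then unnecessary. All products $q_{ij}q_{ik}q_{jk}$ with $i<j<k$ have the sign of $\det g$. Left-multiply $N$ by some $h$ with $\det h<0$ if needed to make them positive; this does not change $[\theta_2(N)]$. Next flip columns $j\geq 2$ so that $q_{1j}>0$; this changes no triple product. Then $q_{jk}>0$ for all $1<j<k$, so $N\in\Gr(2,n)_{>0}$.

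\textbf{Comparison with the paper.} With this lemma supplied, your conic-parametrization route is a valid alternative. The paper instead fixes positive values of $p_{ij}/p_{ik}$ directly from the identity $p_{ij}^2/p_{ik}^2=p_{ijs}p_{ijl}p_{ksl}/(p_{iks}p_{ikl}p_{jsl})$ by taking positive square roots. That identity also needs five distinct indices, and the approach never has to isolate this sign question.
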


\begin{proof} 
In the proof of Theorem \ref{decomposition of boundary}, we showed that 
$V_{\geq 0}$ is the analytic closure 
of $G_{>0}\cap V$. Notice that Proposition \ref{prop:map in terms of Pluckers} implies that 
$\theta_2$ restricts to a map on positive parts 
\[
\theta^{>0}_2: \Gr(2,n)_{>0}\rightarrow G_{>0}\cap V. 
\]
To prove the statement, it is enough to prove that the positive map $\theta^{>0}_2$
is surjective. To check this, 
it is enough to do it on charts. On a suitable 
chart of the image, we have the following rational identity that can easily be verified by Proposition \ref{prop:map in terms of Pluckers} 
\[
\frac{p_{ij}^2}{p_{ik}^2}=\frac{p_{ijs}p_{ijl}p_{ksl}}{p_{iks}p_{ikl}p_{jsl}}.
\]
Fixing a point in $G_{>0}\cap V$, with all positive values of the coordinates $p_{ijk}$'s, we can use these formulas to produce all the ratios $p_{ij}/p_{kl}$ by taking the positive square root of the right hand side. 
\end{proof}
\section{Geometry of $V(3,n)$ and Iterated Algebraic Boundaries}\label{rational normal}
This section establishes the following geometric result, which is crucial for determining iterated boundaries of $V(3,n)$ and for the uniqueness of canonical forms on $V(3,n)$ and on its iterated boundaries, as in Remark \ref{uniqueness of canonical form}.

\begin{theorem}\label{geometry of V and its boundaries}
The ABCT variety $V(3,n)$ and its boundaries of collinear or colliding configurations as in Definition \ref{dfn:iterated boundaries two types} are reduced, irreducible, rational, normal, Cohen-Macaulay, regular in codimension $1$, and have expected dimensions from the perspective of point configurations.

A boundary of collinear configurations $\Pi(A_0; I_a \text{ for } a\in A| I_b \text{ for } b\in B)$ has codimension $n-r+|A_0|+1$ in $V(3,n)$. A boundary of colliding configurations $V(A_0;I_1,\cdots, I_r)_{\red}$ has codimension $n-r+|A_0|$ in $V(3,n)$. 
\end{theorem}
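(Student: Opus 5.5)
I will reduce every boundary in the statement to a product of smaller ABCT varieties and Grassmannian/positroid pieces. Then I transport the good properties from the known cases: Theorem \ref{prop:interpretation of V} for $V(3,n)$ itself, and \cite{KLS13} for positroid varieties.

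\emph{Collinear boundaries.} I first treat the boundaries $\Pi(A_0; I_a \text{ for } a\in A\,|\, I_b \text{ for } b\in B)$. Here the nonzero columns lie on two lines, so by Theorem \ref{decomposition of boundary} the conic condition is automatically satisfied. Hence the boundary is the positroid variety itself, i.e.\ $Z(A|B)\subset V$, intersected with the further cyclic collision and zero-column conditions, which are again positroid conditions. Such a variety is a positroid variety. By \cite{KLS13} it is therefore reduced, irreducible, normal and Cohen--Macaulay. It is also rational, since its open positroid cell is a torus times affine space. I will obtain the codimension by a point-configuration count: choose two lines meeting in a point, place $r$ distinct points on them, and attach the column scalings. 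Subtracting this dimension from $\dim V(3,n)=2n-4$ should give $n-r+|A_0|+1$. Regularity in codimension one follows from normality by Serre's criterion.

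\emph{Colliding boundaries.} For $V(A_0;I_1,\dots,I_r)_{\red}$, I will construct an explicit birational model. Forgetting the zero columns $A_0$ and all but one column in each colliding block $I_j$ gives a map to $V(3,r)$. The fibre over a point records, for each block $I_j$, the rescaling of the remaining columns relative to the representative; this is a torus $(\CC^*)^{|I_j|-1}$. Its closure should be the reduced scheme cut out by the rank-one conditions on the block. I aim to show that on a chart the reduced subscheme is isomorphic to $V(3,r)$ crossed with the affine cone over a product of rank-one (Segre-type) determinantal varieties. This isomorphism would be proved using the matrix description of Theorem \ref{prop:interpretation of V}: a colliding block contributes proportional columns of $\theta(M)$, so the condition $\rk\theta(M)<6$ only involves one representative per block. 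Granting this local product structure:
\begin{itemize}
\item reducedness, irreducibility, normality and Cohen--Macaulayness follow because each factor has these properties (Theorem \ref{prop:interpretation of V}, together with classical determinantal rings), and the properties are preserved under products and smooth base change;
\item rationality follows from the rationality of $V(3,r)$, whose dense open set is the image of $\Gr(2,r)$ under $\theta$, which is birational modulo the $2^{r-1}$ sign ambiguity;
\item the dimension is $2r-4$ plus the fibre dimension $\sum_j(|I_j|-1)$, which yields codimension $n-r+|A_0|$.
\end{itemize}

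\emph{Main obstacles.} First, rationality of $V(3,n)$ itself is subtle, because $\theta$ is $2^{n-1}$-to-one. I instead plan to show that $V(3,n)$ is birational to $\CC^{2n-4}$ via the chart coordinates $(\alpha_1,\beta_1,\gamma_1,\alpha_{n-3},\beta_{n-3},\gamma_{n-3},\alpha_i/\gamma_i,\beta_i)$. In Proposition \ref{Canonical form on V(3,n)} the form in these coordinates is nonvanishing, so they are algebraically independent. I will check that, once $f$ is known from the first and last columns, each remaining $\gamma_i$ is recovered rationally from $\alpha_i/\gamma_i$ and $\beta_i$ by the linear relation $P_{1,i,n-3}=0$.

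Second, normality of $V(3,n)$ is not in Theorem \ref{prop:interpretation of V}. Since $V$ is Cohen--Macaulay, it satisfies $S_2$, so it suffices to prove $R_1$: the singular locus must have codimension at least $2$. I expect the hardest step to be bounding the singular locus. I would attempt this through the degeneracy-locus description: singular points occur where $\rk\theta(M)\le 4$, i.e.\ where the configuration lies on a pencil of conics. Counting such configurations should give codimension at least $2$ in $V$, except possibly along components where many points collide or become collinear. Those components must be handled by the same product reduction, applied inductively on $n$.
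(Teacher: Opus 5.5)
Your collinear case is the paper's argument: these boundaries are positroid varieties, so \cite{KLS13} applies.

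Your rationality argument for $V(3,n)$ is correct and more elementary than Lemma \ref{lem:V(3,n) is rational}. After substituting $\alpha_i=\rho_i\gamma_i$ and dividing by $\gamma_i$, the relation $P_{1,i,n-3}=0$ becomes the equation $\rho_i\gamma_iA_{1,n-3}+\beta_iB_{1,n-3}+\rho_i\beta_iC_{1,n-3}=0$, which is linear in $\gamma_i$.

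The following part of your colliding-boundary argument is also correct. On the open subset where no block $M_{I_j}$ with $|I_j|\ge 2$ vanishes identically, choosing a nonzero representative in each block identifies the boundary with an open subset of $V(3,r)$ times $\mathbb{A}^{\sum_j(|I_j|-1)}$. The vanishing-block locus lies in the closure of this open set. Together these give irreducibility, rationality, the dimension count, and $R_1$, provided $V(3,r)$ is $R_1$. Two other steps of the proposal do not go through.

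\textbf{The local product fails at the vertex loci.} Consider a point where an entire block $M_{I_j}$ with $|I_j|\ge 2$ is zero.
\begin{itemize}
\item There is no forgetful map to $V(3,r)$ there, because the $j$-th point of the configuration is undefined.
\item Your model ``$V(3,r)$ times a cone over a Segre variety'' has the wrong dimension, since it counts the block's point both in $V(3,r)$ and in the cone.
\item The actual local model is a family, over the configuration of the remaining blocks, of affine cones over $C\times\PP^{|I_j|-1}$. Here $C$ is the conic through the remaining points; it moves with the base and is not unique when those points lie on a pencil.
\item This is also exactly where the scheme $V(A_0;I_1,\dots,I_r)$ is non-reduced and the mixed columns of $\tilde\theta$ are needed (Theorem \ref{thm:matrix description of colliding boundary}). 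So ``the rank condition only involves one representative per block'' fails there.
\end{itemize}
These loci have codimension $|I_j|+1\ge 3$, which is harmless for $R_1$. However, $S_2$ (hence normality) and Cohen--Macaulayness are conditions at every point, and your argument says nothing at these points.

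The paper handles them with the incidence variety $\mathcal{I}_{st}$. This is a $\PP^1$-fibration whose fibres are colliding boundaries with one fewer column. It maps birationally onto the boundary, contracting the locus where columns $s,t$ both vanish, and satisfies $\mathcal{O}\cong(\pi_1)_*\mathcal{O}$ (Lemma \ref{lem for iso of sheaves}, Proposition \ref{prop:iso of sheaves}).

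Another possible repair, for normality alone: on a chart $\{p_J\neq 0\}$, the boundary is an affine space times the quotient of $\widehat W\times\mathbb{A}^N$ by the torus acting as $v_\ell\mapsto t_\ell v_\ell$, $w_\ell\mapsto t_\ell^{-1}w_\ell$, with $\widehat W$ an affine chart of $V(3,r)$. Invariants of a normal domain are normal, so this would give normality from that of $V(3,r)$. It would not give Cohen--Macaulayness, which does not descend through torus quotients in general.

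\textbf{Normality of $V(3,n)$ is not established.} Your premise that singular points of this degeneracy locus lie where $\rk\theta(M)\le 4$ is a transversality statement. The map $\theta$ is far from a generic bundle map, and the premise is false. Take a point with $c_n=0$ and the other columns general on a smooth conic. Then $\rk\theta(M)=5$, yet locally $V=\{(b,c_n): Q_b(c_n)=0\}$ is a family of quadric cones and is singular along $c_n=0$. That locus has codimension $2$, so it does not violate $R_1$, but it shows the rank count is not the right mechanism.

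What $R_1$ actually needs is smoothness of $V$ at the generic point of each divisor, in particular $Z(A|B)$ and $ZV(12x:x\in[n])$, where $\rk\theta(M)=5$. Your sketch gives no argument for this. Your ``product reduction'' describes these divisors themselves, not $V$ along them.

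The paper supplies the missing input in two parts:
\begin{itemize}
\item along $Z(A|B)$, an explicit nonvanishing $(n-5)\times(n-5)$ Jacobian minor (Proposition \ref{prop: smooth along Z(A|B) generic});
\item off $\bigcup Z(A|B)$, a comparison with the normal GIT quotient of the variety $V_{2,n}$ of \cite{CGMS18}, where the excluded locus has codimension at least $2$ (Proposition \ref{prop:V(3,n) is normal}).
\end{itemize}
Because your $R_1$ argument for the colliding boundaries rests on $V(3,r)$ being $R_1$, this gap propagates to them as well.
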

In the above Theorem, by `` iterated algebraic boundaries" we mean the family of subvarieties in $V(3,n)$ given in Definition \ref{dfn:iterated boundaries two types}. Later, we will show in Corollary \ref{iterated boundaries of V(3,n)} that they are indeed exactly all the iterated algebraic boundaries of $V(3,n)$ in the sense of Definition \ref{def:iterated boundary}, justifying the naming.

In Subsection \ref{subsection:geometry of V}, we prove the above theorem for $V(3,n)$. In Subsection \ref{subsection:Iterated Boundary components}, we give the formal definition of a family of subvarieties in $V(3,n)$, which we called iterated boundaries in the above theorem. We prove the above theorem in full generality for all iterated boundaries at the end of Subsection \ref{subsection:geometry of iterated}.

\subsection{Geometry of $V(3,n)$}\label{subsection:geometry of V}
\begin{lemma}\label{lem:V(3,n) is rational}
The variety $V(k,n)$ is rational. 
\begin{proof}
Consider the chart
$U=\lbrace p_{12}\neq 0\rbrace\subset \Gr(2,n)$.
Inside $U$, consider the locus $X$ of points of the form
\[
\begin{pmatrix}
1 & 0 & x_1 & \ldots x_{k-1} & \ldots \\
0 & 1 & y_1 & \ldots y_{k-1} & \ldots \\
\end{pmatrix} 
\]
such that the $k\times k$ matrix
\[
M = \begin{pmatrix}
1 & 0 & x_1^{k-1} &\ldots & x^{k-1}_{k-2} \\
0 & 0 & x_1^{k-2}y_1 & \ldots & x^{k-2}_{k-2}y_{k-2} \\
\vdots & \vdots & \vdots & \vdots & \vdots \\
0 & 1 & y_1^{k-1} & \ldots & y^{k-1}_{k-2} 
\end{pmatrix}
\]
is invertible. The matrix $M$ is a Vandermonde 
matrix and so it is invertible over 
an open dense subset $V$ of the torus $T = (\mathbb C^{*})^{2(k-2)}$ whose coordinates are $x_i,y_i$. The map $\theta_2$ is defined over $X\subset \Gr(2,n)$.  Consider the image $Y=\theta_2(X)\subset \Gr(k,n)$, which is a family of subspaces over $V$. Acting on the left by $M^{-1}$ on $Y$ preserves the vector spaces parametrized by $Y$. Thus we may regard $Y'=M^{-1}Y$ as a subset of $\CC^{k(n-k)}\cong U'=\lbrace p_{12\cdots k}\neq 0\rbrace \subset \Gr(k,n)$. 

Let $p\in \CC^k$ denote a column of a point in $R\in Y$. From the parametrization, we see that $p$ is a point in the cone over the rational normal curve $\nu_{k-1}(\PP^1)$. The ideal of this cone in $\CC[z_1,\ldots,z_k]$ is generated by quadrics $Q_i$ whose corresponding $k\times k$ matrices are the $A_i$. For a matrix in $Y'$, the corresponding column is $M^{-1}p$. Thus, given a quadric $Q_i$ defining the ideal of the cone over $\nu_{k-1}(\PP^1)$, the quadric $Q'_i$ over $V$ whose corresponding matrix $A'_i = M^t A_i M$ is a quadric vanishing on $M^{-1}p$. Thus the quadrics $Q'_i$ define the affine cone $C$ of the rational normal curve whose points are of the form $M^{-1}p$, defined over the function field $\CC(V)$ of $V$. The cone $C$ is a rational variety over $\CC(V)$. Doing this construction for each of the $(n-k)$ columns of a point $R\in Y$, we see that $Y'$ can be regarded as product of rational cones over the field $\CC(V)$. Since $V$ is rational, $Y'$ is rational over $\CC$. 
\end{proof}
\end{lemma}

Let $\itr V$ denote the open subscheme of $V=V(3,n)$ given by the non-vanishing of all Pl\"ucker coordinates. Fix one specific choice of a partition $(A|B)$ of $[n]$ into two subsets of cardinality at least $2$. Let $Z(A|B)^\circ$ be the open subscheme of $Z(A|B)$ given by the non-vanishing of all Pl\"ucker coordinates indexed by non-subsets of $A$ or $B$. That is, $Z(A|B)^\circ$ is the open positroid variety corresponding to the closed positroid variety $Z(A|B)$. Thus, $Z(A|B)^\circ \bigcup\itr V$ is an open subscheme of $V$ given by the non-vanishing of all Pl\"ucker coordinates indexed by non-subsets of $A$ or $B$. The open subscheme of $V$ defined by the non-vanishing of $p_{123}$ is a closed affine subvariety in the affine chart in the Grassmannian $$\begin{pmatrix}
    1&0&0&\alpha_4&\cdots&\alpha_{n}\\
    0&0&1&\beta_4&\cdots&\beta_{n}\\
    0&1&0&\gamma_4&\cdots &\gamma_{n}
\end{pmatrix}$$
cut out by the prime ideal generated by polynomials $P_{ijk}$ for $\{i,j,k\}\subset \{4,5,\cdots ,n\} $. Here, $P_{ijk}$ denotes the determinant of $\begin{pmatrix}
        \alpha_i\beta_i&\alpha_j\beta_j&\alpha_k\beta_k\\
        \alpha_i\gamma_i&\alpha_j\gamma_j&\alpha_k\gamma_k\\
        \beta_i\gamma_i&\beta_j\gamma_j&\beta_k\gamma_k\\
    \end{pmatrix}$ as in Proposition \ref{ring map injective}.

By the Jacobian criterion of smoothness, since $V$ has codimension $n-5$ in $\Gr(3,n)$, the singular locus of $V\bigcap \{p_{123}\not=0\}$ is given by the vanishing of all $(n-5)\times (n-5)$ minors of the Jacobian matrix, where the rows are indexed by all $P_{ijk}$ and the columns are indexed by all $\frac{\partial}{\partial \alpha_{l}},\frac{\partial}{\partial \beta_{l}},\frac{\partial}{\partial \gamma_{l}}$ for $l=4,5,\cdots ,n$.

\begin{remark}\label{rmk: some rels of P_ijk}
One has the following:
\begin{enumerate}
    \item $P_{ijk}$ is a polynomial only involving $\alpha_i,\alpha_j,\alpha_k,\beta_i,\beta_j,\beta_k,\gamma_i,\gamma_j,\gamma_k$;
    \item Note that $\frac{\partial P_{ijk}}{\partial \alpha_i} = \beta_i C_{jk}+\gamma_i A_{jk}$. Moreover \[\frac{\partial P_{ijk}}{\partial \alpha_i} \alpha_i =-\beta_i\gamma_iB_{jk}=-\beta_i\gamma_i\alpha_j\alpha_k \det\begin{pmatrix}
    
        \beta_j&\beta_k\\
        \gamma_j&\gamma_k\\
    \end{pmatrix},
    \]
and  
\[
\frac{\partial P_{ijk}}{\partial \beta_i} \beta_i =\alpha_i\gamma_i\beta_j\beta_k \det\begin{pmatrix}
        \alpha_j&\alpha_k\\
        \gamma_j&\gamma_k\\
    \end{pmatrix},
\]
on the locus $\lbrace P_{ijk}=0\rbrace$; 
    \item When $\alpha_j=0$, $\frac{\partial P_{ijk}}{\partial \alpha_i} = \alpha_k\beta_j\gamma_j \det \begin{pmatrix}
        \gamma_i & \gamma_k\\
        \beta_i &\beta_k
    \end{pmatrix}$;
    \item When $\alpha_k=0$, $\frac{\partial P_{ijk}}{\partial \alpha_i} = \alpha_j\beta_k\gamma_k \det \begin{pmatrix}
        \gamma_i & \gamma_j\\
        \beta_i &\beta_j
    \end{pmatrix}.$
\end{enumerate}

\end{remark}

\begin{lemma}\label{int V is smooth}
    The variety $\itr V$ is smooth.
\end{lemma}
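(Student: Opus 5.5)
Since all Plücker coordinates are nonzero on $\itr V$, the torus $(\mathbb{C}^*)^{n-1}$ and the $S_n$-action preserve $\itr V$ (Remark \ref{S_n symmetry}), so it suffices to prove smoothness on the chart $\{p_{123}\neq 0\}$ with the coordinates $\alpha_l,\beta_l,\gamma_l$ ($l=4,\dots,n$). On this chart $V$ is cut out by the prime ideal generated by the $P_{ijk}$. The plan is to apply the Jacobian criterion directly. Because $V$ has codimension $n-5$ in the $3(n-3)$-dimensional chart, it is enough to show that at every point of $\itr V$ the Jacobian matrix of $\{P_{ijk}\}$ has rank at least $n-5$.

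To exhibit such a minor, choose the $n-5$ polynomials $P_{4,5,l}$ for $l=6,\dots,n$. By Remark \ref{rmk: some rels of P_ijk}(1), $P_{45l}$ involves only the variables of columns $4,5,l$. Restrict to the columns of the Jacobian indexed by $\partial/\partial\alpha_l$ for $l=6,\dots,n$. The entry $\partial P_{45l}/\partial\alpha_{l'}$ vanishes for $l'\neq l$ with $l'\geq 6$, so this $(n-5)\times(n-5)$ submatrix is diagonal. Its determinant is $\prod_l \partial P_{45l}/\partial\alpha_l$.

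By Remark \ref{rmk: some rels of P_ijk}(2), on the locus $P_{45l}=0$ we have
\[
\alpha_l\,\frac{\partial P_{45l}}{\partial \alpha_l}=-\beta_l\gamma_l\alpha_4\alpha_5(\beta_4\gamma_5-\beta_5\gamma_4).
\]
Here the symmetric roles of the indices give the formula for differentiating with respect to the third index. It remains to check that each factor on the right is a nonzero Plücker coordinate in this chart. Up to sign, the entries $\alpha_l,\beta_l,\gamma_l$ are the minors $p_{23l},p_{13l},p_{12l}$ (rows ordered $1,3,2$ as in the chart), and $\beta_4\gamma_5-\beta_5\gamma_4$ is $\pm p_{145}$. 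On $\itr V$ all of these are nonzero, so every diagonal entry is nonzero. The Jacobian therefore has full rank $n-5$, and $\itr V$ is smooth.

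The main point to get right is the identification of the matrix entries and the $2\times 2$ minors with Plücker coordinates, including signs. Signs are irrelevant here, since only nonvanishing is needed. One must also ensure that using the identity in (2) is legitimate, which it is because we evaluate at points of $V$ where $P_{45l}=0$. Small cases need a separate remark: if $n=5$ the codimension is $0$ and $\itr V$ is open in $\Gr(3,5)$, hence smooth; if $n\le 5$ there is nothing to prove.
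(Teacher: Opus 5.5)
Your proposal is correct and follows the paper's proof essentially verbatim: the same chart $\{p_{123}\neq 0\}$, the same diagonal $(n-5)\times(n-5)$ Jacobian minor with rows $P_{45l}$ and columns $\partial/\partial\alpha_l$, and the same use of Remark \ref{rmk: some rels of P_ijk}(2) to reduce nonvanishing of the diagonal entries to nonvanishing of Pl\"ucker coordinates. Two minor slips do not affect the argument: in this chart $\beta_l=-p_{12l}$ and $\gamma_l=p_{13l}$ (your identification swaps them), and no symmetry reduction is needed because $\itr V$ already lies in $\{p_{123}\neq 0\}$.
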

\begin{proof}
Since $p_{123}$ does not vanish on $\itr V$, $\itr V$ is an open subscheme in a closed subscheme of the affine chart $\{p_{123}\not=0\}\subset \Gr(3,n)$, given by the non-vanishing of all maximal minors and the vanishing of the determinants $P_{ijk}$. As noticed before, the singular locus is contained in the vanishing of any $(n-5)\times (n-5)$ minor of the Jacobian matrix. Thus, it suffices to show that there exists one such a minor whose zero locus has empty intersection with $\itr V$.

Consider the $(n-5)\times (n-5)$ minor of the Jacobian matrix whose rows are indexed by $P_{45k}$ for $k=6,7,\cdots,n$ and columns indexed by $\alpha_{i}$ for $i=6,7,\cdots,n$. By the relation (1) in Remark \ref{rmk: some rels of P_ijk}, this minor is a diagonal matrix. So its determinant equals the product $\prod_{k=6}^n \frac{\partial P_{45k}}{\partial \alpha_k}$.

By the relation (2) of Remark \ref{rmk: some rels of P_ijk}, the vanishing of $\prod_{k=6}^n \frac{\partial P_{45k}}{\partial \alpha_k}$ is contained in the vanishing of the product $\prod_{k=6}^n \beta_k\gamma_k \alpha_4\alpha_5 \det \begin{pmatrix}
        \beta_4 &\beta_5\\
        \gamma_4 &\gamma_5\\
    \end{pmatrix}$, which is a product of maximal minors of the matrix 
    
\[
\begin{pmatrix}
    1&0&0&\alpha_4&\cdots&\alpha_{n}\\
    0&0&1&\beta_4&\cdots&\beta_{n}\\
    0&1&0&\gamma_4&\cdots &\gamma_{n}
\end{pmatrix}
\]
This shows that $\itr V$ has empty intersection with the singular locus of $V$.
\end{proof}

\begin{proposition}\label{prop: smooth along Z(A|B) generic}
    The variety $V$ is smooth along the generic point of $Z(A|B)$, for any choice $(A|B)$ of a partition of $[n]$ into two subsets of size at least $2$.
\end{proposition}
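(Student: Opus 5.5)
We mimic the proof of Lemma \ref{int V is smooth}: exhibit, on a chart meeting $Z(A|B)^\circ$, an $(n-5)\times(n-5)$ minor of the Jacobian of the $P_{ijk}$ that does not vanish identically on $Z(A|B)$. Smoothness is open, and $V$ has codimension $n-5$ in $\Gr(3,n)$ by Theorem \ref{prop:interpretation of V}. So this shows that the generic point of $Z(A|B)$ lies in the smooth locus. By the $S_n$-symmetry of $V$ (Remark \ref{S_n symmetry}) we may relabel columns and take $A=\{1,\dots,a\}$ and $B=\{a+1,\dots,n\}$ with $a,n-a\ge 2$. Since $Z(A|B)$ is a positroid variety for the cyclic order, and hence irreducible, it suffices to find one point of $Z(A|B)^\circ$ at which the minor is nonzero.

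Choose a chart adapted to the two lines. A generic point of $Z(A|B)$ is a configuration in which the points of $A$ lie on a line $\ell_1$, the points of $B$ lie on a line $\ell_2$, and all points are otherwise generic. If $|A|\ge 3$, take a triple $\{i,j,k\}$ with $i,j\in A$ and $k\in B$, so $p_{ijk}\neq 0$ generically; the case $|A|=2$ is symmetric, using two indices from $B$. After normalizing these three columns to the identity block, as in the charts of \S\ref{section: candidate form}, the remaining columns of $A$ have one coordinate zero. Say $\ell_1=\{[\,x:y:0\,]\}$ in suitable coordinates, and similarly for $\ell_2$ through the column $k$. Concretely, with the chart $\begin{pmatrix}1&0&0&\alpha&\dots\\0&0&1&\beta&\dots\\0&1&0&\gamma&\dots\end{pmatrix}$ and identity columns $e_1,e_3,e_2$, we send columns $1,2$ of $A$ to $e_1,e_2$ and column $k\in B$ to $e_3$. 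Then every other column $l\in A$ has $\beta_l=0$, and every other column $l\in B$ lies on the line through $e_3$ and a fixed point.

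Next, select a Jacobian minor. Fix two further columns $u,v$ of $B$; if $|B|=2$, take one from each side, avoiding the coordinate that is forced to vanish. Use the rows $P_{uvl}$ for the $n-5$ remaining columns $l$, and the columns $\partial/\partial\alpha_l$ or $\partial/\partial\gamma_l$, chosen according to which coordinate of column $l$ is generically nonzero on $Z(A|B)$. As in Lemma \ref{int V is smooth}, relation (1) of Remark \ref{rmk: some rels of P_ijk} makes this minor diagonal, with entries $\partial P_{uvl}/\partial\alpha_l$ (or the $\gamma$ analogue). Now evaluate these entries on $Z(A|B)$ using relations (2)--(4) of Remark \ref{rmk: some rels of P_ijk}. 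When $\beta_l=0$ the expression from (2) degenerates, so compute the partial derivative directly instead. The result should be a product of $2\times 2$ minors, such as $\det\begin{pmatrix}\beta_u&\beta_v\\\gamma_u&\gamma_v\end{pmatrix}$, and of coordinates that correspond to Pl\"ucker coordinates $p_I$ with $I$ not contained in $A$ or $B$. Such coordinates are nonzero on $Z(A|B)^\circ$, so the minor is nonzero there.

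The main obstacle is the choice of $u,v$ and of the differentiation variables. Many of the $P_{ijk}$ vanish to higher order along $Z(A|B)$, for instance when $i,j,k\in A$, and naive minors then vanish identically. We must therefore use triples that straddle the two lines. We must also check carefully that each diagonal entry is nonzero, especially in the boundary cases $|A|=2$ or $|B|=2$. If a single diagonal minor fails, the fallback is a block-triangular minor that mixes $\alpha$- and $\gamma$-derivatives, verified on an explicit point such as the configuration $p$ used in the proof of Theorem \ref{conj:equality of boundaries}.
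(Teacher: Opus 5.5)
Your overall framework matches the paper's: the Jacobian criterion on a chart $\{p_{ijk}\neq 0\}$, a minor made diagonal by relation (1) of Remark~\ref{rmk: some rels of P_ijk}, and irreducibility of $Z(A|B)$. However, there is a genuine gap in the one step that carries the content. Your rule for choosing the differentiation variables is backwards, and the minor you describe vanishes identically on $Z(A|B)$.

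Work in your normalization. The points of $A$ lie on the coordinate line $\ell_1=\{\beta=0\}$, and the chart column $k\in B$ is the coordinate point with only $\beta\neq 0$. For $u,v\in B$, the factor $\gamma_u\alpha_v-\alpha_u\gamma_v$ of $A_{uv}$ equals $\pm p_{kuv}$, which vanishes on $Z(A|B)$. Hence for any column $l\in A$ (so $\beta_l=0$), identically on $Z(A|B)$:
\[
\frac{\partial P_{luv}}{\partial\alpha_l}=\beta_lC_{uv}+\gamma_lA_{uv}=0,\qquad
\frac{\partial P_{luv}}{\partial\gamma_l}=\alpha_lA_{uv}+\beta_lB_{uv}=0 .
\]
Your $|B|=2$ variant (with $u\in A$) fails the same way, because $A_{uv}$ carries the factor $\beta_u=0$.

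The geometric reason is this. As a function of column $l$, $P_{luv}$ is the equation of the conic through the three chart points and $u,v$, evaluated at $c_l$. On $Z(A|B)$ that conic is $\ell_1\cup\ell_2$. Its gradient at a point of $\ell_1$ is a multiple of $d\beta$, so every derivative in a direction along $\ell_1$ vanishes.

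For $|A|\ge 3$, at least one $l\in A$ is among your $n-5$ indices, so your diagonal minor is identically zero on $Z(A|B)$. This already happens for $n=6$ with $|A|=|B|=3$, where the minor is the single entry $\partial P_{luv}/\partial\alpha_l$. The fallback of mixing $\alpha$- and $\gamma$-derivatives cannot help, since both are tangent to $\ell_1$. The inconsistency in your normalization ($e_1,e_2$ actually span $\{\gamma=0\}$) is immaterial. In any consistent normalization, the generically nonzero coordinates of a point on a coordinate line are exactly the tangent directions.

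The fix is to differentiate each column on the coordinate line with respect to precisely the coordinate that vanishes there.

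- For $l\in A$ with $u,v\in B$, take $\partial P_{luv}/\partial\beta_l=\alpha_lC_{uv}+\gamma_lB_{uv}$. On $Z(A|B)$ this becomes $(\alpha_u\beta_v-\beta_u\alpha_v)\,\gamma_u\,\alpha_v\,(\alpha_u\gamma_l-\gamma_u\alpha_l)$. Each factor corresponds to a Pl\"ucker coordinate $p_I$ with $I\not\subset A$ and $I\not\subset B$, so it is nonzero on $Z(A|B)^\circ$.
- For $l\in B$, your choice $\partial\alpha_l$ gives $\beta_l C_{uv}$, which is also fine.

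This is what the paper does. It takes $A=\{2,\dots,q\}$ and $B=\{q+1,\dots,n,1\}$, so that on $\{p_{123}\neq 0\}$ the $A$-line is $\{\alpha=0\}$ (since $p_{234}=-\alpha_4$). It uses the rows $P_{4,k,n}$, which straddle the two lines, and the columns $\partial\alpha_i$ for $i\in A$ and $\partial\beta_i$ for $i\in B$. Relations (2) and (3) of Remark~\ref{rmk: some rels of P_ijk} then express each diagonal entry as a product such as $p_{23n}p_{124}p_{134}p_{1in}$, none of whose factors vanishes on $Z(A|B)$.
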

\begin{proof}
By the $S_n$-symmetry of $V$, we may assume that $A$ and $B$ are cyclic intervals. Furthermore, by possibly cyclically shifting $A$ and $B$, we may assume that $2,3,4\in A$ and $n,1\in B$. Thus, we have $A=\{2,3,\cdots , q\}$ and $B=\{q+1,q+2,\cdots, n, 1\}$ for some $4\leq q\leq n-1$.

Now, $Z(A|B)$ is a closed positroid variety, so it is reduced and irreducible. Therefore, the generic point of $Z(A|B)$ is the generic point of any nonempty open subscheme of $Z(A|B)$. So, it suffices to show that the singular locus of $V\cap \{p_{123}\not=0\}$ does not contain $Z(A|B)\cap \{p_{123}\not=0\}$.

The singular locus of $V\cap \{p_{123}\not=0\}$ is contained in the vanishing of any $(n-5)\times (n-5)$ minor of the Jacobian matrix described above. So to show the statement, it is enough to prove that there exists a minor zero locus does not contain $Z(A|B)\cap \{p_{123}\not=0\}$.

Consider the following $(n-5)\times (n-5)$ minor $M$ of the Jacobian matrix. The rows of $M$ are indexed by $P_{4,k,n}$ for $5\leq k\leq n-1$. The columns of $M$ are indexed by $\partial \alpha_i$, for $5\leq i\leq q$, and by $\partial \beta_i$, for $q+1 \leq i \leq n-1$. 

By observation (1) of Remark \ref{rmk: some rels of P_ijk}, the matrix $M$ is diagonal, so the determinant of $M$ is the product of its diagonal entries. Since $Z(A|B)\cap \{p_{123}\not=0\}$ is reduced and irreducible, its defining ideal is prime. Now, it is enough to check that each diagonal entry does not vanish identically on $Z(A|B)\cap \{p_{123}\not=0\}$. We have two cases.\\ 
{\bf Case 1:} Consider the indices $i$ such that $5\leq i\leq q$ (which can only happen when $q\geq 5$). The corresponding diagonal entry is $\frac{\partial P_{4,i,n}}{\partial \alpha_i}=-\frac{\partial P_{i,4,n}}{\partial \alpha_i}$. 

Notice that $-\alpha_4=p_{234}$ vanishes on $Z(A|B)$ because $2,3,4\in A$. By observation (3) of Remark \ref{rmk: some rels of P_ijk}, $\frac{\partial P_{i,4,n}}{\partial \alpha_i}=\alpha_{n}\beta_4\gamma_4\det \begin{pmatrix}
    \gamma_i & \gamma_{n}\\
    \beta_i & \beta_{n}
\end{pmatrix} = p_{23n}p_{124}p_{134}p_{1 i n}$ on $Z(A|B)$. 
Since $2,3,4,i\in A$ and $1,n\in B$, each factor of this product does not vanish on $Z(A|B)$. Again, since $Z(A|B)$ is defined by a prime ideal, the product does not vanish on $Z(A|B)$. Thus, the diagonal entry $\frac{\partial P_{i,4,q+1}}{\partial \alpha_i}$ does not vanish on $Z(A|B)$ either.\\
{\bf Case 2:} Consider the indices $i$ such that $q+1\leq i \leq n-1$. In particular, $i\in B$. By observation (2) of Remark \ref{rmk: some rels of P_ijk}, the vanishing of $\frac{\partial P_{4,i,n}}{\partial \beta_i}=-\frac{\partial P_{i,4,n}}{\partial \beta_i}$ is contained in the vanishing of the product $\alpha_i\gamma_i \beta_4 \beta_n \det \begin{pmatrix}
    \alpha_4&\alpha_n\\
    \gamma_4&\gamma_n
\end{pmatrix}$. Again, note that $-\alpha_4=p_{234}$ vanishes on $Z(A|B)$ because $2,3,4\in A$. So we can rewrite $\alpha_i\gamma_i \beta_4 \beta_n \det \begin{pmatrix}
    \alpha_4&\alpha_n\\
    \gamma_4&\gamma_n
\end{pmatrix}=-\alpha_i\gamma_i \beta_4 \beta_n \alpha_n\gamma_4=-p_{23i} p_{13i} p_{124}p_{12n}p_{23n}p_{134}$. Since $2,3,4\in A$ and $1,i,n\in B$, each factor on the latter product does not vanish on $Z(A|B)$. Again, since $Z(A|B)$ is defined by a prime ideal, the product does not vanish on $Z(A|B)$. Hence the diagonal entry $\frac{\partial P_{i,4,q+1}}{\partial \alpha_i}$ does not vanish on $Z(A|B)$ either.
\end{proof}

\begin{comment}
    Following previous notations, set 
    \begin{itemize}
        \item $A_{ij}:=\beta_i\beta_j(\gamma_i\alpha_j-\alpha_i\gamma_j)$
        \item $B_{ij}:=\alpha_i\alpha_j(\beta_i\gamma_j-\beta_i\gamma_j)$
        \item $C_{ij}:=\gamma_i\gamma_j(\alpha_i\beta_j-\alpha_i\beta_j)$
    \end{itemize}
    They satisfy the following properties
    \begin{itemize}
        \item $f=f_{ij}=\frac{B_{ij}}{A_{ij}}$, $g=g_{ij}=\frac{C_{ij}}{A_{ij}}$, $h=h_{ij}=\frac{C_{ij}}{B_{ij}}$ is independent of the choice of sub-indices.
        \item $\alpha_i\beta_i C_{jk}+\alpha_i\gamma_i A_{jk}+\beta_i\gamma_i B_{jk}=0$ for any choice of sub-indices.
    \end{itemize}
Notice that since $A_{ij},B_{ij},C_{ij}$ are product of Pl\"ucker coordinates, they are non-vanishing regular functions on $\itr V$. Thus, $f,g,h$ are all non-vanishing regular functions on $\itr V$.
\end{comment}

\begin{proposition}\label{prop:V(3,n) is normal}
The variety $V(3,n)$ is normal.
\end{proposition}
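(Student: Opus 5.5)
Since $V=V(3,n)$ is Cohen--Macaulay (Theorem \ref{prop:interpretation of V}), Serre's criterion reduces normality to showing that $V$ is regular in codimension $1$ ($R_1$), because $S_2$ is automatic. So the plan is to show that the singular locus $\Sing(V)$ has codimension at least $2$ in $V$. By Lemma \ref{int V is smooth}, $\Sing(V)$ lies in the complement of $\itr V$. That complement is $V\cap\bigcup_{I} Z(I)$, the union of the vanishing loci in $V$ of the individual Pl\"ucker coordinates. It therefore suffices to identify every irreducible component of $V\cap\{p_I=0\}$ of codimension exactly $1$ in $V$, and to show that $V$ is generically smooth along each of them.

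By the $S_n$-symmetry of $V$ (Remark \ref{S_n symmetry}), it is enough to treat $p_{123}=0$. I would describe $V\cap\{p_{123}=0\}$ set-theoretically through point configurations (Remark \ref{rmk:point configuration for V}): the columns $1,2,3$ become collinear, and all nonzero columns lie on a common conic. Counting dimensions, the possibilities are as follows.
\begin{enumerate}
\item[(a)] The conic degenerates to a line pair, with the points of some set $A\ni 1,2,3$ on one line and the rest on the other. This is exactly $Z(A|B)$ for a partition with $|A|,|B|\ge 2$. Its dimension is $(|A|-2)+(|B|-2)+4+\dots$; I would check that it equals $2n-5=\dim V-1$. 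This is the expected codimension $1$, and it can also be read off from the $Z(A|B)$ components of the boundary in Theorem \ref{decomposition of boundary}.
\item[(b)] Two of the points $1,2,3$ collide, or a column vanishes. Here the remaining $n-1$ points lie on a conic, which gives dimension $\dim V(3,n-1)+1=2n-5$ (one parameter for the second point). So $Z(12x:x\in[n])\cap V$ and its analogues are also divisors.
\item[(c)] Everything else, for example three points collinear on a smooth conic without a collision. This is impossible on a smooth conic, so it falls into (a) or (b), or has codimension $\ge 2$.
\end{enumerate}
Case (a) is handled directly by Proposition \ref{prop: smooth along Z(A|B) generic}.

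For case (b) I would repeat the Jacobian argument of Proposition \ref{prop: smooth along Z(A|B) generic} in a chart adapted to the collision. If columns $i,i+1$ collide, I choose the chart $\{p_{abc}\neq 0\}$ with $a,b,c$ different from $i,i+1$, and then a diagonal $(n-5)\times(n-5)$ minor of the Jacobian with rows $P_{s,t,k}$ and columns $\partial\alpha_k$ or $\partial\beta_k$, for suitable fixed $s,t$. By Remark \ref{rmk: some rels of P_ijk}, the diagonal entries factor into Pl\"ucker coordinates, and these can be arranged to avoid the colliding pair, so they are generically nonzero on the locus. The subtle point is whether the locus is irreducible, or at least whether each component is reached by this argument. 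I would parametrize the collision locus via $V(3,n-1)$ times a line, using Proposition \ref{inductive formula of canonical form on V(3,n)}-style forgetful maps, which gives irreducibility. The zero-column loci are handled the same way.

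The main obstacle is step (b)/(c): making sure that the classification of divisorial components of $V\setminus\itr V$ is complete. In particular, I need to rule out divisors on which the conic degenerates to a double line, or on which several conditions combine in codimension $1$, and I need to pick, for each collision divisor, a minor of the Jacobian whose diagonal entries provably avoid it. Once all divisorial components are shown to meet the smooth locus, $R_1$ holds, and with Cohen--Macaulayness this gives normality.
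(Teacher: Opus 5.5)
Your argument is correct, but it reaches $R_1$ by a different route than the paper. Both start the same way: Cohen--Macaulayness gives $S_2$, and Proposition \ref{prop: smooth along Z(A|B) generic} handles the divisors $Z(A|B)$. After that, the paper never lists the remaining divisors. It proves $R_1$ on all of $V\setminus\bigcup Z(A|B)$ at once, by projecting to point configurations in $(\mathbb{P}^2)^{n-3}$ (a $(\mathbb{C}^*)^{n-3}$-fibration). It then imports normality of the variety $V_{2,n}$ of $n$ points on a conic from \cite{CGMS18}, uses GIT with the stability results of \cite{DO88}, and shows the triple-collision locus has codimension $2$. Your route replaces that external input with an explicit list of the divisorial components of $V\setminus\itr V$ and Jacobian computations of the kind already in Lemma \ref{int V is smooth}. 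It is more elementary and self-contained. It also produces the list of components of $\{p_I=0\}$ that the paper later uses in Step 2 of the proof of the main theorem.

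The obstacles you flag are mild.
\begin{itemize}
\item \textbf{Completeness of the case split.} By Krull's principal ideal theorem every component of $V\cap\{p_I=0\}$ is a divisor. So once every point lies in one of your listed irreducible divisors, those divisors are exactly the components, and nothing can hide in codimension $\ge 2$.
\item \textbf{Why the list covers every point.} A conic through three distinct collinear points contains their line. A double line is impossible because the columns span $\mathbb{C}^3$. The case $|B|=1$ lies in some $Z(A'|B')$ with $|B'|=2$: move one further point of $A$ into $B$.
\item \textbf{A slip on zero columns.} A zero column gives $V(3,n-1)$, of dimension $2n-6$, so it is not a divisor. It lies inside the collision loci and needs no separate treatment.
\item \textbf{Irreducibility of the collision divisor.} The divisor where columns $i,j$ collide is the closure of the part where column $i\neq 0$. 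That part is isomorphic to an open subset of $V(3,n-1)\times\mathbb{A}^1$, and the rest has dimension $2n-6$, so the divisor is irreducible. Alternatively, use Theorem \ref{thm:matrix description of colliding boundary}, whose proof does not use normality.
\item \textbf{No new minor is needed.} The minor from Lemma \ref{int V is smooth} (rows $P_{45k}$, columns $\partial\alpha_k$) can vanish on $V\cap\{p_{123}\neq 0\}$ only where one of $p_{12k},p_{13k}$ ($k\ge 6$), $p_{234},p_{235},p_{145}$ vanishes. None of these index sets contains $\{4,6\}$, so the minor is nonzero at the generic point of the divisor where columns $4$ and $6$ collide. $S_n$-symmetry then covers every other collision divisor.
\end{itemize}
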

\begin{proof}
Let $V=V(3,n)$. The statement is equivalent to saying that the coordinate ring of $V$ satisfies the conditions $S_2$ and $R_1$. 
By Proposition \ref{prop:interpretation of V}, the variety $V$ is Cohen-Macaulay and so its coordinate ring is $S_2$. Therefore, to prove normality, we need to prove the coordinate ring of $V$ satisfies the condition $R_1$, i.e., $V$ is regular in codimension $1$.

By Proposition \ref{prop: smooth along Z(A|B) generic}, it is enough to show that $V\setminus \bigcup Z(A|B)$ is $R_1$, where the union runs through all the partitions of $[n]$ into two subsets of cardinality at least $2$. Since the vanishing of all Pl\"ucker coordinates is empty, it suffices to show that $V\setminus \bigcup Z(A|B)$ restricted to every single $\{p_{ijk}\not=0\}$ is $R_1$. By $S_n$-symmetry of $V$, this is equivalent to proving that $(V\setminus \bigcup Z(A|B))\bigcap \{p_{123}\not=0\}$ is $R_1$.

Consider the rational projection $\pi: (V\setminus \bigcup Z(A|B))\bigcap \{p_{123}\not=0\}\dashrightarrow (\mathbb{P}^2)^{n-3}$ given by
\[
\begin{pmatrix}
    1&0&0&\alpha_4&\cdots&\alpha_{n}\\
    0&0&1&\beta_4&\cdots&\beta_{n}\\
    0&1&0&\gamma_4&\cdots &\gamma_{n}
\end{pmatrix}
\mapsto ([\alpha_i:\beta_i:\gamma_i] \mbox{ with } i=4,\ldots, n).
\]

The source of the morphism $\pi$ is a constructible subset of $(\mathbb{A}^{3})^{n-3}$ and the projection $\pi$ is obtained from restricting the $(n-3)$-fold product of the usual projection $\mathbb{A}^3\dashrightarrow \mathbb{P}^2$ to the source. Every irreducible component of the indeterminancy locus of $\pi$  is the defined by the common zeros of $\alpha_i,\beta_i,\gamma_i$ for some $i$, which is codimension at least $2$ in $(V-\bigcup Z(A|B))\bigcap \{p_{123}\not=0\}$. Therefore, proving that that the domain of $\pi$ is $R_1$ would show the statement. 

The condition for a point in $\Gr(3,n)$ without zero columns to lie in $V$ and to lie in $Z(A|B)$ is invariant under the $\mathbb{C}^*$-action rescaling each column, so the domain of $\pi$ is a union of fibers of $\mathbb{A}^3\dashrightarrow \mathbb{P}^2$. We conclude that the fibers of $\pi$ are all equal to $(\mathbb{C}^*)^{n-3}$. Note that the dimension of the image of $\pi$ is $n-1$. By \cite[Ex. 24.8.F.(b)]{Vakil24}, the preimage of the smooth locus of the image of $\pi$ is smooth. Thus it is sufficient to verify that the image of $\pi$ is $R_1$. 

The image of $\mathrm{Im}(\pi)$ of $\pi$ is an $\mathrm{GL}(3)$-invariant subscheme of $(\mathbb{P}^2)^{n-3}$ given by the following two conditions on $n-3$ points in $\mathbb{P}^2$:
\begin{enumerate}
    \item[(i)] The $n-3$ points, along with $[1:0:0],[0:1:0],[0:0:1]$, lie on a conic in $\mathbb{P}^2$;
    \item[(ii)] The $n-3$ points, along with $[1:0:0],[0:1:0],[0:0:1]$, cannot be partitioned into two subsets $A, B$ of cardinalities at least $2$ such that points in the same subset are collinear.
\end{enumerate}
Notice that both conditions can be expressed in terms of the vanishing (or non-vanishing) of maximal minors.

Consider the diagonal action of $\mathrm{GL}(3)$ on $(\mathbb{P}^2)^n$. \cite[Theorem 1, Ch. 2]{DO88} shows that the stable points of the action contains the open subscheme defined by the following two conditions:
\begin{enumerate}
    \item[(i)] No $3$ points collide;
    \item[(ii)] No $5$ points collinear.
\end{enumerate}

Let $V_{2,n}$ be the closed subscheme of $(\mathbb{P}^2)^n$ given by the condition that $n$ points lie on a common conic. The stable points of the $\mathrm{GL}(3)$-action on $V_{2,n}$ contains configurations given by:
\begin{enumerate}
    \item[(i)] $n$ points lie on a conic in $\mathbb{P}^2$;
    \item[(ii)] No $3$ points collide;
    \item[(iii)] No $5$ points are collinear.
\end{enumerate}

In \cite{CGMS18}, $V_{2,n}$ is defined differently, but their Theorem 1.1 (1) and Remark 3.3 show that their original definition and our definition here are equivalent. They show that $V_{2,n}$ is reduced, irreducible, and normal.

Therefore, its GIT quotient $V_{2,n} /\!\!/ \mathrm{GL}(3)$ is normal too. The latter contains the geometric quotient of the stable points of $V_{2,n}$. Therefore, the GIT quotient contains the configuration space of $n$ points on $\mathbb{P}^2$ up to an automorphism of $\mathbb{P}^2$ such that:
\begin{enumerate}
    \item[(i)] $n$ points lie on a conic;
    \item[(ii)] No $3$ points collide;
    \item[(iii)] No $5$ points are collinear.
\end{enumerate}
We can further restrict to the open set where the first three points are in general position, and thus we may fix them to be $[1:0:0],[0:1:0],[0:0:1]$. We then obtain the normality of the subscheme $U$ of $(\mathbb{P}^2)^{n-3}$ given by: 
\begin{enumerate}
   \item[(a)] The $n-3$ points, along with $[1:0:0],[0:1:0],[0:0:1]$, lie on a conic;
    \item[(b)] The $n-3$ points, along with $[1:0:0],[0:1:0],[0:0:1]$, satisfy that no $3$ points collide;
    \item[(c)] The $n-3$ points, along with $[1:0:0],[0:1:0],[0:0:1]$, satisfy that no $5$ points are collinear.
\end{enumerate}
Hence, in order to show that the image $\mathrm{Im}(\pi)$ is $R_1$, it is sufficient to prove that the complement of $\mathrm{Im}(\pi) \cap U\subset \mathrm{Im}(\pi)$ has codimension at least $2$ in $\mathrm{Im}(\pi)$.

Condition $(a)$ of $U$ is implied by condition (i) for $\mathrm{Im}(\pi)$. Imposing that $3$ points collide, besides assuming the conditions for $\mathrm{Im}(\pi)$, results in the same conditions for $\mathrm{Im}(\pi)$ except that we now have $2$ fewer points to deal with, and this space has $2$ dimensions fewer. So assuming condition $(b)$ of $U$, along with those for $\mathrm{Im}(\pi)$, yields a codimension at least $2$ locus inside $\mathrm{Im}(\pi)$. 

Finally, we show assuming condition $(b)$ of $U$, besides the conditions for $\mathrm{Im}(\pi)$, guarantees condition $(c)$ of $U$. Suppose this is not the case. Then, there exists $5$ collinear points collinear but no $3$ points collide. Hence there are at least $3$ distinct points among the $5$ points that are collinear. Therefore, any conic containing all the $n$ points contains a line, and so is either a union of two lines or a non-reduced line. In either case, condition $(ii)$ of $\mathrm{Im}(\pi)$ is violated, reaching a contradiction. This implies that condition (c) is always satisfied in $\mathrm{Im}(\pi)$ outside the codimension at least $2$ locus where condition (b) of $U$ is false. In conclusion, the normality of $U$ implies that $\mathrm{Im}(\pi)$ is $R_1$, thus establishing that $V$ is $R_1$. 
\end{proof}

\subsection{Iterated Boundaries}\label{subsection:Iterated Boundary components}
In this subsection, we define a family of subvarieties in $V(3,n)$ by intersecting with certain positroid varieties and possibly take the reduced subscheme structure on them. Their importance in our work stems from the fact that we shall prove that they are exactly the iterated boundaries of $V(3,n)$; see Definition \ref{def:iterated boundary}. 

\begin{definition} \label{two classes of algebraic boundaries}
Fix a partition $A_0\bigsqcup (\bigsqcup_{i=1}^r I_i)$ of $[n]$, where each $I_i$ is a cyclic interval. Fix a partition $[r]=A\bigsqcup B$ into cyclic intervals. Define two families of positroid varieties in $\Gr(3,n)$ as follows.
\begin{align*}
&\Pi(A_0;I_1,\cdots, I_r):=\{[M]\in \Gr(3,n): \rk M_{A_0}=0, \  \rk M_{I_i}\leq 1 \text{ for all } i\},  \text{ and} \\
&\Pi(A_0; I_a \text{ for } a\in A| I_b \text{ for } b\in B):=\\
&\{[M]\in \Gr(3,n): \rk M_{A_0}=0, \  \rk M_{I_i}\leq 1 \text{ for all } i, \ \rk M_{\bigsqcup_{a\in A} I_a}\leq 2, \ \rk M_{\bigsqcup_{b\in B} I_b }\leq 2\}.    
\end{align*}
\end{definition}

We now intersect these positroid varieties with the $\mathrm{ABCT}$ variety $V(3,n)$. It is easy to see from Theorem \ref{Ideal of ABCT} that a positroid variety of the form $\Pi(A_0; I_a \text{ for } a\in A| I_b \text{ for } b\in B)$ is contained in $V(3,n)$, so the intersection is the positroid variety itself. As for a positroid variety of type $\Pi(A_0;I_1,\cdots, I_r)$, it is contained in $V(3,n)$ if and only if $\#(\bigsqcup_{i=1}^r I_i)\leq 5$. 

\begin{definition}\label{dfn:iterated boundaries two types}
Consider the following two classes of subvarieties in $V(3,n)$.
\begin{enumerate}
    \item  We call positroid varieties of the form $\Pi(A_0; I_a \text{ for } a\in A| I_b \text{ for } b\in B)$ {\it boundaries of collinear configurations}. 
    \item  Let $V(A_0;I_1,\cdots, I_r)\coloneqq \Pi(A_0;I_1,\cdots, I_r)\cap V(3,n)$ be the scheme-theoretic intersection between the positroid variety and the $\mathrm{ABCT}$ variety $V(3,n)$. We call its {\it reduced subscheme} $V(A_0;I_1,\cdots, I_r)_{\red}$ the {\it boundary of colliding configurations}.

\end{enumerate}
    \end{definition}
We will show later that the above are exactly all the iterated boundaries of $V(3,n)$ in the sense of Definition \ref{def:iterated boundary}.

\begin{remark}\label{rmk:point confi for colliding boundary}
From the perspective of the Gelfand-MacPherson correspondence recalled in Remark \ref{rmk:positroid as point config}, as a set, $\Pi(A_0;I_1,\cdots, I_r)$ is the subset of $\Gr(3,n)$ such that columns in $A_0$ are zero and nonzero columns in each $I_i$ give the same point on $\mathbb{P}^2$. Intersecting with $V(3,n)$ means requiring that these points also lie on a conic. As a set, $\Pi(A_0; I_a \text{ for } a\in A| I_b \text{ for } b\in B)$ is the subset such that the above colliding conditions hold and, moreover, nonzero columns from $\bigsqcup_{a\in A} I_a$ and from $\bigsqcup_{b\in B} I_b$ respectively give collinear points on $\mathbb{P}^2$. Thus, all points lie on the union of two lines, which is a conic. Indeed, such a variety sits inside $V(3,n)$.
\end{remark}

\begin{remark} Iterated algebraic boundaries given in the above definition can be equivalently written in notations developed in Subsection \ref{Decomposition of G setminus G>0 in V}. For example, $$\Pi(\emptyset;\{1\} \{2\}\{3\}| \{4\}\{5\}\{6\}\{7\})=Z(\{123\}|\{4567\})\text{, 
 \ \ \ \ and}$$ $$ V(\emptyset;\{12\},\{3\},\{4\},\{5\},\{6\})=ZV(\{123\},\{124\},\{125\},\{126\})$$ The new notation draws explicit connections to point configurations, making it easier to keep track of strata for later inductive arguments. The old notation makes it clear what Pl\"ucker coordinates define its ideal of vanishing. We will use both notations. 
\end{remark}

\subsection{An Algebraic Lemma}
\begin{lemma}\label{lem for iso of sheaves}
Let $R$ be a ring. Fix $r_{i,j}\in R$, for $i=1, 2, 3$ and $1\leq j\leq k$. Let $a_1,b_1,c_1,a_2,b_2,c_2$, and $v$ be indeterminates. Let $I\subset R[a_1,b_1,c_1,v]$ be the ideal defined by 
\[
I = (r_{1,j}a_1b_1 + r_{2,j}a_1c_1 + r_{3,j}b_1c_1 \ | \ 1\leq j\leq k).
\]
Let 
\[
\phi: R[a_1,a_2,b_1,b_2,c_1,c_2]\longrightarrow R[a_1,b_1,c_1,v]/I
\]
be the ring map defined by 
\[
a_1\mapsto a_1,b_1\mapsto b_1,c_1\mapsto c_1
\]
\[
a_2\mapsto a_1v, b_2\mapsto b_1v, c_2\mapsto c_1v.
\]
Then 
\[
\ker(\phi) = (a_1b_2-a_2b_1, a_1c_2-a_2c_1, b_1c_2-b_2c_1)+\] \[ (r_{1,j}a_1b_1 + r_{2,j}a_1c_1 + r_{3,j}b_1c_1, r_{1,j}a_1b_2 + r_{2,j}a_1c_2 + r_{3,j}b_1c_2, r_{1,j}a_2b_2 + r_{2,j}a_2c_2 + r_{3,j}b_2c_2, 1\leq j\leq k).
\]
and the image of ${\phi}$ is the $R$-submodule of the target generated by the images of monomials
\[
a_1^x b_1^y c_1^z v^w
\quad\text{with}\quad x+y+z\ge w,
\]
\begin{proof}
Let $J$ be the ideal on the right-hand side of the last display. It is easy to see that $J\subset \ker(\phi)$. Let
\[
\widetilde{\phi}: R[a_1,b_1,c_1,a_2,b_2,c_2]/J
\longrightarrow
R[a_1,b_1,c_1,v]/I
\]
be the induced map. To establish the desired equality of ideals, it suffices to show that $\widetilde{\phi}$ is injective. The image of $\widetilde{\phi}$ is the $R$-submodule of the target generated by the images of monomials
\[
a_1^x b_1^y c_1^z v^w
\quad\text{with}\quad x+y+z\ge w,
\]
as these are the images of the standard $R$-module generators of the source.

We define an $R$-linear map
\[
\psi \colon R[a_1,b_1,c_1,v] \longrightarrow R[a_1,b_1,c_1,a_2,b_2,c_2]
\]
on monomials as follows.
Given $a_1^x b_1^y c_1^z v^w$, we convert powers of $v$ into second-index variables using the rule:
\begin{itemize}
\item first replace $c_1v$ by $c_2$ as many times as possible;
\item then replace $b_1v$ by $b_2$;
\item finally replace $a_1v$ by $a_2$.
\end{itemize}
For instance $\psi(a_1b_1v)=a_1b_2,
\psi(a_1b_1c_1v^2)=a_1b_2c_2$. 
We extend $\psi$ to all polynomials by $R$-linearity.

By construction, $\psi$ is an $R$-module inverse to $\widetilde{\phi}$ on the level of monomials, hence on the image of $\widetilde{\phi}$.
It remains to check that $\psi$ descends to the quotient $R[a_1,b_1,c_1,v]/I$. 
Thus it suffices to show that the $R$-submodule generated by
$r_{1,j}a_1b_1+r_{2,j}a_1c_1+r_{3,j}b_1c_1$, $1\leq j\leq k$ is sent by $\psi$ into the submodule generated by
\[
r_{1,j}a_1b_1+r_{2,j}a_1c_1+r_{3,j}b_1c_1,\quad
r_{1,j}a_1b_2+r_{2,j}a_1c_2+r_{3,j}b_1c_2,\quad
r_{1,j}a_2b_2+r_{2,j}a_2c_2+r_{3,j}b_2c_2, 1\leq j\leq k
\]

For each $j$, since $\psi$ is defined on monomials and extended by $R$-linearity, it suffices to consider
\[
a_1^x b_1^y c_1^z v^w \cdot (r_{1,j}a_1b_1+r_{2,j}a_1c_1+r_{3,j}b_1c_1).
\]

If $w\le z$, then $c_1^z$ absorbs $v^w$, and the image under $\psi$ is divisible by
$r_{1,j}a_1b_1+r_{2,j}a_1c_1+r_{3,j}b_1c_1$. Otherwise, factor out $(c_1v)^z$ and reduce to the case $z=0$.
We are then reduced to
\[
v^w\left(r_{1,j} a_1^{x+1} b_1^{y+1}
+ r_{2,j} a_1^{x+1} b_1^{y} c_1
+ r_{3,j} a_1^{x} b_1^{y+1} c_1\right), \ \ w>0.
\]

If $w\le y+1$, the image under $\psi$ is a multiple of
$r_{1,j}a_1b_2+r_{2,j}a_1c_2+r_{3,j}b_1c_2$.
Otherwise, it is a multiple of
$r_{1,j}a_2b_2+r_{2,j}a_2c_2+r_{3,j}b_2c_2$.
In all cases, the image lies in the desired submodule. Hence $\psi$ is well-defined, showing that $\widetilde{\phi}$ is injective. Thus $J = \ker(\phi)$. 
\end{proof}
\end{lemma}

\subsection{Geometry of Iterated Boundaries}\label{subsection:geometry of iterated}
The boundaries of collinear configurations are positroid varieties. Thus, the geometric properties of these varieties are well-studied and are known to be positive geometries. Here we focus on the boundaries of colliding configurations $V(A_0;I_1,\cdots, I_r)_{\red}$.  Recall from Example \ref{ideal of positroid} that the positroid varieties are cut out as reduced schemes by Pl\"ucker coordinates from cyclic rank conditions. However, the scheme theoretic intersections $V(A_0;I_1,\cdots, I_r)$ could be non-reduced, i.e., $V(A_0;I_1,\cdots, I_r)\not=V(A_0;I_1,\cdots, I_r)_{\red}$. We hereby give the ideal description of $V(A_0;I_1,\cdots, I_r)_{\red}$ as a subscheme of $V(3,n)$ in terms of both matrix coordinates and Pl\"ucker coordinates, generalizing Theorem \ref{prop:interpretation of V} and Theorem \ref{Ideal of ABCT}. We proceed with the definition of a closed subscheme of $V(A_0;I_1,\cdots, I_r)$. 

\begin{definition} \label{Candidate for reduced colliding boundary}
Let $\theta:\operatorname{Mat}_{3\times n}\to  \operatorname{Mat}_{6\times n}$ be 
\[
\begin{pmatrix} 
x_{11} & x_{12} & \dots & x_{1n} \\
x_{21} & x_{22} & \dots & x_{2n} \\
x_{31} & x_{32} & \dots & x_{3n}
\end{pmatrix}\mapsto
\begin{pmatrix} 
x_{11}^2 & x_{12}^2 & \dots & x_{1n}^2 \\
x_{21}^2 & x_{22}^2 & \dots & x_{2n}^2 \\
x_{31}^2 & x_{32}^2 & \dots & x_{3n}^2 \\
x_{11}x_{21} & x_{12}x_{22}&\dots & x_{1n}x_{2n}\\
x_{11}x_{31} & x_{12}x_{32}&\dots & x_{1n}x_{3n}\\
x_{21}x_{31} & x_{22}x_{32}&\dots & x_{2n}x_{3n}\\
\end{pmatrix}.
\]
Let $\tilde{\theta}:\operatorname{Mat}_{3\times n}\to  \operatorname{Mat}_{6\times (n+n')}$ be defined by as follows. For each $a\not=b\in I_i$ and every $i\in [r]$, append the following column to the matrix image of $\theta$ 
\[
(x_{1a}x_{1b}, x_{2a}x_{2b},x_{3a}x_{3b},x_{1a}x_{2b},x_{1a}x_{3b},x_{2a}x_{3b})^T.
\]
The closed subscheme $V(A_0;I_1,\cdots, I_r)'$ of $V(A_0;I_1,\cdots, I_r)$ is then defined by the vanishing of the $6\times 6$ minors of the image of $\tilde{\theta}$. 
\end{definition}
We will eventually show that $V(A_0;I_1,\cdots, I_r)'$ defined above equals the boundary of colliding configurations $V(A_0;I_1,\cdots, I_r)_{\red}$. 

We first give the key construction of a family of {\it incidence scheme correspondences} for studying the boundaries of colliding configurations. If $I_i$ has size $1$ for all $i$, $V(A_0;I_1,\cdots, I_r)_{\red}=V(3,n-\#A_0)$ by forgetting the zero columns in $A_0$, and we looked at the geometry of $V(3,n)$ before. So we hereby assume that $\#I_1\geq 2$ by symmetry as in Remark \ref{S_n symmetry}. Fix two indices $s,t\in I_1$. Our approach is to construct a kind of resolution of $V(A_0;I_1,\cdots, I_r)'$ inductively, at each step shrinking $I_1$ into some smaller set $I_1'$ to get a boundary in $V(3,n-1)$.

\begin{definition}
Let $s, t\in I_1$ be distinct. Let
\[
\mathcal{I}_{st}(A_0;I_1,\cdots, I_r)=\{([M], [x:y]): y\cdot M_{\{s\}}=x\cdot M_{\{t\}}\}\subset V(A_0;I_1,\cdots, I_r)'\times \mathbb{P}^1,
\]
where $M_{\{s\}}$ denotes the $s$-th column of the matrix $M$. The incidence condition can be rephrased in terms of Pl\"ucker coordinates as $p_{s,j,k}(M)y=p_{t,j,k}(M)x \text{ for all } j,k$.
\end{definition}

\begin{proposition}\label{prop:bundle over P^1}
The morphism obtained by projecting to the second factor 
\[
\pi_2 :\mathcal{I}_{st}(A_0;I_1,\cdots, I_r)\subset V(A_0;I_1\cdots, I_r)'\times \mathbb{P}^1 \to \mathbb{P}^1
\]
is trivialized over standard charts of $\mathbb{P}^1=\left\{\left[x/y:1\right]\right\}\cup \left\{\left[1:y/x\right]\right\}$.
More precisely, let $U_1$ and $U_2$ be the preimages of the charts $\left\{\left[x/y:1\right]\right\}$ and $\left\{\left[1:y/x\right]\right\}$, respectively. Restrictions of $\pi_2$ to each $U_i$ factors through an isomorphism to some product space and the projection map on the product space to the second factor
\[
U_1\simeq  V(A_0;I_1',\cdots, I_r)'\times \left\{\left[x/y:1\right]\right\}\to \left\{\left[x/y:1\right]\right\}
\] 
and 
\[
U_2\simeq  V(A_0;I_1',\cdots, I_r)'\times \left\{\left[1:y/x\right]\right\}\to \left\{\left[1:y/x\right]\right\},
\]  
where $V(A_0;I_1',\cdots, I_r)'\subset V(3,n-1)$ obtained by identifying $s,t\in I_1$ as a single index to form $I_1'$ of size $\#I_1-1$.

\end{proposition}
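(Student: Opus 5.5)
On the chart $U_1=\{y\neq 0\}$, set $u=x/y$. The incidence condition then reads $M_{\{s\}}=u\,M_{\{t\}}$, so column $s$ is determined by column $t$ and the scalar $u$. The plan is to write down mutually inverse morphisms
\[
\Phi: U_1\to V(A_0;I_1',\cdots,I_r)'\times \mathbb{A}^1_u,\qquad \Psi: V(A_0;I_1',\cdots,I_r)'\times\mathbb{A}^1_u\to U_1 .
\]
Here $\Phi$ forgets column $s$ and records $u$, and $\Psi$ reinserts column $s$ as $u$ times column $t$. The chart $U_2=\{x\neq 0\}$ is symmetric: with $w=y/x$ one has $M_{\{t\}}=w\,M_{\{s\}}$, and one forgets column $t$ instead. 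So I would only treat $U_1$.

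\emph{Set-theoretic check.} First I would verify that both maps land where claimed and are well defined on the Grassmannian.
\begin{itemize}
\item For $\Phi$: on $U_1$ the span of the columns is unchanged when column $s$ is deleted, because $s$ is a multiple of $t$. Hence the $3\times(n-1)$ matrix still has rank $3$ and $\Phi$ is a morphism to $\Gr(3,n-1)$. In Plücker terms, $p_{sjk}=u\,p_{tjk}$ and the $p_J$ with $s\notin J$ are kept.
\item For $\Psi$: the rank conditions $\rk M_{A_0}=0$ and $\rk M_{I_i}\le 1$ are preserved, since column $s$ is proportional to column $t$ and $t\in I_1'$. The conic condition is preserved as well.
\end{itemize}
These are all immediate, and they show that the underlying varieties match.

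\emph{Scheme-theoretic check (the main step).} The real work is to show that the defining ideals correspond. This means showing that the $6\times 6$ minors of $\tilde\theta(M)$ for $(A_0;I_1,\dots,I_r)$, together with the incidence equations, generate the same ideal as the $6\times 6$ minors of $\tilde\theta$ for $(A_0;I_1',\dots,I_r)$ after substituting $M_{\{s\}}=uM_{\{t\}}$. I would work on an affine chart $\{p_{abc}\neq0\}$ with matrix coordinates, choosing $a,b,c$ away from $s$ where possible.

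After the substitution, the columns of $\tilde\theta(M)$ that involve $s$ are as follows:
\begin{itemize}
\item $\theta$ of column $s$ equals $u^2$ times $\theta$ of column $t$;
\item the appended column for the pair $(s,t)$ equals $u$ times $\theta$ of column $t$;
\item for $b\in I_1\setminus\{s,t\}$, the appended column for the pair $(s,b)$ equals $u$ times the appended column for $(t,b)$.
\end{itemize}
So, up to column operations over $R[u]$, the column space of $\tilde\theta(M)$ is that of $\tilde\theta(M')$ with some columns duplicated by scalar multiples. Every $6\times 6$ minor is therefore an $R[u]$-multiple of a minor of $\tilde\theta(M')$, or is zero, and conversely.

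This is precisely where the appended columns in Definition \ref{Candidate for reduced colliding boundary} are needed. Without them, the plain scheme $V(A_0;I_1,\dots,I_r)$ would not reduce in this way.

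For the coordinate-ring statement that the map is an isomorphism rather than just a bijection, I expect to invoke Lemma \ref{lem for iso of sheaves}. The relevant instance takes $(a_1,b_1,c_1)$ to be the column $t$ entries and $v=u$. The quadratic relations $r_{1,j}a_1b_1+\dots$ are the linear conditions coming from minors in the remaining columns, and the lemma identifies the kernel of the substitution map with the ideal generated by the corresponding bilinear and quadratic minors on the column-$s$ side.

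The main obstacle is matching this bookkeeping precisely: confirming that the generators listed in the lemma's kernel are exactly the extra minors and the incidence relations $a_1b_2-a_2b_1$, and so on, in the source, so that $\Phi^\sharp$ and $\Psi^\sharp$ are inverse ring maps. Once that is done, compatibility with the projection to $\mathbb{P}^1$ is by construction, since both maps record $u$ as the second factor.
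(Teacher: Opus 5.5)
Your argument is correct and uses the same two mutually inverse maps as the paper: forget column $s$ and record $u=x/y$, or reinsert column $s$ as $u$ times column $t$. The paper only asserts that these are inverse. Your observation supplies the ideal-level check the paper leaves implicit. Under $M_{\{s\}}=uM_{\{t\}}$, every column of $\tilde\theta(M)$ involving $s$ becomes $u$ or $u^2$ times a column of $\tilde\theta(M')$. Hence each $6\times 6$ minor becomes $u^k$ times a minor of $\tilde\theta(M')$ or zero, and every minor of $\tilde\theta(M')$ already occurs. Two small additions complete it:
\begin{itemize}
\item The same scaling remark applies to the $2\times 2$ minors of $M_{I_1}$.
\item Charts $\{p_J\neq 0\}$ with $s\notin J$ always cover $U_1$. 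On $U_1$ one has $p_J=0$ if $s,t\in J$, and $p_J=\pm u\,p_{(J\setminus\{s\})\cup\{t\}}$ if only $s\in J$.
\end{itemize}

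Two of your remarks are misattributed, although neither is load-bearing.

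First, the appended columns of Definition \ref{Candidate for reduced colliding boundary} are not what makes this proposition work. The same scaling argument, applied to the minors of $\theta(M)$ alone, also trivializes the possibly non-reduced intersection $V(A_0;I_1,\cdots,I_r)$. The paper uses exactly this in the proof of Theorem \ref{thm:matrix description of colliding boundary}.

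Second, Lemma \ref{lem for iso of sheaves} is not needed here. It is the input to Proposition \ref{prop:iso of sheaves}, where it describes $\pi_1^\sharp$ on a chart whose coordinate ring has already been identified by the present trivialization. That is where the appended columns matter: they supply the mixed generators $r_{1,j}a_1b_2+r_{2,j}a_1c_2+r_{3,j}b_1c_2$ of its kernel.

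On a single chart, eliminating column $s$ identifies the two ambient polynomial rings, and your minor computation shows the ideals correspond. That already gives the isomorphism of coordinate rings, not just a bijection of points, so the proof is complete at that point.
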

\begin{proof}
The preimage $U_1$ of $ \left\{\left[x/y:1\right]\right\}$ under $\pi_2$ is the open subscheme 
\[
\mathcal{I}_{st}(A_0;I_1,\cdots, I_r)\cap (V(A_0;I_1\cdots, I_r)'\times \left\{\left[x/y:1\right]\right\}).
\] 
This is closed in $V(A_0;I_1\cdots, I_r)'\times \left\{\left[x/y:1\right]\right\}$, being cut out by the condition $M_{\{s\}}=x/y \cdot M_{\{t\}}$.
Define the morphism 
\[
U_1\to   V(A_0;I_1',\cdots, I_r)'\times \left\{\left[x/y:1\right]\right\},
\] 
where the first component is given by projecting $U_1\to V(A_0;I_1',\cdots, I_r)'$ by forgetting column $s$, whereas the second component is projecting to the second component. This morphism has an inverse because one can uniquely recover column $s$ from column $t$ and $x/y$. The chart isomorphism for $U_2$ follows from the same argument. 
\end{proof}

\begin{proposition} \label{prop:iso of sheaves}
One has a surjective morphism obtained by projecting to the first factor
\[
\pi_1:\mathcal{I}_{st}(A_0;I_1,\cdots, I_r)\subset V(A_0;I_1\cdots, I_r)'\times \mathbb{P}^1 \to V(A_0;I_1\cdots, I_r)'.
\]
The morphism $\pi_1$ is birational and induces an isomorphism of structure sheaves $\mathcal{O}_{V(A_0;I_1,\cdots, I_r)'}\cong (\pi_1)_*\mathcal{O}_{\mathcal{I}_{st}(A_0;I_1,\cdots, I_r)}$. 
\end{proposition}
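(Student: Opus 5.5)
The plan is to treat $\pi_1$ affine-locally and reduce the sheaf isomorphism to Lemma~\ref{lem for iso of sheaves}. We will use two complementary open sets: where columns $s$ and $t$ are not both zero, and the locus where they both vanish.

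\emph{Surjectivity and birationality.} Surjectivity is set-theoretic. For $[M]\in V(A_0;I_1,\ldots,I_r)'$ we have $\rk M_{I_1}\le 1$, so columns $s$ and $t$ are proportional. Hence there is some $[x:y]\in\mathbb{P}^1$ with $yM_{\{s\}}=xM_{\{t\}}$, and when both columns vanish every $[x:y]$ works. On the open set where $M_{\{s\}}$ or $M_{\{t\}}$ is nonzero, the point $[x:y]$ is uniquely determined, and it is given by ratios of Pl\"ucker coordinates $p_{sjk}/p_{tjk}$. So $\pi_1$ is an isomorphism there with inverse $[M]\mapsto([M],[p_{sjk}:p_{tjk}])$. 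This open set is dense, because the locus where columns $s$ and $t$ both vanish has smaller dimension by the point-configuration count. Irreducibility of $V(A_0;I_1,\ldots,I_r)'$ is not yet available at this stage. So we will instead argue density using Proposition~\ref{prop:bundle over P^1}: $\mathcal{I}_{st}$ is a $\mathbb{P}^1$-bundle over $V(A_0;I_1',\ldots,I_r)'$, so its dimension is at most $\dim V(A_0;I_1',\ldots,I_r)'+1$. The fibre of $\pi_1$ over the zero-column locus is a whole $\mathbb{P}^1$, which lets us compare dimensions and conclude birationality.

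\emph{Isomorphism of structure sheaves.} We work on a standard chart $\{p_{123}\ne0\}$ (after relabeling by $S_n$-symmetry) with $s,t\notin\{1,2,3\}$; other charts are handled symmetrically. Over the chart $\{[x:y]:y\neq 0\}$ we set $v=x/y$. Then $U_1$ has coordinate ring obtained from that of $V(A_0;I_1',\ldots,I_r)'$ by adjoining $v$ and setting $(\alpha_s,\beta_s,\gamma_s)=v(\alpha_t,\beta_t,\gamma_t)$. The equations defining $V(A_0;I_1',\ldots,I_r)'$ in column $t$ are the $6\times6$ minors of $\tilde\theta$, and after the chart normalization these reduce to quadratic relations of the form $r_{1,j}\alpha_t\beta_t+r_{2,j}\alpha_t\gamma_t+r_{3,j}\beta_t\gamma_t$, with coefficients $r_{i,j}$ in the ring $R$ generated by the other columns. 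We then apply Lemma~\ref{lem for iso of sheaves} with $(a_1,b_1,c_1)=(\alpha_t,\beta_t,\gamma_t)$ and $(a_2,b_2,c_2)=(\alpha_s,\beta_s,\gamma_s)$. The kernel $J$ computed there is exactly the ideal of $V(A_0;I_1,\ldots,I_r)'$ in this chart. The $2\times2$ minors encode $\rk M_{\{s,t\}}\le1$, and the mixed relations are precisely the extra columns $(x_{1a}x_{1b},\ldots)$ appended in Definition~\ref{Candidate for reduced colliding boundary}. This gives an injection $\mathcal{O}(V')\hookrightarrow\mathcal{O}(U_1)$ whose image is spanned by monomials $a_1^xb_1^yc_1^zv^w$ with $x+y+z\ge w$. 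The chart $y'=y/x$ is handled symmetrically, with $s$ and $t$ exchanged.

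Gluing the two charts, $H^0(\pi_1^{-1}(W),\mathcal{O})$ consists of the elements that are polynomial in $v$ on one chart and in $v^{-1}$ on the other. These are exactly the monomials bounded in degree by the total degree in column $t$, and by the same token in column $s$. This is the image of $\phi$, so $(\pi_1)_*\mathcal{O}_{\mathcal{I}_{st}}=\mathcal{O}_{V'}$.

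The main obstacle is this last identification. We must check that the relations defining $V'$ really take the shape required by Lemma~\ref{lem for iso of sheaves}, namely that the equations linear in the $\theta$-image of column $t$ have no $\alpha_t^2$-type terms in a suitable chart. We must also check that the \v{C}ech intersection computation matches the degree condition $x+y+z\ge w$ exactly, which means showing that nothing with $w>x+y+z$ becomes regular after gluing.
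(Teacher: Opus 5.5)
Your treatment of the sheaf isomorphism is the paper's own argument. You restrict to charts $\{p_J\neq0\}$ with $s,t\notin J$, identify the two trivializing opens of Proposition~\ref{prop:bundle over P^1} over such a chart with $\bigl(V(A_0;I_1',\ldots,I_r)'\cap\{p_J\neq0\}\bigr)\times\mathbb{A}^1$, apply Lemma~\ref{lem for iso of sheaves}, and characterize the glued sections by the monomial condition $x+y+z\ge w$.

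\textbf{The gap: density of $W_0$.} Your birationality rests on the density of $W_0=\{M_{\{s\}}\neq0\ \text{or}\ M_{\{t\}}\neq0\}$ in $V(A_0;I_1,\ldots,I_r)'$, and that step fails as written. Your dimension comparison only bounds the dimension of the zero-column locus $\{M_{\{s\}}=M_{\{t\}}=0\}$ by $\dim V(A_0;I_1',\ldots,I_r)'$. Since $V(A_0;I_1,\ldots,I_r)'$ is not known to be equidimensional, this does not exclude a lower-dimensional irreducible component lying entirely inside that locus.

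The missing input is irreducibility of $X=V(A_0;I_1',\ldots,I_r)'$, which the induction in the proof of Theorem~\ref{thm:matrix description of colliding boundary} supplies (base case $V(3,m)$). Then $\mathcal{I}_{st}=U_1\cup U_2$ is irreducible, being the union of two irreducible opens isomorphic to $X\times\mathbb{A}^1$ that meet in $X\times\mathbb{C}^*$. Your surjectivity then makes $V(A_0;I_1,\ldots,I_r)'=\pi_1(\mathcal{I}_{st})$ irreducible, so the nonempty open $W_0$ is dense and $\pi_1$ is birational. The paper's proof does not argue this density either; it only establishes the sheaf statement.

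\textbf{Smaller points.} Charts with $\#(J\cap\{s,t\})=1$ are not ``symmetric'' to the main case. They do lie in $W_0$, though, so your isomorphism over $W_0$ covers them; this is the paper's first case. Charts with $\#(J\cap I_1)\ge2$ are empty.

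The two obstacles you list are handled as in the paper.
\begin{itemize}
\item Once columns $1,2,3$ are the permuted identity, the squared rows of $\tilde\theta$ are pivots. The maximal minors therefore reduce to $3\times3$ minors of the $(\alpha\beta,\alpha\gamma,\beta\gamma)$ rows. Cofactor expansion along column $t$, column $s$, or the appended $(s,t)$ column gives the lemma's three families with the same coefficients.
\item Neither chart ideal involves $v$, and both are homogeneous in the merged column. So the grading by $w-(x+y+z)$ is well defined on the overlap, and a section extends over the other chart exactly when every homogeneous component has $w\le x+y+z$.
\end{itemize}

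\textbf{A caveat you share with the paper.} You claim that all relations on the merged column are quadratic of the lemma's shape; this holds literally only for $I_1=\{s,t\}$. If $\#I_1\ge3$, there are also relations linear in that column: the $2\times2$ minors of $M_{I_1'}$, and the minors involving appended columns that pair it with some $u\in I_1'$. These need the evident extension of the lemma, in which each homogeneous relation $f$ of degree $d$ contributes lifts of $v^wf$ for $0\le w\le d$. These lifts are again maximal minors of $\tilde\theta(M)$ or $2\times2$ minors of $M_{I_1}$, and the same monomial argument applies.
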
 
\begin{proof}
By using an inductive argument, we may assume that $A_0=\emptyset$ (see the proof of Theorem \ref{geometry of V and its boundaries} where the inductive argument is explicit). We have the induced morphism of sheaves $\mathcal{O}_{V(I_1,\cdots, I_r)'}\to (\pi_1)_*\mathcal{O}_{\mathcal{I}_{st}(I_1,\cdots, I_r)}$. A sheaf morphism being an isomorphism can be checked affine-locally. We have affine charts on $V(A_0;I_1\cdots, I_r)'$ obtained by intersecting $V(A_0;I_1\cdots, I_r)'$ with standard affine charts $\{p_{J}\not=0\}$ of the Grassmannian. Notice that if $\# (J\cap I_1)\geq2$, $\{p_{J}\not=0\}$ has empty intersection with $V(I_1,\cdots, I_r)$. We divide the case into $\# (J\cap \{s,t\})=0 \text{ or } 1$. Denote the open intersection $\{p_{J}\not=0\}\cap V(I_1,\cdots, I_r)'$ by $V(I_1,\cdots, I_r)'_J$.

When $\# (J\bigcap \{s,t\})=1$, by $S_n$-symmetry, because the $I_i$ are cyclic intervals, we can assume without loss of generality that $J=\{1,2,3\}$, $s=3$, and that $I_1$ is an interval $[3,k]$ for $k\geq4$. Fix a chart for $V(I_1,\cdots, I_r)'_J$ where the first $3\times 3$ minor is $\begin{pmatrix}
    1&0&0\\
    0&0&1\\
    0&1&0\\
\end{pmatrix}$. The relations coming from colliding points from $I_1$ forces $a_i=c_i=0$ for $i\in I_1$. Thus, $\pi_1$ is an isomorphism restricted to its preimage, where the inverse can be obtained by recovering the point in $\mathbb{P}^1$ from $[1:b_t]$.

When $s,t\not\in J$, again we can assume that $J=\{1,2,3\}$, $s=4, t=5$, and that $I_1$ is an interval containing $4,5$. Fix a chart for $V(I_1,\cdots, I_r)'_J$ where the first $3\times 3$ submatrix of our matrix representative $M$ is $\begin{pmatrix}
    1&0&0\\
    0&0&1\\
    0&1&0\\
\end{pmatrix}.$
Write $M=\begin{pmatrix}
    1&0&0&a_4&a_5&\cdots &a_n\\
    0&0&1&b_4&b_5&\cdots &b_n\\
    0&1&0&c_4&c_5&\cdots &c_n\\
    \end{pmatrix}$ for coordinates on $V(I_1,\cdots, I_r)'_J$. The ring of regular functions on $V(I_1,\cdots, I_r)'_J$ is 
    
\[
\mathcal{O}(V(I_1,\cdots, I_r)'_J)=k[a_i,b_i,c_i: i=4,5,\cdots,n  ]/L_J,
\] 
where $L_J$ is generated by the maximal minors of the image of $\tilde\theta$ and $2\times 2$ minors of $M_{I_i}$ for each $i$. We now explicitly compute the ring of regular functions on the preimage $(\pi_1)^{-1}(V(I_1,\cdots, I_r)'_J)$.

The preimage $\pi_1^{-1}(V(I_1,\cdots, I_r)'_J)$ is the open subscheme of $\mathcal{I}_{st}\subset V(I_1,\cdots, I_r)'\times \mathbb{P}^1$ defined by the nonvanishing of $p_{J}$ on the first component. Thus, in view of the covering $\mathcal{I}_{st}=U_1\cup U_2$ with 
\[
U_1\simeq  V(A_0;I_1',\cdots, I_r)'\times \left\{\left[x/y:1\right]\right\}, \qquad U_2\simeq  V(A_0;I_1',\cdots, I_r)'\times \left\{\left[1:y/x\right]\right\}\]
derived from Proposition \ref{prop:bundle over P^1}, 
the preimage $\pi_1^{-1}(V(I_1,\cdots, I_r)'_J)$ is the union \[(U_1\cap \{p_J\not=0\})\cup (U_2\cap \{p_J\not=0\})\]. So we have 
\[
U_1\cap \{p_J\not=0\}\simeq  \left(V(A_0;I_1',\cdots, I_r)'\cap \{p_J\not=0\}\right )\times \left\{\left[x/y:1\right]\right\},
\]
and
\[
U_2\cap \{p_J\not=0\}\simeq  \left(V(A_0;I_1',\cdots, I_r)'\cap \{p_J\not=0\}\right)\times \left\{\left[1:y/x\right]\right\},
\]
where the $p_J$ on the right-hand side is understood to be a coordinate on $V(3,n-1)$ where columns are indexed by $[n]$ with the indices $s$ and $t$ identified. This is well-defined because $s,t\not\in J$ and thus $J$ has still size $3$, upon identifying $s$ with $t$. 

On $U_2\cap \{p_J\not=0\}\simeq  \left(V(A_0;I_1',\cdots, I_r)'\cap \{p_J\not=0\}\right)\times \left\{\left[1:y/x\right]\right\}$, fix coordinates 
\[
M'=\begin{pmatrix}
    1&0&0&a_4&a_6&\cdots &a_n\\
    0&0&1&b_4&b_6&\cdots &b_n\\
    0&1&0&c_4&c_6&\cdots &c_n\\
   \end{pmatrix}
\] 
on the right-hand side by specifying the first $3\times 3$ submatrix. The isomorphism is given by 
\[
\begin{pmatrix}
    1&0&0&a_4&a_4\cdot y/x&a_6&\cdots &a_n\\
    0&0&1&b_4&b_4\cdot y/x&b_6&\cdots &b_n\\
    0&1&0&c_4&c_4\cdot y/x&c_6&\cdots &c_n\\
    \end{pmatrix}
\leftrightarrow
\left(\begin{pmatrix}
    1&0&0&a_4&a_6&\cdots &a_n\\
    0&0&1&b_4&b_6&\cdots &b_n\\
    0&1&0&c_4&c_6&\cdots &c_n\\
    \end{pmatrix}, [1:y/x]\right)
\]

For ease of notation, write $v$ for the ratio $y/x$. Thus, we have the coordinate ring
$\mathcal{O}(U_2\cap \{p_J\not=0\})= k[v, a_i,b_i,c_i: i=4,6,7,\cdots,n ]/ L_{1} ,$ where $L_1$ is the ideal defined by the vanishing of the maximal minors of $\tilde\theta(M')$ and the  $2\times 2$ minors of $M'_{I_i}$ for all $i$.

Similarly, for $U_1\bigcap \{p_J\not=0\}\simeq  \left(V(A_0;I_1',\cdots, I_r)'\bigcap \{p_J\not=0\}\right)\times \left\{\left[x/y:1\right]\right\}$, fix coordinates $$M''=\begin{pmatrix}
    1&0&0&a_5&a_6&\cdots &a_n\\
    0&0&1&b_5&b_6&\cdots &b_n\\
    0&1&0&c_5&c_6&\cdots &c_n\\
    \end{pmatrix}$$ on the right-hand side and the isomorphism is given by 
$$\begin{pmatrix}
    1&0&0&a_5\cdot x/y&a_5&a_6&\cdots &a_n\\
    0&0&1&b_5\cdot x/y&b_5&b_6&\cdots &b_n\\
    0&1&0&c_5\cdot x/y&c_5&c_6&\cdots &c_n\\
    \end{pmatrix}
\leftrightarrow
\left(\begin{pmatrix}
    1&0&0&a_5&a_6&\cdots &a_n\\
    0&0&1&b_5&b_6&\cdots &b_n\\
    0&1&0&c_5&c_6&\cdots &c_n\\
    \end{pmatrix}, [x/y:1]\right)$$
Recall that we write $v$ for $y/x$. We have the coordinate ring $\mathcal{O}(U_2\cap \{p_J\not=0\})= k[v^{-1}, a_i,b_i,c_i: i=5,6,\cdots,n ]/ L_{2} ,$ where $L_2$ is the ideal defined by the vanishing of the maximal minors of $\tilde\theta(M'')$ and the $2\times 2$ minors of $M''_{I_i}$ for all $i$.

The gluing identifies the open subscheme where $t$ is invertible, on which we have the connecting isomorphism 
$$
\left(\begin{pmatrix}
    1&0&0&a_4&a_6&\cdots &a_n\\
    0&0&1&b_4&b_6&\cdots &b_n\\
    0&1&0&c_4&c_6&\cdots &c_n\\
    \end{pmatrix}, [1:y/x]\right) \leftrightarrow
\left(\begin{pmatrix}
    1&0&0&a_5&a_6&\cdots &a_n\\
    0&0&1&b_5&b_6&\cdots &b_n\\
    0&1&0&c_5&c_6&\cdots &c_n\\
    \end{pmatrix}, [x/y:1]\right)
$$
where $a_5=a_4\cdot y/x,b_5=b_4\cdot y/x,c_5=c_4\cdot y/x$.

Thus, the coordinate ring of $(\pi_1)^{-1}(V(I_1,\cdots, I_r)'_J)$ is the subset of
$$\{(f,g)\in k[v, a_i,b_i,c_i: i=4,6,7,\cdots,n ]/ L_{1} \times k[t^{-1}, a_i,b_i,c_i: i=5,6,\cdots,n ]/ L_{2}\}$$ given by the condition that 
$f\leftrightarrow g$ under the correspondence $a_5=va_4,b_5=vb_4,c_5=vc_4$. Notice that $f$ uniquely determines $g$ under this correspondence, and so $(\pi_1)_*\mathcal{O}(V(I_1,\cdots, I_r)'_J)$ is the subring of $$k[v, a_i,b_i,c_i: i=4,6,7,\cdots,n ]/ L_{1} $$ satisfying the condition that after replacing $a_4\mapsto v^{-1}a_5,b_4\mapsto v^{-1}b_5,c_4\mapsto v^{-1}c_5$ in $f$, we obtain some polynomial $g$ in $k[v^{-1}, a_i,b_i,c_i: i=5,6,\cdots,n ]/ L_{2}\}$. The ring homomorphism $\mathcal{O}(V(I_1,\cdots, I_r)'_J)\to (\pi_1)_*\mathcal{O}(V(I_1,\cdots, I_r)'_J)$ maps $a_5\mapsto va_4,b_5\mapsto vb_4,c_5\mapsto vc_4$, and all other variables are sent to themselves.

Thus, to show the statement, it suffices to show that the ring homomorphism 
$$k[a_i,b_i,c_i: i=4,5,\cdots,n  ]/L_J \to k[v, a_i,b_i,c_i: i=4,6,7,\cdots,n ]/ L_{1}$$ sending 
\[
a_5\mapsto va_4,b_5\mapsto vb_4,c_5\mapsto vc_4
\]
 and fixing all other variables is injective, with image being the subring of the target consisting of polynomials $f$ such that its corresponding $g$ is in $k[v^{-1}, a_i,b_i,c_i: i=5,6,\cdots,n ]/ L_{2}$.

First, notice that we may organize all relations in $L_J$ and in $L_{1}$ not involving $a_4,b_4,c_4, a_5,b_5,c_5$ and it turns out that those are the same by definition of $L_J$ and $L_1$. Let $R$ be $k[a_i,b_i,c_i: i=6,\cdots,n ]$ modulo these relations. 
The remaining relations involving variables indexed by $4$ and $5$ are exactly of the form described in Lemma \ref{lem for iso of sheaves}. Finally, notice that $k[v, a_i,b_i,c_i: i=4,6,7,\cdots,n ]/ L_{1}$ is spanned as $R$-module by monomials $a_{4}^{x}b_{4}^{y}c_{4}^{z}v^w$, which get transformed to $a_{5}^{x}b_{5}^{y}c_{5}^{z}v^{w-x-y-z}$. Also, $L_2$ does not involve $v$, and so whether a polynomial in $k[t^{\pm 1}, a_i,b_i,c_i: i=5,6,\cdots,n ]/ L_{2}$ is in $k[v^{- 1}, a_i,b_i,c_i: i=5,6,\cdots,n ]/ L_{2}$ is independent of the choice of lift in $k[v^{\pm 1}, a_i,b_i,c_i: i=5,6,\cdots,n ]$. We conclude that the $R$-submodule of $ k[v, a_i,b_i,c_i: i=4,6,7,\cdots,n ]/ L_{1} $ consisting of polynomials $f$ whose corresponding $g$ is in $k[v^{-1}, a_i,b_i,c_i: i=5,6,\cdots,n ]/ L_{2}$ is spanned by $a_{4}^{x}b_{4}^{y}c_{4}^{z}v^w$ such that $x+y+z\geq w$. So, by Lemma \ref{lem for iso of sheaves}, the image of the ring homomorphism is $(\pi_1)_*\mathcal{O}(V(I_1,\cdots, I_r)'_J)$.
\end{proof}

\begin{theorem}\label{thm:matrix description of colliding boundary}
As schemes, one has $V(A_0; I_1,\cdots , I_r)_{\red}=V(A_0; I_1,\cdots , I_r)'$.
In other words, the boundary of colliding configurations $V(A_0;I_1,\cdots, I_r)_{\red}$ is cut out as a subscheme of the Grassmannian by the vanishing of Pl\"ucker coordinates defining the positroid variety $\Pi(A_0;I_1,\cdots, I_r)$ as in Example \ref{ideal of positroid} and the vanishing of the maximal minors of the image of $\tilde{\theta}$.
\end{theorem}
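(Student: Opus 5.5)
Since $V(A_0;I_1,\cdots,I_r)'$ is by construction a closed subscheme of $V(A_0;I_1,\cdots,I_r)$, and both have the same underlying set, it suffices to show two things. First, $V(A_0;I_1,\cdots,I_r)'$ is reduced. Second, it has the same support as $V(A_0;I_1,\cdots,I_r)$. The reduced subscheme of $V(A_0;\dots)$ is then the unique reduced closed subscheme with that support, and it equals $V(A_0;\dots)'$.

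\textbf{Support.} We argue pointwise via the Gelfand--MacPherson description of Remark \ref{rmk:point confi for colliding boundary}. At a point of $V(A_0;I_1,\cdots,I_r)$, the nonzero columns in each $I_i$ are proportional, say $M_{\{b\}}=\lambda M_{\{a\}}$. Then the appended column of $\tilde\theta$ indexed by $(a,b)$ is $\lambda\,\theta(M)_{\{a\}}$ up to the obvious identification of coordinates. Hence appending these columns does not raise the rank of $\theta(M)$. If a column is zero, the appended column is zero. So the rank condition defining $V(A_0;\dots)'$ holds at every closed point, and the supports agree. We may also reduce to $A_0=\emptyset$ by forgetting the zero columns, exactly as for $V(3,n-\#A_0)$.

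\textbf{Reducedness.} We induct on $N=\sum_i(\#I_i-1)$.
\begin{itemize}
\item \emph{Base case} $N=0$. Here $V(\emptyset;\{1\},\dots,\{n\})'=V(3,n)$, which is reduced by Theorem \ref{prop:interpretation of V}.
\item \emph{Inductive step.} Pick distinct $s,t\in I_1$ and form the incidence scheme $\mathcal I_{st}$. By Proposition \ref{prop:bundle over P^1}, $\mathcal I_{st}$ is covered by two opens, each isomorphic to $V(A_0;I_1',\cdots,I_r)'\times\mathbb A^1$. By induction $V(A_0;I_1',\cdots,I_r)'$ is reduced, so $\mathcal I_{st}$ is reduced, since a product of a reduced scheme with $\mathbb A^1$ over an algebraically closed field is reduced.
\end{itemize}
Proposition \ref{prop:iso of sheaves} gives $\mathcal O_{V'}\cong(\pi_1)_*\mathcal O_{\mathcal I_{st}}$. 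A nilpotent local section of $\mathcal O_{V'}$ would map to a nilpotent section of $\mathcal O_{\mathcal I_{st}}$ over an open set, hence to zero. By injectivity of this isomorphism, the section itself is zero. Therefore $V(A_0;I_1,\cdots,I_r)'$ is reduced. As a byproduct, irreducibility also propagates, because $\pi_1$ is surjective and $\mathcal I_{st}$ is irreducible when the smaller boundary is. This will be useful for Theorem \ref{geometry of V and its boundaries}.

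\textbf{Main obstacle.} The main step is the isomorphism $\mathcal O_{V'}\cong(\pi_1)_*\mathcal O_{\mathcal I_{st}}$, already established via Lemma \ref{lem for iso of sheaves}. What remains to check carefully is that the inductive hypothesis applies to $V(A_0;I_1',\cdots,I_r)'$ as a scheme inside $V(3,n-1)$. Concretely, the appended $\tilde\theta$-columns of the merged index $I_1'$ must match those inherited from $I_1$ under the chart identification $M_{\{s\}}=v\,M_{\{t\}}$. In that identification, the columns for pairs $(s,t)$, $(s,u)$, $(t,u)$ become $v$-multiples of columns already present for $I_1'$, so the two ideals coincide. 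I would verify this directly on the charts used in Proposition \ref{prop:iso of sheaves}.

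Finally, to obtain the Plücker formulation, combine the chart descriptions. The positroid equations of Example \ref{ideal of positroid} are the $2\times2$ minors of $M_{I_i}$ together with the vanishing of $M_{A_0}$, and these appear in the ideals $L_J$.
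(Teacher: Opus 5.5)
Your proof is correct. Its reducedness half is exactly the paper's argument: merge $s,t\in I_1$, get $\mathcal{I}_{st}$ reduced from $V(A_0;I_1',\dots,I_r)'$ via Proposition~\ref{prop:bundle over P^1}, descend through $\mathcal{O}_{V'}\cong(\pi_1)_*\mathcal{O}_{\mathcal{I}_{st}}$, and bottom out at $V(3,m)$.

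Where you differ is the support statement.
\begin{itemize}
\item \emph{The paper's route} is indirect. It reruns the incidence construction on the possibly non-reduced intersection $V(A_0;I_1,\dots,I_r)$ itself. By induction it concludes that this is irreducible of dimension $\dim V(A_0;I_1',\dots,I_r)+1$. It does the same for $V(A_0;I_1,\dots,I_r)'$, and then uses that a closed subscheme of an irreducible scheme, of the same dimension, has the same underlying set.
\item \emph{Your route} is direct. On $\Pi(A_0;I_1,\dots,I_r)$ any two columns of one $I_i$ are proportional, so each appended column of $\tilde\theta(M)$ is either zero or $\lambda\,\theta(M)_{\{a\}}$. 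Hence $\rk\tilde\theta(M)=\rk\theta(M)$ at every point.
\end{itemize}

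Your argument is more elementary. It needs no irreducibility and no dimension count. It also avoids extending Propositions~\ref{prop:bundle over P^1} and~\ref{prop:iso of sheaves} to the non-reduced scheme $V(A_0;I_1,\dots,I_r)$, a step the paper only sketches. It is moreover a positivity-free version of the square-root argument in the paper's later remark. The paper's route, in exchange, produces the irreducibility and dimension of the boundary along the way, and these are reused in Theorem~\ref{geometry of V and its boundaries}. You recover irreducibility anyway from surjectivity of $\pi_1$ and irreducibility of $\mathcal{I}_{st}$.

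Two small points.
\begin{itemize}
\item $X\times\mathbb{A}^1$ is reduced whenever $X$ is, over any field; algebraic closure is not needed.
\item The compatibility you flag as the main obstacle does hold as you describe. Let $M'$ denote $M$ with column $s$ deleted, and substitute $M_{\{s\}}=vM_{\{t\}}$. Then each $6\times 6$ minor of $\tilde\theta(M)$, and each $2\times2$ minor involving column $s$, becomes either $0$ or a power of $v$ times the corresponding minor for $M'$. Conversely, every such minor for $M'$ is already a minor for $M$. So the two ideals agree, and the chart isomorphism of Proposition~\ref{prop:bundle over P^1} holds scheme-theoretically.
\end{itemize}
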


\begin{proof}
Proposition \ref{prop:bundle over P^1} can be repeated with every $V(A_0; I_1,\cdots , I_r)'$ replaced by $V(A_0; I_1,\cdots , I_r)$ and fibers are isomorphic to $V(A_0; I_1',\cdots , I_r)$. The analogous map to Proposition \ref{prop:iso of sheaves} is still birational onto $V(A_0; I_1,\cdots , I_r)$. Thus, $V(A_0; I_1,\cdots , I_r)$ is irreducible if $V(A_0; I_1',\cdots , I_r)$ is irreducible and $\dim V(A_0; I_1,\cdots , I_r)= \dim V(A_0; I_1',\cdots , I_r)+1$. By induction, $V(A_0; I_1,\cdots , I_r)$ is irreducible. The base case is $V(3,n)$ being irreducible as in Theorem \ref{prop:interpretation of V}. Also, the same induction for $V(A_0; I_1,\cdots , I_r)'$ shows that $V(A_0; I_1,\cdots , I_r)$ and $V(A_0; I_1,\cdots , I_r)'$ have the same dimension.

Thus, to show that $V(A_0; I_1,\cdots , I_r)'=V(A_0; I_1,\cdots , I_r)_{\red}$, it suffices to show that the scheme $V(A_0; I_1,\cdots , I_r)'$ is reduced. Notice that reducedness is a condition that can be checked locally. Now, we see that $V(A_0;I_1,\cdots, I_r)'\subset V(3,n)$ is reduced if so is $\mathcal{I}_{st}(A_0;I_1,\cdots, I_r)$ by Proposition \ref{prop:iso of sheaves}. In turn, the latter is reduced if so is $V(A_0;I_1',\cdots, I_r)'\subset V(3,n-1)$ by Proposition \ref{prop:bundle over P^1}. In this way, we decrease both $n$ and the size of $I_1$ as long as some $\# I_i\geq 2$. The base case of the induction is when each $I_i$ has size $1$ so that the boundary component is isomorphic to $V(3,m)$ by forgetting zero columns. The fact that $V(3,m)$ is reduced follows from Theorem \ref{prop:interpretation of V}.
\end{proof}

As a direct corollary of Theorem \ref{thm:matrix description of colliding boundary}, we have the ideal description in Pl\"ucker coordinates.  
\begin{theorem} \label{Ideal of colliding boundary}
 The boundary of colliding configurations $V(A_0;I_1,\cdots, I_r)_{\red}$ is cut out as a subscheme of $V(A_0;I_1,\cdots, I_r)$ by the vanishing of quartics in Pl\"ucker coordinates given by 
 \[p_{j_1',j_2,j_3}p_{j_1,j_5,j_6}p_{j_2,j_4,j_6}p_{j_3,j_4,j_5}-p_{j_2,j_3,j_4}p_{j_1',j_2,j_6}p_{j_1,j_3,j_5}p_{j_4,j_5,j_6} = 0,\] where the indices varies through all $7$-element subsets of $[n]$ such that $j_1,j_1'\in I_i$ for some $i$.
 \end{theorem}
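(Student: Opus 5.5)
The plan is to derive this from Theorem \ref{thm:matrix description of colliding boundary}. That theorem says $V(A_0;I_1,\cdots,I_r)_{\red}$ is cut out inside $\Pi(A_0;I_1,\cdots,I_r)$ by the maximal minors of $\tilde\theta(M)$. So it suffices to show that, on $\Pi(A_0;I_1,\cdots,I_r)\cap V(3,n)$, the vanishing of those $6\times 6$ minors is scheme-theoretically equivalent to the vanishing of the stated quartics. The quartics with $j_1=j_1'$ are exactly the ABCT quartics of Theorem \ref{Ideal of ABCT}, which define $V(3,n)$; the new equations are those with $j_1\neq j_1'$ in the same $I_i$. The argument mirrors the way \cite{ARS24} passes from Theorem \ref{prop:interpretation of V} to Theorem \ref{Ideal of ABCT}.

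\textbf{Step 1: identify the appended column.} The column appended by $\tilde\theta$ for a pair $a\neq b\in I_i$ is a "polarization" of $\theta$: it evaluates the symmetric bilinear form of a conic on the pair $(M_{\{a\}},M_{\{b\}})$. On $\Pi$ the two columns are proportional, and on the dense locus where they are nonzero this polarized column equals $\lambda$ times the column $\theta(M_{\{b\}})$. So the only new information is infinitesimal, which matches the non-reducedness phenomenon. I would therefore view each $6\times6$ minor of $\tilde\theta(M)$ involving an appended column $(a,b)$ as obtained from an ordinary $6\times 6$ minor of $\theta$ on columns $\{j_1,\dots,j_6\}$ with $j_1=b$, by replacing the $\theta$-column of $b$ with the polarized column for $(a,b)$.

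\textbf{Step 2: compute that minor in Pl\"ucker coordinates.} \cite{ARS24} evaluates the $6\times6$ minor of $\theta$ on columns $j_1,\dots,j_6$ as the ABCT quartic, up to sign, via a classical identity for six points on a conic. That identity is multilinear in the rows of $M$. Since the minor is quadratic in column $j_1$, polarizing it in column $j_1$ (replacing one copy of $x_{\cdot j_1}$ by $x_{\cdot j_1'}$) yields the polarized quartic. Concretely, this is the quartic where, in each of the two monomials, exactly one of the two Pl\"ucker factors containing $j_1$ has $j_1$ replaced by $j_1'$. That is precisely the stated $7$-index quartic. I would verify this sign/placement bookkeeping on a standard chart $p_{123}=1$. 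Some appended columns have both $a,b$ and other indices chosen arbitrarily; these give minors that mix several polarized columns. On $\Pi$, however, such minors reduce modulo the $2\times2$ rank conditions $\rk M_{I_i}\le1$ to combinations of the single-polarization ones. This is because $p_{a,j,k}p_{b,l,m}-p_{b,j,k}p_{a,l,m}$ lies in the ideal of $\Pi$ when $a,b\in I_i$.

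\textbf{Step 3: conclude equality of ideals.} The ideal generated by the ideal of $\Pi$, the ABCT quartics, and the polarized quartics then coincides with the ideal of Definition \ref{Candidate for reduced colliding boundary}. Hence, by Theorem \ref{thm:matrix description of colliding boundary}, it defines $V(A_0;I_1,\cdots,I_r)_{\red}$. One also has to handle the minors of $\tilde\theta$ that are not of the "one column replaced" shape, for instance minors containing two columns from the same $I_i$. On $\Pi$ these are multiples of minors with a repeated point and vanish, or reduce as above.

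The main obstacle is Step 2's reduction. We must show that every maximal minor of $\tilde\theta$ lies in the ideal generated by $I(\Pi)$, the ABCT quartics, and the single polarized quartics, and the converse as well. I would first try the $(\mathbb{C}^*)^{n}$ torus-weight and $S_n$ symmetry to reduce to $n=7$ with a single pair $j_1,j_1'$, and check this case by direct expansion on the chart $p_{123}\neq0$.
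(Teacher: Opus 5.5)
Your route is the paper's: it too deduces the statement from Theorem~\ref{thm:matrix description of colliding boundary}, by a one-line ``standard computation'' that rewrites the extra minors of $\tilde\theta(M)$ as Pl\"ucker quartics. Your Steps~1--2 are sound. Modulo $I(\Pi)$, the appended column for $(a,b)$ is the polarization $\Phi(M_a,M_b)$ of $\theta$. Polarizing the classical identity $\det[\theta(v_1),\dots,\theta(v_6)]=c\,Q(v_1,\dots,v_6)$ in the column $v_1$ gives twice the stated $7$-index quartic, because the two placements of $j_1'$ agree modulo $I(\Pi)$. (The identity is quadratic in each column, not ``multilinear in the rows'', and it is the column you polarize.)

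The gap is your claim that minors with several appended columns from \emph{different} blocks reduce to single-polarization ones. The exchange relation $p_{ajk}p_{blm}\equiv p_{bjk}p_{alm}$ only moves a prime within one block. It never lowers the number of primed indices, and in fact the reduction is false.

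Take $n=8$, $I_1=\{1,2\}$, $I_2=\{3,4\}$, $I_3,\dots,I_6=\{5\},\dots,\{8\}$, and work on the chart $M_5,M_6,M_7=e_1,e_2,e_3$, $M_8=(a,b,c)$. The coordinate ring $B$ of $\Pi$ there embeds into a polynomial ring via $M_1=sv$, $M_2=tv$, $M_3=uw$, $M_4=zw$. It is $\mathbb{Z}_{\ge0}^2$-graded by degree in columns $\{1,2\}$ and in columns $\{3,4\}$, with $B_{(0,0)}=k[a,b,c]$. Let $D=\det[\theta(v),\theta(w),\theta(e_1),\theta(e_2),\theta(e_3),\theta(M_8)]$; this is a nonzero polynomial.

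In $B$, up to scalars, the ABCT quartics give $0$ or $s^2u^2D$, $s^2z^2D$, $t^2u^2D$, $t^2z^2D$. The $7$-index quartics give $0$ or $stu^2D$, $stz^2D$, $s^2uzD$, $t^2uzD$. Let $J$ be the ideal these generate. The minor of $\tilde\theta(M)$ on the appended columns for $(1,2)$ and $(3,4)$ together with the $\theta$-columns $5,\dots,8$ is $f=stuz\,D$.

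Suppose $hf\in J$. Comparing bidegree-$(2,2)$ parts gives $h_{(0,0)}\,stuz=\sum c_i g_i$ in $k[a,b,c][s,t,u,z]$, where the $g_i$ are the eight monomials above. This forces $h_{(0,0)}=0$. Hence $h$ vanishes at the point $q$ with $M_1=\dots=M_4=0$, $M_8=(1,1,1)$, which is a point of $V(A_0;I_1,\dots,I_r)$. So $f\notin J$ locally at $q$. But $f^2=(s^2u^2D)(t^2z^2D)\in J$.

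Therefore the subscheme of $V(A_0;I_1,\dots,I_r)$ cut out by the $7$-index quartics is non-reduced at $q$. No argument can supply the reduction you need; in particular your planned check at $n=7$ with a single pair cannot see two nontrivial blocks. What your polarization argument actually proves is the version that also allows quartics polarized at several indices lying in distinct blocks, i.e.\ all maximal minors of $\tilde\theta(M)$. The single-pair family suffices when at most one $I_i$ has size $\ge 2$: then any two columns from the same block give minors lying in $I(\Pi)$. The paper's one-line proof does not address this point either.
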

\begin{proof}
    A standard computation shows that the extra minors appearing in $\tilde{\theta}(M)$ that are required to vanish on $V(A_0;I_1,\cdots, I_r)_{\red}$ can be expressed as these Pl\"ucker polynomials.
\end{proof}

Finally, we prove the main theorem of this section.

\begin{proof}[Proof of Theorem \ref{geometry of V and its boundaries}]
Theorem \ref{thm:matrix description of colliding boundary} implies that $V(A_0;I_1,\cdots, I_r)'=V(A_0;I_1,\cdots, I_r)_{\red}$ can be interchanged in the statement of Proposition \ref{prop:bundle over P^1} and \ref{prop:iso of sheaves}. Similar to the reducedness argument as in the proof of Theorem \ref{thm:matrix description of colliding boundary}, note that irreducibility, rationality, normality, and Cohen-Macaulayness can be passed from fiber of locally trivial family to the total space and from source to target if the morphism is surjective, birational, and induces isomorphism on structure sheaves. Thus, we apply the same inductive argument and the base case is when each $I_i$ has size $1$ so that the boundary component is isomorphic to $V(3,m)$ by forgetting zero columns. Results proved in subsection \ref{subsection:geometry of V}, along with the result that $V(3,m)$ is reduced, irreducible, and Cohen-Macaulay as in Theorem \ref{prop:interpretation of V}, gives the base case. The dimension count is again an easy consequence of Proposition \ref{prop:bundle over P^1} and \ref{prop:iso of sheaves}. So far, we verified the theorem for $V(3,n)$ and boundaries of colliding configurations. The theorem holds for boundaries of collinear configurations because they are positroid varieties.
\end{proof}

\begin{remark}
    The above argument shows that a boundary of colliding configurations $V(A_0;I_1,\cdots, I_r)_{\red}$ has codimension $n-r+|A_0|$ in $V(3,n)$. This is indeed the expected codimension from the perspective of  the Gelfand-MacPherson correspondence as in Remark \ref{rmk:positroid as point config}.

    After a smooth conic in $\mathbb{P}^2$ is fixed by five general points, having a new column amounts to choosing a projective point on the conic and choosing a scaling that determines the column. Thus, adding a column creates $2$ extra dimensions. This corresponds to the fact that $\dim V(3,n+1)=\dim V(3,n)+2$. Now, if we instead add a new column with the condition that it is parallel to an existing column, this only gives $1$ extra dimension. And if we add a zero column, it contributes no extra dimensions. Thus, the codimension of $V(A_0;I_1,\cdots, I_r)_{\red}$ in $V(3,n)$ is given by $2|A_0|+\sum_{i=1}^r (|I_i|-1)=n-r+|A_0|$. A boundary of collinear configurations $\Pi(A_0; I_a \text{ for } a\in A| I_b \text{ for } b\in B)$ has $1$ more codimension in $V(3,n)$ than the corresponding boundary of colliding configurations. This can be interpreted as the fact that $4$ general points determine a union of two lines whereas $5$ general points determine a smooth conic.

\end{remark}

\begin{remark}
    The extra relations in Theorem \ref{thm:matrix description of colliding boundary} coming from the extra columns in $\tilde{\theta}(M)$ have a natural interpretation in terms of point configurations as in Remark \ref{rmk:point configuration for V}.

   Consider appending to the matrix $$M=\begin{pmatrix} 
x_{11} & x_{12} & \dots & x_{1n} \\
x_{21} & x_{22} & \dots & x_{2n} \\
x_{31} & x_{32} & \dots & x_{3n}
\end{pmatrix}$$ columns given by $(\sqrt{x_{1a}x_{1b}},\sqrt{x_{2a}x_{2b}},\sqrt{x_{3a}x_{3b}})^{T}$ for each $a\not=b\in I_i$ for all $i\in [r]$. Notice that $\theta$ and $\tilde{\theta}$ are $\mathrm{GL}(3)$-equivariant. Fixing a chart on $\Gr(3,n)$ given by some $3\times 3$ submatrix being the identity, then Pl\"ucker positivity implies that entries in each row must be nonnegative or nonpositive, depending on the chart you choose. Thus, on positive points, the square roots makes sense by choosing the squareroot according to the sign requirement.

Now, for each $[M]\in V(A_0;I_1,\cdots, I_r)_{\geq0}$, nonzero columns in $M$ give points configurations on a common conic where $[x_{1a}:x_{2a}:x_{3a}]=[x_{1b}:x_{2b}:x_{3b}]=p\in \mathbb{P}^2$ as in Remark \ref{rmk:point confi for colliding boundary}. Then the column $(\sqrt{x_{1a}x_{1b}},\sqrt{x_{2a}x_{2b}},\sqrt{x_{3a}x_{3b}})^{T}$ gives the same point $[\sqrt{x_{1a}x_{1b}}:\sqrt{x_{2a}x_{2b}}:\sqrt{x_{3a}x_{3b}}]=p\in \mathbb{P}^2$. Therefore, this new column gives a point that lies on the same conic in $\mathbb{P}^2$. By Theorem \ref{prop:interpretation of V}, if we append the image of $(\sqrt{x_{1a}x_{1b}},\sqrt{x_{2a}x_{2b}},\sqrt{x_{3a}x_{3b}})^{T}$ under the Veronese map as a new column of $\theta(M)$, the new matrix is still not full-rank.

The Veronese image of $(\sqrt{x_{1a}x_{1b}},\sqrt{x_{2a}x_{2b}},\sqrt{x_{3a}x_{3b}})^{T}$ is 
\[
(x_{1a}x_{1b}, x_{2a}x_{2b},x_{3a}x_{3b},\sqrt{x_{1a}x_{1b}}\sqrt{x_{2a}x_{2b}},\sqrt{x_{1a}x_{1b}}\sqrt{x_{3a}x_{3b}},\sqrt{x_{2a}x_{2b}}\sqrt{x_{3a}x_{3b}})^T.
\]
On the positroid cell $\Pi(A_0;I_1,\cdots, I_r)_{\geq0}$, the submatrix $\begin{pmatrix}
    x_{1a}&x_{1b}\\
    x_{2a}&x_{2b}\\    x_{3a}&x_{3b}
    \end{pmatrix}$
has rank at most $1$. Therefore, the vanishing of $2\times 2$ minors give $x_{1a}x_{2b}=x_{2a}x_{1b}$ and therefore $\sqrt{x_{1a}x_{1b}}\sqrt{x_{2a}x_{2b}}=x_{1a}x_{2b}=x_{2a}x_{1b}$.

We thus recover the column $$(x_{1a}x_{1b}, x_{2a}x_{2b},x_{3a}x_{3b},x_{1a}x_{2b},x_{1a}x_{3b},x_{2a}x_{3b})^T$$ which we append to form $\tilde{\theta}$ as in the above theorem. The above argument shows that the matrix $\tilde{\theta}(M)$ is not full-rank on $V(A_0;I_1,\cdots, I_r)_{\geq0}$, and thus the condition still holds on its Zariski closure $V(A_0;I_1,\cdots, I_r)_{\red}$. This is a much easier way to see that these extra relations must hold on the reduced subscheme. The content of the above theorem is that these extra conditions suffice to cut out the reduced scheme structure. In fact, we can prove the following theorem whose proof is not difficult but tedious and so we omitted it.  
\end{remark}

\begin{theorem}\label{thm: ideal of reduced part of V(A_0,I_1...I_r)}
The generators of the ideal of $V(A_0;I_1,\cdots, I_r)' = V(A_0;I_1,\cdots, I_r)_{\red}$ described in Theorem \ref{thm:matrix description of colliding boundary} form a Gr\"obner basis. 
\end{theorem}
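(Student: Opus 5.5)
The plan is to prove the Gröbner basis statement by the same induction on $\sum_i(\#I_i-1)$ that proves Theorem \ref{thm:matrix description of colliding boundary}, and to work on the standard affine charts $\{p_J\neq 0\}$.

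The base case is when all $I_i$ are singletons. Then the boundary is $V(3,n-\#A_0)$, and the generators are the maximal minors of $\theta(M)$ together with the zero columns. We use the term order from \cite{ARS24}, in which these $6\times 6$ minors (equivalently, the quartic binomials of Theorem \ref{Ideal of ABCT}) form a Gröbner basis. On the chart $p_{123}\neq0$ the minors reduce to the $3\times 3$ minors $P_{ijk}$ of $\theta'(M)$, and ARS24 prove that these generate a prime ideal. We will take their Gröbner statement (a determinantal-type ideal) as the starting point. If it is not stated there, we will prove it using the Lemma \ref{lem for iso of sheaves}-style monomial basis.

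For the inductive step, fix $s,t\in I_1$. Pick an elimination-compatible term order in which the variables of column $s$ are largest, ordered $c_s>b_s>a_s$, matching the replacement rule used to define $\psi$ in Lemma \ref{lem for iso of sheaves}. The key observation is that Lemma \ref{lem for iso of sheaves} already does the work. It exhibits an explicit $R$-module basis of the coordinate ring of $V(A_0;I_1,\dots,I_r)'_J$, namely the monomials $a^xb^yc^z$ in columns $s,t$ after applying $\psi$, tensored with a basis of $R$. This basis coincides with the set of monomials not divisible by the leading terms of the generators:
\begin{itemize}
\item the $2\times2$ minors $a_sb_t-a_tb_s$, etc., with leading terms $a_sb_t,a_sc_t,b_sc_t$;
\item the three families of quadratic relations in Lemma \ref{lem for iso of sheaves}.
\end{itemize}
By the inductive hypothesis, $R$ carries a Gröbner basis given by the generators for $V(A_0;I_1',\dots,I_r)'$. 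We then check that the standard monomials for the proposed generating set span and are linearly independent modulo the ideal. Independence comes from the injectivity of $\widetilde\phi$, and spanning comes from the reduction algorithm. This is the standard criterion: a set $G\subset I$ is a Gröbner basis if and only if its standard monomials are linearly independent in the quotient. The remaining relations not involving columns $s,t$ are handled inside $R$.

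Finally, we globalize from charts to the Plücker description, or simply state the result chartwise as Theorem \ref{thm:matrix description of colliding boundary} does. The step I expect to be the main obstacle is the choice of term order. It must simultaneously extend the inductive order on $R$ and make the leading terms of the $6\times 6$ minors of $\tilde\theta$ reduce exactly to those quadratic leading terms. Expanding the appended columns of $\tilde\theta(M)$ along the chart shows these minors reduce to the relations $r_{1,j}a_1b_2+\cdots$ of the lemma, and this is where the tedious case-checking lives. If a global order fails, I would fall back on a lexicographic product order, adding one block per collapsed index in the order of the induction.
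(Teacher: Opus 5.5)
The paper omits this proof, so your sketch has to carry the whole argument. Its central step does not hold.

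\textbf{The inductive step.} Lemma~\ref{lem for iso of sheaves} does not produce an $R$-module basis. It computes $\ker\phi$ and shows that $\operatorname{im}\phi$ is the $R$-\emph{span} of the classes of $a_1^xb_1^yc_1^zv^w$ ($x+y+z\ge w$) in $R[a_1,b_1,c_1,v]/I$. These classes are not $R$-independent: the generators $r_{1,j}a_1b_1+r_{2,j}a_1c_1+r_{3,j}b_1c_1$ of $I$ are exactly $R$-linear relations among them.

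In fact no basis of the shape (basis of $R$)$\,\otimes\,$(monomials in columns $s,t$) can exist, because the chart of $V(A_0;I_1,\dots,I_r)'$ is not even flat over $\spec R$. Take the point where all columns other than the identity block and $s,t$ vanish. There every $r_{i,j}$ vanishes, so the fiber is the $4$-dimensional locus of rank-$\le1$ pairs of columns $(s,t)$. The generic fiber is $3$-dimensional once the remaining columns determine the conic (e.g.\ $I_1=\{s,t\}$, all other $I_i$ singletons, $n\ge7$).

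Consequently, injectivity of $\widetilde\phi$ gives no independence by itself. It only moves the question to $T[v]$ with $T=R[a_1,b_1,c_1]/I$. This $T$ is the chart of $V(A_0;I_1',\dots,I_r)'$, and it, not $R$, is what your inductive hypothesis describes. If columns $s,t$ sit on top of a block order, the leading terms of the quadrics in $I$ are $\mathrm{in}(r_{i,j})$ times a monomial in $a_1,b_1,c_1$. So standard monomials do not factor along your blocks, and Buchberger's criterion involves $S$-pairs whose leading coefficients have syzygies in $R$.

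Your listed leading terms $a_sb_t,a_sc_t,b_sc_t$ also contradict both your own order ($c_s>b_s>a_s$ on top) and the normal forms produced by $\psi$. Each of these forces the leading terms to be $a_tb_s,a_tc_s,b_tc_s$ instead.

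\textbf{What the lemma does give.} Since $I$ is homogeneous in $(a_1,b_1,c_1)$, the part of the quotient of degree $d_1$ in column $s$ and $d_2$ in column $t$ maps isomorphically onto $T_{d_1+d_2}v^{d_2}$. So what is needed is an explicit order, compatible with the inductive order on $T$, with the following property: in every multidegree, the monomials outside the leading terms of the listed generators are equinumerous with the standard monomials of $T$ in the merged multidegree. That count is the real content of the theorem, and it is exactly what you defer as ``the main obstacle''.

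\textbf{The base case.} This is equally open. The results the paper quotes from \cite{ARS24} are primality of $I_{\det}$, reducedness and Cohen--Macaulayness, and generation by the quartics; none is a Gr\"obner basis statement. Lemma~\ref{lem for iso of sheaves} concerns adjoining a column parallel to an existing one, so it says nothing about the minors $P_{ijk}$ of $\theta'(M)$.

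The universal Gr\"obner property of generic maximal minors does not transfer either. Substituting $\alpha_i\beta_i,\alpha_i\gamma_i,\beta_i\gamma_i$ for the entries of column $i$ does not preserve least common multiples of monomials: $\mathrm{lcm}(\alpha_i\beta_i,\alpha_i\gamma_i)=\alpha_i\beta_i\gamma_i$ is not the image of $x_{1i}x_{2i}$. So the $S$-pairs of the $P_{ijk}$ are not images of generic $S$-pairs, and they must be handled directly for an order you write down.

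Your remark ``equivalently, the quartic binomials'' is also not a valid move. Being a Gr\"obner basis is a property of a specific generating set in a specific polynomial ring under a specific order, and those binomials live in the Pl\"ucker ring.

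\textbf{The choice of order and ring.} The statement fixes neither ring nor order. Choosing, on each chart, an order that restricts correctly along every merging step down to a base order for $V(3,m)$ is therefore the substance of the result. Your fallback lexicographic product order is unchecked. Finally, Gr\"obner bases do not glue from the charts $\{p_J\neq0\}$ to the Pl\"ucker or Stiefel coordinate ring. You should formulate and prove an explicit chartwise statement rather than ``globalize''.
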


\section{$V(3,5)$ and $V(3,6)$ are Positive Geometries}\label{n=5,6 case}
In this section, we consider the case $n=5$ or $6$. This will serve as the base case for our inductive argument on the main theorem.

When $n=5$, $V(3,5)=G(3,5)$. As stated in Example \ref{Canonical form n=5}, the pushforward form equals the canonical form on $G(3,5)$. The conjecture is thus verifies for $n=5$.

Now, let $n=6$. For simplicity of notation, we shall also denote with $D(F)\subset X$ the principal open $\lbrace F\neq 0\rbrace \subset X$ inside a locally closed set $X$. Note that in the case $n=6,$ the positroid variety $Z(12x)\subset \Gr(3,6)$ is contained inside $V(3,6)$. So $ZV(12x)=Z(12x)$ is a positroid variety. Thus, by Theorem \ref{decomposition of boundary}, the boundary divisors of $V(3,6)$ are given by $Z(A|B)$, $Z(12x)$, and cyclic rotations thereof. In this case, all boundary components of $V(3,6)$ turn out to be positroid varieties, which are already known to be positive geometries.

Fix the affine chart  $$\begin{pmatrix}
    1&0&0&\alpha_1&\alpha_2&\alpha_{3}\\
    0&0&1&\beta_1&\beta_2&\beta_{3}\\
    0&1&0&\gamma_1&\gamma_2 &\gamma_{3}
\end{pmatrix}$$
As in Example \ref{Canonical form n=6}, we compute pushforward form $\Omega_{V(3,6)}$ on the given chart in $V(3,6)$ and it is given by 
\[
\Omega_{V(3,6)} = \frac{\Omega^{123}_{\gamma_3}}{p_{234}p_{345}p_{126}p_{145}p_{256}}
\] 
%= \frac{\Omega_{\gamma_1}}{p_{234}p_{126}p_{156}p_{356}p_{245}}=
%=\frac{\Omega_{\alpha_2}}{p_{234}p_{156}p_{126}p_{346}p_{145}}
%=\frac{[246]\Omega_{\alpha_3}}{p_{234}p_{345}p_{126}p_{156}p_{245}p_{146}}.

Here one-forms are pulled-back to $V(3,6)$ via the inclusion map $V(3,6)\to G(3,6)$. Whereas the nine forms $d \alpha_i,d \beta_i,d \gamma_i$ form a frame on $G(3,6)$, any $8$ of them do not globally form a frame on the entire $V(3,6)$. The variety $V(3,6)$ is a hypersurface in $G(3,6)$ cut out, in the given chart, by the polynomial 
\[P = \det \begin{pmatrix}
    \alpha_1\beta_1&\alpha_2\beta_2&\alpha_3\beta_3\\
        \alpha_1\gamma_1&\alpha_2\gamma_2&\alpha_3\gamma_3\\
        \beta_1\gamma_1&\beta_2\gamma_2&\beta_3\gamma_3\\
\end{pmatrix}.
\] 
The tangent bundle of $V(3,6)$ is the quotient of the tangent bundle on $G(3,6)$ by the relation $dP=0$ in local coordinates. Namely, we have the relation $\frac{\partial P}{\partial \alpha_1}d\alpha_1+\cdots+ \frac{\partial P}{\partial \gamma_3}d\gamma_3=0$. This means that on the distinguished open set $D(\frac{\partial P}{\partial \alpha_1})$, every one-form except $d\alpha_1$ forms a basis of the tangent bundle, i.e., $\Omega^{123}_{\alpha_1}$ is a local frame on $D(\frac{\partial P}{\partial \alpha_1})$. Thus, $\Omega^{123}=\frac{\Omega_{\alpha_1}}{\frac{\partial P}{\partial \alpha_1}}$ does not have poles along any component of $D(\frac{\partial P}{\partial \alpha_1})$. The same argument holds if we replace $\alpha_1$ with any other eight variables. 

Notice that by the Jacobian criterion, the union of the nine open sets $D(\frac{\partial P}{\partial \alpha_i}),D(\frac{\partial P}{\partial \beta_i}),D(\frac{\partial P}{\partial \gamma_i})$ equals the smooth locus of $V(3,6)$, which has codimension at least $2$ because $V(3,6)$ is normal. Since $\Omega^{123}$ does not have poles along the union, $\Omega^{123}$ does not have poles along the entire chart $\{p_{123}\not=0\}$ that we are working on.

\begin{remark}\label{rmk:simple poles}
The function $p_{234}$ vanishes at order one on the subscheme $ZV(234)$ because one can easily check with Macaulay 2 that $ZV(234)$ in $V(3,6)$ is reduced and is equal to the union of positroid varieties $Z(234|156)\cup Z(34x:x\in [6])\cup Z(2345)$. Thus, if a rational form $\Omega$ has a pole on a closed subvariety $Z$ locally given by the vanishing of $p_{234}$ (i.e., a closed subvariety given by an irreducible component of $ZV(234)$) and if $\Omega = \Omega'\wedge  d\log p_{234}$ restricted on $Z$, then $\Res_Z \Omega =\Omega'$.
\end{remark}

We are now ready to prove that 
$\Omega$ has the required recursive properties.

\begin{theorem}\label{thm:canonical form on V(3,6)}
The $8$-form $\Omega$ has simple poles exactly along the union of the following components of $\Gr(3,6)$.
    \begin{itemize}
        \item[(a)] $Z(A|B)$, where $A,B$ are cyclic intervals, $A\sqcup B=[6]$, and $|A|,|B|\geq2$, in which case $Z(A|B)=ZV(A|B)$.
        \item[(b)] $Z(12x)= V(12x:x\in [6])$ and cyclic rotations thereof.
    \end{itemize}
Furthermore, the residue along the above two families of poles are equal to the canonical forms on the corresponding pole component as a positroid variety.
   \end{theorem}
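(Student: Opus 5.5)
The plan is to use the explicit chart expression $\Omega_{V(3,6)}=\Omega^{123}_{\gamma_3}/(p_{234}p_{345}p_{126}p_{145}p_{256})$, together with the fact, established just above, that $\Omega^{123}$ has no poles on the chart $\{p_{123}\neq0\}$. The $D_6$-symmetry of $\Omega$ (Remark \ref{D_n symmetry}, Remark \ref{symmtry of pushforward}) lets us check each boundary type only once, up to rotation and reflection.

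\textbf{Step 1: locating the poles.} On $\{p_{123}\neq0\}$, the form $\Omega$ is $\Omega^{123}$ (holomorphic) times a rational function whose denominator is a product of Plücker coordinates. So the poles lie inside $\bigcup ZV(ijk)$ for the five listed minors. The decomposition of each $ZV(ijk)$ comes from Remark \ref{rmk:simple poles}. For example, $ZV(234)=Z(234|156)\cup Z(34x)\cup Z(2345)$, and $Z(2345)\subset Z(2345|61)$ is of codimension $2$ in $V$. Hence the only divisorial components are of types (a) and (b).

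Two facts then show that every pole is simple:
\begin{itemize}
\item each $p_{ijk}$ vanishes to order one along these components, by reducedness in Remark \ref{rmk:simple poles};
\item no two denominator factors vanish on a common divisor. This needs a short check; for instance $p_{234}$ and $p_{345}$ share the component $Z(34x)$.
\end{itemize}
That shared component looks like a double pole, and it must be handled by rewriting $\Omega^{123}_{\gamma_3}$ in a different frame, e.g.\ through a $\partial P/\partial\cdot$ factor that vanishes on $Z(34x)$. To avoid this, I would first choose, using the alternative expressions $\Omega^{123}_{\alpha_1}/\frac{\partial P}{\partial\alpha_1}$ etc., a representation adapted to the divisor under study, in which only one denominator factor vanishes there.

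Components off the chart, i.e.\ inside $\{p_{123}=0\}$, are handled by cyclic symmetry: rotate so that the divisor meets $\{p_{123}\neq0\}$. The last point of this step is that every listed component really is a pole. This follows because the residue computed in Step 2 is nonzero.

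\textbf{Step 2: residues.} For a type (a) divisor, say $Z(234|561)$ up to symmetry, I would pick coordinates adapted to the positroid cell, write $\Omega=\Omega'\wedge d\log p_{234}$ on it, and identify $\Omega'$ with the positroid canonical form. That canonical form is the pushforward of a Grassmannian-type form along a positroid parametrization (Theorem \ref{Grassmannian is a positive geometry}), given by a product of $d\log$'s of face coordinates.

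For type (b), $Z(12x)$ (and its rotations) is the positroid variety where columns $1,2$ are parallel. Here I would use Proposition \ref{inductive formula of canonical form on V(3,n)}: $\Omega=c\,\varphi^*\Omega_{V(3,5)}\wedge d\log\beta_1\wedge d\log\gamma_1$, with the columns rotated so that the colliding pair is $\{4,5\}$. On the collision locus one takes a single residue, in the ratio-of-columns direction. It should equal $\varphi^*\Omega_{\Gr(3,5)}\wedge d\log(\text{scale})$, which matches the canonical form of $Z(12x)\cong\Gr(3,5)\times$(scaling), since positroid forms are compatible with column identification.

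\textbf{Main obstacle.} I expect sign and frame bookkeeping to be the hardest part: the chart form singles out $p_{123}$ and is not manifestly dihedral. The type (b) residue is where this bites. There the apparent double pole from $p_{234}p_{345}$ has to cancel against a zero of $\partial P/\partial\gamma_3$. I would first try to verify this concretely by substituting a parametrization of $Z(34x)$ with one transverse coordinate $\epsilon$, and then checking directly that the leading term of $\Omega$ is $d\epsilon/\epsilon$ wedge the expected form, using Macaulay2-style direct computation for $n=6$.
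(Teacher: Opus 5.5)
\textbf{Gap 1: the ``exactly'' part of the statement is not proved.} From $\Omega=\Omega^{123}_{\gamma_3}/(p_{234}p_{345}p_{126}p_{145}p_{256})$ you only learn that, on $\{p_{123}\neq0\}$, the poles lie in the zero loci of these five minors. Your conclusion that every divisorial component of those loci is of type (a) or (b) is false.

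\begin{itemize}
\item The factors $p_{145}$ and $p_{256}$ are not cyclic-interval minors. For instance, $ZV(145)$ contains the divisors $Z(14x:x\in[6])$, $Z(145|236)$ and $Z(1245|36)$ of $V(3,6)$. None of these is a boundary component.
\item Even for $p_{234}$, the three components you quote are not all of them. $ZV(234)$ also contains $Z(23x)$ and $Z(1234)$, and the non-boundary divisors $Z(24x)$ and $Z(2346|15)$.
\end{itemize}

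Nothing in your plan shows that $\Omega$ is regular along such divisors, and this is exactly the difficulty flagged in Remark \ref{open problem motivation}.

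The missing idea is the paper's Part 2. Use the sharper expression $\Omega=\frac{p_{135}}{p_{156}p_{345}}\Omega^{123}$, with $\Omega^{123}$ regular on the whole chart. Then the poles lie in $ZV(123)\cup ZV(345)\cup ZV(561)$. By the cyclic symmetry of the pushforward they also lie in $ZV(234)\cup ZV(456)\cup ZV(612)$. The divisorial part of the intersection of these two unions is precisely the fifteen divisors of types (a) and (b).

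You can see the same thing inside one chart. There, the non-boundary components of $ZV(156)\cup ZV(345)$ are $Z(15x)$, $Z(35x)$, $Z(1345|26)$ and $Z(1356|24)$, and the numerator $p_{135}$ vanishes on each of them.

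This expression also removes your ``double pole'' worry. Only $p_{345}$ in its denominator vanishes on $Z(34x)$. The extra factor in the $\gamma_3$-frame is cancelled, as you guessed, by the vanishing of $\partial P/\partial\gamma_3$ at $\alpha_1=\gamma_1=0$.

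\textbf{Gap 2: one family of type (a) divisors is missed.} $Z(2345)$ is not of codimension $2$ in $V(3,6)$.

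\begin{itemize}
\item Since $\{6,1\}$ contains no $3$-subset, $Z(2345)=Z(2345|61)$.
\item It has codimension $2$ in $\Gr(3,6)$, hence dimension $7$.
\item It lies in $V(3,6)$, because the points sit on two lines. This is why it appears as a component of the divisor $ZV(234)$ in Remark \ref{rmk:simple poles}.
\end{itemize}

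So type (a) has two dihedral orbits: the $3|3$ partitions and the $4|2$ partitions. Your Step 2 treats only $Z(234|561)$ ``up to symmetry''. It therefore never verifies the residue along the six divisors $Z(i,i+1,i+2,i+3)$.

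The paper handles these with a third frame, $\Omega=\Omega_{\alpha_2}/(p_{234}p_{156}p_{126}p_{346}p_{145})$. In it, $p_{234}$ is the only denominator factor vanishing on $Z(2345)$. It then uses the relation $p_{456}p_{134}=p_{346}p_{145}$ on that divisor.

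The rest of your plan is sound: frames adapted to each divisor, a nonzero residue forcing a genuine simple pole, and the type (b) residue via Proposition \ref{inductive formula of canonical form on V(3,n)}. That last route is how the paper treats $ZV(34x)$ for general $n$ in Section \ref{sec:final proof}.
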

\begin{proof}
    Notice that $\Omega$ is obtained from pushing forward the canonical form on $\Gr(2,6)$. So $\Omega$ is $D_6$-symmetric, where $D_6$ is the dihedral group acting on the six columns. Thus, each pole is also $D_6$-symmetric. We divide the proof in two parts.

\noindent {\bf Part 1: Poles of $\Omega$ along the divisors of type (a) and (b).}

We first compute the residue of $\Omega$ along the irreducible components of $ZV(234)$, i.e., $Z(234|156)$, $Z(34x)$, $Z(2345)$, following Remark \ref{rmk:simple poles}. Note that, in our local chart, $p_{234}=-\alpha_1$. \\
{\bf Residues on $Z(234|156)$.} From the first expression of $\Omega$, we have $\Res_{Z(234|156)} \Omega = \frac{\Omega_{\gamma_3,\alpha_1}}{p_{345}p_{126}p_{145}p_{256}} $. Note that $\Res_{Z(234|156)} \Omega$ is a form on an affine chart of $Z(234|156)$. Now, notice that the relation $p_{456}p_{125}=p_{145}p_{256}$ holds on $Z(234|156)$. So we may write $\Res_{Z(234|156)} \Omega = \frac{\Omega_{\gamma_3,\alpha_1}}{p_{345}p_{126}p_{145}p_{256}} =\frac{\Omega_{\gamma_3,\alpha_1}}{p_{345}p_{126}p_{456}p_{125}}$. We point out that the last equality holds when we restrict to $Z(234|156)$: it does not hold on the affine chart of $\Gr(3,6)$ or on $V(3,6)$.  
Finally, we verify that
$\frac{\Omega_{\gamma_3,\alpha_1}}{p_{345}p_{126}p_{456}p_{125}}$ is the canonical form on $Z(234|156)$.
This is because $\frac{\Omega_{\gamma_3,\alpha_1}}{p_{345}p_{126}p_{456}p_{125}}\wedge d \log p_{156}\wedge d\log p_{234}$ equals the canonical form on $\Gr(3,n)$.

In the computations we are using $\Omega_{\gamma_3}$, which is a local frame on $D(\frac{\partial P}{\partial \gamma_3})$. The above arguments show that if we do have a simple pole on $Z(234|156)$, its residue on $Z(234|156)$ restricted to $Z(234|156)\cap D(\frac{\partial P}{\partial \gamma_3})$ equals the canonical form. What we need is that the residue on $Z(234|156)$ agrees with the canonical form before restricting.

The set $Z(234|156)\cap D(\frac{\partial P}{\partial \gamma_3})$ is dense in $Z(234|156)$, as the smaller set $Z(234|156)\cap D(\alpha_3\beta_3C_{12})$ is easily seen to be dense in $Z(234|156)$.

Now, density implies that the pole of $\Omega$ intersected with $Z(234|156)$ is the Zariski closure of the pole of $\Omega_{|_{D(\frac{\partial P}{\partial \gamma_3})}}$ intersected with $Z(234|156)\cap D(\frac{\partial P}{\partial \gamma_3})$. The form  $\frac{\Omega_{\gamma_3}}{p_{234}p_{345}p_{126}p_{145}p_{256}}$ has a simple pole on $Z(234|156)\cap D(\frac{\partial P}{\partial \gamma_3})$ because the residue computation gives us the canonical form on $Z(234|156)$, which is nonzero. So we conclude that $\Omega$ has a simple pole on the closure $Z(234|156)$. 

Finally, since we already have $\Res \Omega$ well defined on $Z(234|156)$, agreeing with the canonical form on a dense open set, it must be equal to the canonical form. This is because two rational forms agreeing on a dense open set must be equal.\\
{\bf Residues on $Z(34x)$ and $Z(2345)$.}
On $Z(34x)$ and $Z(2345)$, a similar computation shows the same result. We need to use the relations $p_{456}p_{235}=p_{356}p_{245}$ on $Z(34x)$ and $p_{456}p_{134}=p_{346}p_{145}$ on $Z(2345)$. Finally, we conclude the following. \\
This discussion shows that, given the three local expressions for $\Omega$ 

\[
\Omega=\frac{\Omega_{\gamma_3}}{p_{234}p_{345}p_{126}p_{145}p_{256}} = \frac{\Omega_{\gamma_1}}{p_{234}p_{126}p_{156}p_{356}p_{245}}=\frac{\Omega_{\alpha_2}}{p_{234}p_{156}p_{126}p_{346}p_{145}},
\]

\begin{enumerate}

\item[(i)] the residue of $\frac{\Omega_{\gamma_3}}{p_{234}p_{345}p_{126}p_{145}p_{256}}$ along $p_{234}=0$ equals the canonical form on $Z(234|156)$.

\item[(ii)] the residue of $\frac{\Omega_{\gamma_1}}{p_{234}p_{126}p_{156}p_{356}p_{245}}$ along $p_{234}=0$ equals the canonical form on $Z(34x)$.

\item[(iii)] the residue of $\frac{\Omega_{\alpha_2}}{p_{234}p_{156}p_{126}p_{346}p_{145}}$ along $p_{234}=0$ equals the canonical form on $Z(2345)$. 

\end{enumerate}

\noindent {\bf Part 2: There are no other poles.}
We verified that $\Omega_{V(3,6)}$ has poles along the desired divisors (a) and (b). We now show that these are the only poles.

From the expression $\Omega_{V(3,6)}=\frac{p_{135}}{p_{156}p_{345}}\Omega^{123}$ on the chart $\{p_{123}\not=0\}$, we know that the poles of $\Omega_{V(3,6)}$ are contained in the union $ZV(123)\cup ZV(345)\cup ZV(561)$.

By cyclic symmetry, the poles are also contained in $ZV(234)\cup ZV(456)\cup ZV(612)$.

One easily use Macaulay 2 to compute the scheme theoretic intersection $$(Z(123)\cup Z(345)\cup Z(561))\bigcap(Z(234)\cup Z(456)\cup Z(612)) $$ inside the variety $G(3,6)$ and see that it is exactly equal to the union of the $2$ families of divisors above. In fact, in order to see that there are no other poles, we only need the statement for set theoretic intersection. This can easily be verified by looking at what Plucker coordinates are required to vanish.
\end{proof}

\begin{remark}\label{open problem motivation}
    The cyclic symmetry plays a crucial role in the proof of the above theorem than simply shortening the argument. The way the above proof was written hides it under the rug, which we now make explicit. 

    In {\bf Part 2} of the above proof, we invoked cyclic symmetry of our candidate form, which is a smart argument. Instead, one can try to work on local charts in coordinates and verify with brute force that indeed our candidate form do not have poles along ``non-positroid" subvarieties of $ZV(123)$ such as $V(1345)$ or its cyclic rotations. It turns out that it is easy to show that $\Omega$ does not have poles along $V(1345)$ if we work on the chart $D(p_{123})$ in local coordinates. However, it is very difficult to show that $\Omega$ does not have poles along $V(2456)$ on the same chart $D(p_{123})$. The previous implies the latter by cyclic symmetry.

    Now, cyclic symmetry is an easy consequence of writing our candidate form as a push-forward of the canonical form on $\Gr(2,n)$. However, cyclic symmetry is not at all obvious for the expression in Example \ref{Canonical form n=6}, which is what we mostly have been using in this section. Since the push-forward formulation is our only source of cyclic symmetry, we cannot avoid the push-forward machinery and instead simply give the correct canonical form in local coordinates and extend. From another perspective, this means that we do not have a good way to write down the canonical form on $V(3,n)$. This motivates open problems in the last section.
\end{remark}

\begin{theorem}\label{thm:V(3,6) is a positive geometry}
The variety $V(3,6)$ is a normal positive geometry. 
\begin{proof}
Theorem \ref{thm:canonical form on V(3,6)} shows that the poles of $\Omega$ are along the Zariski closures of $\partial V_{\geq0}$ and the residues coincide with the canonical forms of its components, each of which is a positive geometry itself. This verifies the recursive definition of a positive geometry for the pair $(V(3,6),V(3,6)_{\geq 0})$ and the given $\Omega$; see Definition \ref{def: positive geom}. By Proposition \ref{prop:V(3,n) is normal} $V(3,6)$ is normal. Uniqueness of $\Omega$ up to scaling is given by rationality from Lemma \ref{lem:V(3,n) is rational}, normality from Proposition \ref{prop:V(3,n) is normal}, and Remark \ref{uniqueness of canonical form}.

Strictly speaking, we need also to verify the technical topological assumptions in the first paragraph of Definition \ref{def:iterated boundary} for $V(3,6)$ and its iterated boundaries. Namely, its TTN part is the analytic closure of its analytic interior and its analytic interior is an oriented real manifold whose dimension coincides with the Krull dimension of the variety. This statement is automatic for the iterated boundaries of $V(3,6)$ because they are all positroid varieties, which are known to be positive geometries. For $V(3,6)$ itself, we delay this discussion to the last section, where we directly prove the general case.
\end{proof}
\end{theorem}

\section{Constructing Positive Geometries}\label{sec: pos geoms as fibrations over p^1}
In this section, we introduce two key constructions of new positive geometries from existing ones. Combining these two constructions will give the inductive step of the proof of the main theorem.

\begin{definition}[{\bf $X$-fibration over $\mathbb{P}^1$}]
A scheme $Y$ is an $X$-fibration over $\mathbb{P}^1$ if there is a morphism 
$\pi: Y\rightarrow \mathbb{P}^1$ and an open cover $\lbrace V_i\rbrace$ of $\mathbb{P}^1$ such that: 
\begin{enumerate}
\item[(1)] For every $i$, letting $U_i = \pi^{-1}(V_i)$, there is an isomorphism
\[
\phi_i: U_i \stackrel{\sim}{\longrightarrow} X\times V_i. 
\]

\item[(2)] For every pair $i,j$, there is $\rho_{ij}: X\times \Gamma(V_i\cap V_j) \rightarrow X$ such that the diagram 
\[
\begin{tikzcd}
U_i \cap U_j \arrow[r, "\phi_j"] \arrow[d, "\phi_i"']
& {X \times (V_i \cap V_j)} \arrow[d, "{\rho_{ij}\times 1}"] \\
{X \times (V_i \cap V_j)} \arrow[r, "id"]
& {X \times (V_i \cap V_j)}
\end{tikzcd}
\]
commutes. 
The data $\lbrace (V_i,\phi_i)\rbrace$ satisfying the properties (1) and (2) is called a {\it trivialization} of $Y$. 
\end{enumerate}
\end{definition}

In this paper, we will always take $\{V_i\}$ to be the standard affine chart on $\PP^1$ consisting of two affine lines. Next, we prove a technical result about fibrations over $\mathbb{P}^1$ in the context of positive geometries. 

\begin{theorem}\label{pos geom over P1}
Let $(X,X_{\geq0}, \Omega_X)$ be a positive geometry. Assume that $X$ and each iterated boundary component of $X$ is irreducible, normal, and rational.

Let $Y$ be an $X$-fibration over $\mathbb{P}^1$ and $Y_{\geq0}$ is a semialgebraic set in $Y(\mathbb{R})$ that is an orientable real manifold of full dimension.

The pair $(Y,Y_{\geq0})$ is said to be compatible with $(X,X_{\geq0}, \Omega_X)$ if the following hold

\begin{enumerate}
    \item[(i)] $Y$ has a trivialization over the open cover given by the two standard affine lines covering $\mathbb{P}^1$. i.e., $Y=U_1\cup U_2$ where $U_i$ are dense open subschemes of $Y$ with isomorphisms 
 \[
 \phi_1:U_1\to X\times \left\{\left[\frac{x}{y}:1\right]\right\}=X\times \mathbb{A}^1, \qquad \phi_2:U_2\to X\times \left\{\left[1:\frac{y}{x}\right]\right\}=X\times \mathbb{A}^1.
 \] 
The transition isomorphism
 \[
 \phi_2\circ \phi_1^{-1}: X\times \spec \mathbb{C}\left[\frac{x}{y}\right]_{\frac{x}{y}} \to  X\times \spec \mathbb{C}\left[\frac{y}{x}\right]_{\frac{y}{x}}\]
is given by 
\[
(x, t)\mapsto (\rho(x,t),t^{-1}), \qquad x\in X, t\in \mathbb{C}^*
\] 
for a morphism $\rho:X\times \mathbb{C}^*\to X$. Similarly for $\phi_1\circ \phi_2^{-1}$. 

\item[(ii)] Total nonnegativity  is compatible. Namely, $Y_{\geq0} =\phi_1^{-1}(X_{\geq0} \times \mathbb{A}^1_{\geq0})\bigcup \phi_2^{-1}(X_{\geq0} \times \mathbb{A}^1_{\geq0})$.
In fact, there exists such a semialgebraic set $Y_{\geq0}$ if and only if the transition isomorphisms $\phi_2\circ \phi_1^{-1}, \phi_1\circ \phi_2^{-1}$ both send totally nonnegative part of one chart to the totally nonnegative part of the other. In this case, $Y_{\geq0}$ is uniquely defined as the union of the totally nonnegative parts of the charts.

\item[(iii)] Each boundary component $X_i$ of $X$ gives an $X_i$-fibration over $\mathbb{P}^1$ induced by $b_{X_i}:Y_{X_i}\hookrightarrow Y\to \mathbb{P}^1$.
i.e., the transition isomorphisms $\phi_2\circ \phi_1^{-1}, \phi_1\circ \phi_2^{-1}$ restrict to isomorphisms between 
\[
X_i\times \spec \mathbb{C}\left[\frac{x}{y}\right]_{\frac{x}{y}} \simeq  X_i\times \spec \mathbb{C}\left[\frac{y}{x}\right]_{\frac{y}{x}}.
\]
In other words, the map $\rho_t:X\to X, x\mapsto \rho(x,t)$ restricts to a morphism $X_i\to X_i$ for any $t\in \mathbb{C}^*$.
In particular, this implies that total positivity is compatible. Namely, the above condition holds for interiors as well.

\end{enumerate}

If the three conditions above hold, then $(Y,Y_{\geq0})$ has boundaries given by
\begin{itemize}
    \item $Y_{X_i}$ as in $(iii)$, where $X_i$'s are all the boundary components of $X$.
    \item The two $X$-fibers of $\pi:Y\to \mathbb{P}^1$ over the points $\{[1:0]\}$ and $\{[0:1]\}$ in $\mathbb{P}^1$. 
\end{itemize} 

Moreover, suppose that $(X, X_{\geq 0}, \Omega_X)$ is a positive geometry and 

\begin{enumerate}
    \setcounter{enumi}{3}
\item[(iiibis)] Condition (iii) above holds true, with the boundary component $X_i$ of $X$ replaced by any iterated boundary component $Z$ of $X$.
\item[(iv)] The canonical forms on $U_1$ and $U_2$ glue. Namely, $(\phi_2\circ \phi_1^{-1})^*(\Omega_X\wedge d\log(y/x))=- \Omega_X\wedge d\log(x/y)$. Then, $\Omega_Y$ given by ${\Omega_Y}_{|_{U_1}}:=\phi_1^*(\Omega_X\wedge d\log(x/y))$ and ${\Omega_Y}_{|_{U_2}}:=\phi_2^*(-\Omega_X\wedge d\log(y/x))$ is well-defined.
 \end{enumerate}

Then $(Y,Y_{\geq0},\Omega_Y)$ is a positive geometry with iterated boundaries given by

\begin{itemize}
    \item $Y_Z$ as in (iii).
    \item $Z$ embedded in the $X$-fibers of $\pi:Y\to \mathbb{P}^1$ over the points $\{[1:0]\}$ and $\{[0:1]\}$ in $\mathbb{P}^1$, 
    \end{itemize} 
where $Z$ ranges through all iterated boundaries of $X$. Furthermore, $Y$ and each iterated boundary component of $Y$ is irreducible, normal, and rational.
\end{theorem}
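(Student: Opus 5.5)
The argument works chart by chart on $Y=U_1\cup U_2$ and is an induction on $\dim X$, using the model case $X\times\mathbb{P}^1$ with the product form $\Omega_X\wedge d\log t$.

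\textbf{Step 1: boundaries.} On $U_1\cong X\times\mathbb{A}^1$, the set $Y_{\geq0}\cap U_1$ is $X_{\geq0}\times\mathbb{A}^1_{\geq0}$ by (ii). The analytic boundary of a product is
\[
\partial X_{\geq0}\times\mathbb{A}^1_{\geq0}\;\cup\; X_{\geq0}\times\{0\}.
\]
Its Zariski closure in $U_1$ is therefore $\bigcup_i (X_i\times\mathbb{A}^1)\cup (X\times\{0\})$. The same holds on $U_2$, with the fiber over $t^{-1}=0$.

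By (iii), $X_i\times\mathbb{A}^1$ in the two charts glue to the closed subscheme $Y_{X_i}$. Irreducibility of $Y_{X_i}$ follows because $X_i$ is irreducible and the two dense opens meet. The fibers over $[0:1]$ and $[1:0]$ are copies of $X$.

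Interiors and orientability come from the product structure, and orientations are consistent because the transition map preserves positive parts by (ii). So the boundary components of $(Y,Y_{\geq0})$ are exactly the $Y_{X_i}$ together with the two $X$-fibers.

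\textbf{Step 2: the canonical form.} By (iv), $\Omega_Y$ is globally well defined. On $U_1$ it equals $\Omega_X\wedge d\log t$, with $t=x/y$.
\begin{itemize}
\item Its poles on $U_1$ are simple. They lie along $X_i\times\mathbb{A}^1$, where $\Omega_X$ has its poles, and along $t=0$.
\item The residue along $t=0$ is $\pm\Omega_X$. The residue along $X_i\times\mathbb{A}^1$ is $\Omega_{X_i}\wedge d\log t$.
\item The analogous statements hold on $U_2$. Since every divisor of $Y$ meets $U_1$ or $U_2$, there are no other poles.
\end{itemize}
Hence the residue of $\Omega_Y$ on $Y_{X_i}$ is precisely the form obtained by applying the same construction to the fibration $Y_{X_i}\to\mathbb{P}^1$ with fiber $X_i$. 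Restricting (iv) shows that the gluing condition (iv) holds for $X_i$: a residue of a pulled-back form commutes with restriction to the invariant subvariety $X_i$, which is legitimate because $\rho_t$ preserves $X_i$.

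\textbf{Step 3: induction.} The fibration $Y_{X_i}$ satisfies (i)–(iv) and (iiibis) with fiber the positive geometry $X_i$. By induction on $\dim X$ it is therefore a positive geometry, with iterated boundaries $Y_Z$ and copies of $Z$ in the two special fibers. The two fibers over $0$ and $\infty$ are copies of $X$, which is a positive geometry by hypothesis. The base case is $\dim X=0$, where $Y\cong\mathbb{P}^1$ with $\Omega_Y=\pm\,d\log t$.

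Taking iterated boundaries of these then gives the claimed list. A $Z$ inside a special fiber is the same object whether it arises from $Y_Z$ or from the fiber $X$.

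\textbf{Step 4: regularity properties.} Each iterated boundary is locally isomorphic to $Z\times\mathbb{A}^1$ or is $Z$ itself.
\begin{itemize}
\item Normality is local and is preserved under products with $\mathbb{A}^1$.
\item Rationality follows from birationality with $Z\times\mathbb{A}^1$.
\item Irreducibility follows because the two dense charts intersect.
\end{itemize}
Uniqueness of $\Omega_Y$ then follows from Remark~\ref{uniqueness of canonical form}.

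The main obstacle is Step 2's claim that the residue on $Y_{X_i}$ again satisfies the gluing identity (iv). This requires $\rho_t^*\Omega_X$ to agree with $\Omega_X$ modulo $dt$ (with the appropriate sign on the $d\log$ term), and this compatibility must survive taking residues along $X_i$. I would deduce it by comparing the Laurent expansions of the two sides of (iv) along $X_i\times\mathbb{C}^*$ and using that the residue map commutes with pullback by the automorphism $\rho_t$ of $X_i$.
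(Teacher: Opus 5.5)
Your proposal is correct and follows essentially the same route as the paper. Both arguments compute the boundary chart by chart through the product structure $X\times\mathbb{A}^1$, identify the components as the $Y_{X_i}$ together with the two special fibers, check poles and residues of $\Omega_Y$ locally on $U_1$ and $U_2$, and handle iterated boundaries inductively, with irreducibility, normality and rationality inherited from the local product structure.

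The step you flag as the main obstacle is automatic. The residue of the globally defined $\Omega_Y$ along the closed divisor $Y_{X_i}$ is itself a global form on $Y_{X_i}$. Its restrictions to the two charts are, up to sign, the two product forms built from $\Omega_{X_i}$. So the gluing condition (iv) for $X_i$ holds without further work, although your argument of taking the residue of the identity in (iv) also works.
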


\begin{proof}
Condition (ii) guarantees that $Y_{\geq0}$ is a well-defined orientable semialgebraic subset of $Y$, locally on either chart given by total nonnegative part on $X\times \mathbb{A}^1$. 

Since $Y$ is the union of two dense open sets $U_1$ and $U_2$, we see the following equality between analytic interiors: 
\[
\itr Y_{\geq0}=\itr (Y_{\geq0}\cap U_1) \cup \itr (Y_{\geq0}\cap U_2).
\]
To prove this, first notice that the right-hand side sits inside $\itr Y_{\geq 0}$, because it is an open set contained in $ (Y_{\geq0}\cap U_1) \cup  (Y_{\geq0}\cap U_2)=Y_{\geq0}$. Now, for any $p\in \itr Y_{\geq 0}$, we may assume that $p\in U_1$, because $U_1\cup U_2=Y$. Since $p$ is an interior point, there exists a neighborhood $V$ containing $p$; by passing to $V\cap U_1$, we can suppose that $V\subset U_1$. Thus, $p\in\itr (Y_{\geq0}\cap U_1)$. This shows the other containment.  

So the analytic interior of $Y_{\geq0}$ can be computed after restricting to $U_1$ and $U_2$, where on each chart we have the trivializing isomorphisms $\phi_1$ and $\phi_2$ from condition $(i)$. Recall that the inverse functon theorem implies that $\itr(\psi^{-1}(S)) = \psi^{-1}(\itr S)$, for any diffeomorphism $\psi$ and any smooth real manifold $S$. Thus 
\[
\itr (Y_{\geq0}\cap U_1)=(\phi_1)^{-1}(\itr(X_{\geq0} \times \mathbb{A}^1_{\geq0}))=(\phi_1)^{-1}(X_{>0} \times \mathbb{A}^1_{>0})).
\] 
Similarly for $\itr (Y_{\geq0}\bigcap U_2)$. The analytic boundary of $Y_{\geq 0 }$ is then the complement (denoted by a $c$) of the union 
\[
(\phi_1)^{-1}(X_{>0} \times \mathbb{A}^1_{>0}) \cup (\phi_2)^{-1}(X_{>0} \times \mathbb{A}^1_{>0})
\] 
in $Y_{\geq0}$. We see that
\[
\begin{aligned}
 Y_{\geq0}\cap U_1 \cap ((\phi_1)^{-1}(X_{>0} \times \mathbb{A}^1_{>0})) \cup (\phi_2)^{-1}(X_{>0} \times \mathbb{A}^1_{>0})))^c&=Y_{\geq0}\cap U_1\cap ((\phi_1)^{-1}(X_{>0} \times \mathbb{A}^1_{>0}))^c\\&=\phi_1^{-1}(\partial X_{\geq0 }\times \mathbb{A}^1_{\geq0} \cup X_{\geq0}\times  \{0\})\end{aligned}
 \] 
Here the first equality comes from the definition of $\phi_1$ and $\phi_2$. We now show the second equality.   

First, notice that, by (i) and (ii), $Y_{\geq0}\cap U_1=\phi_1^{-1}(X_{\geq0}\times \mathbb{A}^1_{\geq 0})$. Indeed, suppose by contradiction
that there exists $q\in \phi_2^{-1}(X_{\geq0}\times \mathbb{A}^1_{\geq 0})\cap Y_{\geq0}\cap U_1$ and $q\notin \phi_1^{-1}(X_{\geq0}\times \mathbb{A}^1_{\geq 0})$. Since the transition isomorphism $\phi_1\circ\phi_2^{-1}$ respects totally nonnegative parts by (iii), we would
have $\phi_1(q)\in X_{\geq0}\times \mathbb{A}^1_{\geq 0}$, contradiction the second assumption. Hence $Y_{\geq0}\cap U_1=\phi_1^{-1}(X_{\geq0}\times \mathbb{A}^1_{\geq 0})$. Hence

\[
\begin{aligned}
&Y_{\geq0}\cap U_1\cap ((\phi_1)^{-1}(X_{>0} \times \mathbb{A}^1_{>0}))^c
=(\phi_1)^{-1}(X_{\geq0}\times \mathbb{A}^1_{\geq 0}) \cap ((\phi_1)^{-1}(X_{>0} \times \mathbb{A}^1_{>0}))^c\\
&= (\phi_1)^{-1}((X_{\geq0}\times \mathbb{A}^1_{\geq 0}) \cap (X_{>0} \times \mathbb{A}^1_{>0})^c)=\phi_1^{-1}(\partial X_{\geq0 }\times \mathbb{A}^1_{\geq0} \cup X_{\geq0}\times  \{0\}).
\end{aligned}
\] 
Similarly for $Y_{\geq0} \cap U_2$. 

Notice that $\phi_1^{-1}( X_{\geq0}\times  \{0\})$ is the totally nonnegative part of the fiber $\pi^{-1}([0:1])\simeq X$; $\phi_2^{-1}( X_{\geq0}\times  \{0\})$ is the totally nonnegative part of the fiber $\pi^{-1}([1:0])\simeq X$. Finally, $\phi_1^{-1}(\partial X_{\geq0 }\times \mathbb{A}^1_{\geq0})=\bigcup_i \phi_1^{-1}((X_i)_{\geq0}\times \mathbb{A}^1_{\geq0})$, where $X_i$'s are all the boundaries of $X$. By (iii), for each $i$, $\phi_1^{-1}((X_i)_{\geq0}\times \mathbb{A}^1_{\geq0})\cup \phi_2^{-1}((X_i)_{\geq0}\times \mathbb{A}^1_{\geq0})=(Y_{X_i})_{\geq0}$. Thus the boundary of $Y_{\geq0}$ is 
\[
\partial Y_{\geq0}=\pi^{-1}([0:1])_{\geq0 }\cup \pi^{-1}([1:0])_{\geq0 } \cup \bigcup_{i} (Y_{{X_i}})_{\geq0}. 
\]
Now, the Zariski closure of $X_{\geq0}$ is $X$ since $X$ is a positive geometry. Applying this to each fiber $\pi^{-1}([0:1])\simeq  \pi^{-1}([1:0])\simeq X $, we have that the Zariski closures of $\pi^{-1}([0:1])_{\geq0 }, \pi^{-1}([1:0])_{\geq0 } $ are $\pi^{-1}([0:1]), \pi^{-1}([1:0])$, respectively. Also, we have $(Y_{X_i})_{\geq0}=\phi_1^{-1}((X_i)_{\geq0}\times \mathbb{A}^1_{\geq0})\cup \phi_2^{-1}((X_i)_{\geq0}\times \mathbb{A}^1_{\geq0})$ and the Zariski closure of $(X_i)_{\geq0}\times \mathbb{A}^1_{\geq0}$ is $X_i\times \mathbb{A}^1$ since $(X_i,(X_i)_{\geq0})$ is a boundary component. So the Zariski closure of $Y_{\geq0}$ is the union $\phi_1^{-1}(X_i\times \mathbb{A}^1)\cup \phi_2^{-1}(X_i\times \mathbb{A}^1) =Y_{X_i}$.

So far, we showed that the algebraic boundary components of $(Y,Y_{\geq0})$ are $\pi^{-1}([0:1])$, $\pi^{-1}([1:0])$, and the $Y_{X_i}$'s; the analytic boundary components are the corresponding totally nonnegative parts. The iterated boundaries of $\pi^{-1}([0:1])\simeq \pi^{-1}([1:0])\simeq X$ are already part of the data of the positive geometry $X$. As for the iterated boundaries of $Y_{X_i}$, we note that the pair $Y_{X_i}$ and $X_i$ satisfy the same four conditions (i)-(iv) in the statement. Thus, the above argument can be applied again with $Y$ and $X$ replaced by $Y_{X_i}$ and $X_i$. An induction implies the statement about iterated boundaries when condition (iii) is replaced by (iiibis).

We are now ready to show that $(Y,Y_{\geq0})$ is a positive geometry. First, we verify the topological conditions described in the first paragraph of Definition \ref{def:iterated boundary}. Namely, $Y_{\geq0}$ is the analytic closure of its analytic interior $Y_{>0}$ and $Y_{>0}$ is an orientable manifold of dimension equal to the Krull dimension of $Y$. Notice that (ii) and (iii) gurantee that $Y\to \mathbb{P}^1$ restrictes to a fibration $Y_{\geq0}\to \mathbb{P}^1_{\geq0}$ and $Y_{>0}\to \mathbb{P}^1_{\geq0}$. Since $\mathbb{P}^1_{\geq0}$ is homeomorphic to an interval $I$ and thus contractible, $Y_{\geq0}$ and $Y_{>0}$ are homeomorphic to the trivial bundle $X_{\geq0}\times I$ and $X_{>0}\times I$. Thus, the topological condition follows from the assumption that it holds for $X_{\geq0}$.

Condition (iv) gives a candidate canonical form $\Omega_Y$. By induction, each boundary component (either $X$ or $Y_{X_i}$) is already a positive geometry. We have to show that $\Omega_Y$ is the unique form -- with only simple poles exactly along the boundary components -- whose residue along those components give the canonical form on each of them. The uniqueness part follows from Remark \ref{uniqueness of canonical form}. The form $\Omega_Y$ is easily seen to have simple poles on the boundary and no extra poles, because its restriction to $U_1$ and $U_2$ both have simple poles along the desired boundaries and no other poles. Finally, for the residue along $Y_{X_i}$, notice that taking a residue is a local computation and may be perfomed locally on the chart $U_1$. The residue is thus 
\[
\phi_1^*(\Res (\Omega_X\wedge d\log (x/y))=\phi_1^*(\Res \Omega_X\wedge d\log(x/y))=\phi_1^*( \Omega_{X_i}\wedge d\log(x/y)),
\] 
where the first equality holds because we are taking the residule along $X_i\times \mathbb{A}^1$ and so $d\log \frac{x}{y}$ is independent. Now, the right-hand side is the canonical form on $Y_{X_i}$ restricted to $U_1\bigcap Y_{X_i}$ by inductive hypothesis. As for the residue along the fiber $\pi^{-1}([0:1])$, we consider the chart $U_1$ and then the fiber is cut out by the function $x/y$, which vanishes in degree $1$. So the residue can be obtained by removing $\wedge d\log(x/y)$, thus producing $\Omega_X$, the canonical form on $X$ indeed. We have thus shown that $Y$ is a positive geometry. 

As for the last sentence in the statement, it is clear from condition (i) that $X$ irreducible, normal, rational implies the same properties for $Y$. Again, for each iterated boundary $Z$, $Y_Z\to \mathbb{P}^1$ has $Z$-fibers and the same four conditions in the statement hold for each pair $Y_Z$ and $Z$. Thus, $Z$ irreducible, normal, rational implies the same properties for $Y_Z$. All iterated boundaries of $Y$ are isomorphic to either $Z$ or $Y_Z$. This finishes the proof. 
\end{proof}

We are going to apply this theorem to Grassmannians and subvarieties in the Grassmannians. We first discuss a simple example.

\begin{example}\label{Hirzebruch} ({\bf Hirzebruch surfaces are positive geometries})

    Let $(X,X_{\geq0},\Omega_X)=(\mathbb{P}^1,(\mathbb{P}^1)_{\geq 0},d\log (\frac{x}{y}))$ be the $1$-dimensional projective simplex. Consider the Hirzebruch surface $F_n=\mathbb{P}(\mathcal{O}\oplus \mathcal{O}(n))$ for $n\geq0$. Then $F_n$ is a $\mathbb{P}^1$-bundle over $\mathbb{P}^1$ with the following explicit description of trivializing charts.
    
    Let $X=\proj \mathbb{C}[a,b]$. Using the notations established in Theorem \ref{pos geom over P1}, let the base space $\mathbb{P}^1$ have projective coordinates $\proj \mathbb{C}[x,y]$. Then the transition isomophism is given by $$\rho([a:b], t)=[a:t^n b] $$

    The positive part on $\mathbb{P}$ is given by the condition that all coordinates are positive. So, it's easy to see that the two transition isomorphism send positive part to positive part. Furthermore, the two boundary components of $X_{\geq0}=\mathbb{P}^1_{\geq 0}$ are given by $[1:0],[0:1]$. And they are indeed stable under $\rho$. Namely, $\rho([1:0],t)=[1:0],\rho([0:1],t)=[0:1]$ for any $t\in \mathbb{C}^*$. Finally, the transition isomorphism $\phi_2\circ \phi_1^{-1}$ sends $([a:b],t)\mapsto ([a:t^nb],t^{-1})$. And we verify that indeed, $d\log (\frac{a}{t^n b})\wedge d\log (\frac{1}{t})=-d\log (\frac{a}{b})\wedge d\log t$. So the pullback from the two charts glue to be a rational form on $F_n$.

    Thomas Lam pointed out that the positive part of $F_n$ is just $\mathbb{R}_{\geq0}\times \mathbb{R}_{\geq0}$. If we trivialize the $\mathbb{P}^{1}$-bundle over the affine line in $\mathbb{P}^1$ away from $[1:-1]$, we have an affine open set $\mathbb{P}^1\times \mathbb{A}^1$ containing the positive part. And the canonical form on $F_n$ restricted to this affine open is just the product $d\log (\frac{a}{b})\wedge d\log t$ of the canonical forms on the two components being projective or affine lines. Thus, one can easily see that the residue computation at poles must work out.

    The content of our theorem says that under mild conditions, the positive part of this above affine open set can still be realized as a semi-algebraic set in $F_n$. Furthermore, if you start with the affine open and take the correct form therein, you can extend it to the surface $F_n$ without introducing a pole along the fiber over $[1:-1]$. The assumptions (ii),(iii),(iv) in our theorem are essentially conditions on the chart morphisms so that naively pulling back the canonical form gives the correct answer. They are mainly to prevent chart choices such as gluing a $\mathbb{P}^1$-bundle using the transition map $\rho:([a:b],t)\mapsto [a+b:b]$. In this way, We end up with a space abstractly isomorphic to $F_1$. However, the pole $\{[0:1]\}\times \mathbb{A}^1$ on $U_1$ is sent to $[1:1]\times \mathbb{A}^1$, which is not a pole on $U_2$. The union of the preimages under the trivialization isomorphisms of the positive parts on the charts do not form a semi-algebraic set, and if we naively pullback canonical forms from the chart, they certainly do not glue.
    
   In the special case where $n=1$, $F_1$ equals the blowup of $\mathbb{P}^2$ at the origin. The projection $\pi_2:F_1\subset \mathbb{P}^2\times \mathbb{P}^1\to \mathbb{P}^1 $ to the second component gives $Y=F_1$ as a $\mathbb{P}^1$-bundle over $\mathbb{P}^1$. If we instead consider the projection $\pi_1:F_1\to \mathbb{P}^2$ to the first component, we get the blowup map $\Bl_{[0:0:1]} \mathbb{P}^2\to \mathbb{P}^2$. 

We explicitly compute the canonical form on $F_1=\{([\alpha:\beta:\gamma],[x:y])| \alpha y=\beta x\}\subset \mathbb{P}^2\times \mathbb{P}^1$ according to our theorem. 
We can let $\phi_1^{-1}$ send the point $([a:b], \frac{x}{y})\in X\times \mathbb{A}^1 $ to the point $([\frac{ax}{y}:a:b],[x:y])\in F_1$. Thus, $U_1$ consists of points in $F_1$ such that $y\not=0$. We take the form $d\log\frac{b}{a}\wedge d\log \frac{x}{y}$ and pullback via $\alpha=\frac{ax}{y},\beta=a,\gamma=b$. Since $\frac{b}{a}=\frac{\gamma}{\beta}, \frac{x}{y}=\frac{\alpha}{\beta}$, we have that the form on $F_1$ is equal to $d\log\frac{\gamma}{\beta}\wedge d\log \frac{\alpha}{\beta}$. 

For ease of notation, set $X_0=\frac{\alpha}{\gamma}, X_1=\frac{\beta}{\gamma}$ affine coordinates on $\mathbb{P}^2$ pulled-back on $\Bl \mathbb{P}^2$. We have $d\log\frac{1}{X_1}\wedge d\log \frac{X_0}{X_1}=-d\log X_1\wedge (\frac{X_1}{X_0} \cdot \frac{X_1dX_0-X_0dX_1}{X_1^2})$. Since we are taking the wedge with $d\log X_1$, we can ignore the $dX_1$ in the second term. We end up with $-d\log X_1\wedge (\frac{X_1}{X_0}\cdot \frac{X_1 dX_0}{X_1^2})=-d\log X_1\wedge d\log X_0$ is the canonical form on $\mathbb{P}^2$ pulled-back on $\Bl \mathbb{P}^2$.

It is known that well-behaved blowups of projective polytopes are positive geometries \cite{brauner2024wondertopes}. By their result, $F_1$ is indeed a positive geometry whose canonical form is equal to the pullback from $\mathbb{P}^2$ via $\pi_1$. It is worth noting that our theorem derives the positive geometry strucure from pulling back via $\pi_2$. And the next theorem we present gives the converse direction of \cite{brauner2024wondertopes} in the special case of $F_1\to \mathbb{P}^2$.

\end{example}

\begin{definition}\label{dfn:trivial subvariety}
    Let $(X,X_{\geq0})$ and $(Y,Y_{\geq0})$ be as in the first half of Theorem \ref{pos geom over P1}. A subvariety $S$ in $X$ is said to be trivial in $Y$ if the $S$-family over $\mathbb{P}^1$ induced as a subfamily of $Y$ by condition $(3)$ of Theorem \ref{pos geom over P1} is the product $S\times \mathbb{P}^1$. 
\end{definition}

\begin{definition}\label{retract}
    Let $(X,X_{\geq0})$ and $(Y,Y_{\geq0})$ be as in the first half of Theorem \ref{pos geom over P1}. Let $b_Z:Y_Z\hookrightarrow Y$ be as in condition (3) in Theorem \ref{pos geom over P1}. Let $S$ be an iterated boundary of $X$ trivial in $Y$.

 A reduced, irreducible, normal projective variety $R$ is said to be a {\it retract of $Y$ along $S$} if it comes with the following data
 \begin{itemize}
     \item A morphism $\psi: Y\to R$;
     \item Two closed embeddings $\iota_1,\iota_2:X\hookrightarrow R$ such that the precomposition with $\iota_S: S\hookrightarrow X$ agrees, i.e., $\iota_1\circ \iota_S = \iota_2\circ \iota_S$.      \end{itemize}
Moreover, the above morphims are required to satisfy the following properties. 
 \begin{enumerate}
    \item The morphism $\iota: Y\to R\times \mathbb{P}^1$ defined by $y\mapsto (\psi(y),\pi(y))$ a closed embedding. In particular, $\psi$ is a closed embedding composed with a projection, and thus is proper. So the following diagram commutes:
\[
\begin{tikzcd}
Y \arrow[r, "\iota"] \arrow[dr, "\psi"'] 
& R \times \mathbb{P}^1 \arrow[d, "\mathrm{pr}_1"] \\
& R
\end{tikzcd}
\]

\item the morphism $\psi$ restricted to $Y\setminus Y_{S}=Y\setminus(S\times \mathbb{P}^1)$ is an isomorphism onto its image, which is the open subscheme of $R$ given by the complement of $\iota_S:S\hookrightarrow R$. Note that $\psi(Y)=R$. 

\item The morphism $\iota_{Y_S}:S\times \mathbb{P}^1=Y_S\hookrightarrow Y \to R$ factors through $S$ by projecting to the first component and then embed through $\iota_S$. This amounts to say that the following diagram commutes:

\[
\begin{tikzcd}
S\times \mathbb{P}^1=Y_S \arrow[r, hook, "\iota_{Y_S}"] \arrow[d, "\mathrm{pr}_1"]
& Y \arrow[d, "\psi"] \\
S \arrow[r, hook]
& R
\end{tikzcd}
\]

\item $X\times \{[0:1]\}\hookrightarrow Y\to R$ is the closed embedding $\iota_1:X\hookrightarrow R$ and $X\times \{[1:0]\}\hookrightarrow Y\to R$ is the closed embedding $\iota_2:X\hookrightarrow R$.
\end{enumerate}
\end{definition}

\begin{example}\label{base case of induction of the next theorem}
    When $S=X$, $Y=Y_S$ equals the product $X\times \mathbb{P}^1$. Condition (2) and the assumption that $R$ is a variety requires the closed embedding $\iota_S:S\hookrightarrow R$ be an isomorphism, i.e., $R=X$. Thus, the retract of $X\times \mathbb{P}^1$ along $X$ is $X$. When $S=\emptyset$, a retract of $Y$ is $Y$ itself.
\end{example}

\begin{theorem} \label{retract of pos geo over P^1}
 Let $(X,X_{\geq0})$ and $(Y,Y_{\geq0})$ be as in the first part of Theorem \ref{pos geom over P1}. Let $b_Z:Y_Z\hookrightarrow Y$ be as in condition (iiibis) in Theorem \ref{pos geom over P1}.  Let $S$ be an iterated boundary of $X$ trivial in $Y$ and let $\psi:Y\to R$ and $\iota_{S}:S\hookrightarrow R$ be a retract of $Y$ along $S$.  

 Let $R_{\geq 0}$ be the image of $Y_{\geq 0}$ under $\psi:Y\to R$. Let $(Y_i,(Y_i)_{\geq0})$ be the algebraic and analytic boundary components of $Y$. Then, 

\begin{enumerate}
    \item[(i)] The boundary components of $(R,R_{\geq0})$ are exactly $(\psi(Y_{i}), \psi((Y_{i})_{\geq0}))$ for all $Y_i\not=Y_S$. In this case, $\psi(Y_{i})$ is the Zariski closure of $\psi((Y_{i})_{\geq0})$.
\end{enumerate}
Assume furthermore that $(X,X_{\geq0}, \Omega_X)$ is a positive geometry and the assumptions in the second part of Theorem \ref{pos geom over P1} holds. Thus, Theorem \ref{pos geom over P1} implies that $(Y,Y_{\geq0}, \Omega_Y)$ is a positive geometry with some canonical form $\Omega_Y$. Assume that $R$ and all iterated boundaries of $(R,R_{\geq0})$ are normal. Then, we have
\begin{enumerate}
    \setcounter{enumi}{1}
    \item[(ii)] $(R, R_{\geq 0},\psi_{*}\Omega_Y )$ is a normal positive geometry. 
\end{enumerate}
 \end{theorem}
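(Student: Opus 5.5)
The plan is to transport everything along $\psi$, using that $\psi$ is an isomorphism off $Y_S=S\times\mathbb{P}^1$ (property (2) of Definition \ref{retract}) and collapses $Y_S$ onto $\iota_S(S)$, which has codimension at least $2$ in $R$ because $\dim Y_S=\dim Y-\operatorname{codim}_X S$ and the fiber direction $\mathbb{P}^1$ is contracted. In fact, when $S$ is a proper iterated boundary of $X$, $\iota_S(S)$ has dimension $\dim R-1-\operatorname{codim}_X S\le\dim R-2$.

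\textbf{Part (i).} From Theorem \ref{pos geom over P1} the boundary components of $Y$ are the two fibers $\pi^{-1}([0:1])\simeq X$, $\pi^{-1}([1:0])\simeq X$ and the $Y_{X_i}$. Since $S$ is an iterated boundary, it lies in some $X_i$, or $S=X$, which is the trivial case handled by Example \ref{base case of induction of the next theorem}; so $Y_S$ is contained in some boundary component but is only a component itself when $S$ is a boundary divisor $X_i$. First, $\psi$ is proper and surjective, so $R_{\geq0}=\psi(Y_{\geq0})$ is closed. Second, on $Y\setminus Y_S$ the map $\psi$ is an isomorphism onto $R\setminus\iota_S(S)$, so interiors and boundaries correspond: $\itr R_{\geq0}\setminus\iota_S(S)=\psi(\itr Y_{\geq0}\setminus Y_S)$. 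The locus $\psi(Y_S)_{\geq0}=\iota_S(S_{\geq0})$ has codimension $\ge2$, so it cannot contribute a divisorial component. It also lies in the closure of the images of the other boundary components, because $S$ sits inside the fibers $X\times\{0\}$ and $X\times\{\infty\}$. Hence the analytic boundary of $R_{\geq0}$ is $\bigcup_{Y_i\ne Y_S}\psi((Y_i)_{\geq0})$. The Zariski closure claim follows since $\psi$ is proper and $\psi|_{Y_i}$ is birational onto its image whenever $Y_i\not\subset Y_S$, so $\psi(Y_i)=\overline{\psi((Y_i)_{\geq0})}$ is irreducible of dimension $\dim R-1$. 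The interior of $R_{\geq0}$ is $\psi(\itr Y_{\geq0})$, since $Y_{>0}$ misses $Y_S$. This is an oriented manifold whose closure is $R_{\geq0}$, which gives the topological hypotheses.

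\textbf{Part (ii).} Since $\psi$ is birational, $\psi_*\Omega_Y=(\psi^{-1})^*\Omega_Y$ on $R\setminus\iota_S(S)$. Its poles off $\iota_S(S)$ are exactly the images of the simple poles of $\Omega_Y$ other than $Y_S$. If $Y_S$ is a pole, it is not a divisor in $R$ and so imposes nothing. Because $R$ is normal and $\iota_S(S)$ has codimension $\ge 2$, no divisor of $R$ lies in $\iota_S(S)$. Thus the divisorial poles of $\psi_*\Omega_Y$ are exactly the $\psi(Y_i)$, all simple, and residues can be computed generically, off $\iota_S(S)$. There they equal $\psi_*\Res_{Y_i}\Omega_Y=\psi_*\Omega_{Y_i}$.

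It remains to show each $(\psi(Y_i),\psi((Y_i)_{\geq0}),\psi_*\Omega_{Y_i})$ is a positive geometry. For the two fibers, $\psi$ restricts to the closed embeddings $\iota_1,\iota_2$, so these are just copies of $X$. For the components $Y_{X_i}$, the map $\psi|_{Y_{X_i}}:Y_{X_i}\to\psi(Y_{X_i})$ is again a retract along $S$, or along $S\cap X_i$, or it is an isomorphism when $S\not\subset X_i$. So I would induct on dimension, using the base cases in Example \ref{base case of induction of the next theorem} and the normality hypothesis on iterated boundaries of $R$. Uniqueness comes from Remark \ref{uniqueness of canonical form}, since $R$ is birational to $Y$, hence rational, and normality gives $R_1$.

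\textbf{Main obstacle.} The hardest step is verifying that the restriction to $Y_{X_i}$ really satisfies all conditions of Definition \ref{retract}, in particular the closed-embedding condition (1) and the identification of the image of $Y_{X_i}\cap Y_S$. A second difficulty is checking, when $S=X_i$ is itself a boundary divisor, that the residue cancellation is consistent. There, two distinct boundaries of $R$ (the images of $X\times\{0\}$ and $X\times\{\infty\}$) meet along $\iota_S(S)$, and one must make sure no codimension-one component of an iterated boundary gets lost. I would first handle this by comparing the iterated residues of $\Omega_Y$ along $Y_S$ computed through the two fibers.
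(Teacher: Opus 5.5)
Your proposal is correct and follows essentially the paper's route. The shared steps are these: $\psi$ is an isomorphism off $Y_S$; $Y_S$ collapses onto the lower-dimensional $\iota_S(S)\subset\iota_1(X)\cap\iota_2(X)$; the two fibers map isomorphically via $\iota_{1,2}$; and (ii) goes by induction on $\dim X$ after restricting the retract to $Y_{X_i}\to\psi(Y_{X_i})$ along $S\cap X_i$. Your closure argument for $\iota_S(S_{\geq0})\subset\partial R_{\geq0}$ and your codimension-$\geq 2$ argument against extra poles are, if anything, cleaner than the paper's.

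Three small corrections. First, $\psi|_{Y_{X_i}}$ is an isomorphism only when $S\cap X_i=\emptyset$, not whenever $S\not\subset X_i$. Second, your ``second difficulty'' is vacuous: the recursive definition imposes no compatibility between the two routes to $\iota_S(S)$, since each of $\iota_1(X)$ and $\iota_2(X)$ is independently a copy of the positive geometry $X$. Third, the retract conditions (1)--(4) for $Y_{X_i}$ are not a real obstacle, because they follow by directly restricting those for $Y$.
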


\begin{proof}
If $S=X$, Example \ref{base case of induction of the next theorem} shows that the statement is satisfied. Assume $S\not=X$ and let $X_i$ be the boundary divisors of $X$. For any point $p\in R_{\geq 0}$ not in $\iota_S:S\hookrightarrow R$, condition (2) of Definition \ref{retract} implies that $p$ is in the interior of $R_{\geq0}$ if and only if its unique isomorphic preimage is in the interior of $Y_{\geq 0}$. 
    
 Now, take $p\in  R_{\geq 0}$ that is in the image of the morphism $\iota_S:S\hookrightarrow R$. Conditions (2) and (3) in Definition \ref{retract} imply that the preimage of $p$ under $\psi$ is $\{p\}\times \mathbb{P}^1\hookrightarrow S\times \mathbb{P}^1\hookrightarrow Y$. Note that the assumption $p\in R_{\geq0}:=\psi(Y_{\geq0})$ implies that some preimage of $p$ is in $Y_{\geq0}$. Thus, we must have that $\{p\}\times \mathbb{P}^1\hookrightarrow S_{\geq0}\times \mathbb{P}^1$ equals the preimage of $p$ under $\psi$. In particular, it contains $\{p\}\times {\mathbb{P}^1}_{\geq0}\subset  S_{\geq0}\times \mathbb{P}^1_{\geq0}$. The subvariety $S\times \mathbb{P}^1$ is an iterated boundary component of $Y$ by Theorem \ref{pos geom over P1}. So $p$ is the image of some point in the analytic boundary of $Y_{\geq0}$. Indeed, we show that $p$ must be in the analytic boundary of $R_{\geq0}$ as well. Suppose otherwise; then there exists an analytic open set $U\subset  R_{\geq 0}$ containing $p$. Since $\psi$ is the composition of a closed immersion with a projection, it is analytic continuous on the real points. So the preimage of $U$ is analytic open in $Y(\mathbb{R})$. The intersection of the preimage of $U$ with $Y_{\geq 0}$ is thus analytic open in $Y_{\geq 0}$ and contains all preimages of $p$ lying in $Y_{\geq0}$. Recall that $p$ has preimage $\{p\}\times \mathbb{P}^1\hookrightarrow S_{\geq0}\times \mathbb{P}^1\hookrightarrow Y$. Thus, we have an interval of preimages of $p$ contained in $S_{\geq0}\times \mathbb{P}^1_{\geq0}$ and thus we constructed an analytic open set in $Y_{\geq 0}$ containing them. This contradicts the fact that $S\times \mathbb{P}^1$ is an iterated boundary component of $Y_{\geq0}$.
    
 So far we have established that the analytic boundary of $ R_{\geq0}$ is the image of the analytic boundary of $Y_{\geq 0}$ under $\psi:Y\to R$, which equals the union $\bigcup \psi((Y_i)_{\geq0})$ over all boundary components of $Y$. However, we point out that not all images $\psi((Y_i)_{\geq0})$ are necessarily analytic boundary components of $R_{\geq 0}$. Notice that $\dim Y_{\geq0}=\dim R_{\geq0}$ by conditions (2) and (3) in Definition \ref{retract}. So, if $\dim \psi((Y_i)_{\geq0})=\dim (Y_i)_{\geq0}$, then $\dim \psi((Y_i)_{\geq0})=\dim (Y_i)_{\geq0}=\dim Y_{\geq0}-1=\dim R_{\geq0}-1$, and so $\psi((Y_i)_{\geq0})$ is indeed an analytic boundary component for $R_{\geq 0}$. However, some images might have lower dimensions and be contained in other boundary components. Theorem \ref{pos geom over P1} divides all the $Y_i$'s in two families. For each case, we determine when they form a boundary component. 

\textbf{Case 1}: $Y_i$ is a boundary component of $Y$ of the form $\pi^{-1}([0:1])=X\times [0:1]$ or $\pi^{-1}([1:0])=X\times [1:0]$. By condition (4) in Definition \ref{retract}, both have isomorphic image under $\psi$ as a subvariety in $R$ mapping positive parts to positive parts. Thus, dimensions are preserved and their images form boundary components.

\textbf{Case 2}: $Y_i$ is a boundary divisor on $Y$ of the form $Y_{X_i}$, for some boundary divisor ${X_i}$ on $X$. Recall that we assumed that $S\not=X$. So $S$ has codimension at least one in $X$. 
    \begin{enumerate}
        \item  If $X_i\not=S$, $X_i\setminus S$ is dense in $X_i$. In particular, $Y_{X_i}$ contains a dense open subset away from $Y_S=S\times \mathbb{P}^1$. Since $\psi$ is an isomorphism outside of $Y_S$, the scheme theoretic image of $Y_{X_i}$ has the same dimension as $Y_{X_i}$ and so do the positive parts.  We conclude that $\psi(Y_{X_i})$ is a boundary divisor on $\psi(Y)=R$. 
        
        \item If $X_i=S$, Condition $(3)$ in Definition \ref{retract} forces the image of $Y_S=S\times \mathbb{P}^1$ to be $S$. Thus, $\psi((Y_S)_{\geq0})=S_{\geq0}$ has dimension equal to $\dim (Y_S)_{\geq0}-1$. Indeed, $S$ is already contained in both $\iota_{1,2}:X\hookrightarrow R$, which are the images from {\bf Case 1} by condition (4) in Definition \ref{retract}.
        Thus, the image of $(Y_S)_{\geq0}$ does not form a boundary component. 
    \end{enumerate}
    
We conclude that as long as $Y_i\not=Y_S$, the image of $(Y_i)_{\geq0}$ under $\psi$ is an analytic boundary of $R_{\geq0}$ and has the same dimension as $(Y_i)_{\geq0}$. In this case, we also showed that the scheme theoretic image of $Y_i$ has the same dimension. Since each algebraic boundary $Y_i$ is by definition the Zariski closure of $(Y_i)_{\geq0}$, each $Y_i$ is reduced and irreducible. Thus, its scheme theoretic image is the Zariski closure of $\psi(Y_i)$ and is reduced and irreducible. The morphism $\psi$ is proper by condition (1) in Definition \ref{retract}, and so closed. Hence $\psi(Y_i)$ (being a Zariski closed set in $R$, endowed with the reduced scheme structure) is the scheme theoretic image of $Y_i$ that is reduced, irreducible, contains $\psi((Y_i)_{\geq 0})$, and these two have the same dimension. Therefore $\psi(Y_i)$ is the Zariski closure of $\psi((Y_{i})_{\geq0})$. This shows (i).

To show (ii), we first verify the topological conditions described in the first paragraph of Definition \ref{def:iterated boundary}. Namely, we need to show that $R_{\geq0}$ is a semialgebraic subset and equals the analytic closure of its analytic interior $R_{>0}$ and $R_{>0}$ is an orientable manifold of dimension equal to the Krull dimension of $R$. First, $R_{\geq0}$ is by choice the image of $Y_{\geq0}$. By Condition (i) in Definition \ref{retract}, $Y_{\geq0}$ stays semialgebraic after the embedding in $R\times \mathbb{P}^1$ and thus has semialegbraic image in $R$ by the Tarski–Seidenberg theorem. It is furthermore analytically closed because the map $\psi:Y(\mathbb{R})\to R(\mathbb{R})$ is a closed embedding followed by a projection away from a compact space $\mathbb{RP}^1$, and is thus universally closed. Notice from previous discussion that the analytic boundary $\partial R$ is the isomorphic image of $\psi(\partial Y)$ since $Y_S$ is contained in $\partial Y$. So $\partial R$ being an orientable manifold of the correct dimension follows from the fact that $Y_{\geq0}$ is. Finally, $R_{\geq0}$ equals the analytic closure of $R_{>0}$ because $R_{\geq0}$ is by choice the image of $Y_{\geq0}$. Then, any sequence in $Y_{>0}$ approaching some preimage of a point in $R_{\geq0}$ gives a sequence in $R_{>0}$ approaching the point in $R_{\geq0}$.

We now prove (ii) by induction on $\dim X$. Fix a retract $\psi:Y\to R$ with $\iota_{1,2}:X\hookrightarrow R$; it follows from the proof of part (i) that $(R,R_{\geq0})$ has boundary components divided into two families. To show that $\psi_{*}(\Omega_Y)$ is the canonical form making $(R,R_{\geq0})$ a positive geometry, we have to show that it is the unique form with only simple poles exactly along the boundary components, whose residue along those components is the canonical form on each boundary component. The uniqueness follows from Remark \ref{uniqueness of canonical form}. Notice that $\psi_{*}\Omega_Y$ contains no extra poles other than the boundary components of $R$ because of the following: 
    \begin{itemize}
        \item If $\psi_{*}\Omega_Y$ has a pole along $Z\subset R$, then $\Omega_Y$ has a pole along $\psi^{-1}(Z)\subset Y$.
        
        \item The form $\Omega_Y$ contains no poles other than the boundary components of $Y$. 
        
        \item The morphism $\psi$ restricts to an isomorphism outside of $Y_S$ and maps $Y_S$ onto $S$. 

       \item $\partial R=\psi(\partial Y)$ and $S\subset \partial R$.
    \end{itemize}
    
Since $\psi$ is an isomorphism outside $Y_S$, the poles outside $S\subset R$ are inside $\partial Y$. On the other hand, the poles 
on $S$ are in $\partial R = \psi(\partial Y)$. Hence $\psi_{*}\Omega_Y$ has no extra poles. It remains to show that each boundary component is a positive geometry whose canonical form is given by the residue of $\psi_{*}\Omega_Y$ along the corresponding component. This suffices because the residue being nonzero automatically implies that they must be simple poles. 

\textbf{Case 1}: The boundary component is $\iota_1:X\times \{[0:1]\}\hookrightarrow Y\to R$ or $\iota_2:X\times \{[1:0]\}\hookrightarrow Y\to R$. In both cases, the boundary component is isomorphic to $X$ and is already known to be a positive geometry with canonical form $\Omega_X$. Since the residue computation is a local computation and thus can be done on a dense open subset, we can do the residue computation outside of $S\hookrightarrow R$ because $S\not=X$ by assumption. Now, $\psi$ restricts to an isomorphism outside of $Y_S\to S$, so the validity of the residue computation for $X$ as a boundary of $Y$ implies that that of the residue computation for $X$ as a boundary of $R$. 

\textbf{Case 2}: The boundary component is $\psi(Y_{X_i})$ for some boundary component $X_i\not=S$ of $X$. We first establish that $(\psi(Y_{X_i}), \psi((Y_{X_i})_{\geq0}), \psi_{*}\Omega_{Y_{X_i}})$ is a positive geometry, where $\psi$ is understood as the restriction of $\psi:Y\to R$ on $Y_{X_i}\to \psi(Y_{X_i})$. First, notice that
\begin{itemize}
    \item The pair $(Y_{X_i},(Y_{X_i})_{\geq0})$ and $(X_i,(X_i)_{\geq 0})$ are related as in Theorem \ref{pos geom over P1}.
    \item $S\cap X_i$ is a subvariety of $X_i$ that is trivial in $Y_{X_i}$, because $S$ is a subvariety of $X$ trivial in $Y$.
\end{itemize}  
Therefore, if we can show that $\psi(Y_{X_i})$ is a retract of $Y_{X_i}$ along $S\cap X_i$ in the sense of Definition \ref{retract}, this decreases $\dim X$. Now, we invoke induction, with two possible cases. Either there is a step of the induction procedure such that the trivial subscheme satisfies $S\cap X_i = X_i$ or there is none. In the first case, we use again Example \ref{base case of induction of the next theorem} showing 
that $(\psi(Y_{X_i}), \psi((Y_{X_i})_{\geq0}), \psi_{*}\Omega_{Y_{X_i}})$ is a positive geometry. Otherwise, in the second case, we invoke 
induction with base case $\dim X=0$ and $Y\cong \mathbb{P}^1$. 

Consider $\psi:Y_{X_i}\to \psi(Y_{X_i})$ given by the restriction of $\psi:Y\to R$. Also, precomposing $X_i\hookrightarrow X$ with $\iota_1: X\times \{[0:1]\}\hookrightarrow Y\to R$ and $\iota_2: X\times \{[1:0]\}\hookrightarrow Y\to R$ gives two maps $\iota_{1,2}'$. Notice that for both $\iota_{1,2}$, the first map $X\hookrightarrow Y$ embeds a copy of $X$ as the fiber of $Y\to \mathbb{P}^1$ over $\{[0:1]\}$ and $\{[1:0]\}$. Thus, after precomposing with $X_i\hookrightarrow X$, $\iota'_{1,2}$ embeds a copy of $X_i$ as the fiber of $Y_{X_i}\to \mathbb{P}^1$ over $\{[0:1]\}$ and $\{[1:0]\}$. Then, their images under $\psi:Y\to R$ land in $\psi(Y_{X_i})$. $Y_{X_i}$ is reduced because $X_i$ is reduced, and since $\psi$ is proper, $\psi(Y_{X_i})$ is the scheme theoretic image. So we have morphisms $\iota'_{1,2}:X_i\hookrightarrow Y_{X_i}\to \psi(Y_{X_i})$. Notice that precomposing $\iota'_{1,2}$ with $S\cap X_i$ agrees simply because precomposing $\iota_{1,2}$ with $S$ agrees. Now, we verify the four conditions in Definition \ref{retract} required to be satisfied by $\psi$ and $\iota_{1,2}$: 
	
\begin{enumerate}
    \item The image of $Y_{X_i}$ under the closed embedding $\iota:Y\hookrightarrow R\times \mathbb{P}^1$ is contained in $\psi(Y_{X_i})\times \mathbb{P}^1$. So $\iota$ restricts to a closed embedding $Y_{X_i}\to R\times \mathbb{P}^1$.
    \item $\psi$ restricted to $Y_{X_i}\setminus Y_{S\cap X_i}$ is an isomorphism onto the correct open subshcheme of $\psi(Y_{X_i})\subset R$ because $\psi$ restriced to $Y\setminus Y_S$ is an isomorphism onto the correct image in $R$.
    \item $\psi$ restricted to $Y_{S\cap X_i}$ is the projection to the first component because so is $\psi$ restricted to $Y_S$.
    \item This is satisfied by construction. 
\end{enumerate}

We conclude that $(\psi(Y_{X_i}), \psi((Y_{X_i})_{\geq0}), \psi_*\Omega_{Y_{X_i}})$ is a positive geometry by induction on $\dim X$. 

It remains to verify that $\psi_*\Omega_{Y_{X_i}}$ is the residue of $\psi_*\Omega_Y$ along $\psi(Y_{X_i})$. Again, $\psi$ restricts to an isomorphism away from $Y_S$, which is dense in $Y_{X_i}$ because $S\not=X_i$. Since taking the residue of a form is a local computation, the residue of $\psi_*\Omega_Y$ along $\psi(Y_{X_i})$ can be computed away from $Y_S$. Thus, the residue computation is valid for $\psi_*\Omega_Y$ because so is for $\Omega_Y$, since $Y$ is a positive geometry. 
\end{proof}

\begin{example}({\bf Projective cones over rational normal curves are positive geometries})

    In the above proof, it appears that contracting a $\mathbb{P}^1$-worth of $S$ causes no effect on the positive geometry structure. The following example makes it manifest.

    Going along Example \ref{Hirzebruch}, let $X=\mathbb{P}^1, Y=F_n$. The Hirzebruch surface $F_n$ can be constructed by taking the projective cone in $\mathbb{P}^{n+1}$ over a degree $n$ rational normal curve in $\mathbb{P}^n$ and take the blowup at the origin. We can simply take $R$ to be the projective cone and the morphism $Y\to R$ to be the blowup map. So, we are contracting the exceptional divisor. 

    When $n=1$, $(F_1)_{\geq0}$ has $4$ boundary divisors, among which the exceptional divisor over $[0:0:1]$ becomes the origin in $R=\mathbb{P}^2$. Residue calculations away from the origin work for $R$ because they work for $F_1$. Residue calculations near the origin work for $R$ because the origin occurs as the boundary of $\{x=0\},\{y=0\}$, which must work because they are positive geometries isomorphic to their strict transforms in the blowup.

    Theorem \ref{pos geom over P1} and Theorem \ref{retract of pos geo over P^1} show that  $\mathbb{P}^1$ being a positive geometry implies that the blowup $\Bl_{[0:0:1] }\mathbb{P}^2$ is a positive geometry, which in turn implies that $\mathbb{P}^2$ is a positive geometry. 
\end{example}

\section{Canonical Form on the Positroid Variety $Z(A|B)$}\label{sec: canonical form of Z(A|B)}
In this section, we explicitly compute the canonical form on the codimension $1$ boundaries of collinear configurations and verify a residue computation.

As stated in Theorem \ref{Grassmannian is a positive geometry}, the totally nonnegative Grassmannian is a positive geometry and the totally nonnegative positroid varieties are all its iterated boundaries. Thus, each positroid variety has a canonical form, which can be computed by taking iterated residues of the canonical form on the Grassmannian. In this section, we apply the BCFW-bridge algorithm from \cite{AHBCGPT16} to compute the canonical form on a special class of positroid varieties.

The real points of a positroid variety contain a dense real torus parametrized by a {\it reduced plabic graph}. The {\it Postnikov's boundary measurement map} sends $\mathbb{R}_{>0}^n$ bijectively to the positive points of the open positroid variety. Muller and Speyer \cite{MS17} constructed a {\it twist map} which extends this to an open embedding of the algebraic torus into the open positroid variety. Under this twist map, each edge weight of the reduced plabic graph is a rational function on the positroid variety, and the canonical form on this variety is the wedge product of all the $d\log$'s of all such edge weights.

In this way, one can obtain a formula for the canonical form on any positroid variety given by a reduced plabic graph. The BCFW-bridge construction is a way to generate a reduced plabic graph for a given positroid variety \cite{AHBCGPT16}.

In this section, we work out an explicit formula for the canonical form on the positroid variety $Z(12\cdots m|m+1,m+2\cdots ,n)$ going through the above machinery.

\begin{proposition}\label{residue along Z(A|B) is good}
    The canonical form on the positroid variety $Z(12\cdots m|m+1,m+2\cdots ,n)$ agrees with the residue $\Res_{[123]=0} \Omega_{V(3,n)}$. 
\end{proposition}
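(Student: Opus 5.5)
We want to show that the canonical form on the positroid divisor $Z(12\cdots m\,|\,m+1\cdots n)$ of $V(3,n)$ equals $\Res_{p_{123}=0}\Omega_{V(3,n)}$. The plan is to compute both sides in the same local chart and compare the resulting rational functions. For the left-hand side we run the BCFW-bridge construction of \cite{AHBCGPT16} on the bounded affine permutation of $Z(1\cdots m|m+1\cdots n)$, and the cyclic rank conditions of Example \ref{ideal of positroid} determine that permutation. A natural bridge sequence starts from the two-line configuration in which columns $1,\dots,m$ span one line and columns $m+1,\dots,n$ span another. The resulting boundary measurement parametrization has $2n-5$ positive edge weights. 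We then express these weights, via the Muller--Speyer twist \cite{MS17}, as ratios of Pl\"ucker coordinates. That gives $\Omega_{Z(A|B)}$ as a wedge of $d\log$'s, which we rewrite as a single rational function times a standard volume form in the matrix chart $\{p_{\ast}\neq0\}$. Using the dihedral symmetry of Remark \ref{D_n symmetry}, we may instead work with the divisor containing the column indices corresponding to $\alpha_1$ (as in the proof of Theorem \ref{thm:canonical form on V(3,6)}, where $p_{234}=-\alpha_1$). In that normalization the divisor is cut out by $\alpha_1=0$ together with $\alpha_i=0$ for all indices on one of the two lines.

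For the right-hand side we use Proposition \ref{the better form of the form}, choosing the parameter $m$ there to match the split point of the partition. The form is then a rational function with the single factor $\alpha_1$ in the denominator, multiplied by $d\alpha_1\wedge(\cdots)$. On $Z(A|B)$ the coordinates $\alpha_2,\dots,\alpha_{m-2}$ (those on the $\alpha=0$ line) must also vanish. So we must check two things. First, the remaining $(2n-5)$-form, after setting $\alpha_1=0$, restricts to a nonvanishing frame on a dense open subset of $Z(A|B)$. Second, the relations $P_{ijk}=0$ collapse correctly there. To handle both, we use the block structure: on $Z(A|B)$ the factors $\beta_i\gamma_{i+1}-\gamma_i\beta_{i+1}$ become Pl\"ucker coordinates of the first line, and $\alpha_i\beta_{i+1}-\beta_i\alpha_{i+1}$ become those of the second. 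Proposition \ref{prop: smooth along Z(A|B) generic} guarantees that $p_{123}$ vanishes to order one along $Z(A|B)$. Hence, following Remark \ref{rmk:simple poles}, the residue is obtained by deleting $d\log\alpha_1$.

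The final step is to show that the two explicit expressions coincide on a dense open subset of $Z(A|B)$. For this we use the three-term Pl\"ucker relations that hold on the positroid variety, analogous to $p_{456}p_{125}=p_{145}p_{256}$ in the $n=6$ case; these convert the denominator of the residue into the cyclic product expected for the positroid canonical form. Since both sides are rational forms on the irreducible variety $Z(A|B)$, agreement on a dense open set implies equality. As a cross-check we verify that the result wedged with $d\log$ of the two missing frozen coordinates recovers $\Omega_{\Gr(3,n)}$ on the chart, exactly as in the $n=6$ argument.

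The main obstacle is the bookkeeping of the bridge decomposition for general $m$ and $n$: we need a clean closed form for $\Omega_{Z(A|B)}$ in matrix coordinates. We will first derive it inductively in $n$, adding one column to the longer line by a single bridge, which multiplies the form by a $d\log$ of a ratio of consecutive minors. This should mirror the recursion of Proposition \ref{inductive formula of canonical form on V(3,n)}, whose factor $\frac{\beta_1\gamma_2}{\beta_1\gamma_2-\beta_2\gamma_1}\,d\log\beta_1\wedge d\log\gamma_1$ restricts to exactly such a bridge contribution on the collinear locus. If this works, the comparison reduces by induction to the case $n=6$, which was already verified in Theorem \ref{thm:canonical form on V(3,6)}.
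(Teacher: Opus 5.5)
Your setup matches the paper's: you rotate to the chart $P'$ where $p_{123}=\pm\alpha_1$, use Proposition \ref{the better form of the form} with parameter $m=|A|$, note that $\alpha_1$ is the only denominator factor vanishing on $Z(1\cdots m|m+1\cdots n)$, delete $d\log\alpha_1$, and compare with the BCFW-bridge form. The gap is the comparison itself, which is the entire content of the proposition. You never supply it, and the tools you propose for it do not work as stated.

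\textbf{Order of vanishing.} Proposition \ref{prop: smooth along Z(A|B) generic} only makes $\mathcal{O}_{V,Z}$ a DVR. It does not make $p_{123}$ a uniformizer; if $p_{123}$ vanished to order $k>1$, deleting $d\log p_{123}$ would be off by the factor $k$. The input you need is Lemma \ref{order of vanishing}, which uses a conormal-sheaf argument. Remark \ref{rmk:simple poles} rests on a Macaulay2 check valid only for $n=6$.

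\textbf{The $n=6$ heuristics do not transfer.} Since $\dim Z(A|B)=2n-5$, its codimension in $\Gr(3,n)$ is $n-4$, not $2$. So there are no ``two missing frozen coordinates'' to wedge back in. There is also no a priori ``cyclic product'' formula for $\Omega_{Z(A|B)}$ toward which to steer three-term Pl\"ucker relations. The target is whatever the bridge computation produces, and you never write it down.

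\textbf{The inductive fallback.} It is miscounted and does not save work. Adding a column raises $\dim Z(A|B)$ by $2$, so one bridge contributing one $d\log$ cannot account for it; compare the two factors $d\log\beta_1\wedge d\log\gamma_1$ in Proposition \ref{inductive formula of canonical form on V(3,n)}. In the chart where $p_{123}=\pm\alpha_1$, that recursion deletes the column carrying $\alpha_1$. This column lies in $A$, not on the longer line, and for $m=3$ it lands on $Z(12|4\cdots n)$, so you would have to induct over all rotations. Commuting $\Res_Z$ with $\varphi^*$ also needs $\varphi$ smooth at the generic point of $Z$, with $Z=\varphi^{-1}(Z')$ locally. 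Most importantly, the positroid-side identity $\Omega_Z=c\,\varphi^*\Omega_{Z'}\wedge d\log\beta_1\wedge d\log\gamma_1$ is not a known fact. Proving it requires its own explicit bridge computation.

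\textbf{What closes the argument.} The paper skips the twist map and Pl\"ucker relations entirely and substitutes the parametrization directly. Row-reduce the BCFW matrix into the shape $P'$, so that every $\alpha_i,\beta_i,\gamma_i$ is an explicit polynomial in the edge weights $p,q,r$, times $p_{[m+1,n-1]}^{-1}$ for the $\alpha$'s. The recursions $(q,r)_{[2,i+2]}=(q,r)_{[2,i+1]}r_{i+2}+q_{[2,i+2]}$ and $(p,q)_{[m+1,i+2]}=(p,q)_{[m+1,i+1]}q_{i+2}+p_{[m+1,i+2]}$ give $\beta_i\gamma_{i+1}-\gamma_i\beta_{i+1}=-r_{[1,i+1]}q_{[2,i+2]}$ and $\alpha_i\beta_{i+1}-\beta_i\alpha_{i+1}=-p_{[m+1,i+2]}q_{[2,i+1]}/p_{[m+1,n-1]}$. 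Hence the coefficient of the residue is a Laurent monomial.

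The substitution is unipotent-triangular. In the presence of all $dr_j$ you may replace $d(q,r)_{[2,i+1]}$ by $dq_{[2,i+1]}$. In the presence of all $dq_j$ you may replace $d(p,q)_{[m+1,i+1]}$ by $dp_{[m+1,i+1]}$. Chains such as $dr_1\wedge dr_{[1,2]}\wedge\cdots\wedge dr_{[1,m-1]}$ then telescope to a monomial times $dr_1\wedge\cdots\wedge dr_{m-1}$. The monomials cancel to the square-free product of all edge weights, which is exactly $\Omega_{Z(1\cdots m|m+1\cdots n)}=\bigwedge d\log(\text{edge weights})$.
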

\begin{proof}
Consider the bounded affine permutation $f\in \mathcal B(3,n)$ (see Definition \ref{def: bounded affine perm}) given by
\begin{align*}
    f(x)= x+2 \ \text{ for } \ 1\leq x\leq m-2 \ \text{ and } \  m+1\leq x\leq n-2\\
\quad f(m-1)= m+2,\quad  f(m)= n+1,\quad f(n-1)=n+2,\quad f(n)=m+n+1.
\end{align*} 
In window notation, $f=[3,\cdots, m,m+2, \overline{1},m+3,\cdots n, \overline{2},\overline{m+1}]$, where $\overline{i}=i+n$ and we see that every number modulo $n$ indeed appears once in the window. The corresponding Grassmann necklace (Definition \ref{def: necklace}) is 
\[
I(f) = (I_1,\ldots, I_n) = 
\]
\[
(\{1,2, m+1\}, \{2,3 ,m+1\},\cdots , \{m-1,m, m+1\}, \{m,m+1, m+2\}, \{m+1,m+2, \overline{1}\},\{m+2,m+3, \overline{1}\},\cdots, 
\]
\[
\cdots,\{n-1,n, \overline{1}\},\{n,\overline{1}, \overline{2}\}).
\]
Thus the corresponding positroid variety is $\Pi_f=Z(1\cdots m|m+1\cdots n)$.

Let $s_j$ be the transposition swapping $j$ and $j+1$. The BCFW-bridge construction \cite[\S 5.2]{AHBCGPT16} factorizes the given affine bounded permutation as follows
\[
f=g\circ s_{m+1}\circ \cdots \circ s_{n-1}\circ s_{2}\circ \cdots s_{n-2} \cdot \circ s_{1}\circ \cdots \circ s_{m-1},
\]

where $g$ is the affine bounded permutation given by 
\[
g(x)= x+n = \overline{x}, \text{ for }\ x=1, 2, m+1, \mbox{ and  } g(x)=x \ \text{ for }\  3\leq x\leq m \ \text{ and }\ m+2\leq x\leq n.
\]

Following \cite[\S 5.2]{AHBCGPT16}, the decomposition gives a recipe for constructing a reduced plabic graph for this positroid, where each such transposition corresponds to adding a weighted edge, which results in a move on the matrix. Take edge weight variables $$p_{m+1},\cdots p_{n-1}, q_{2},\cdots q_{n-2},r_1,\cdots r_{m-1},$$ one for each transposition. After composing all the moves, we reach a parametrization of a torus in the open positroid variety $\Pi_f^\circ$, which we describe as follows. 

Fix an interval $[a,b]$. For ease of notation, let $p_{[a,b]}:=\prod_{i\in [a,b]} p_i$ and similarly for $q_{[a,b]}$ and $r_{[a,b]}$. If $a>b$, $[a,b]$ is understood as an empty interval and $p_{[a,b]}=1$ is the empty product. Also, let $(p,q)_{[a,b]}:=\sum_{i\in [a-1,b]} p_{[a,i]} q_{[i+1,b]}$ and similarly for $(q,r)_{[a,b]}$. For example, we have $q_{[2,5]}=q_2q_3q_4q_5$, and $(p,q)_{[6,7]}=p_6p_7+p_6q_7+q_6q_7$. 

The BCFW-bridge construction gives the following parametrization for $\Pi_f^\circ$ 
$$P=\begin{pmatrix}
    1& r_{[1,1]}& r_{[1,2]} &\cdots & r_{[1,m-1]}& 0&\cdots&\cdots&\cdots &0\\
    0&1 & (q,r)_{[2,2]}&\cdots&(q,r)_{[2,m-1]}&q_{[2,m]}&q_{[2,m+1]}&\cdots &q_{[2,n-2]} &0\\
    0&\cdots  &\cdots  &\cdots &0&1&(p,q)_{[m+1,m+1]}&\cdots &(p,q)_{[m+1,n-2]}&p_{[m+1,n-1]}
\end{pmatrix},$$
where on rows $1,2,3$, the $1$ sits on columns $1,2,m+1$, respectively. Notice that in this special case, one can solve for the edge weight variables as rational functions in the Pl\"ucker coordinates, as predicted by Muller-Speyer's twist map. 

Row operations on the matrix does not change points in the Grassmannian. We multiply row $3$ by $\frac{1}{p_{[m+1,n-1]}}$, add ($-r_{[1,1]}=-r_1$ times row $2$) to row $1$, and switch rows $1$ and $3$. We obtain the matrix representative
\[
P'=\begin{pmatrix}
    0&0& \alpha_1& \cdots &\alpha_{n-3} &1\\
    0&1& \beta_1& \cdots &\beta_{n-3} &0\\
    1&0& \gamma_1& \cdots &\gamma_{n-3} &0\\    
\end{pmatrix},
\]
where 
\begin{itemize}
    \item $\alpha_i=0$, for $1\leq i \leq m-2$. $\alpha_{m-1}=\frac{1}{p_{[m+1,n-1]}}$. $\alpha_i=\frac{(p,q)_{[m+1,i+1]}}{p_{[m+1,n-1]}}$ for $m\leq i\leq n-3$.

    \item $\beta_i=(q,r)_{[2,i+1]}$ for $1\leq i\leq m-2$. $\beta_i=q_{[2,i+1]}$ for $m-1\leq i\leq n-3$.
    
    \item $\gamma_i=r_{[1,i+1]}-r_1 (q,r)_{[2,i+1]}$ for $1\leq i\leq m-2$. $\gamma_i= -r_1q_{[2,i+1]}$ for $m-1\leq i\leq n-3$.
\end{itemize}
It is well-known \cite{AHBCGPT16} that the canonical form $\Omega_{\Pi_f}$ on $(\Pi_f,\Pi_{f,\geq0})$ equals the wedge of $d\log$ of all the edge weight variables $p,q,r$'s. 

Proposition \ref{the better form of the form} gives a formula for $\Omega_{V(3,n)}$. By cyclic symmetry, the same formula holds when the matrix variables $\alpha,\beta,\gamma$'s are set up as in the matrix $P'$. Now, by the ideal description of $Z(1\cdots m|m+1\cdots n)$ [cite positroid paper], the Pl\"ucker coordinate $[123]$ pulled back to $V(3,n)$ vanishes in degree $1$ along $Z(1\cdots m|m+1\cdots n)$. Thus, the residue $\Res_{[123]=0} \Omega_{V(3,n)}$ along $Z(1\cdots m|m+1\cdots n)$ is given by removing $d\log [123]=d\log \alpha_1$ from the expression of $\Omega_{V(3,n)}$. This gives $\Res_{[123]=0} \Omega_{V(3,n)}$ in matrix variables $\alpha,\beta,\gamma$'s.

Finally, to establish the proposition, we evaluate the matrix variables $\alpha,\beta,\gamma$ as edge weight variables $p,q,r$'s according to the evaluations provided above. A nontrivial computation verifies that indeed $\Res_{[123]=0} \Omega_{V(3,n)}=\Omega_{Z(1\cdots m|m+1\cdots n)}$, which we now explain.

We start from the expression 
\begin{equation*}
    \begin{aligned}
        \Res_{[123]=0} \Omega_{V(3,n)}=&\frac{ \gamma_{m-1}}{\alpha_{m-1} \beta_{n-3}\gamma_1\gamma_{n-3}} \frac{1}{\prod_{i=1}^{m-2}(\beta_i\gamma_{i+1}-\gamma_i\beta_{i+1}) \prod_{i=m-1}^{n-4} (\alpha_i\beta_{i+1}-\beta_i\alpha_{i+1})}\cdot \\
        &\left( \bigwedge_{i=1}^{m-2} d\beta_i \wedge d\gamma_i \right) \wedge \left( \bigwedge_{i=m-1}^{n-3} d\alpha_i \wedge d\beta_i \right) \wedge d\gamma_{n-3}.
    \end{aligned}
\end{equation*}
By algebraic manipulations, we have
 $$\begin{aligned}
     \prod_{i=1}^{m-3}(\beta_i\gamma_{i+1}-\gamma_i\beta_{i+1})&=\prod_{i=1}^{m-3} ((q,r)_{[2,i+1]}(r_{[1,i+2]}-r_1 (q,r)_{[2,i+2]})- (q,r)_{[2,i+2]}(r_{[1,i+1]}-r_1 (q,r)_{[2,i+1]})) \\
     &=\prod_{i=1}^{m-3}-r_{[1,i+1]}q_{[2,i+2]},\\
    \text{and} \quad \beta_{m-2}\gamma_{m-1}-\gamma_{m-2}\beta_{m-1}&=
(q,r)_{[2,m-1]}( -r_1q_{[2,m]})-q_{[2,m]}(r_{[1,m-1]}-r_1 (q,r)_{[2,m-1]})\\
&=-r_{[1,m-1]}q_{[2,m]}.
 \end{aligned}$$
Combining them, we conclude that 
$$\prod_{i=1}^{m-2}(\beta_i\gamma_{i+1}-\gamma_i\beta_{i+1})=\prod_{i=1}^{m-2}-r_{[1,i+1]}q_{[2,i+2]}.$$
On the other hand, 
$$ \begin{aligned}
    \prod_{i=m}^{n-4} (\alpha_i\beta_{i+1}-\beta_i\alpha_{i+1})
    &= \prod_{i=m}^{n-4}\left(\frac{(p,q)_{[m+1,i+1]}q_{[2,i+2]}}{p_{[m+1,n-1]}} - \frac{(p,q)_{[m+1,i+2]}q_{[2,i+1]}}{p_{[m+1,n-1]}}      \right)\\
&=\prod_{i=m}^{n-4} -\frac{    p_{[m+1,i+2]}   q_{[2,i+1]}}{p_{[m+1,n-1]}} ,\\
\text{and}\quad \alpha_{m-1}\beta_{m}-\beta_{m-1}\alpha_{m}&=
\frac{q_{[2,m+1]}}{p_{[m+1,n-1]}}-\frac{(p,q)_{[m+1,m+1]}q_{[2,m]}}{p_{[m+1,n-1]}}
= \frac{-p_{m+1}q_{[2,m]}}{p_{[m+1,n-1]}}
\end{aligned} $$
Combining them, we conclude that 
$$\prod_{i=m-1}^{n-4} (\alpha_i\beta_{i+1}-\beta_i\alpha_{i+1})= \prod_{i=m-1}^{n-4} -\frac{    p_{[m+1,i+2]}   q_{[2,i+1]}}{p_{[m+1,n-1]}}.$$

Thus, the rational function in the expression for $\Res_{[123]=0} \Omega_{V(3,n)}$ becomes 
$$ \begin{aligned}
    &\frac{ -r_1q_{[2,m]}}{\frac{1}{p_{[m+1,n-1]}} q_{[2,n-2]}(-r_1q_2)(-r_1 q_{[2,n-2]})} \frac{1}{\prod_{i=1}^{m-2}(-r_{[1,i+1]}q_{[2,i+2]}) \prod_{i=m-1}^{n-4} (-\frac{    p_{[m+1,i+2]}   q_{[2,i+1]}}{p_{[m+1,n-1]}})} \\
    &=(-1)^{n-5} \frac{\prod_{i=m+1}^{n-1} p_{[i, n-1]} }{q_{[2,n-2]}\prod_{i=1}^{m-1} r_{[1,i]} \prod_{i=2}^{n-2} q_{[2,i]}},
\end{aligned}$$
which is a Laurent polynomial. Now, we compute the form.

By the usual Leibniz rule, $dx\wedge d(xy)=dx\wedge (ydx+xdy)=dx\wedge(xdy)$. This means that whenever we are wedging with $dx$, we can replace $d(xy)$ with $xdy$. Similarly, whenever we are wedging with $dx$, we may replace $dy$ by $xd(y/x)$. For clarity, we call such substitutions {\it extended Leibniz rules}.

First, since $\gamma_{n-3}=-r_1 q_{[2,n-2]}=-r_1\beta_{n-3}$. We have $d\beta_{n-3}\wedge d\gamma_{n-3}=d\beta_{n-3}\wedge (\beta_{n-3}dr_1)$ by the extended Leibniz rules. Thus, we can first replace $d\gamma_{n-3}$ by $\beta_{n-3}dr_1$. The form thus becomes $\beta_{n-3}\left( \bigwedge_{i=1}^{m-2} d\beta_i \wedge d\gamma_i \right) \wedge \left( \bigwedge_{i=m-1}^{n-3} d\alpha_i \wedge d\beta_i \right) \wedge dr_1$.

By the extended Leibniz rules, $d\beta_i\wedge d(r_1\beta_i)=\beta_id\beta_i\wedge dr_1$, which would have zero contribution after wedging with $r_1$. This means that we may replace $d\beta_i\wedge d\gamma_i$ by $d\beta_i\wedge d(\gamma_i +r_1\beta_i)$. Also, for any $m\leq i\leq n-3$, we can replace $d\alpha_i$ by $\alpha_{m-1}d(\frac{\alpha_i}{\alpha_{m-1}})$. Also, $d\alpha_{m-1}=-\alpha_{m-1}^2 d(1/\alpha_{m-1})$

After such a substitution, we have
$$\begin{aligned}
    \bigwedge_{i=1}^{m-2} d\beta_i \wedge d\gamma_i &\mapsto\bigwedge_{i=1}^{m-2} d\beta_i \wedge d(\gamma_i+r_1\beta_i)
    =\bigwedge_{i=1}^{m-2} d((q,r)_{[2,i+1]}) \wedge d(r_{[1,i+1]})
    \\
     \bigwedge_{i=m-1}^{n-3} d\alpha_i \wedge d\beta_i 
     &\mapsto -\alpha_{m-1}^{n-m}d(1/\alpha_{m-1})\wedge d\beta_{m-1}\wedge \bigwedge_{i=m}^{n-3} d(\alpha_i/\alpha_{m-1}) \wedge d\beta_i\\
     &=-\alpha_{m-1}^{n-m}d(p_{[m+1,n-1]} )\wedge d(q_{[2,m]}) \wedge   \bigwedge_{i=m}^{n-3} d((p,q)_{[m+1,i+1]}))\wedge d(q_{[2,i+1]})\\
\end{aligned}$$

Now, notice that in the form we have a wedge product of $dr_{1}\wedge dr_{[1,2]}\wedge \cdots \wedge dr_{[1,m-1]}$. By the extended Leibniz rules, we can substitute the last term $dr_{[1,m-1]}$ by $r_{[1,m-2]}dr_{m-1}$, and then replace the second to last term $dr_{[1,m-2]}$ by $r_{[1,m-3]}dr_{m-2}$, and continue in a similar fashion. So 
\[
dr_{1}\wedge dr_{[1,2]}\wedge \cdots \wedge dr_{[1,m-1]}=r_1^{m-2}r_2^{m-3}\cdots r_{m-2}dr_1\wedge dr_2\wedge \cdots \wedge d{r_{m-1}}.
\]

In the presence of $dr_i$ for all $i\in [m-1]$, we may apply the extended Leibniz rules to substitute $d((q,r)_{[2,2]})=d(q_2+r_2)\mapsto dq_2$. Then, in the presence of $dq_2$ and all $dr_i$, we may replace $d((q,r)_{[2,3]})=d(q_2q_3+q_2r_3+r_2r_3)\mapsto d(q_2q_3)$. The process continues and we see that in the presence of $dr_i$ for $i\in [1,m-1]$, every $d((q,r)_{[2,i+1]})$ can be replaced by its first term $d(q_{[2,i+1]})$.

Now, we have $dq_{2}\wedge dq_{[2,3]} \wedge \cdots \wedge dq_{[2,m-1]}$ coming from $d\beta_i\wedge d\gamma_i$. These can be combined with the $dq$-intervals from $d\alpha_i\wedge d\beta_i$ to form a sequence 
\[
dq_{2}\wedge dq_{[2,3]} \wedge \cdots \wedge dq_{[2,n-2]}= q_2^{n-4}q_3^{n-5}\cdots q_{n-3} dq_{2}\wedge dq_{3} \wedge \cdots \wedge dq_{n-2},
\]
by the same reasoning above. Again, with a similar argument, we can replace every $d((p,q)_{[a,b]}))\mapsto dp_{[a,b]}$ in the presence of all the $dq$'s. Similarly, the wedge product of all the $dp_{[a,b]}$'s can be substituted by a monomial in the $p$'s multiplied by the wedge product of all the $dp$'s.

In conclusion, we see that the form is equal to a monomial times the wedge product of the differential of all the $p,q,r$-variables, times $1/\alpha_{m-1}^{n-m}$; the latter is a monomial in the $p$-variables. A final computation reveals that we are left with the square-free product of all the variables in the denominator. This finishes the proof. 
\end{proof}

\begin{example}
    The computation is best illustrated through the following example. Let $m=4,n=7$ and we compute the form on $Z(1234|567)$. Since $n=7$, we have modulo $7$ cyclic symmetry and we denote $n+7$ by $\overline{n}$. The subscheme $Z(1234|567)$ of $\Gr(3,7)$ is given by the positroid whose Grassmann necklace is $(125,235,345,456,56\overline{1},67\overline{1},7\overline{12})$. The corresponding bounded affine permutation is thus $f=[346\overline{1}7\overline{25}]$ and we have $Z(1234|567)=\Pi_f$.

    The BCFW-recursion gives coordinates on a general closed positroid variety $\Pi_f$ that gives a formula for the canonical form on $\Pi_f$. We follow [cite].

The BCFW-bridge construction breaks up the permutation as the following product 
$$f=g\cdot s_{5}s_{6}\cdot s_{2}s_3s_4 s_{5}\cdot s_{1}s_2 s_{3}$$
where $g$ is the affine bounded permutation given by 
$$g(x)=\overline{x}\ \text{ for }\ x=1,2,5, \quad g(x)=x \ \text{ for }\  x=3,4,6,7$$
We have a matrix parametrization of the open positroid variety $\Pi_f^\circ$ given by 
$$\begin{pmatrix}
    1 & r_1 & r_1r_2 & r_1r_2r_3 & 0  & 0 & 0 \\
    0 & 1 & q_2+r_2 & q_{23}+q_2r_3+r_{23} & q_2q_3q_4 & q_2q_3q_4q_5  & 0\\
    0 & 0 & 0 & 0 & 1 & p_5+q_5 & p_5p_6
\end{pmatrix}$$
and the canonical form is given by 
$(\bigwedge_{i=1}^3 d\log r_i)\wedge (\bigwedge_{j=2}^5 d\log q_j)\wedge (\bigwedge_{k=5}^6 d\log p_k)$.

%On the other hand, as part of the residue computation, we need to compute the wedge product of the following forms {\color{red}DS: To be done!}
    
\end{example}

\section{$V(3,n)$ is a Positive Geometry}\label{sec:final proof}

In this section, we prove our main Theorem \ref{thm:mainintro}. 
\begin{lemma}\label{order of vanishing}
    The Pl\"ucker coordinate $p_{123}$ (a global section of $\mathcal{O}_{V(3,n)}(1)$) vanishes at order one along divisors $Z(1\cdots m|m+1\cdots n)$ for $ 3\leq m\leq n-2$ and $ZV(23x)_{\red}$.
\end{lemma}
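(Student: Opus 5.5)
Since all varieties involved are normal (Theorem \ref{geometry of V and its boundaries}) and the divisors in question are irreducible, the order of vanishing of $p_{123}$ along a prime divisor $D\subset V(3,n)$ is a discrete valuation $v_D(p_{123})$, computed in the local ring $\mathcal{O}_{V,D}$, which is a DVR because $V$ is regular in codimension $1$. My plan is to exhibit, for each divisor $D$, a point of $D$ (equivalently a dense open subset of $D$) at which $V$ is smooth and $p_{123}$ is part of a regular system of parameters. Equivalently, $dp_{123}$ should not vanish on $T_pD$-complement: the restriction of $dp_{123}$ to the cotangent space of $V$ at a generic point of $D$ must be nonzero and not lie in the conormal-to-$D$-killing part. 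Concretely, it suffices to find a dense open $U\subset V$ meeting $D$ such that the scheme $\{p_{123}=0\}\cap U$ is reduced along $D$. Then $v_D(p_{123})=1$.

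For $D=Z(1\cdots m|m+1\cdots n)$ with $3\le m\le n-2$, I work in the chart $\{p_{n-1,n,?}\neq 0\}$ of the type used in Proposition \ref{residue along Z(A|B) is good}, i.e.\ the matrix $P'$ with identity on columns $1,2,n$. In that chart $p_{123}=\pm\alpha_1$ is a coordinate function. By Proposition \ref{prop: smooth along Z(A|B) generic}, $V$ is smooth at the generic point of $D$, and the explicit $(n-5)\times(n-5)$ diagonal Jacobian minor used there lets me eliminate $n-5$ of the coordinates (the $\alpha_i$ for $i$ in $A$ and $\beta_i$ for $i$ in $B$, adapted to this chart), leaving local coordinates on $V$ that include $\alpha_1$, provided $\alpha_1$'s column is not among the eliminated ones. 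I choose the minor so that $\alpha_1$ survives; then $\alpha_1$ is a local coordinate on $V$ near a generic point of $D$, and since $D\subset\{\alpha_1=0\}$ has codimension one, $D=\{\alpha_1=0\}$ locally and the vanishing order is one. Alternatively, one can argue via the BCFW parametrization of $\Pi_f^\circ$ together with one extra transverse parameter: perturbing points of $D$ to a smooth conic as in the proof of Theorem \ref{conj:equality of boundaries} gives a curve $q_\epsilon$ in $V$ with $p_{123}(q_\epsilon)$ of order exactly $\epsilon$, which bounds the valuation by $1$.

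For $D=ZV(23x)_{\red}$, the colliding boundary $V(\emptyset;\{2,3\},\{1\},\{4\},\dots,\{n\})_{\red}$, I use the incidence description of Proposition \ref{prop:bundle over P^1} and Proposition \ref{prop:iso of sheaves}, or directly a curve. Take a generic point of $D$: columns $2,3$ coincide at a smooth point of a smooth conic through all the other points. Move column $3$ along the conic with parameter $\epsilon$. This gives a curve in $V$ transverse to $D$, and $p_{123}$ vanishes to order exactly $1$ in $\epsilon$, since $p_{123}$ is the area spanned by $c_1$ and a secant direction that tends to the tangent, and $c_1$ is not on the tangent line. Combined with smoothness of $V$ at the generic point of $D$, this gives multiplicity one. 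The needed smoothness follows from the $R_1$ part of Proposition \ref{prop:V(3,n) is normal}.

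The main obstacle is making "order of vanishing along $D$" rigorous from a single transverse curve: $v_D(p_{123})=1$ needs the curve to pass through a smooth point of both $V$ and $D$ and to be transverse to $D$ there. I would secure this using the fibration chart of Proposition \ref{prop:bundle over P^1}. There $D$ corresponds to the fiber coordinate $v$ taking a special value, and $p_{123}$ equals $(v-v_0)$ times a unit, by the explicit chart formulas with $a_5=va_4$, etc.
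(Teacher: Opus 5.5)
Your argument is correct in substance but takes a different route from the paper.

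\textbf{How the two routes differ.} The paper never looks for smooth points. It works in the DVR $\mathcal{O}_{V,D}=(R/I)_J$ in the chart $p_{12n}=1$, which is where normality enters, and proves $\alpha_1\notin J^2$.
\begin{itemize}
\item For $Z(1\cdots m|m+1\cdots n)$, it expands $\Delta_{1jk}$ along its first column. This shows each generator $\beta_j\gamma_k-\beta_k\gamma_j$ of $J$ is $\alpha_1$ times the invertible cofactor $\beta_1\gamma_1\alpha_j\alpha_k$. Together with the symmetry among the $\alpha_i$, $\alpha_1\in J^2$ would force $J=J^2$.
\item For $ZV(23x)_{\red}$, it uses $J=(\alpha_1,\gamma_1)$ and the involution $\alpha_i\leftrightarrow\gamma_i$.
\end{itemize}
You instead use a Jacobian minor to show that $p_{123}$ is a local coordinate on $V$ at generic points of the collinear divisor. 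For the colliding divisor you use an explicit test curve, a secant tending to the tangent. Your Case 1 gives more: $\{p_{123}=0\}$ is smooth generically along $D$. It also does not need the normality theorem, since the minor itself certifies smoothness of $V$ there. The paper's Case 2 is a one-line symmetry argument, shorter than yours.

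\textbf{Two things need fixing.}

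First, the minor in Case 1 must have index $1$ as a pivot. Suppose $1\notin\{j_0,k_0\}$. On $D$ we have $\alpha_1=0$ and $B_{j_0k_0}=0$. Hence $\partial P_{1j_0k_0}/\partial\beta_1=\gamma_1B_{j_0k_0}+\alpha_1C_{j_0k_0}$ and $\partial P_{1j_0k_0}/\partial\gamma_1=\alpha_1A_{j_0k_0}+\beta_1B_{j_0k_0}$ both vanish identically on $D$. So only the $\alpha_1$-column would be available. Instead, fix $k_0\in[m-1,n-3]$ and use:
\begin{itemize}
\item rows $P_{1,k,k_0}$;
\item Jacobian columns $\partial\alpha_k$ for $2\le k\le m-2$, and $\partial\beta_k$ for $k\in[m-1,n-3]\setminus\{k_0\}$.
\end{itemize}
This minor is diagonal. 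On $D$ its entries are, up to sign, $\beta_1\gamma_1\alpha_{k_0}(\gamma_k\beta_{k_0}-\beta_k\gamma_{k_0})$ and $\alpha_k\beta_1\gamma_1\alpha_{k_0}\gamma_{k_0}$. These are generically nonzero, so your claim holds.

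Second, your last paragraph is wrong. The fibration of Proposition~\ref{prop:bundle over P^1} lives over the colliding boundary itself, where columns $s,t$ are already parallel. So $p_{123}$ vanishes identically on it. Its special values of $v$ correspond to a column becoming zero, which is a codimension-one locus inside $D$, not $D$ inside $V$.

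Fortunately that step is unnecessary. Suppose $V$ is smooth at the base point $p\in D$, which holds generically by $R_1$. Then $D$ is Cartier at $p$ with some local equation $t$. Write $p_{123}/p_{12n}=t^kg$ with $g\in\mathcal{O}_{V,p}$ and $k=v_D(p_{123})$. For any curve $q_\epsilon$ through $p$ not contained in $D$, this gives $\operatorname{ord}_\epsilon p_{123}(q_\epsilon)\ge k$. Your curve has order exactly one, so $k=1$. No transversality and no smoothness of $D$ is needed.
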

\begin{proof}
It is immediate that $p_{123}$ vanishes on the given divisors on $V(3,n)$. The order of vanishing is a local computation, and thus we can restrict to the chart $p_{12n}=1$. On the latter, we fix coordinates
\begin{equation}\label{local coords}
\begin{pmatrix}
    0&0& \alpha_1& \cdots &\alpha_{n-3} &1\\
    0&1& \beta_1& \cdots &\beta_{n-3} &0\\
    1&0& \gamma_1& \cdots &\gamma_{n-3} &0\\    
\end{pmatrix}
\end{equation}
Note that, in these coordinates, $p_{123}=\alpha_1$. Notice that this is a different choice of chart than in Section \ref{section: candidate form}. However, by cyclic symmetry, all results in Section \ref{section: candidate form} can be directly translated to these coordinates, hence the naming.

\textbf{Case 1:} We show that the order of vanishing along $Z(1\cdots m|m+1\cdots n)$ is one. For simplicity of notation, write $V$ and $W$ for restriction of $V(3,n)$ and $Z(1\cdots m|m+1\cdots n)$ on this chart, respectively. Then the local ring
$\mathcal{O}_{V,W}=(R/I)_J$, where $R=k[\alpha_i,\beta_i,\gamma_i]$, $I$ is the prime ideal of $V$ (described in Theorem \ref{prop:interpretation of V}) and $J$ is the prime ideal in $R/I$ generated by the images of 
\begin{itemize}
    \item $\alpha_i$ for $1\leq i\leq m-2$
    \item $\beta_j\gamma_k-\beta_k\gamma_j$ for $m-1\leq j,k\leq n-3.$
\end{itemize}
The ideal $J$ is prime because $W$ is a positroid variety. Furthermore, $W$ is a divisor in $V$. By the aforementioned description, $I$ is generated by the determinants
\[
\Delta_{ijk}=\det \begin{pmatrix}
    \alpha_i\beta_i  &\alpha_j\beta_j  &\alpha_k\beta_k\\
    \alpha_i\gamma_i&\alpha_j\gamma_j&\alpha_k\gamma_k\\
    \beta_i \gamma_i&\beta_j \gamma_j&\beta_k \gamma_k
\end{pmatrix}.
\]
By Proposition \ref{prop:V(3,n) is normal}, $V$ is normal and so we see that $\mathcal{O}_{V,W}=(R/I)_J$ is a discrete valuation ring with maximal ideal $J(R/I)_J$. The element $\alpha_1$ is in $J$, and we have to show that $\alpha_1\not\in J^2(R/I)_J$. 
We expand $\Delta_{ijk}$ using the first column. The first two terms are a multiple of $\alpha_i$ and the last term is $\beta_i\gamma_i \alpha_j\alpha_k(\beta_j\gamma_k-\beta_k\gamma_j)$. Since $\Delta_{ijk}$ is zero in $R/I$, this implies that $\beta_i\gamma_i \alpha_j\alpha_k(\beta_j\gamma_k-\beta_k\gamma_j)$ is a multiple of $\alpha_i$ in $R/I$.

Let $1\leq i\leq m-2$ and $m-1\leq j,k\leq n-3$. It is easy to see that $\beta_i,\gamma_i, \alpha_j,\alpha_k$ are not in the prime ideal $J$ by explicitly constructing points in $W$ of the form
\[
\begin{pmatrix}
    0&0&0&0&\cdots& 0 & n-m&n-m-1&\cdots &1\\
    0&1&1&1&\cdots &1& 0&0&\cdots &0 \\
    1&0&1&2& \cdots &m-2 & 0&0&\cdots &0  
\end{pmatrix}.
\]
Thus, since $J$ is prime, their product is not in $J$. As $\beta_i\gamma_i \alpha_j\alpha_k$ is not an element in the maximal ideal of the local ring $(R/I)_J$, it is invertible in this ring. Choosing $i=1$ in $\Delta_{ijk}$, we then conclude that $\beta_j\gamma_k-\beta_k\gamma_j$ is a multiple of $\alpha_1$ in $(R/I)_J$.

Now, by symmetry, if $\alpha_1\in J^2(R/I)_J$, then $\alpha_i\in J^2(R/I)_J$ for all $1\leq i\leq m-2$. The above argument shows that this would imply that $\beta_j\gamma_k-\beta_k\gamma_j\in J^2(R/I)_J$ for all $m-1\leq j,k\leq n-3$. Since they are the generators of $J$, this would imply that $J\subset J^2$ and so $J=J^2$. 

Let $U$ be the dense open subscheme of $W$ of smooth points in both $V$ and $W$.  Let $\mathcal J/\mathcal J^2$ be the conormal sheaf of $U\subset V$. By the conormal sheaf short exact sequence \cite[Theorem 8.17]{Hartshorne}, we see that $\mathcal J/\mathcal J^2$ is locally free of rank one on $U$. However, the discussion above implies that $\mathcal J/\mathcal J^2$ should be zero everywhere, thus providing a contradiction.

\textbf{Case 2:}  We show that the order of vanishing along $ZV(23x)_{\red}$ is one. For simplicity of notation, write $V$ and $D$ for restriction of $V(3,n)$ and $ZV(23x)_{\red}$ on this chart, respectively. Then the local ring
$\mathcal{O}_{V,D}=(R/I)_J$, where $R=k[\alpha_i,\beta_i,\gamma_i]$, $I$ is the prime ideal generated by the determinants $\Delta_{ijk}$, and $J$ is the prime ideal in $R/I$ defining
$D$ in $V$. From Theorem \ref{thm:matrix description of colliding boundary}, we see that $ZV(23x)_{\red}=ZV(23x)$ on our local chart. Therefore $J$ 
is generated by the images in $R/I$ of 
\begin{itemize}

\item the $2\times 2$ minors of the $3\times 2$ submatrix of \eqref{local coords} consisting of the second and third column: $\alpha_1,\gamma_1$. 

\item the determinants $\Delta_{ijk}\in I$. 
\end{itemize}
So $J = (\alpha_1,\gamma_1)$ and the conclusion follows from the involution symmetry exchanging $\alpha_i$ with $\gamma_i$ on the ring $\mathcal{O}_{V,D}=(R/I)_J$. 
\end{proof}

\begin{proposition}\label{pi1pi2 are good}
The following statements hold true.

 \begin{enumerate}
 \item[(i)]  The morphism $
\pi_2 :\mathcal{I}_{st}(A_0;I_1,\cdots, I_r)\subset V(A_0;I_1\cdots, I_r)'\times \mathbb{P}^1 \to \mathbb{P}^1
$ as in Proposition \ref{prop:bundle over P^1} satisfies the assumptions in the first half of Theorem \ref{pos geom over P1}.

\item[(ii)]  The morphism $
\pi_1:\mathcal{I}_{st}(A_0;I_1,\cdots, I_r)\subset V(A_0;I_1\cdots, I_r)'\times \mathbb{P}^1 \to V(A_0;I_1\cdots, I_r)'.
$ as in Proposition \ref{prop:iso of sheaves} satisfies the assumptions in the first half of Theorem \ref{retract of pos geo over P^1}.
 \end{enumerate}  
\end{proposition}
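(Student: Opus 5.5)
For part (i), I take $X=V(A_0;I_1',\dots,I_r)'\subset V(3,n-1)$ with the positive structure coming from $\Gr_{\geq0}(3,n-1)$, where $s,t\in I_1$ are adjacent in cyclic order and merge into one index. I take $Y=\mathcal{I}_{st}(A_0;I_1,\dots,I_r)$, with $Y_{\geq0}$ the preimage of the nonnegative part under $\pi_1$.

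Condition (i) of Theorem \ref{pos geom over P1} is the trivialization of Proposition \ref{prop:bundle over P^1}. On $U_1\cap U_2$, the point $(M',t)$ with column $s$ equal to $t$ times column $t$ is matched with the point having column $t$ equal to $t^{-1}$ times column $s$. So the transition map $\rho(\cdot,t)$ is the rescaling of the merged column by $t$ (or $t^{-1}$), i.e. the torus action of Subsection \ref{actions}.

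Condition (ii) follows because rescaling a single column by a positive real preserves all Plücker signs, so positive parts go to positive parts. There is one sign subtlety. Because $s,t$ are adjacent and the columns are parallel, a nonnegative point of $V(A_0;I_1,\dots,I_r)'$ has column $s$ equal to a nonnegative multiple of column $t$. I need to check this on a chart $p_J\neq0$ with $s\in J$, $t\notin J$, via the minors $p_{s,j,k}$ and $p_{t,j,k}$ with $j,k$ outside $I_1$. This identifies $Y_{\geq0}$ with the union of $X_{\geq0}\times\mathbb{A}^1_{\geq0}$ over the two charts.

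Conditions (iii), (iiibis) and (iv) require that $\rho_t$ preserve every iterated boundary of $X$. I use Definition \ref{dfn:iterated boundaries two types}, taking as inductive hypothesis that these are the iterated boundaries of $V(3,n-1)$-type varieties. Both positroid varieties and $V(3,n)$ are $(\mathbb{C}^*)^{n-1}$-invariant, and reducedness is preserved by automorphisms, so the reduced colliding boundaries are invariant too. For the gluing in (iv), the canonical form is invariant under column scaling up to $\eta\wedge dt$ (Proposition \ref{form invariance under column scaling}). Wedging with $d\log t$ kills that error term, and $d\log t^{-1}=-d\log t$ supplies the required sign.

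For part (ii), I take $R=V(A_0;I_1,\dots,I_r)'$, which is reduced, irreducible and normal by Theorems \ref{thm:matrix description of colliding boundary} and \ref{geometry of V and its boundaries}. I set $\psi=\pi_1$ and $S=V(A_0;I_1'',\dots,I_r)'$, the locus where the merged column of $X$ is zero, i.e. where columns $s$ and $t$ both vanish. The two embeddings $\iota_1,\iota_2:X\hookrightarrow R$ insert the merged column as column $t$ with column $s$ zero, respectively as column $s$ with column $t$ zero. These agree on $S$.

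I then verify the conditions of Definition \ref{retract}.
\begin{enumerate}
\item $\iota=(\pi_1,\pi_2)$ is the defining closed embedding of $Y$ into $R\times\mathbb{P}^1$.
\item Off $S$ at least one of columns $s,t$ is nonzero, so $[x:y]$ is uniquely determined and $\pi_1$ is an isomorphism. Locally the inverse is given by a ratio $p_{s,j,k}/p_{t,j,k}$.
\item Over $S$ the incidence condition $y\,M_s=x\,M_t$ is vacuous, so $Y_S=S\times\mathbb{P}^1$. This gives triviality in the sense of Definition \ref{dfn:trivial subvariety}, and $\psi$ restricted to $Y_S$ is the projection.
\item This holds by construction of $\iota_1,\iota_2$.
\end{enumerate}
I must also check that $S$ is an iterated boundary of $X$: it is the colliding boundary in which the merged index moves to $A_0$.

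The main obstacle I expect is the positivity bookkeeping. First, the positive part of $R$ must coincide with $\psi(Y_{\geq0})$, which requires nonnegative points of $R$ near $S$ to lift. Second, the sign of the ratio $x/y$ must be controlled on every chart, especially when the cyclic order places $s,t$ at a wraparound. I would handle this with the sign argument of the final remark of Subsection \ref{subsection:geometry of iterated}.
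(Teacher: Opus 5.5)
Your setup matches the paper's argument, and so do your checks. The setup is $X$, $Y$, $R=V(A_0;I_1,\dots,I_r)'$, $S$ equal to the locus where columns $s,t$ both vanish, and $\iota_1,\iota_2$. The checks are the trivialization from Proposition \ref{prop:bundle over P^1}, transitions given by positive column rescaling, invariance of positroid and reduced colliding boundaries under column scaling via the inductive description of boundaries, and the four retract conditions. The paper makes the induction explicit: it derives the boundary classification for smaller $n$ by feeding (i) and (ii) into Theorems \ref{pos geom over P1} and \ref{retract of pos geo over P^1}.

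\textbf{The error: your definition of $Y_{\geq0}$.} You define $Y_{\geq0}$ as the $\pi_1$-preimage of the nonnegative part of $R$. You then claim it equals $\phi_1^{-1}(X_{\geq0}\times\mathbb{A}^1_{\geq0})\cup\phi_2^{-1}(X_{\geq0}\times\mathbb{A}^1_{\geq0})$. This fails over $S$. For $M\in S_{\geq0}$ the incidence $yM_s=xM_t$ is vacuous, so your preimage contains the whole real fiber $\{M\}\times\mathbb{P}^1(\mathbb{R})$. The chart union contains only $\{M\}\times\mathbb{P}^1_{\geq0}$. With your definition:
\begin{itemize}
\item condition (ii) of Theorem \ref{pos geom over P1} does not hold as stated;
\item $(Y_S)_{\geq0}$ is too large;
\item $Y_{\geq0}$ is not the closure of its interior, since the arcs with $x/y<0$ over $S_{\geq0}$ are lower-dimensional and are not limits of points of $Y_{>0}$.
\end{itemize}
The fix is to take $Y_{\geq0}$ to be the chart union, as condition (ii) prescribes. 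Its existence is exactly your observation that rescaling a column by a positive real preserves nonnegativity.

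\textbf{Your anticipated main obstacle.} With the corrected definition, it disappears.
\begin{itemize}
\item Theorem \ref{retract of pos geo over P^1} defines $R_{\geq0}$ as $\psi(Y_{\geq0})$, so nothing has to be lifted for this proposition.
\item The equality $\psi(Y_{\geq0})=R\cap\Gr_{\geq0}(3,n)$, needed later, is direct. If $M_s=cM_t$ with $M_t\neq0$, choose $j,k\notin I_1$ with $p_{t,j,k}\neq0$; these exist because every column of $I_1$ lies in the span of $M_t$. Then $p_{s,j,k}=c\,p_{t,j,k}$ with no sign change: for adjacent $s,t$ the two index sets sort alike, and in the wraparound case they differ by an even $3$-cycle. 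Hence $c\geq0$.
\item Points of $S_{\geq0}$ lift with any $[x:y]\in\mathbb{P}^1_{\geq0}$.
\end{itemize}
The square-root remark you point to is not what is needed here.

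\textbf{Conditions (iiibis) and (iv).} These belong to the second half of Theorem \ref{pos geom over P1} and are not part of this statement. If you do want them, note that Proposition \ref{form invariance under column scaling} concerns only $\Omega_{V(3,n)}$. For a colliding $X$ with some $\#I_i'\geq2$, the gluing needs the induction on $\sum_i(\#I_i-1)$ that the paper carries out in part (v) of the main theorem. It also needs the fact that $X$ is already a positive geometry.
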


\begin{proof}
We induct on $n$ to show that both of the above statements are true. (Recall that $A_0, I_1,\ldots, I_r$ in the statement define a partition of $[n]$.)

First, assume that the above statements hold for any $n<n_0$, for a fixed $n_0$, statements (i) and (ii) -- combined with the first half of Theorem \ref{pos geom over P1} and Theorem \ref{retract of pos geo over P^1} -- imply the following facts inside $V(3,n)$ for any $n<n_0$:
\begin{enumerate}
    \item $\mathcal{I}_{st}(A_0;I_1,\cdots, I_r)$, along with its natural totally nonnegative part, has boundaries given by the $V(A_0,I_1',\cdots,I_r)'$-fibers over $[0:1]$ and $[1:0]$ under $\pi_2$; for each boundary $X_i$ of $V(A_0,I_1',\cdots,I_r)'$, $\pi_2$ induces an $X_i$-fibration.
    \item $V(A_0,I_1,\cdots,I_r)'$, along with its natural totally nonnegative part, has boundaries given by the image of each boundary of $\mathcal{I}_{st}(A_0;I_1,\cdots, I_r)$ under $\pi_1$ such that the dimension does not drop.
\end{enumerate}

We claim that the above two statements imply that for any $n<n_0$, $V(A_0,I_1,\cdots,I_r)'$, along with its natural totally nonnegative part, has boundaries given by:

\begin{enumerate}
   \item Fixing any $i\in [r]$, we replace $I_i,I_{i+1}$ with $I_i\bigsqcup I_{i+1}$ in $V(A_0,I_1,\cdots,I_r)'$, where the index is taken modulo $r$. Namely, we have $V(A_0,I_2,\cdots ,I_{r-1}, I_r\bigsqcup I_1)'$ when $i=r$.
   \item Fixing a partition of $[r]$ into two disjoint cyclic intervals $A\bigsqcup B$, each of length at least $2$, one has the boundaries of collinear configurations $\Pi(A_0; I_a \text{ for } a\in A| I_b \text{ for } b\in B)$.
   \item For each $\# I_i\geq2$ and $a\in I_i$, $V(A_0\bigsqcup \{a\},I_1,\cdots, I_i-\{a\},\cdots,I_r)'$.
\end{enumerate}

To see this, notice that when each $I_i$ has size $1$, this is the content of Theorem \ref{decomposition of boundary}. When some $I_i$ has size at least $2$, by cyclic symmetry we can assume that $\#I_1\geq2$. Recall that $V(A_0,I_1',\cdots,I_r)'$ by definition lives in $V(3,n-1)$ where two adjacent elements of $I_1$ are identified. Since the boundaries of $V(A_0,I_1',\cdots,I_r)'$ are those above by induction, then the boundaries of $\mathcal{I}_{st}(A_0,I_1,\cdots,I_r)$ lie in the following two classes: 

\begin{enumerate}
    \item Fixing a boundary $X_i$ of $V(A_0,I_1',\cdots,I_r)'$ means fixing either one of the three conditions above: merging two sets of columns, dividing all columns into two disjoint subsets and require they correspond to collinear points, or deleting an extra column. Note that we are doing this to $V(A_0,I_1',\cdots,I_r)'$, namely we have $n-1$ indices and ``$st$" is treated as a single index. Taking the sub-fibrations in $\mathcal{I}_{st}$ simply amounts to duplicate the column formally labeled by ``$st$" into two parallel columns now labeled by $s$ and $t$. Now, taking the image in $V(A_0,I_1,\cdots,I_r)$ under $\pi_1$ means that we forget about the point in $\mathbb{P}^1$, keeping track of the ratio between columns $s$ and $t$. We have the same original condition, except that now $s$ and $t$ are treated as two different columns in the same set $I_1$. 
    \item The fibers over $[1:0]$ and $[0:1]$ are the subschemes of $\mathcal{I}_{st}$ where column $t$ or $s$ is zero, respectively. Taking the image, we have $V(A_0\bigsqcup \{s\},I_1-\{s\},\cdots,I_r)'$ or $V(A_0\bigsqcup \{t\},I_1-\{t\},\cdots,I_r)'$.
\end{enumerate}
Hence, induction on $n<n_0$ shows that the boundaries of $V(A_0;I_1,\cdots, I_r)'$ are either of the form (1), (2) or (3). 

So far, we established that if the statements (i) and (ii) hold for $n<n_0$, then we have a description of boundaries of $V(A_0;I_1,\cdots, I_r)'\subset V(3,n)$ for $n<n_0$. Below, in the inductive proof of (i) and (ii), we shall use this consequence.

We now provide the inductive proof of (i) and (ii). 

\noindent (i). Consider the morphism $
\pi_2 :\mathcal{I}_{st}(A_0;I_1,\cdots, I_r)\subset V(A_0;I_1\cdots, I_r)'\times \mathbb{P}^1 \to \mathbb{P}^1
$. We verify the assumptions in the first half of Theorem \ref{pos geom over P1} with $X=V(A_0;I_1'\cdots, I_r)'$ and $Y=\mathcal{I}_{st}(A_0;I_1,\cdots, I_r)$.
\begin{enumerate}
    \item As shown in Proposition \ref{prop:bundle over P^1}, $\pi_2$ is trivialized on two charts $U_1$ and $U_2$.
    \item As in the proof of Proposition \ref{prop:iso of sheaves}, the transition morphism from chart $U_1$ to $U_2$ is given by multiplying column $s$ by $y/x$ to get column $t$. Since $y/x\geq0$ on the totally nonnegative part, the transition morphism preserves the totally nonnegative part.
    \item We need to check this condition for each boundary $X_i$ of the fiber $V(A_0;I_1'\cdots, I_r)'$. By inductive hypothesis, we have a description of boundaries of $X_i$ by merging two sets of columns, dividing all columns into two disjoint subsets and require they correspond to collinear points, or deleting an extra column. Notice that all three possibilities are invariant under the transition map, because the transition map is simply rescaling a column by scalar $y/x$. On the intersection of the two charts, the scalar $y/x$ is invertible. And thus the transition morphism restricts to an isomorphism between $X_i$-fibers.
\end{enumerate}

\noindent (ii). Consider the morphism $
\pi_1:\mathcal{I}_{st}(A_0;I_1,\cdots, I_r)\subset V(A_0;I_1\cdots, I_r)'\times \mathbb{P}^1 \to V(A_0;I_1\cdots, I_r)'.$ We verify the assumptions in the first half of Theorem \ref{retract of pos geo over P^1}, with $X=V(A_0;I_1'\cdots, I_r)'$, $Y=\mathcal{I}_{st}(A_0;I_1,\cdots, I_r)$,  $R=V(A_0;I_1\cdots, I_r)'$. We think of $S$ as a variety abstractly isomorphic to $V(A_0\bigcup \{s,t\}, I_1-\{s,t\},\cdots, I_r)'$. We let $S$ embed into $X$ as its closed subscheme given by setting columns $s$ (and $t$) to zero. Notice that $S$ is indeed trivial in $Y$ in the sense of Definition \ref{dfn:trivial subvariety}, because the transition map is column scaling but these two columns are already zero. To define a retract as in Definition \ref{retract}, we need the following extra data of two morphisms $\iota_{1,2}:X\hookrightarrow R$. We have two embeddings $X\hookrightarrow Y$ as fibers over $[1:0]$ and $[0:1]$ of $\pi_2$. They are given by requiring column $t$ or $s$ to be zero. $\iota_{1,2}$ is defined by composing them with $\pi_1:Y\to R$. We check the definition of retract:
\begin{enumerate}
    \item The morphism $Y\to R\times \mathbb{P}^1$ being a closed embedding is true by construction of $Y$ as the incidence subscheme $\mathcal{I}_{st}$.
    \item The isomorphism outside $S\times \mathbb{P}^1\subset Y$ (where both columns $s$ and $t$ are zero) is clear. Columns $s$ and $t$ not simultaneously zero implies that we can recover the unique point in $\mathbb{P}^1$ giving the ratio of these two columns.
    \item The commutativity follows essentially by definition of the maps along with the fact that $V(A_0\bigcup \{s,t\}, I_1-\{s,t\},\cdots, I_r)'$ is isomorphic to the closed subscheme of $\mathcal{I}_{st}(A_0;I_1,\cdots, I_r)$ given by setting columns $s$ and $t$ both to zero.

    \item True by choice of $\iota_{1,2}$.
\end{enumerate}
This finishes the proof. 
\end{proof}

In the proof above, we saw a classification of the boundaries of $V(A,I_1,\cdots,I_r)'$. By iteratedly taking boundaries of boundaries and employing Theorem \ref{decomposition of boundary} as base case, we achieve the classification of iterated boundaries of $V(3,n)$: 

\begin{corollary} \label{iterated boundaries of V(3,n)}
The iterated algebraic boundaries of the ABCT variety $V(3,n)$ are:
\begin{enumerate}
    \item[(i)] the boundaries of collinear configurations $\Pi(A_0; I_a \text{ for } a\in A| I_b \text{ for } b\in B)$ and their positroid subvarieties.
    \item[(ii)] boundaries of colliding configurations $V(A_0;I_1,\cdots, I_r)_{\red}=V(A_0;I_1,\cdots, I_r)'$.
\end{enumerate}    
The iterated analytic boundaries are given by the totally nonnegative points of the algebraic boundaries. The iterated boundaries form a poset under containment.
\end{corollary}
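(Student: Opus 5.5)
The plan is an induction on the poset of boundaries, using the boundary classification from the proof of Proposition \ref{pi1pi2 are good} as the engine. I start from $V(3,n)$ itself. Theorem \ref{decomposition of boundary} together with Theorem \ref{conj:equality of boundaries} identifies the boundary divisors. They are the collinear positroid varieties $Z(A|B)=\Pi(\emptyset;\{a\}_{a\in A}|\{b\}_{b\in B})$ and the colliding loci $ZV(i,i+1,x:x\in[n])_{\red}=V(\emptyset;\{i,i+1\},\{x\}\ldots)_{\red}$.

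I then take boundaries of each of these types separately. For the collinear ones, $Z(A|B)$ and more generally each $\Pi(A_0;I_a|I_b)$ is a positroid variety. By Theorem \ref{Grassmannian is a positive geometry} its iterated boundaries inside $\Gr(3,n)$ are exactly its positroid subvarieties, with totally nonnegative parts the corresponding cells. Since $\Pi(A_0;I_a|I_b)\subset V(3,n)$, and the totally nonnegative part of a subvariety is computed inside $\Gr_{\geq0}(3,n)$, nothing changes on passing to $V(3,n)$. This gives family (i), closed under taking boundaries.

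For the colliding ones, I use the claim established inside the proof of Proposition \ref{pi1pi2 are good}. There, by induction on $n$ via Theorems \ref{pos geom over P1} and \ref{retract of pos geo over P^1}, the boundaries of $V(A_0;I_1,\dots,I_r)'$ are shown to be of three kinds. The first merges two cyclically adjacent $I_i,I_{i+1}$, giving again a colliding configuration. The second is a collinear configuration $\Pi(A_0;I_a|I_b)$, which lands in family (i). The third moves an element $a\in I_i$ with $\#I_i\ge 2$ into $A_0$, again giving a colliding configuration. By Theorem \ref{thm:matrix description of colliding boundary}, $V(A_0;I_1,\dots,I_r)'=V(A_0;I_1,\dots,I_r)_{\red}$. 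The base case where all $\#I_i=1$ is Theorem \ref{decomposition of boundary} applied to $V(3,n-\#A_0)$ after forgetting zero columns. So the union of families (i) and (ii) is closed under taking boundaries. Conversely, every member is reached from $V(3,n)$ by a finite chain of the three operations, since one can first create collisions and zero columns and only at the end impose collinearity. This shows equality with the set of iterated boundaries.

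For the analytic statement, note that at every step in Theorems \ref{pos geom over P1} and \ref{retract of pos geo over P^1} the analytic boundary components are the totally nonnegative parts (images of them, which coincide with the intersection with $\Gr_{\geq0}(3,n)$). This also holds for positroid varieties. The poset statement is then immediate, since containment is a partial order on closed subvarieties.

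The main obstacle is the reachability and exhaustiveness bookkeeping. Each colliding $V(A_0;I_1,\dots,I_r)_{\red}$ with arbitrary cyclic-interval data must actually arise through successive boundary steps. I would show this by ordering operations: first zero out $A_0$ one column at a time, then merge adjacent singletons into intervals. I would also need the induction in Proposition \ref{pi1pi2 are good} to be applied to the reduced structures, which is where Theorem \ref{thm:matrix description of colliding boundary} enters.
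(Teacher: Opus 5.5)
Your proposal is correct and follows essentially the paper's own argument: you iterate the three-type boundary classification from the proof of Proposition \ref{pi1pi2 are good}, with Theorem \ref{decomposition of boundary} (together with Theorem \ref{conj:equality of boundaries}) as the base case, and you use positroid theory for the collinear family. One correction to your reachability ordering: a column can only be moved into $A_0$ once it lies in a block of size at least $2$, so each zeroing must come after a collision with a neighbour (as your earlier sentence says), and cannot be done first starting from $V(3,n)$.
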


\begin{remark}\label{iterated boundaries of colliding boundaries}
In the statement of  Corollary \ref{iterated boundaries of V(3,n)} it is hiding the seemingly more general description of iterated boundaries of $V(A_0;I_1,\cdots, I_r)_{\red}\subset V(3,n)$. They are exactly those iterated boundaries of $V(3,n)$ contained in $V(A_0;I_1,\cdots, I_r)_{\red}$. These are given by:
\begin{enumerate}
    \item the boundaries of collinear configurations $\Pi(A_1; J_a \text{ for } a\in A| J_b \text{ for } b\in B)$ and their positroid subvarieties, where $A_1$ contains $A_0$ and $J_a, J_b$ are disjoint unions of $I_i-A$.
    \item boundaries of colliding configurations $V(A_0;J_1,\cdots, J_s)_{\red}$, where $A_1$ contains $A_0$ and $J_j$ are disjoint unions of $I_i-A$.
\end{enumerate}
    
\end{remark}

\begin{theorem}[Main Theorem]
Let $\theta_2$ denote the rational Veronese map $\Gr(2,n)\dashrightarrow V(3,n)$. The following hold.

\begin{enumerate}
\item[(i)] The triple $(V(3,n), V(3,n)_{\geq0}, \Omega_{V(3,n)})$ is a positive geometry, where $V(3,n)$ is the ABCT variety, $V(3,n)_{\geq0}$ is the intersection $V(3,n)\bigcap \Gr_{\geq0}(3,n)$ or equivalently the analytic closure of the image ${\theta_2(\Gr(2,n)_{>0})}$, and finally the canonical form $\Omega_{V(3,n)}$ is the pushforward of $\Omega_{\Gr(2,n)}$ along $\theta_2$. \\

\item[(ii)] The iterated algebraic boundaries of the ABCT variety $V(3,n)$ are certain positroid varieties in $\Gr(3,n)$ and certain other positroid varieties in $\Gr(3,n)$ intersected with $V(3,n)$ with the reduced subscheme structure. A detailed description is given in Definition \ref{dfn:iterated boundaries two types}. The iterated analytic boundaries are the corresponding iterated algebraic boundaries intersected with $\Gr_{\geq0}(3,n)$.\\

\item[(iii)] The poset structure of iterated boundaries is the induced sub-poset on the two classes of positroids as above. The poset is naturally graded by the dimension of the iterated boundary. \\

\item[(iv)] The variety $V(3,n)$ and all its iterated algebraic boundaries are reduced, irreducible, rational, normal, Cohen-Macaulay, regular in codimension $1$, and have expected dimensions from the perspective of point configurations.\\

\item[(v)]  For any iterated boundary $V(A_0;I_1\cdots, I_r)_{\red}$ in $V(3,n)$ such that $s\not=t\in I_1$, the morphism $
\pi_2 :\mathcal{I}_{st}(A_0;I_1,\cdots, I_r)\subset V(A_0;I_1\cdots, I_r)_{\red}\times \mathbb{P}^1 \to \mathbb{P}^1
$ as in Proposition \ref{prop:bundle over P^1} satisfies the assumptions of Theorem \ref{pos geom over P1}.\\

\item[(vi)] For any iterated boundary $V(A_0;I_1\cdots, I_r)_{\red}$ in $V(3,n)$ such that $s\not=t\in I_1$, the morphism $
\pi_1:\mathcal{I}_{st}(A_0;I_1,\cdots, I_r)\subset V(A_0;I_1\cdots, I_r)_{\red}\times \mathbb{P}^1 \to V(A_0;I_1\cdots, I_r)_{\red}
$ as in Proposition \ref{prop:iso of sheaves} satisfies the assumptions of Theorem \ref{retract of pos geo over P^1}.\\

\end{enumerate}
\end{theorem}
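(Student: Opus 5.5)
The plan is a single strong induction on $n$, proving (i)--(vi) together for all iterated boundaries $V(A_0;I_1,\dots,I_r)_{\red}$ inside $V(3,n)$. The base cases are $n\le 6$. For $n=5$, $V(3,5)=\Gr(3,5)$ and Example \ref{Canonical form n=5} applies. For $n=6$, we use Theorem \ref{thm:V(3,6) is a positive geometry}. All boundaries of collinear configurations are positroid varieties, hence positive geometries by Theorem \ref{Grassmannian is a positive geometry}; so the real work concerns $V(3,n)$ and the colliding boundaries.

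\textbf{Structural parts.} Part (iv) is Theorem \ref{geometry of V and its boundaries}. Parts (v) and (vi) are Proposition \ref{pi1pi2 are good}, using $V(A_0;I_1,\dots,I_r)'=V(A_0;I_1,\dots,I_r)_{\red}$ from Theorem \ref{thm:matrix description of colliding boundary}. Part (ii) and the first half of (iii) are Corollary \ref{iterated boundaries of V(3,n)}, together with Theorem \ref{decomposition of boundary} and Proposition \ref{Equality of boundaries}, which gives the two descriptions of $V_{\geq0}$. The gradedness in (iii) follows from the codimension formulas of Theorem \ref{geometry of V and its boundaries}: each boundary step drops the dimension by exactly one. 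Normality, rationality and regularity in codimension $1$ give uniqueness of canonical forms via Remark \ref{uniqueness of canonical form}.

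\textbf{Colliding boundaries.} Let $V(A_0;I_1,\dots,I_r)_{\red}\subsetneq V(3,n)$ with $\#I_1\ge 2$. By induction, $X=V(A_0;I_1',\dots,I_r)_{\red}\subset V(3,n-1)$ is a normal positive geometry. Part (v) and Theorem \ref{pos geom over P1} then make $\mathcal I_{st}$ a positive geometry. I will need to check the gluing condition (iv) of that theorem: the transition map rescales column $s$ by $y/x$, and by Proposition \ref{form invariance under column scaling} the form $\Omega_X$ is invariant up to a $dt$-term, which dies after wedging with $d\log t$. Then (vi) and Theorem \ref{retract of pos geo over P^1} make $V(A_0;I_1,\dots,I_r)_{\red}$ a positive geometry with canonical form $(\pi_1)_*\Omega_{\mathcal I_{st}}$.

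\textbf{$V(3,n)$ itself (main obstacle).} Here no fibration is available, so I verify Definition \ref{def: positive geom} directly for $\Omega_{V(3,n)}=\theta_*\Omega_{\Gr(2,n)}$. The topological hypotheses come from Theorem \ref{conj:equality of boundaries}, Proposition \ref{analytic closure}, and Lemma \ref{int V is smooth}, which makes $V_{>0}$ a manifold of full dimension. By Theorem \ref{decomposition of boundary} and $D_n$-symmetry (Remark \ref{D_n symmetry}), it suffices to treat the divisors $Z(1\cdots m|m+1\cdots n)$ and $ZV(23x)_{\red}$, all lying in $\{p_{123}=0\}$.
\begin{itemize}
\item Lemma \ref{order of vanishing} shows $p_{123}$ vanishes to order one along each of them, so each residue can be computed by stripping $d\log p_{123}=d\log\alpha_1$ from the expression in Proposition \ref{the better form of the form}.
\item For $Z(A|B)$, the residue is the positroid canonical form by Proposition \ref{residue along Z(A|B) is good}.
\item For $ZV(23x)_{\red}$, I use Proposition \ref{inductive formula of canonical form on V(3,n)}, which writes $\Omega_{V(3,n)}$ as a unit times $\varphi^*\Omega_{V(3,n-1)}\wedge d\log\beta\wedge d\log\gamma$. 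After relabeling the colliding columns, the prefactor should restrict to $1$ on the divisor. The residue should then be identified with $(\pi_1)_*$ of the canonical form on $\mathcal I_{st}$, i.e.\ the canonical form of the colliding boundary.
\end{itemize}
The hardest step is excluding extra poles. I would argue as in Part 2 of Theorem \ref{thm:canonical form on V(3,6)}. From the chart formula, the poles lie in the zero loci of the few Plücker coordinates in the denominator. By cyclic symmetry they therefore lie in the intersection of two rotated such unions. I would then show, using Lemma \ref{intersecting in C} and Lemma \ref{must be cyclic}, that every codimension-one component of this intersection inside $V(3,n)$ is one of the boundary divisors listed in Theorem \ref{decomposition of boundary}.
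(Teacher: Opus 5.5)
Your handling of (ii)--(vi), of the colliding boundaries through $\mathcal I_{st}$, and of the two residue computations matches the paper. However, the step that excludes extra poles of $\Omega_{V(3,n)}$ fails as proposed.

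The $n=6$ argument works only because $\Omega^{123}$ is a regular form on $\{p_{123}\neq 0\}$ whose coefficient has just $p_{156}p_{345}$ in the denominator. For general $n$, the chart formula of Proposition~\ref{the better form of the form} is a rational function times the coordinate form $\mathcal D$. Its denominator is $p_{234}\,p_{2,3,m+2}\,p_{12n}\,p_{134}\,p_{13n}\prod_{j=4}^{m+1}p_{1,j,j+1}\prod_{j=m+2}^{n-1}p_{2,j,j+1}$, and the containment of the poles in $ZV(123)$ together with the zero loci of these factors is too coarse.

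To see this, recall that $Z(A|B)\subset ZV(p_I)$ if and only if $I\subset A$ or $I\subset B$. Take $n=8$ and the non-boundary divisor $Z(1256|3478)$. For every $\sigma\in D_8$, one of the following holds:
\begin{itemize}
\item $\sigma\{1,4,5\}$ is monochromatic, and $p_{145}$ occurs for every $m$;
\item both $\sigma\{1,5,6\}$ and $\sigma\{2,5,6\}$ are monochromatic, and one of $p_{156},p_{256}$ occurs for each $m$.
\end{itemize}
So this divisor lies in every rotated and reflected union, for every choice of $m$. Your claim that the divisorial part of the intersection consists only of boundary divisors is therefore false. More generally, any non-cyclic partition whose cyclic blocks all have length at least $3$ survives, since one of the always-present consecutive triples $\sigma\{n,1,2\},\sigma\{1,2,3\},\sigma\{2,3,4\}$ is monochromatic. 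Lemma~\ref{must be cyclic} cannot repair this: it constrains totally nonnegative points, not the complex pole divisor.

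The missing idea is to make the pole exclusion inductive, as the paper does. By Proposition~\ref{inductive formula of canonical form on V(3,n)}, $\Omega_{V(3,n)}$ is a unit times $\varphi_4^*\Omega_{V(3,n-1)}\wedge d\log\beta_1\wedge d\log\gamma_1$ (or the analogous expression with $d\log\alpha_1$). Hence the poles lie in $ZV(123)\cup ZV(124)\cup ZV(234)\cup ZV(245)\cup\varphi_4^{-1}(\text{poles of }\Omega_{V(3,n-1)})$, and also in the analogous union with $ZV(134),ZV(145)$. The inductive hypothesis says $\Omega_{V(3,n-1)}$ has poles only along its boundary divisors. After normalizing by $D_n$, this excludes every $ZV(abx)$ with $a,b$ non-adjacent and every non-cyclic $Z(A|B)$. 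For instance, $Z(1256|3478)$ is excluded because none of $p_{123},p_{124},p_{234},p_{245}$ vanishes on it, and deleting column $4$ leaves the non-cyclic partition $\{1,2,5,6\}\sqcup\{3,7,8\}$.

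There is also a smaller gap. For the gluing condition (iv) of Theorem~\ref{pos geom over P1} you invoke Proposition~\ref{form invariance under column scaling}, but that result only concerns $\Omega_{V(3,n)}$. When the fiber $V(A_0;I_1',\dots,I_r)_{\red}$ is itself a colliding boundary, you need the paper's further induction on $\sum_i(\#I_i-1)$. That induction writes its canonical form as the canonical form of the boundary with two parallel columns merged, wedged with $d\log h$, where $h$ is the ratio of those two columns.
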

\begin{proof}
(ii) is Corollary \ref{iterated boundaries of V(3,n)} and (iv) is Theorem \ref{geometry of V and its boundaries}. To show (iii), we need to show that the poset of iterated boundaries is the induced sub-poset of positroids. This is clear, because if the positroids appearing in Corollary \ref{iterated boundaries of V(3,n)} that we are intersecting with $V(3,n)$ are exactly those such that if several columns give points on a line, then all columns give points on a union of two lines. So intersecting with $V(3,n)$ does not change containment conditions. Namely, for $\Pi_1,\Pi_2$ positroids of this type, $\Pi_1\subset\Pi_2$ if and only if $\Pi_1\cap V(3,n)\subset \Pi_2\cap V(3,n)$. 

Now, we show that (i)+(v)+(vi) hold by induction on $n$. When $n=5,6$, in \S \ref{n=5,6 case} we establish that $V(3,5)$ and $V(3,6)$ are indeed positive geometries with the canonical form given by pushforward. Furthermore, their iterated boundaries are all positroid varieties and thus (v) and (vi) hold. 

Fix $n\geq7$. We assume that (i), (v), (vi) holds inside $V(3,m)$ for all $m<n$. We first show that this implies (v) and (vi) holds inside $V(3,n)$.

Notice that (v) and (vi) combined is a stronger version of Proposition \ref{pi1pi2 are good}. Recall from the proof of Proposition \ref{pi1pi2 are good} that the morphism $\pi_2$ in (v) realizes $\mathcal{I}_{st}(A_0;I_1,\cdots, I_r)\subset V(3,n)$ as a fibration over $\mathbb{P}^1$ with fibers isomorphic to $V(A_0;I_1',\cdots , I_r)'\subset V(3,n-1)$. The morphism $\pi_1$ in (vi) realizes $V(A_0;I_1,\cdots, I_r)'$ as a retract of $\mathcal{I}_{st}(A_0;I_1,\cdots, I_r)\subset V(3,n)$.
Thus, if (v) and (vi) are true, once combined with Theorem \ref{pos geom over P1} and Theorem \ref{retract of pos geo over P^1}, they imply that:
\begin{itemize}
    \item If $V(A_0;I_1',\cdots , I_r)'\subset V(3,n-1)$ is a positive geometry, then $\mathcal{I}_{st}(A_0;I_1,\cdots, I_r)\subset V(3,n)$  is a positive geometry.
    \item If $\mathcal{I}_{st}(A_0;I_1,\cdots, I_r)\subset V(3,n)$ is a positive geometry, then $V(A_0;I_1,\cdots, I_r)'\subset V(3,n)$ is a positive geometry.
\end{itemize}
These last two statements tell us that $V(A_0;I_1,\cdots, I_r)\subset V(3,n)$ is a positive geometry whenever $V(A_0;I_1',\cdots , I_r)'\subset V(3,n-1)$ is a positive geometry. Notice that here the column index set has cardinality $n-1$, and so it is reduced by one.

To show (v), we only need to verify that the morphism $
\pi_2 :\mathcal{I}_{st}(A_0;I_1,\cdots, I_r)\subset V(A_0;I_1\cdots, I_r)_{\red}\times \mathbb{P}^1 \to \mathbb{P}^1
$ as in Proposition \ref{prop:bundle over P^1} satisfies the assumptions of the second half of Theorem \ref{pos geom over P1}. Recall from the proof of Proposition \ref{pi1pi2 are good} that $\pi_2$ realizes $\mathcal{I}_{st}(A_0;I_1,\cdots, I_r)$ as a fibration over $\mathbb{P}^1$ with fibers isomorphic to $V(A_0;I_1',\cdots , I_r)'\subset V(3,n-1)$. 

\begin{itemize}
    \item We can conclude that the fiber $V(A_0;I_1',\cdots , I_r)'\subset V(3,n-1)$ is a positive geometry. This is done by repeatedly using the above discussion to merge two columns in the same $I_i$ into one column, appealing to (v)+(vi) for all $m<n$, combined with Theorem \ref{pos geom over P1} and \ref{retract of pos geo over P^1}. The process terminates until we reach $V(3,m')$ for a smaller $m'$, which is indeed a positive geometry by inductive hypothesis that (i) holds for $m'<n$.
    \item From Remark \ref{iterated boundaries of colliding boundaries}, we know that all iterated boundaries of $V(A_0,I_1',\cdots,I_r)'$ are either positroid varieties or other boundaries of colliding configurations. They are invariant under rescaling a column because they are either a positroid variety or a positroid variety intersected with $V(3,n)$, both of which are invariant under column rescaling.
    \item We need to show that the canonical form glues. That is, the transition morphism preserves canonical form. Note that the transition morphism sends $(X,t)\to (X',t^{-1})$, where $X\in V(A_0,I_1',\cdots,I_r)'\subset V(3,n-1)$ and $X'$ is obtained by rescaling a column in $X$ by $t$. When $I_1',I_2,\cdots ,I_r$ all have size $1$, the canonical form is invariant under the transition morphism by Proposition \ref{form invariance under column scaling} and the fact that $d\log (t^{-1})=-d\log t$. We show that this is the case in general by induction on $\sum \#(I_i-1)$. Assume without loss of generality that $I_1'$ has size at least $2$. Since (v) and (vi) holds for $n-1$, we again have that the canonical form on $V(A_0,I_1',\cdots,I_r)'$ is equal to $\Omega_{V(A_0,I_1'',\cdots,I_r)'}\wedge d\log h$, where $V(A_0,I_1'',\cdots,I_r)'\subset V(3,n-2)$ is obtained by merging two columns in $I_1'$ and $h$ is the ratio between the two parallel columns in $I_1'$. Now, if $\Omega_{V(A_0,I_1'',\cdots,I_r)'}$ is invariant under rescaling a column by $h$ up to multiple of $dh$, then $\Omega_{V(A_0,I_1'',\cdots,I_r)'}\wedge d\log h   $ is invariant under rescaling a column by $t$ up to multiple of $dt$. This reasoning reduces $\sum \#(I_i-1)$ by $1$ and we are done.
\end{itemize}
The above bullet points verify (v). Statement (vi) follows because assumptions in the first half of Theorem \ref{retract of pos geo over P^1} is verified in Proposition \ref{pi1pi2 are good}, assumptions in the second half is covered in (v), and finally the normality assumption is covered in (iv).

So far, we showed that (i)+(v)+(vi) holds for smaller $n$ implies that (v)+(vi) holds for $n$, which in turns implies that all iterated boundaries of colliding configurations $V(A_0;I_1,\cdots, I_r)'\subset V(3,n)$ are positive geometries. From here, we now show that (i) holds for $n$.

    To show the statement of positive geometry in (i), we first verify the topological conditions described in the first paragraph of Definition \ref{def:iterated boundary}. Namely, we need to show that $V(3,n)_{\geq0}$ is the analytic closure of its analytic interior and the analytic interior is an orientable manifold of dimension equal to the Krull dimension of $R$. Notice that by Theorem \ref{conj:equality of boundaries}, the analytic interior of $V(3,n)_{\geq0}$ is exactly the intersection $V(3,n)(\mathbb{R})\bigcap \Gr(3,n)_{>0}$, where the latter is the subset the the real Grassmannian where all the Pl\"ucker coordinates are strictly positive. Recall from Subsection \ref{subsection:geometry of V} that we used $\itr V$ to denote the open subscheme of $V(3,n)$ defined by the nonvanishing of all Pl\"ucker coordinates. Then, the analytic interior of $V(3,n)_{\geq0}$ is exactly the intersection $\itr V(\mathbb{R})\bigcap \Gr(3,n)_{>0}$. By Lemma \ref{int V is smooth}, $\itr V$ is a smooth variety, and thus its real points $\itr V(\mathbb{R})$ is a manifold. Then, the analytic interior $\itr V(\mathbb{R})\bigcap \Gr(3,n)_{>0}$ being an open subset contained in it is also a manifold. The dimension count of $\itr V(\mathbb{R})\bigcap \Gr(3,n)_{>0}$ follows from the fact that the rational map $\theta:\Gr(2,n)\dashrightarrow V(3,n)$ is defined on $\Gr(2,n)_{>0}$, finite-to-one, and has image in $V(3,n)_{>0}$, as in Remark \ref{D_n symmetry} and the discussion right above Proposition \ref{pushforward formula}. To show that it is orientable, notice that the canonical form $\Omega_{\Gr(2,n)}$ has no zeros or poles in $\Gr(2,n)_{>0}$ and is defined over $\mathbb{R}$. Also, $\Gr(2,n)_{>0}$ is exactly the preimage of $V(3,n)_{>0}$ under $\theta$. Thus, the pushforward of $\Gr(2,n)_{>0}$ via $\theta$ also has no zeros or poles on $V(3,n)_{>0}$ and is defined over $\mathbb{R}$. One can see this explicitly using the formula from Proposition \ref{the better form of the form} that on the chart $p_{123}\not=0$, all poles and zeros are along the vanishing of some Pl\"ucker coordinate. By cyclic symmetry, the same statement holds on any chart $p_{i,i+1,i+2}\not=0$. And the statement follows from the fact that $G_{\geq0}(3,n)\setminus G_{>0}(3,n)$ equals the union of positroid divisors $\bigcup Z(p_{i,i+1,i+2})$, as discussed in the beginning of Section \ref{sec:boundary components}. Thus, our candidate canonical form is a nowhere vanishing holomorphic form on $V(3,n)_{>0}$, showing orientability. Finally, $V(3,n)_{\geq0}$ is the analytic closure of $V(3,n)_{>0}$ by Proposition \ref{analytic closure}.

We already identified all iterated boundaries of $V(3,n)$ in (ii) and showed that they are normal in (iv). Thus, to check (i), it remains to prove that our candidate canonical form $\theta_*(\Omega_{\Gr(2,n)})$ verifies the recursive property in Definition \ref{def: positive geom}. Note that, by Remark \ref{uniqueness of canonical form}, the uniqueness of the canonical form is guaranteed by the fact that all iterated boundaries are rational and normal, which holds by (iv).

By Theorem \ref{decomposition of boundary} and Theorem \ref{conj:equality of boundaries}, the boundary components of $V(3,n)$ are exactly $Z(A|B)$ and $V(a,a+1,l:l\in [n])'=V(a,a+1,l)'$. The recursive property involves verifying three steps.

\begin{enumerate}
    \item Each boundary divisor described in Theorem \ref{decomposition of boundary} is a positive geometry.
    \item The candidate form $\Omega_{V(3,n)}$ on $V(3,n)$ has no other poles outside of the boundary divisors. 
    \item The candidate form $\Omega_{V(3,n)}$ has simple poles along each boundary divisor described above and the residue equals the canonical form on the corresponding boundary divisor.
\end{enumerate}

\noindent \textbf{Step 1:} The variety $Z(A|B)$ is a positroid variety, which is known to be a positive geometry \cite[\S 7]{AHBCGPT16}. The boundary of colliding configurations $ZV(34l)'$ is already shown above to be a positive geometry as a byproduct of showing (v)+(vi).\\

\noindent \textbf{Step 2:} The first equality in Proposition \ref{inductive formula of canonical form on V(3,n)} gives a formula for $\Omega_{V(3,n)}$ in coordinates on the chart $\{p_{123}\not=0\}$. So it does not see poles along $ZV(123)$.

On the complement of $ZV(123)$, the formula implies that any pole of $\Omega_{V(3,n)}$ is contained in the union of $ZV(\beta_1\gamma_2-\beta_2\gamma_1)= ZV(145)$, $ZV(\beta_1)=ZV(124)$, $ZV(\gamma_1)=ZV(134)$, or it is a pole of $\varphi^*_4\Omega_{V(3,n-1)}$. The poles of $\varphi^*_4\Omega_{V(3,n-1)}$ are exactly the preimages under $\varphi^*_4$ of poles of $\Omega_{V(3,n-1)}$. Note that by inductive hypothesis, $\Omega_{V(3,n-1)}$ has poles exactly along all the boundary divisors of $\Omega_{V(3,n-1)}$.

Thus, we conclude that 
\[
Z(\Omega_{V(3,n)})\subset ZV(123)\cup ZV(145)\cup ZV(124)\cup ZV(134)\cup \varphi_4^{-1}(Z(\Omega_{V(3,n-1)})).
\]

Similarly, using the second equality, we have that 
\begin{equation}\label{inductive on poles}
    Z(\Omega_{V(3,n)})\subset ZV(123)\cup ZV(245)\cup ZV(124)\bigcup ZV(234)\cup \varphi_4^{-1}(Z(\Omega_{V(3,n-1)})).
\end{equation}

By induction on $n$, it is easy to show that the vanishing of a single Pl\"ucker coordinate on $V(3,n)$ is the scheme theoretic union of divisors of the form $Z(A|B)$ for a partition $A\sqcup B$ of $[n]$ into two subsets of size at least $2$ and $ZV(a,b,x:x\in [n])$ for a choice $a\not=b\in [n]$. We now employ the above containment and $D_n$-symmetry to show that if $A,B$ are not simultaneously cyclic intervals in the first case or if $a,b$ are not consecutive numbers in the second case, then they are not contained in $Z(\Omega_{V(3,n)})$.

{\bf Case 1:} Consider the component $\Xi =ZV(a,b,x:x\in [n])$, where $a,b$ are not consecutive. Namely, $a,b$ are separated by two nonempty cyclic intervals. Since $n\geq 7$, one of the two intervals contains at least three elements. By $D_n$-symmetry, we can take $b=2$ and let the longer interval come after $b$. Thus, $a\not=1,3,4,5$. Now, a Pl\"ucker coordinate $p_I$ vanishes on $ZV(abx)$ if and only if $a,b\in I$. This implies that $ZV(123), ZV(245),ZV(124), ZV(234)$ do not contain $ZV(abx)$ since $a$ is disjoint from $[1,5]$. Furthermore, forgetting column $4$ still gives $ZV(abx:x\in [n-1])$ where $a,b$ are non-adjacent, which is not a pole of $\Omega_{V(3,n-1)}$ by inductive hypothesis. So $ZV(abx)$ is not contained in the component $\varphi_4^{-1}(Z(\Omega_{V(3,n-1)}))$, either. Thus, inclusion (\ref{inductive on poles}) shows that $\Xi$ is not contained in $Z(\Omega_{V(3,n)})$.

{\bf Case 2:} Consider the component $\Xi=Z(A|B)$ such that no cyclic intervals in $A$ or $B$ have length at least $2$. In this case, $n$ has to be even, so that $n\geq 8$. Assume that $A$ consists of all the odd numbers and $B$ consists of all the even numbers. Thus, a Pl\"ucker coordinate $p_I$ vanishes on $Z(A|B)$ if and only if all elements in $I$ have the same parity. This implies that $ZV(123), ZV(245), ZV(124), ZV(234)$ do not contain $Z(A|B)$. Furthermore, forgetting column $4$ still gives $Z(A|B)$ where $A$ still has at least four disconnected cyclic intervals, which is not a pole of $\Omega_{V(3,n-1)}$ by inductive hypothesis. So $Z(A|B)$ is not contained in the component $\varphi_4^{-1}(Z(\Omega_{V(3,n-1)}))$, either. Thus, (\ref{inductive on poles}) shows that $\Xi$ is not contained in $Z(\Omega_{V(3,n)})$.

{\bf Case 3:} Consider the component $\Xi = Z(A|B)$ such that $A$ and $B$ each contains a cyclic interval of size at least $2$ such that they are adjacent. By $D_n$-symmetry, take $1,2\in A$ and $3,4\in B$. A Pl\"ucker coordinate $p_I$ vanishes on $Z(A|B)$ if and only if $I\subset A$ or $I\subset B$. So if $I$ has nonempty intersection with both $\{1,2\}$ and $\{3,4\}$, $p_I$ cannot vanish on $Z(A|B)$. This implies that $ZV(123), ZV(245), ZV(124), ZV(234)$ do not contain $Z(A|B)$. Furthermore, forgetting column $4$ does not merge cyclic intervals because $4$ is contained in a cyclic interval of size at least $2$ in $B$. So forgetting column $4$ still gives a partition of $[n-1]$ into two sets which are not cyclic intervals. This is not a pole of $\Omega_{V(3,n-1)}$ by inductive hypothesis. So $Z(A|B)$ is not contained in the component $\varphi_4^{-1}(Z(\Omega_{V(3,n-1)}))$, either. Thus, inclusion (\ref{inductive on poles}) shows that $\Xi$ is not contained in $Z(\Omega_{V(3,n)})$.

{\bf Case 4:} What is left is the component $\Xi = Z(A|B)$ such that $A$ contains an interval of size at least $2$ and for each cyclic interval contained in either $A$ or $B$ of size at least $2$, the adjacent cyclic intervals from the other half of the partition must have length $1$. By $D_n$-symmetry, let $A$ contains an interval of size at least $2$ that starts at $3$. Namely, we can assume that $2\in B$ and $3,4\in A$. By assumtion, the maximal cyclic interval contained in $B$ where $2$ lies in must have length $1$, because otherwise we would be in Case 3. This implies that $1\in A$. A Pl\"ucker coordinate $p_I$ vanishes on $Z(A|B)$ if and only if $I\subset A$ or $I\subset B$. So if $I$ has nonempty intersection with both $\{2\}$ and $\{1,3,4\}$, $p_I$ cannot vanish on $Z(A|B)$. This implies that $ZV(123), ZV(245), ZV(124), ZV(234)$ do not contain $Z(A|B)$. Furthermore, forgetting column $4$ does not merge cyclic intervals because $4$ is contained in a cyclic interval of size at least $2$ in $A$. So forgetting column $4$ still gives a partition of $[n-1]$ into two sets which are not cyclic intervals. This is not a pole of $\Omega_{V(3,n-1)}$ by inductive hypothesis. So $Z(A|B)$ is not contained in the component $\varphi_4^{-1}(Z(\Omega_{V(3,n-1)}))$, either. Thus, inclusion (\ref{inductive on poles}) shows that $\Xi$ is not contained in $Z(\Omega_{V(3,n)})$.\\

\noindent \textbf{Step 3:} By cyclic symmetry, it suffices to show the statement for $Z(A|B)$ and $ZV(34x)$.

Notice that the vanishing of a Pl\"ucker coordinate has an irreducible component given by $Z(A|B)$. The vanishing of $p_{234}$ has an irreducible component given by $ZV(34x)$. So we can compute the residue by removing $d\log$ of such a Pl\"ucker coordinate. After we verify that it agrees with the canonical form prescribed by {\bf Step 1}, we must have simple poles, because otherwise the resulting residue given by removing $d\log$ of such a Pl\"ucker coordinate will not be a rational form. In both cases, we can remove $d\log p_{234}$ because it vanishes at order one by Lemma \ref{order of vanishing}.

{\bf Case 1:} Consider $ZV(34x)$. By Proposition \ref{inductive formula of canonical form on V(3,n)}, removing $d\log$ of $p_{234}=\alpha_1$ one obtains $\frac{-\alpha_2\beta_1}{\alpha_1\beta_2-\alpha_2\beta_1}\varphi_4^* \Omega_{V(3,n-1)}\wedge d\log \beta_1$. Since $\alpha_1=p_{234}$ vanishes on $ZV(34x)$, the rational function $\frac{-\alpha_2\beta_1}{\alpha_1\beta_2-\alpha_2\beta_1}=1$ on $ZV(34x)$. So the residue of $\Omega_{V(3,n)}$ along $ZV(34x)$ is given by $\varphi_4^* \Omega_{V(3,n-1)}\wedge d\log \beta_1$. Notice that $\alpha_1=p_{234}, \gamma_1=p_{134}$ both vanish on $ZV(34x)$. So $\beta_1$ is precisely the ratio of the fourth column divided by the third column. This is exactly the canonical form on $ZV(34x)$ prescribed by {\bf Step 1}. 

{\bf Case 2:} The $Z(A|B)$ case is verified by a direct computation in Proposition \ref{residue along Z(A|B) is good}.
\end{proof}

\section{$V(n-3,n)$ is a Positive Geometry}\label{paozhuan}
The geometric structure of $V(k,n)$ for $k\geq4$ is in general unclear. When $k=n-2$, $V(k,n)=\Gr(k,n)$ is trivial. In this section, we explain the first nontrivial case of $k=n-3$.

As in \cite[Remark 4.10]{ARS24}, we make the observation that $V(d+1,n)\cong V(n-d-1,n)$ as varieties. We briefly explained the argument for self-containment.

Fix consider the standard bilinear form on $V=\mathbb{C}^{n }$ and we have the corresponding duality
\[ \perp:\Gr(d+1,n) \to \Gr(n-d-1,n), \qquad [U] \mapsto [U^{\perp}] \]
In Pl\"ucker coordinates, we have $p_{I}(U)=(-1)^I p_{I^c}(U^\perp)$, where $I$ is any subset of $[n]$ of size $d+1$, $I^c$ is the complement in $[n]$, and $(-1)^I$ is the sign of the permutation with one line notation $[I,I^c]$. The isomorphism, restricted to the open subvariety $\Gr(d+1,n)^\circ$ where all Pl\"ucker coordinates do not vanish, descends to the point configuration space via the Gelfand-MacPherson correspondence. For a configuration of $n$ points in $\mathbb{P}^d$, the image is a configuration of $n$ points in $\mathbb{P}^{n-d-2}$. This map is called the Gale transform. Goppa's duality, as in \cite[Corollary 3.2]{EP00} and the remarks afterward, states that the Gale transform of $n$ distinct points on a common rational normal curve in $\mathbb{P}^{d}$ is again $n$ distinct points on a common rational normal curve in $\mathbb{P}^{n-d-2}$.

Recall that $V(d+1,n)$ is the Zariski closure of the image of the map $\theta_d:\Gr(2,n)^\circ\to \Gr(d,n)^\circ $. From definition of $\theta_d$, a point $[M]\in \Gr(d,n)^\circ$ is in the image of $\theta_d$ if and only if the Gelfand-MacPherson correspondence gives $n$ distinct points on a common rational normal curve in $\mathbb{P}^{d}$. Thus, the duality isomorphism between the Grassmannians sends the image of $\theta_d$ isomorphically to the image of $\theta_{n-d-2}$. 

Now, we consider the automorphism on any Grassmannian $$\operatorname{alt}:\Gr(k,n)\to \Gr(k,n)$$  by multiplying the $j$-th column by $(-1)^{j-1}$. Since column rescaling does not change the corresponding point configuration, the composition $\operatorname{alt}\circ \perp$ is still an isomorphism between the images of $\theta_d$ and $\theta_{n-d-2}$, and thus restricts to an isomorphism between their Zariski closure $V(d+1,n) \cong V(n-d-1,n)$. The merit of composing with the sign change $\operatorname{alt}$ is that we have $p_I(U)=p_{I^c}(\operatorname{alt}(U^\perp))$. The truth of this statement can be found in \cite[Section 7]{Hochster75} and \cite[Theorem 2.2.8]{oxley06}.

\begin{theorem}
The ABCT variety $V(n-3,n)$ is a positive geometry.
\end{theorem}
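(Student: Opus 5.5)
The plan is to transport the whole positive geometry structure of $V(3,n)$ to $V(n-3,n)$ through the isomorphism $\Phi:=\operatorname{alt}\circ\perp:\Gr(3,n)\to\Gr(n-3,n)$ constructed just above. By the preceding discussion (Goppa duality together with the Zariski closure argument), $\Phi$ restricts to an isomorphism of varieties $V(3,n)\cong V(n-3,n)$. I would define the canonical form $\Omega_{V(n-3,n)}:=(\Phi^{-1})^*\Omega_{V(3,n)}$, and show that $\Phi$ carries the semialgebraic set $V(3,n)_{\geq0}$ onto $V(n-3,n)_{\geq0}=V(n-3,n)\cap\Gr_{\geq0}(n-3,n)$. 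Since positive geometry is a notion intrinsic to the triple $(X,X_{\geq0},\Omega_X)$ (boundaries, Zariski closures, residues, normality and rationality are all preserved by isomorphisms), the Main Theorem for $V(3,n)$ then gives the statement immediately.

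The key identity is $p_I(U)=p_{I^c}(\operatorname{alt}(U^\perp))$, quoted from \cite{Hochster75,oxley06}. Because every Pl\"ucker coordinate of $\Phi(U)$ is literally a Pl\"ucker coordinate of $U$ (same sign, no rescaling), $\Phi$ maps $\Gr_{\geq0}(3,n)$ bijectively onto $\Gr_{\geq0}(n-3,n)$ and $\Gr_{>0}(3,n)$ onto $\Gr_{>0}(n-3,n)$. Intersecting with $V(3,n)$ and using that $\Phi(V(3,n))=V(n-3,n)$ yields $\Phi(V(3,n)_{\geq0})=V(n-3,n)_{\geq0}$ and likewise for the interiors. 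Since $\Phi$ is a homeomorphism on real points, analytic interiors, analytic closures and boundaries correspond. Consequently every iterated boundary $(C,C_{\geq0})$ of $V(n-3,n)$ equals $(\Phi(C'),\Phi(C'_{\geq0}))$ for an iterated boundary $C'$ of $V(3,n)$. By Corollary \ref{iterated boundaries of V(3,n)} these are images of positroid varieties or of reduced intersections of positroid varieties with $V(3,n)$. The identity $p_I\leftrightarrow p_{I^c}$ also shows that $\Phi$ sends positroid varieties to positroid varieties, which would give the analogous description for $V(n-3,n)$ if desired.

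For the form, residues commute with pullback by isomorphisms: $\Res_{\Phi(C)}(\Phi^{-1})^*\omega=(\Phi^{-1}|_{\Phi(C)})^*\Res_C\omega$. The recursive property in Definition \ref{def: positive geom} therefore transfers verbatim, including simple poles exactly along boundaries and the $\pm1$ condition at points. Normality, rationality and regularity in codimension one of all iterated boundaries transfer by (iv) of the Main Theorem, which gives uniqueness via Remark \ref{uniqueness of canonical form}.

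The main obstacle I expect is making sure the relevant positive parts match. First, the cited sign identity must be checked for our precise convention of $\perp$ and $\operatorname{alt}$, up to a global sign, which is harmless projectively. Second, if one wants Lam's definition $\overline{\theta_{n-4}(\Gr(2,n)_{>0})}$ instead, one needs that $\Phi\circ\theta_2$ and $\theta_{n-4}$ agree on $\Gr(2,n)_{>0}$, or at least have the same positive image. I would verify this using Proposition \ref{prop:map in terms of Pluckers}. For that same target, I would also aim to identify $\Omega_{V(n-3,n)}$ with a pushforward of $\Omega_{\Gr(2,n)}$ by comparing the two maps on $\Gr(2,n)^\circ$, though this identification is not needed for the positive geometry claim itself.
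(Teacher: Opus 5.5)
Your proposal is correct and follows the paper's proof. The paper likewise uses the identity $p_I(U)=p_{I^c}(\operatorname{alt}(U^\perp))$ to see that $\operatorname{alt}\circ\perp$ restricts to an isomorphism $V(3,n)\cong V(n-3,n)$ carrying totally nonnegative parts to totally nonnegative parts, and then deduces the statement from the Main Theorem with $d=2$; you additionally spell out the routine transfer of boundaries, residues, normality and uniqueness that the paper leaves implicit.
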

\begin{proof}
    After the sign change, the isomorphism $\operatorname{alt}\circ \perp:V(d+1,n) \cong V(n-d-1,n)$ restricts to an isomorphism between their totally nonnegative parts. In particular, by taking $d=2$, we conclude the theorem as a corollary of the main theorem.
\end{proof}

The above theorem is a straightforward consequence of the main theorem and Goppa's duality. However, there are interesting combinatorial interpretations of the iterated boundaries of $V(n-3,n)_{\geq0}$, as we illustrate in the example below for $n=7$.

\begin{table}[h]
\centering
\renewcommand{\arraystretch}{2.8} % Increased padding for visual elegance
\begin{tabular}{|c|c|}
\hline
\textbf{$7$ points on a conic in $\mathbb{P}^2$} & \textbf{$7$ points on a cubic in $\mathbb{P}^3$} \\ \hline
% --- (1,1) Plane Conic ---
\begin{tikzpicture}[scale=1.0, baseline=(current bounding box.center)]
    \draw[thick, blue!80!black] (0,0) ellipse (1.8cm and 1.1cm);
    \foreach \n in {1,2,...,7} {
        \pgfmathsetmacro{\angle}{(\n-1)*360/7 + 90} % Rotated for better symmetry
        \filldraw (1.8*cos{\angle}, 1.1*sin{\angle}) circle (1.5pt);
        \node at ({2.2*cos{\angle}}, {1.4*sin{\angle}}) {\small \n};
    }
\end{tikzpicture} 
& 
% --- (1,2) Elegant Twisted Cubic ---
\begin{tikzpicture}[x={(-1.2cm,-0.3cm)}, y={(1.5cm,-0.2cm)}, z={(0cm,0.8cm)}, scale=0.8, baseline=(current bounding box.center)]
    % The curve: r(t) = (t, t^2, 0.4*t^3)
    % A wider domain and lower Z-scale creates a graceful "S" sweep
    \draw[thick, red!80!black, smooth, domain=-1.7:1.7, samples=80] 
        plot ({\x}, {\x*\x}, {0.4*\x*\x*\x});
        
    % 7 Points distributed widely to prevent label collision
    % Points are calculated at t = -1.5, -1.0, -0.5, 0, 0.5, 1.0, 1.5
    \foreach \t [count=\i] in {-1.6, -1.4, -1.1, -0.7, 0, 1.0, 1.5} {
        \pgfmathsetmacro{\zval}{0.4*\t*\t*\t}
        \filldraw ({\t}, {\t*\t}, {\zval}) circle (1.8pt);
        
        % Logic to place labels away from the curve's concavity
        \ifnum \i<4
            \node[anchor=south, xshift=-3pt] at ({\t}, {\t*\t}, {\zval}) {\small \i};
        \else\ifnum \i=4
            \node[anchor=north, yshift=-2pt] at ({\t}, {\t*\t}, {\zval}) {\small \i};
        \else\ifnum \i=6
            \node[anchor=north, yshift=-2pt] at ({\t}, {\t*\t}, {\zval}) {\small \i};
        \else
            \node[anchor=south, xshift=-3pt] at ({\t}, {\t*\t}, {\zval}) {\small \i};
        \fi\fi\fi
    }
\end{tikzpicture} \\ \hline

\begin{tikzpicture}[scale=1]
    % Define the intersection point (origin)
    \coordinate (O) at (0,0);

    % Line 1: Contains points 1, 2, 3
    % Oriented at 150 degrees (pointing up-left)
    \draw[thick, blue!80!black] (-3.5, 2.02) -- (0.5, -0.28); 
    
    % Points on Line 1 (1, 2, 3)
    \foreach \pos [count=\i] in {-3, -2, -1} {
        \pgfmathsetmacro{\x}{\pos * cos(30)}
        \pgfmathsetmacro{\y}{\pos * -sin(30)}
        \filldraw (\x, \y) circle (1.8pt) node[anchor=south west, xshift=-2pt] {\small \i};
    }

    % Line 2: Contains points 4, 5, 6, 7
    % Oriented at 30 degrees (pointing up-right)
    \draw[thick, blue!80!black] (-0.5, -0.28) -- (3.5, 2.02);

    % Points on Line 2 (4, 5, 6, 7)
    \foreach \pos [count=\j] in {0.5, 1.5, 2.5, 3.5} {
        \pgfmathsetmacro{\val}{int(\j + 3)}
        \pgfmathsetmacro{\x}{\pos * cos(30)}
        \pgfmathsetmacro{\y}{\pos * sin(30)}
        \filldraw (\x, \y) circle (1.8pt) node[anchor=south east, xshift=2pt] {\small \val};
    }
    
\end{tikzpicture} & 

\begin{tikzpicture}[scale=1.5]
    % 1. The Conic (Definite Ellipse, not a circle)
    \draw[thick, red!80!black] (0,0) ellipse (1.5cm and 0.8cm);
    
    % 2. The Tangent Line (Slope 2)
    % Point of Tangency T: (-1.449, 0.206)
    % Line Equation: y = 2x + 3.1048
    \draw[thick, red!80!black] (-2, -0.895) -- (-1.15, 0.805);
    
    % 3. Points 1, 2, 3 on the Line (Approaching tangency)
    % Coordinates calculated using y = 2x + 3.1048
    \filldraw (-1.9, -0.695) circle (0.8pt) node[left, xshift=-2pt] {\small 1};
    \filldraw (-1.7, -0.295) circle (0.8pt) node[left, xshift=-2pt] {\small 2};
    \filldraw (-1.55, 0.005)  circle (0.8pt) node[left, xshift=-2pt] {\small 3};

    % 5. Points 4, 5, 6, 7 on the Ellipse (Counter-clockwise flow)
    % Angles chosen to follow the curve around from the tangent point (~172 degrees)
    \foreach \angle [count=\j] in {130, 45, -45, -130} {
        \pgfmathsetmacro{\pnum}{int(\j + 3)}
        \pgfmathsetmacro{\px}{1.5*cos(\angle)}
        \pgfmathsetmacro{\py}{0.8*sin(\angle)}
        \filldraw (\px, \py) circle (0.8pt);
        % Position labels radially outside for clarity
        \node at ({1*\px}, {1.25*\py}) {\small \pnum};
    }
\end{tikzpicture}

\\ \hline

\begin{tikzpicture}[scale=1]
    % Define the intersection point (origin)
    \coordinate (O) at (0,0);

    % Line 1: Contains points 1, 2, 3
    % Oriented at 150 degrees (pointing up-left)
    \draw[thick, blue!80!black] (-3.5, 2.02) -- (0.5, -0.28); 
    
    % Points on Line 1 (1, 2, 3)
    \foreach \pos [count=\i] in {-3, -2} {
        \pgfmathsetmacro{\x}{\pos * cos(30)}
        \pgfmathsetmacro{\y}{\pos * -sin(30)}
        \filldraw (\x, \y) circle (1.8pt) node[anchor=south west, xshift=-2pt] {\small \i};
    }

    % Line 2: Contains points 4, 5, 6, 7
    % Oriented at 30 degrees (pointing up-right)
    \draw[thick, blue!80!black] (-0.5, -0.28) -- (3.5, 2.02);

    % Points on Line 2 (4, 5, 6, 7)
    \foreach \pos [count=\j] in {0.5, 1.25, 2, 2.75,3.5} {
        \pgfmathsetmacro{\val}{int(\j + 2)}
        \pgfmathsetmacro{\x}{\pos * cos(30)}
        \pgfmathsetmacro{\y}{\pos * sin(30)}
        \filldraw (\x, \y) circle (1.8pt) node[anchor=south east, xshift=2pt] {\small \val};
    }
    
\end{tikzpicture}

&
\begin{tikzpicture}[x={(-1.2cm,-0.3cm)}, y={(1.5cm,-0.2cm)}, z={(0cm,0.8cm)}, scale=0.8, baseline=-1.5cm]
    % The curve: r(t) = (t, t^2, 0.4*t^3)
    % A wider domain and lower Z-scale creates a graceful "S" sweep
    \draw[thick, red!80!black, smooth, domain=-1.7:1.7, samples=80] 
        plot ({\x}, {\x*\x}, {0.4*\x*\x*\x});
        
    % 7 Points distributed widely to prevent label collision
    % Points are calculated at t = -1.5, -1.0, -0.5, 0, 0.5, 1.0, 1.5
    \foreach \t [count=\i] in {-1.6,  -1.1, -0.7, 0, 1.0, 1.5} {
        \pgfmathsetmacro{\zval}{0.4*\t*\t*\t}
        \filldraw ({\t}, {\t*\t}, {\zval}) circle (1.8pt);
        
        % Logic to place labels away from the curve's concavity
        \ifnum \i=1
            \node[anchor=east, xshift=-3pt] at ({\t}, {\t*\t}, {\zval}) {\small \i=2};
        \else\ifnum \i=2
            \node[anchor=north, yshift=-2pt] at ({\t}, {\t*\t}, {\zval}) {\small 3};
        \else\ifnum \i=3
            \node[anchor=north, yshift=-2pt] at ({\t}, {\t*\t}, {\zval}) {\small 4};
        
        \else\ifnum \i=4
            \node[anchor=south east, yshift=-2pt] at ({\t}, {\t*\t}, {\zval}) {\small 5};
     \else\ifnum \i=5
            \node[anchor=north, yshift=-2pt] at ({\t}, {\t*\t}, {\zval}) {\small 6};
             \else
            \node[anchor=south east, yshift=-3pt] at ({\t}, {\t*\t}, {\zval}) {\small 7};
        
        \fi\fi\fi\fi\fi
    }
\end{tikzpicture}

\\ \hline
% --- (1,1) Plane Conic ---
\begin{tikzpicture}[scale=1.0]
    \draw[thick, blue!80!black] (0,0) ellipse (1.8cm and 1.1cm);
    \foreach \n in {1,2,...,5} {
        \pgfmathsetmacro{\angle}{(\n-1)*360/7 + 90} % Rotated for better symmetry
        \filldraw (1.8*cos{\angle}, 1.1*sin{\angle}) circle (1.5pt);
        \node at ({2.2*cos{\angle}}, {1.4*sin{\angle}}) {\small \n};
    }

{
        \pgfmathsetmacro{\angle}{(6-1)*360/7 + 90} % Rotated for better symmetry
        \filldraw (1.8*cos{\angle}, 1.1*sin{\angle}) circle (1.5pt);
        \node at ({2.2*cos{\angle}}, {1.4*sin{\angle}}) {\small 6=7};
    }
    
\end{tikzpicture} 

&

\begin{tikzpicture}[scale=1.5]
    % 1. The Conic (Definite Ellipse, not a circle)
    \draw[thick, red!80!black] (0,0) ellipse (1.5cm and 0.8cm);
    
    % 2. The Tangent Line (Slope 2)
    % Point of Tangency T: (-1.449, 0.206)
    % Line Equation: y = 2x + 3.1048
    \draw[thick, red!80!black] (-2, -0.895) -- (-1.15, 0.805);
    
    % 3. Points 1, 2, 3 on the Line (Approaching tangency)
    % Coordinates calculated using y = 2x + 3.1048
    \filldraw (-1.9, -0.695) circle (0.8pt) node[left, xshift=-2pt] {\small 7};
    \filldraw (-1.7, -0.295) circle (0.8pt) node[left, xshift=-2pt] {\small 6};

    % 5. Points 4, 5, 6, 7 on the Ellipse (Counter-clockwise flow)
    % Angles chosen to follow the curve around from the tangent point (~172 degrees)
    \foreach \angle [count=\j] in {130, 90, 45, -45, -130} {
        \pgfmathsetmacro{\pnum}{int(\j )}
        \pgfmathsetmacro{\px}{1.5*cos(\angle)}
        \pgfmathsetmacro{\py}{0.8*sin(\angle)}
        \filldraw (\px, \py) circle (0.8pt);
        % Position labels radially outside for clarity
        \node at ({1*\px}, {1.25*\py}) {\small \pnum};
    }
\end{tikzpicture}
\\ \hline
\end{tabular}
\vspace{0.5cm}
\caption{Boundary Correspondence between $V(3,7)$ and $V(4,7)$}
\end{table}
\begin{example}
In this example, we explicitly compute all boundaries of $(V(4,7),V(4,7)_{\geq0})$.

The boundaries of $(V(3,7), V(3,7)_{\geq0})$ comes in three families up to cyclic rotation. They are $Z(123|4567), Z(34567), Z(67x:x\in [7])_{\red}$. We directly compute the image under $\operatorname{alt}\circ \perp$.

The boundary $Z(123|4567)$ corresponds to the collinear configurations where $c_1,c_2,c_3$ are collinear and $c_4,c_5,c_6,c_7$ are collinear. This is given by the vanishing of $[123]$ and any $3$-element subset of $\{4,5,6,7\}$. Thus, the image is given by the vanishing of $[4567]$ and any $4$-element subset containing $\{1,2,3\}$. This corresponds to a configuration $d_1,\cdots,d_7$ in $\mathbb{P}^3$ such that $d_1,d_2,d_3$ are collinear and $d_4,d_5,d_6,d_7$ are coplanar.

The boundary $Z(34567)$ corresponds to the collinear configurations where $c_3,c_4,c_5,c_6,c_7$ are collinear. This is given by the vanishing of any $3$-element subset of $\{3,4,5,6,7\}$. Thus, the image is given by the vanishing of any $4$-element subset containing $\{1,2\}$. This corresponds to a configuration $d_1,\cdots,d_7$ in $\mathbb{P}^3$ such that $d_1$ and $d_2$ collide. In this case, $6$ points in general position determine a twisted cubic.

The boundary $Z(67x:x\in [7])_{\red}$ corresponds to the configurations where $c_6$ and $c_7$ collide and all points lie on a common conic in $\mathbb{P}^2$. This is set-theoretically defined by the vanishing of any $3$-element subset containing $\{6,7\}$ and a quartic polynomial $[123][345][561][246]-[234][456][612][135]$ for the conic condition. Thus, the image is given by the vanishing of any $4$-element subset of $\{1,2,3,4,5\}$ and the vanishing of $[4567][1267][2347][1357]-[1567][1237][3457][2467]$. The former dictates for the corresponding configuration $d_1,\cdots,d_7$ in $\mathbb{P}^3$ that $d_1,d_2,d_3,d_4,d_5$ must be coplanar. It is a straightforward computation that the quartic relation can be interpreted as ``the line through $d_6,d_7$ passes through the conic determined by the coplanar points $d_1,d_2,d_3,d_4,d_5$."

The above table is a visualization of the conditions on the point configurations given by $V(4,7)$. We conclude that the configurations on a twisted cubic degenerates in one of two ways. We can either ``let two points collide" or ``let the twisted cubic degenerate to a conic and a line such that they intersect." We conjecture that for any $n$, boundaries of $V(4,n)$ are of the form we just described.

\end{example}

\section{Open Problems}
Some follow-up open problems of this article are as follows. 
\begin{enumerate}
    \item Find a manifestly cyclically invariant global expression of the canonical form $\Omega_{V(3,n)}$ instead of in matrix coordinates. See Remark \ref{open problem motivation} for motivations.

    \item The ABCT variety $V(k,n)$ is conjectured to be a positive geometry for all $3\leq k\leq n-3$ by Lam \cite{Lam24}. We verified the conjecture for $k=3,n-3$. Does it still hold for $4\leq k\leq n-4$? The geometry of $V(k,n)$ is more complicated, and we lack results resembling those in \cite{ARS24}. Also, the degeneration strata at the boundary will be much more complicated. Our results and techniques introduced for $V(3,n)$ might be relevant for the general case. As shown in the previous section, we understand $ V(k,k+3)$ well. In particular, boundaries of $V(k,k+3)$ can be given the interpretation of a condition on point configurations, which conjecturally is the same condition describing boundaries for higher $n$.
\end{enumerate}

{
\bibliographystyle{alphaurl}
\bibliography{Biblio}
}

\end{document}